\documentclass[11pt]{amsart}
\usepackage[top=5cm,bottom=4.2cm,right=3cm,left=3cm]{geometry}
\usepackage{mathrsfs}
\usepackage{yfonts}
\usepackage{amssymb}
\usepackage{latexsym}
\usepackage{amsmath}
\usepackage{graphicx,color}%,psfrag,epsfig,psfig
\usepackage[all]{xy}
\usepackage{pstricks}
\usepackage{pst-plot}
\usepackage{epsfig}
% THEOREM Environments 
\theoremstyle{plain}
\newtheorem{thm}{Theorem}[section]
\newtheorem{propo}[thm]{Proposition}
\newtheorem{lem}[thm]{Lemma}
\newtheorem{cor}[thm]{Corollary}

\theoremstyle {definition}

\theoremstyle{remark}
\newtheorem*{rem}{Remark} 

\definecolor{grey30}{rgb}{0.3,0.3,0.3}
\definecolor{grey67}{rgb}{0.67,0.67,0.67}

%Macros for AMS-LaTeX
%version, January 2010

%Accents +++++++++++++++++++++++++++++++++++++++++++++++++++++++++++++++++++++++

%Set operations ++++++++++++++++++++++++++++++++++++++++++++++++++++++++++++++++++
 %union
 %intersection
 %big union
 %big intersection
\newcommand{\nin}{\notin} %not of membership
 % set invoke with \set{...}

%Sets ++++++++++++++++++++++++++++++++++++++++++++++++++++++++++++++++++++++++++++
\newcommand{\ci}{\subseteq} % contained in with equality
 %  not \nc 
 % containing with equality

%Text ++++++++++++++++++++++++++++++++++++++++++++++++++++++++++++++++++++++++++++

\newcommand{\tif}{\text{if }}

\newcommand{\tand}{\text{ and }}

\newcommand{\tfor}{\text{for }}

\newcommand{\twith}{\text{with }}

%Greek letters+++++++++++++++++++++++++++++++++++++++++++++++++++++++++++++++++++
%Lowercase ++++++++++++++++++++++
\newcommand{\ga}{\alpha}
\newcommand{\gb}{\beta}
\newcommand{\gc}{\chi}
\newcommand{\gd}{\delta}
\newcommand{\gep}{\varepsilon}
\newcommand{\gf}{\varphi}
\newcommand{\gga}{\gamma}
\newcommand{\gh}{\eta}

\newcommand{\gm}{\mu}
\newcommand{\gn}{\nu}

\newcommand{\gp}{\pi}

\newcommand{\gs}{\sigma}
\newcommand{\gr}{\rho}
\newcommand{\gt}{\tau}

\newcommand{\gx}{\xi}
\newcommand{\gy}{\psi}
\newcommand{\gz}{\zeta}

% Uppercase +++++++++++++++++++++
\newcommand{\gD}{\Delta}
\newcommand{\gF}{\Phi}

\newcommand{\gO}{\Omega}
\newcommand{\gP}{\Pi}

\newcommand{\gU}{\Upsilon}

\newcommand{\gY}{\Psi}
 
% Math fonts +++++++++++++++++++++++++++++++++++++++++++++++++++++++++++++++++++
\def\B#1{\textbf{#1}}% Bold math
\newcommand{\C}[1]{{\mathcal{#1}}} % math Calligraphy
\newcommand{\D}[1]{{\mathbb{#1}}} % math Black board
 % Euler script

%Referencing sections and declarations +++++++++++++++++++++++++++++++++++++++++

\newcommand{\refT}[1]{Theorem ~\ref{#1}}

\newcommand{\refP}[1]{Proposition ~\ref{#1}}
\newcommand{\refC}[1]{Corollary ~\ref{#1}}
\newcommand{\refE}[1]{Equation ~\eqref{#1}}

%Misc envirenment 
% to keep lines together
{\par \samepage}%
{\par}

%Misc. ++++++++++++++++++++++++++++++++++++++++++++++++++++++++++++++++++++++++
\newcommand{\ol}{\overline}

% Dynamically defined objects ++++++++++++++++++++++++++++++++++++++++++++++++++

\newcommand{\pc}{\mathcal{PC}}
\newcommand{\rs}{\hat{\mathbb{C}}}
\newcommand{\ra}{\rightarrow}
\renewcommand{\Im}{\operatorname{Im}}
\renewcommand{\Re}{\operatorname{Re}}
   
\newcommand{\Dom}{\operatorname{Dom}}   
\newcommand{\cp}{\textup{cp}}
\newcommand{\cv}{\textup{cv}}
\newcommand{\IS}{\mathcal{I}\hspace{-1pt}\mathcal{S}}

\newcommand{\Csh}{\mathcal{C}^\sharp}
\newcommand{\rr}{\operatorname{\mathcal{R}}}
\newcommand{\irr}{HT_{N}}

\newcommand{\diam}{\operatorname{diam}\,}
\newcommand{\darun}{\operatorname{int}\,}
\newcommand{\ex}{\operatorname{\mathbb{E}xp}}
 
\newcommand{\eps}{\varepsilon}

\newcommand{\vfi}{\varphi}
\newcommand{\cc}{\mathbb{C}}
\newcommand{\ea}{e^{2\pi \ga \mathbf{i}}}
\newcommand{\p}{\mathcal{P}}
\newcommand{\Bk}{\boldsymbol{k}}
\newcommand{\co}[1]{^{\circ {#1}}}

\begin{document}
\title[Statistical properties of quadratic polynomials]%
{Statistical properties of quadratic polynomials with a neutral fixed point}
\author[Artur Avila]{Artur Avila}
\email[Artur Avila]{artur@math.jussieu.fr}
\address[Artur Avila]
{CNRS UMR 7586, Institut de Math\'ematiques de Jussieu - Paris Rive Gauche, 
B\^atiment Sophie Germain, Case 7012, 75205 Paris Cedex 13, France \& 
IMPA, Estrada Dona Castorina 110, 22460-320, Rio de Janeiro, Brazil}

\author[Davoud Cheraghi]{Davoud Cheraghi}
\email[Davoud Cheraghi]{d.cheraghi@imperial.ac.uk}
\address[Davoud Cheraghi]{Department of Mathematics, Imperial College London, United Kingdom}
\thanks{The second author acknowledges funding from Leverhulme Trust while carrying out this research.}
\subjclass[2010]{Primary: 37F50, Secondary: 37F25, 58F11}
%\date{\today}

\begin{abstract}
We describe the statistical properties of the dynamics of the quadratic polynomials 
$P_\ga(z)=\ea z+z^2$ on the complex plane, with $\ga$ of \textit{high return} times.
In particular, we show that these maps are \textit{uniquely ergodic} on their \textit{measure 
theoretic attractors}, and the unique invariant probability is a \textit{physical measure} 
describing the statistical behavior of typical orbits in the \textit{Julia set}. 
This confirms a conjecture of Perez-Marco on the unique ergodicity of \textit{hedgehog} 
dynamics, in this class of maps.
\end{abstract}

\maketitle
\renewcommand{\thethm}{\Alph{thm}}
\section*{Introduction}\label{intro}
In this paper we are interested in the asymptotic distribution of the orbits of quadratic polynomials
\[P_\ga(z)=\ea z+z^2\]
acting on the complex plane, for irrational values of $\ga$.

When considering conservative dynamical systems (i.e., those preserving a smooth density),
the Ergodic Theorem assures the existence of a basic
statistical description (stationarity) of typical (with respect to Lebesgue measure) orbits, and the initial
focus of analysis tends to be the nature of the ergodic decomposition of Lebesgue measure.
However, for non-conservative dynamical systems, stationarity is far from assured
in principle.  A nice situation emerges when one can identify \textit{physical measures} $\mu$,
which describe the behavior of large subsets of the phase space, in the sense that their basins (the set of orbits
whose Birkhoff averages of any continuous observable are given by the spatial average with respect to $\mu$) have
positive Lebesgue measure.  Ideally, one would like to be able to describe the behavior of
almost every orbit using one of only finitely many physical measures: Indeed, whether such situation arises
frequently is the main content of the famous Palis conjecture \cite {Pal00}.

In the case of rational maps, the analysis is rather simple in the Fatou set (in particular, all orbits 
present stationary behavior), but the behavior in the Julia set is much less well understood. 
When the Julia set is not the whole Riemann sphere (e.g., for polynomial maps), a general theorem
of Lyubich \cite {Ly83b} assures that the $\omega$-limit set of almost every orbit is contained in 
the \textit{postcritical} set $\pc$,
defined as the closure of the forward orbits of all critical values contained in the Julia set, so it is of
importance to understand the asymptotic distribution of orbits in the postcritical set.  
Particularly, if all orbits in $\pc$ admit a common asymptotic distribution $\mu$ (i.e., the dynamics 
restricted to the postcritical set is uniquely ergodic), then almost every orbit in the Julia set must have 
this same asymptotic distribution, and $\mu$ will be a physical measure provided the Julia set 
has positive Lebesgue measure.

Of course, in many cases the Julia sets of quadratic polynomials have zero Lebesgue measure, 
and this is true in particular for almost every irrational $\ga$, see \cite {PZ04}.  
However, it has been shown by Buff and Ch\'eritat \cite {BC12} that for some irrational values of $\ga$ 
the Julia set of $P_\ga$ has positive Lebesgue measure. 
Their analysis is dependent on a renormalization approach introduced by Inou and Shishikura 
\cite {IS06} to control the postcritical set of $P_\ga$ whenever $\ga$ is of \textit{high type}: 
$\ga \in \irr$ for $N$ suitably large.
Here the class $\irr$ is defined in terms of the modified continued fraction expansion of $\ga$ as the 
set of all
\[
\ga=a_{-1}+\cfrac {\varepsilon_0} {a_0+\cfrac {\varepsilon_1} {a_1+\cfrac {\varepsilon_2} 
{a_2+\cdots}}}
\]
with $a_{-1}\in \mathbb{Z}$, $\varepsilon_i=\pm 1$ and $a_i \geq N \geq 2$, $i \geq 0$.

A systematic approach to study the dynamics of quadratics $P_\ga$ using Inou-Shishikura 
renormalization has been laid out by Cheraghi in \cite{Ch10-I}. 
%A systematic approach to study the dynamics of $P_\ga$ using the renormalization of Inou and 
%Shishikura has been carried out by Cheraghi in \cite{Ch10-I}. 
A new analytic technique is introduced in \cite{Ch10-II} to prove optimal estimates on the changes of 
coordinates (Fatou coordinates) that appear in the renormalization. 
These have led to numerous important results on the dynamics of quadratics $P_\ga$ with $\ga$ 
of high type.
In particular, the combination of \cite{Ch10-I} and \cite{Ch10-II} provide a detailed geometric 
description of the postcritical sets $\pc(P_\ga)$ (implying zero area), 
and explain the topological behavior of the orbits of these maps. 
Here, we make use of the renormalization and the optimal estimates on the changes of coordinates 
in order to describe the asymptotic distribution of the orbits of $P_\ga$.

\begin{thm}\label{T:unique-ergodicity}
There exists $N \geq 2$ such that for $\ga\in \irr$, $P_\ga:\pc(P_\ga)\to \pc(P_\ga)$ is 
uniquely ergodic.
\end{thm}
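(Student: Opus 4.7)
The natural strategy is to exploit the near-parabolic renormalization tower together with the optimal Fatou-coordinate estimates to analyze $P_\ga$-invariant probability measures on $\pc(P_\ga)$ scale by scale. First I would set up the tower structure: for $\ga \in \irr$ with $N$ large, Inou--Shishikura renormalization yields nested ``perturbed petal'' domains $S_0 \supset S_1 \supset \cdots$ shrinking to $0$, with first-return maps $R_n = P_\ga^{q_n}|_{S_n}$ (where the return times $q_n$ are determined by the continued fraction of $\ga$) that, after conjugacy by the perturbed Fatou coordinate, lie again in the Inou--Shishikura class and are (semi-)conjugate to quadratics $P_{\ga_n}$ with $\ga_n$ the Gauss shift of $\ga$. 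From the detailed description of $\pc(P_\ga)$ in \cite{Ch10-I}, the postcritical set decomposes as $\pc(P_\ga) = \{0\} \cup \bigsqcup_n A_n$, where each $A_n$ is the ``tower piece'' formed by the portion of the critical orbit that first returns to $S_n$ but not to $S_{n+1}$.

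Next, I would take an arbitrary $P_\ga$-invariant probability $\mu$ on $\pc(P_\ga)$ and decompose it along the tower. The Kac formula ties $\mu(A_n)$ to $q_n$ times the mass of an induced cross-section measure $\hat\mu_n$ supported on a fundamental domain at level $n$; via the Fatou-coordinate conjugacy, $\hat\mu_n$ pushes forward to a $P_{\ga_n}$-invariant probability on $\pc(P_{\ga_n})$. This produces a sequence of renormalized measures $(\mu_n)_{n \ge 0}$ satisfying the compatibility relations dictated by renormalization. Unique ergodicity is then equivalent to showing only one such compatible sequence can exist.

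The uniqueness is driven by the quantitative control of the perturbed Fatou coordinates established in \cite{Ch10-II}, together with the compactness of the Inou--Shishikura class. These give uniform bounds on the distortion of the change of coordinates at every scale and every $\ga \in \irr$; combined with the Kac constraint (which forces $\mu(S_n) \to 0$ as $q_n \to \infty$), they should determine the mass distribution of $\mu$ at every level in terms of universal proportions dictated by renormalization, forcing any two invariant probabilities to coincide. A useful companion step is to first rule out atomic concentration away from $0$, which follows because every point of $\pc(P_\ga) \setminus \{0\}$ leaves a neighborhood of $0$ under some forward iterate and re-enters the renormalization cascade.

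The main obstacle is that this plan cannot be closed by a naive induction on renormalization depth, since unique ergodicity at level $n{+}1$ is precisely what is being proved. The way around must be either to run the argument \emph{uniformly} in $\ga \in \irr$---so that a failure of uniqueness at one parameter propagates along the renormalization orbit and contradicts the uniform estimates---or to exploit quantitative contraction of renormalization so that after finitely many levels the measure is pinned down up to a vanishing error. Fitting the measure transport under the Fatou-coordinate change of variables, the Kac bookkeeping along the tower, and the optimal estimates together self-consistently at every scale is the technical heart of the argument.
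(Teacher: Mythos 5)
Your proposal has a genuine gap, and you essentially name it yourself in the last paragraph: the Kac/induced-measure scheme is circular as stated (uniqueness at level $n+1$ is what is being proved), and the claim that the uniform distortion estimates ``should determine the mass distribution of $\mu$ at every level in terms of universal proportions'' is precisely the unproved crux. Renormalization compatibility alone does not pin down an invariant measure; something must force all orbits to spend asymptotically the same fraction of time in each region, and your outline never supplies that mechanism. The paper's actual route is different and avoids invariant measures entirely. It uses the Birkhoff-average characterization of unique ergodicity: for each $n$ it builds a connected sector $I^n$ attached to $0$ and shows (Proposition~\ref{P:orbits-organized}) that \emph{every} nonzero point of $\pc(P_\ga)$ visits the $q_n$ sets $f_0\co{i}(I^n)$, $0\le i\le q_n-1$, exactly once per block of $q_n$ iterates, just with a cyclic shift depending on the starting point. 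Thus all orbits have identical visit frequencies to these sectors, and the only remaining issue is whether the sectors are metrically negligible.

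That metric statement is the technical heart of the paper, and it is entirely absent from your proposal. Propositions~\ref{P:cremer-sectors} and \ref{P:siegel-sectors} show that for a density-one set of indices $i$ (along a subsequence of $n$), the sector $f_0\co{i}(I^n)$ is contained in a small ball around $0$ (non-Brjuno case) or in a small neighborhood of the Siegel disk with small diameter transverse to it (Brjuno case). Proving this requires the arithmetic dichotomy via the bi-sequence $B_{n,j}$ of Section~\ref{SS:arithmetic}, the heights-of-chains analysis in the renormalization tower, and the optimal derivative estimate on the lifts $\gc_{n,0}$ from \cite{Ch10-II}. Once these are in hand, the Birkhoff average of any continuous observable over a block of length $q_n$ differs from a fixed quantity (the value at $0$, or the average of any other orbit) by $O(\gep)$, which gives unique ergodicity directly. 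So while you correctly identified the tools (tower, return times $q_n$, Fatou-coordinate estimates, compactness of $\IS$), the architecture of your argument would need to be replaced: the decomposition $\pc(P_\ga)=\{0\}\cup\bigsqcup_n A_n$ by first-return depth and the Kac bookkeeping do not lead anywhere without the equidistribution-of-sectors input, and with that input the measure-theoretic scaffolding becomes unnecessary.
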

For the remaining of this section, we fix $N$ as in the previous theorem.
\begin{cor}\label{C:physical}
For every $\ga \in \irr$, if $J(P_\ga)$ has positive Lebesgue measure then $P_\ga|J(P_\ga)$ 
admits a unique physical measure, which describes the behavior of almost every orbit in $J(P_\ga)$.
\end{cor}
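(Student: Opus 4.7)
The plan is to reduce the statement to \refT{T:unique-ergodicity} combined with Lyubich's theorem from \cite{Ly83b}. Let $\mu$ denote the unique $P_\ga$-invariant Borel probability measure supported on $\pc(P_\ga)$ afforded by Theorem~A; this will serve as the physical measure whose existence and uniqueness we want to establish.

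First I would fix a point $z\in J(P_\ga)$ with $\omega(z)\subseteq \pc(P_\ga)$, a condition satisfied by Lebesgue-almost every $z\in J(P_\ga)$ by Lyubich's theorem. For such $z$, the empirical measures $\mu_n(z)=\frac{1}{n}\sum_{k=0}^{n-1}\delta_{P_\ga^k(z)}$ form a tight sequence, since the forward orbit of $z$ is eventually trapped in any prescribed neighborhood of the compact set $\pc(P_\ga)\subset\mathbb{C}$, and a standard Krylov--Bogolyubov-type computation shows that every weak-$*$ accumulation point is a $P_\ga$-invariant probability measure supported on $\omega(z)\subseteq \pc(P_\ga)$. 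Unique ergodicity forces each such limit to coincide with $\mu$, so $\mu_n(z)\to \mu$ weakly; in other words $z$ lies in the basin $B(\mu)$. Hence $B(\mu)$ contains Lebesgue-almost every point of $J(P_\ga)$, and under the assumption that $J(P_\ga)$ has positive area this already makes $\mu$ a physical measure describing the orbit of almost every point of $J(P_\ga)$.

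For uniqueness, suppose $\mu'$ is another physical measure for $P_\ga|J(P_\ga)$. Its basin $B(\mu')$ has positive Lebesgue measure, so it must meet $B(\mu)$ on a set of positive measure; at any point of $B(\mu)\cap B(\mu')$ the Birkhoff averages of every continuous observable converge simultaneously to $\int f\,d\mu$ and $\int f\,d\mu'$, forcing $\mu=\mu'$. The entire argument is essentially formal once Theorem~A is in hand; the only points that deserve a brief verification are that $\pc(P_\ga)$ is compact (automatic for $P_\ga$, whose critical orbit is bounded) and that Lyubich's theorem applies to the non-hyperbolic map $P_\ga$ (which it does, since $J(P_\ga)$ is a proper subset of the Riemann sphere). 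I do not anticipate any substantial obstacle beyond \refT{T:unique-ergodicity} itself.
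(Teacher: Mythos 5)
Your argument is correct and is essentially the paper's own proof: both reduce the corollary to Lyubich's theorem (a.e.\ $\omega$-limit set in $\pc(P_\ga)$) plus Theorem~\ref{T:unique-ergodicity}, passing through the weak-$*$ limits of the empirical measures. The only difference is that you spell out the tightness/Krylov--Bogolyubov step and the uniqueness-of-physical-measure argument explicitly, which the paper leaves implicit; no gap either way.
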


While the use of the Inou-Shishikura renormalization operator of course restricts the set of 
parameters that we can address, it is believed that some renormalization operator with similar 
qualitative features should be available through the whole class of irrational numbers, so the 
arguments developed here might eventually be applied to the general case.
Irrational rotations of dynamically significant types such as bounded type, Brjuno, non-Brjuno, 
Herman, non-Herman, and Liouville are present in $\irr$. 
Less speculatively, considering the class $\irr$ is enough to conclude (due to the work of Buff-Ch\'eritat)
that Corollary \ref {C:physical} is indeed meaningful, in the sense that it applies non-trivially to some
quadratic polynomials.\footnote {We point out that ``Feigenbaum´´
quadratic polynomials with a Julia set of positive Lebesgue
measure have been recently constructed by Avila and Lyubich.  In those cases the
dynamics in the postcritical set is very simple and obviously uniquely ergodic (indeed it is almost periodic),
so they also admit physical measures.}

In order to identify precisely the unique invariant measure on the postcritical set, 
we need to discuss in more detail the dynamics of the maps $P_\ga$ near the origin.
One can identify two basic cases with fundamentally different behavior, distinguished 
by whether zero belongs to $\pc(P_\ga)$ or not, which turns out to depend on the local 
dynamics of $P_\ga$ at zero.  
The map $P_\ga$ is called \textit{linearizable} at zero, if there exists a local conformal change 
of coordinate $\phi$ fixing zero and conjugating $P_\ga$ to the rotation of angle $2\pi \ga$ around zero: 
$R_\ga\circ \phi=\phi\circ P_\ga$ on the domain of $\phi$, where $R_\ga(z)=\ea z$.
It was a nontrivial problem to determine the values of $\ga$ for which $P_\ga$ is linearizable. 
We refer the interested reader to \cite{Mi06} for its rich history, and only need to mention that the 
answer to this problem depends on the arithmetic nature of $\ga$. 
For Lebesgue almost every $\ga\in [0,1]$ such linearization exists, and on the other hand, for a 
generic choice of $\ga\in[0,1]$, $P_\ga$ is not linearizable. 
When $P_\ga $ is linearizable at zero, the maximal domain on which $P_\ga$ is conjugate to a rotation 
is called the \textit{Siegel disk} of $P_\ga$.

By a result of Ma\~n\'e \cite{Man87}, the orbit of the critical point is recurrent, and when $P_\ga$ 
is linearizable, the orbit of the critical point accumulates on the whole boundary of the Siegel disk, 
while when it is not linearizable, this orbit accumulates on zero.  
In either case, this allows us to construct natural invariant measures supported in the postcritical set: 
in the linearizable case, one takes the harmonic measure on the boundary of the Siegel disk, 
viewed from zero, while in the non-linearizable case, one takes the Dirac mass at the origin.  
It is worthwhile to point out that Buff and Ch\'eritat construct examples of Julia sets of positive 
Lebesgue measure in both linearizable and non-linearizable cases, so there are indeed physical 
measures of both types.

Theorem~\ref{T:unique-ergodicity} implies that there is no periodic point in $\pc(P_\ga)$, except 
possibly zero.
Here, we prove an interesting property of the dynamics of linearizable maps counterpart to the  \textit{small cycle property} of nonlinearizable quadratics obtain by Yoccoz \cite{Yoc95}.
That is, when $P_\ga$ is not linearizable at $0$, every neighborhood of $0$ contains  
infinitely many periodic cycles of $P_\ga$. 

\begin{thm}\label{T:small-cycle-property}
When $P_\ga$ is linearizable at $0$ for some $\ga\in \irr$, every neighborhood of the Siegel disk of $P_\ga$ 
contains infinitely many periodic cycles of $P_\ga$. 
\end{thm}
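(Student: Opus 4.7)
The plan is to construct, for every sufficiently large $n$, a periodic cycle $C_n$ of $P_\ga$ contained in an ever-shrinking neighborhood of the Siegel disk $\gD$, with pairwise distinct periods tending to infinity. Since $P_\ga$ is linearizable, $\ga$ is Brjuno, and Brjuno-ness is preserved by the modified Gauss map governing the Inou-Shishikura renormalization; hence each $f_n := \rr^n P_\ga$ (defined for all $n$ because $\ga \in \irr$) is itself linearizable, with its own Siegel disk $\gD_n$ at $0$ of irrational rotation number $\ga_n$.

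At each level $n$, I would first produce a non-trivial periodic cycle $D_n$ of $f_n$ of period dividing $a_{n+1}$, located just outside $\partial \gD_n$. The starting point is that $f_n^{a_{n+1}}$ is conjugate on $\gD_n$ (through the linearization) to the rigid rotation by $a_{n+1}\ga_n \in \mathbb{R}/\mathbb{Z}$, which sits within $|\ga_{n+1}|$ of $0$ modulo $1$; so $f_n^{a_{n+1}}$ is very close to the identity on compact subsets of $\gD_n$, yet has no fixed points in $\gD_n \setminus \{0\}$. I would then apply the argument principle to $f_n^{a_{n+1}} - \mathrm{id}$ on a Jordan curve $\gga_n$ placed immediately outside $\partial \gD_n$. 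Using the near-parabolic structure of $f_n$ in that region together with the optimal Fatou-coordinate estimates of \cite{Ch10-II}, one shows that the winding number of $f_n^{a_{n+1}} - \mathrm{id}$ along $\gga_n$ strictly exceeds $1$; the extra zeros it encloses outside $\gD_n$ then form the cycle $D_n$.

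Next, I would transport $D_n$ back to a cycle $C_n$ of $P_\ga$ through the renormalization change of coordinates $\Phi_n$ identifying $f_n$ with a first return map of $P_\ga$ on a sector at combinatorial depth $n$. Because that first-return time is of order $q_n$ while the period of $D_n$ is of order $a_{n+1}$, the period of $C_n$ is of order $q_{n+1}$, so the cycles $C_n$ are pairwise distinct. Moreover, the image $\Phi_n(\mathrm{dom}\, f_n)$ is contained in an annular region around $\partial \gD$ whose Hausdorff distance to $\partial \gD$ tends to $0$ as $n \to \infty$, by the geometric control on the renormalization tower built in \cite{Ch10-I, Ch10-II}; hence $C_n$ eventually enters any preassigned open neighborhood of $\bar \gD$, which completes the argument.

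The main obstacle is the winding-number estimate on the outer curve $\gga_n$. The inner estimate (smallness of $f_n^{a_{n+1}} - \mathrm{id}$ on loops inside $\gD_n$) is essentially free from the linearization, but the outer estimate requires uniform control along the entire renormalization tower, and this is where the arithmetic constraint $\ga \in \irr$ and the sharp changes-of-coordinate bounds of \cite{Ch10-II} become indispensable. One also needs $\gga_n$ to be placed at a controlled distance from $\partial \gD_n$ so that, after transport by $\Phi_n$, the cycle $C_n$ actually lands in arbitrarily thin neighborhoods of $\partial \gD$; any loss of uniformity in the Inou-Shishikura class as $n$ grows would destroy both the winding-number bound and the required localization, which is precisely why the high-type hypothesis is used.
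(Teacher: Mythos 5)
Your overall architecture (find a periodic cycle of each renormalization $f_n$ near its Siegel disk, then transport it down the tower) is the right shape, but the proposal has a genuine gap at its core: the existence of the cycle $D_n$. You reduce it to showing that the winding number of $f_n\co{a_{n+1}}-\mathrm{id}$ along a curve just outside $\partial \gD(f_n)$ exceeds $1$, and then simply assert that "one shows" this from the near-parabolic structure. No such winding-number estimate is available in the references, and proving it would be essentially as hard as the theorem itself. The paper avoids this entirely: every $h\in\IS_\ga$ with $\ga\neq 0$ small comes equipped with the non-zero fixed point $\gs_h$ that split off from the parabolic point (with $|\gs_h|\asymp\ga$ by Lemma~\ref{L:size-of-sigma}), so each renormalization $f_m$ has the fixed point $\gs_m\in\partial\C{P}_m$ for free, and $\gY_m(\gs_m)$ is automatically a periodic point of $f_0$ of period $q_m$ by the conjugacy in Lemma~\ref{L:conjugacy}. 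No topological degree argument is needed, and the periods $q_m$ are distinct, so the only remaining work is localization.

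The localization step in your proposal is also not correct as stated. It is false that $\gY_n(\C{P}_n)$ (or the sectors $f_0\co{i}(I^n)$) lie in a thin annulus around $\partial\gD(f_0)$: these sectors have $0$ on their boundary and stretch out to the postcritical set, sweeping across the whole Siegel disk (indeed $\cap_n\gU^n=\gD(f_0)\cup\pc(f_0)$ by Proposition~\ref{P:Siegel-disk-enclosed}). Likewise one cannot expect $D_n$ to sit "just outside $\partial\gD(f_n)$" uniformly in $n$. What the paper actually does is estimate the height of the lift $\C{O}_{m-1}$ of the orbit of $\gs_m$ in the Fatou coordinate of $f_{m-1}$, namely $\Im\C{O}_{m-1}\approx\frac{1}{2\pi}\log\frac{1}{\ga_m}$, compare it with the height of $\partial\tilde{\gD}(f_{m-1})$, and then run the dichotomy between properties $\mathscr{A}$ and $\mathscr{B}$ (using Corollary~\ref{C:tips-near-Siegel-disk} in one case and the definition of $\mathscr{B}$ in the other), followed by the uniform hyperbolic contraction of the maps $\gc_{j,l}$ (Lemma~\ref{L:contraction}) to push the cycle into $B_{\gd}(\gD(f_0))$; even then the conclusion is only obtained along an infinite subsequence of levels, which suffices. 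Both the existence mechanism and the quantitative localization therefore need to be replaced before your outline becomes a proof.
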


Our results can also be seen as providing insight onto the dynamics of so-called ``hedgehogs''.
Let $f$ be a holomorphic germ defined on a neighborhood of zero with $f(0)=0$ and $f'(0)=\ea$ for an irrational $\ga$. 
Perez-Marco in \cite{PM97} introduced local invariant sets for $f$, called \textit{Siegel compacta}, that were used to 
study the local dynamics of $f$ at zero.
More precisely, he proves that for $f$ as above and a Jordan domain $U\ni 0$ where $f$ and its inverse 
are one-to-one on a neighborhood of the closure of $U$, there exists a unique compact 
connected invariant set $K$ with $0\in K\ci \overline{U}$ and $K\cap \partial U\neq \emptyset$. 
A distortion property of the iterates of $f$ on $U$ is translated to the property of unique ergodicity of  
$f:\partial K\to \partial K$, which was conjectured to hold in this full generality.
When $f$ is not linearizable at zero, $K$ has no interior ($\partial K=K$) and is called the \textit{hedgehog} 
of $f$ on $U$. 
We show that the boundary of every Siegel-compacta of $P_\ga$ must be either an invariant curve in the Siegel disk of $P_\ga$ or is contained in $\pc(P_\ga)$. 
Hence, we can confirm the following partial result on this conjecture.
\footnote{The conjecture was announced in several workshops around 1995. 
It has been mistakenly reported in \cite[Section 3.4]{Yoc99} that this conjecture has been proved 
in full generality by R.~Perez-Marco. 
However, our communication with him confirms that there was never any proof of this statement.}

\begin{thm}\label{C:hedgehog-dynamics}
For every $\ga\in \irr$ and every Siegel compacta $K$ of $P_\ga$, the map 
$P_\ga:\partial K\to \partial K$ is uniquely ergodic.
\end{thm}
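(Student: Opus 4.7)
\medskip

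The strategy is to reduce \refT{C:hedgehog-dynamics} to \refT{T:unique-ergodicity} by establishing the dichotomy alluded to just before the statement: for any Siegel compactum $K$ of $P_\ga$, either (i) $K\ci \Delta$ and $\partial K$ is an invariant Jordan curve inside the Siegel disk $\Delta$, or (ii) $\partial K\ci \pc(P_\ga)$. The theorem then follows immediately in each case. In case (i), the linearizing coordinate $\phi\colon \Delta\to \set{|w|<r}$ conjugates $P_\ga|\Delta$ to $R_\ga(w)=\ega w$, and $\phi(K)$ is a compact, connected, $R_\ga$-invariant subset of the disk containing $0$; by standard arguments such a set must be a round closed disk $\set{|w|\le\gr}$, so $P_\ga|\partial K$ is topologically conjugate to an irrational rotation on a circle, which is uniquely ergodic. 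In case (ii), $\partial K$ is a non-empty, compact, $P_\ga$-invariant subset of $\pc(P_\ga)$; any $P_\ga$-invariant Borel probability on $\partial K$ (one exists by Krylov--Bogolyubov on $P_\ga|\partial K$) is simultaneously a $P_\ga$-invariant probability on $\pc(P_\ga)$, and hence, by \refT{T:unique-ergodicity}, the unique such measure. In particular the invariant probability on $\partial K$ is unique.

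The heart of the argument is the dichotomy itself. I would first show $\partial K\setminus \pc(P_\ga)\ci \Delta$. Given $p\in \partial K\setminus\pc(P_\ga)$, a disk $V\ni p$ is disjoint from the postcritical set, so every branch of $P_\ga^{-n}$ extends univalently across $V$ for every $n\ge 0$. Since $\partial K$ is a compact $P_\ga$-invariant set and the rotation number at the fixed point is irrational, $p$ admits arbitrarily large first returns $n_k$ to $V$. Following Perez-Marco's local linearization technique, the Koebe distortion theorem yields a normal-family limit of the univalent iterates $P_\ga^{n_k}$ on a neighborhood of $p$; a non-trivial such limit commutes with $P_\ga$ and generates a holomorphic vector field near $p$ whose flow commutes with $P_\ga$, placing $p$ in a rotation domain of $P_\ga$. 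Univalence of $P_\ga$ on the domain $U$ of the Siegel compactum forces this rotation domain to coincide with $\Delta$, so $p\in \Delta$.

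It remains to exclude mixing, i.e., to show that if $\partial K\i \Delta\neq \emptyset$ then $\partial K\ci \Delta$. The key input is that in linearizing coordinates, $\phi(K\cap\Delta)$ is a relatively closed, connected, $R_\ga$-invariant subset of the disk containing $0$, which, together with the compactness and connectedness of $K$ in $\cc$, forces $\phi(K\cap\Delta)$ to be a round closed disk $\set{|w|\le\gr}$ strictly inside $\set{|w|<r}$. The boundary circle $C:=\phi^{-1}(\set{|w|=\gr})\ci \Delta$ is then an invariant Jordan curve, and by simple connectedness of $\Delta$ it bounds a topological disk $D$ with $\overline{D}\ci \Delta$; by connectedness and compactness of $K$ we conclude $K=\overline{D}\ci \Delta$, which is case (i). Otherwise $\partial K\i \Delta=\emptyset$ and the first step gives case (ii). I expect the hardest step to be the first one --- upgrading the purely local normal-family limit around a point $p\in\partial K\setminus\pc(P_\ga)$ into a genuine Siegel-disk structure for the global map $P_\ga$. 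The global renormalization picture developed in \cite{Ch10-I} and the sharp Fatou-coordinate estimates of \cite{Ch10-II} should be essential in making this step effective throughout the class $\irr$.
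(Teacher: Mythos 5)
Your reduction of the theorem to the dichotomy (either $\partial K$ is an invariant curve in the Siegel disk, or $\partial K\ci \pc(P_\ga)$) and then to Theorem~\ref{T:unique-ergodicity} is exactly the paper's strategy: this dichotomy is Proposition~\ref{P:Hedgehogs-vs-postcritical}, and your two-case deduction of unique ergodicity from it (conjugacy to an irrational circle rotation in the Siegel case; uniqueness of the invariant measure pushed into $\pc(P_\ga)$ in the other case) is sound. The problem is that the dichotomy is the entire content of the theorem, and your proposed proof of it does not go through. First, your argument needs the point $p\in\partial K\setminus\pc(P_\ga)$ to have ``arbitrarily large first returns to $V$,'' i.e.\ to be recurrent; an arbitrary point of a compact invariant set need not be recurrent, and Poincar\'e recurrence only gives this for $\mu$-a.e.\ point, which is useless here since you do not yet know anything about invariant measures on $\partial K$. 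Second, and more seriously, the step from a normal-family limit of return maps to a holomorphic vector field commuting with $P_\ga$ and thence to ``$p$ lies in a rotation domain'' is precisely the part of hedgehog theory that has never been carried out: if it worked it would prove Perez-Marco's conjecture for all irrationals by purely local means, and the paper's footnote records that no such proof exists. You acknowledge this gap yourself in the last sentence, but acknowledging it does not close it — as written, the crucial inclusion $\partial K\setminus\pc(P_\ga)\ci\gD(P_\ga)$ is unproved.

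For contrast, the paper's proof of the dichotomy is global and combinatorial rather than local. One shows (Lemma~\ref{L:hedgehogs-postcritical}) that for large $n$ the compactum $K$ is disjoint from the closure of $\C{B}^n$, the union of the forward images of the ``critical sectors'' $\gY_n(\C{C}_n^{-k_n})$: indeed $P_\ga$ is injective on a neighborhood of $K$, so $K$ stays a definite distance $\gd$ from the critical point, while by Lemma~\ref{L:critical-puzzles-and-returns} some iterate of each such sector lands on the critical point with diameter $\to 0$ and the return $f_0\co{b_n}$ is within $\gd/3$ of the identity there — so if $K$ met any of these pieces, invariance would force a point of $K$ into the $2\gd/3$-ball around the critical point. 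Since $\partial\gU^n\ci\overline{\C{B}^n}$ (Lemma~\ref{L:Upsilon-boundary}) and $K$ is connected with $0\in K$, this traps $K$ inside every $\gU^n$, and $\bigcap_n\gU^n=\gD(f_0)\cup\pc(f_0)$ by Proposition~\ref{P:Siegel-disk-enclosed}. If you want to salvage your write-up, you should replace the Perez-Marco normal-family paragraph by this renormalization argument (or an equivalent one); the surrounding reductions can stay as they are.
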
 

Beside the quadratic polynomials, all results of this paper also apply to maps in the Inou-Shishikura class 
(see Section~\ref{SS:IS-class} for the definition) with rotation numbers of high type. 
In particular, one infers the appropriately interpreted statements for a large class of rational maps. 

The result of Inou and Shishikura \cite{IS06} and the analytic technique introduced in \cite{Ch10-II} 
have led to recent major advances on the dynamics of quadratic polynomials, and their 
numerous applications are still being harvested. 
They have been used to confirm a fine relation between the sizes of the Siegel disks and 
the arithmetic of the rotation $\ga$ in \cite{CC13}, and have resulted in a breakthrough on the 
local connectivity of the Mandelbrot set in \cite{ChSh14}. 
Most of the current paper is devoted to analyzing the delicate relation between the arithmetic 
of $\ga$ and the geometry of the renormalization tower.
Indeed, once we carry out this analysis, the proofs of the above theorems only occupy 
two to three pages each. 
We expect our analysis of this interaction assist with answering the remaining questions concerning 
the dynamics of the quadratic polynomials $P_\ga$.
\renewcommand{\thethm}{\thesection.\arabic{thm}}
\section{Near parabolic renormalization and an invariant class}\label{sec:prelim} 
\subsection{Inou-Shishikura class}\label{SS:IS-class}
Consider the cubic polynomial $P(z)=z(1+z)^2$. 
We have $P(0)=0$ and $P'(0)=1$. 
Also, $P$ has a critical point at $\cp_P=-1/3$ which is mapped to the critical value at $\cv_P=-4/27$, and another critical point at $-1$ which is mapped to zero.

Consider the filled in ellipse 
\[E= \Big\{x+\B{i} y\in \cc \; \Big | \; (\frac{x+0.18}{1.24})^2+(\frac{y}{1.04})^2\leq 1 \Big\},\]
and let 
\begin{equation}\label{E:U}
U= g(\rs \setminus E), \text{ where } g(z)=-\frac{4z}{(1+z)^2}.
\end{equation}
The domain $U$ contains $0$ and $\cp_P$, but not the other critical point of $P$ at $-1$.

Following \cite{IS06}, we define the class of maps
\begin{displaymath}
\IS\!=\Big\{h=P\circ \vfi^{-1}\!\!:U_h \rightarrow \cc \;\Big|% 
\begin{array}{l} 
\text{$\vfi\colon U \ra U_h$ is univalent onto, $\vfi(0)=0$, $\vfi'(0)=1$,}\\ 
\text{and $\vfi$ has a quasi-conformal extension onto $\cc$.} 
%,\text{ and} \\ \textrm{$\phi^{-1}$ extends onto $\overline{U_f}$ as a continuous function.} 
\end{array}
\Big\}.
\end{displaymath}
Every map in $\IS$ has a fixed point of multiplier one at $0$, and a unique critical point at 
$\cp_h=\vfi(-1/3)\in U_h$ which is mapped to $\cv_h=-4/27$.
Elements of $\IS$ have the same covering structure as the one of $P$ on $U$. 

For $\ga\in \D{R}$, let $R_\ga$ denote the rotation of angle $\ga$ about zero; $R_\ga(z)=\ea z$.
By pre-composing the elements of $\IS$ with rotations $R_\ga$, $\ga\in \D{R}$, we define the class of maps   
\[\IS_\ga=\{ h \circ R_\ga \mid h\in \IS\}.\]
Let us also normalize the quadratic family to the form 
\[ Q_\ga(z)=\ea z+\frac{27}{16} e^{4\pi \ga \B{i}}z^2,\]
so that it has a fixed point of multiplier $\ea$ at zero, and its critical value is at $-4/27$.

Consider a holomorphic map $h\colon \!\!\Dom h \ra \cc$, where $\Dom h$ denotes the domain of definition 
(always assumed to be open) of $h$. 
Given a compact set $K\subset \Dom h$ and an $\eps>0$, a neighborhood of $h$ (in the compact-open topology) 
is defined as the set of holomorphic maps $g: \Dom g\to \D{C}$ such that $K \subset \Dom g$ and for all $z\in K$,  
$|g(z)-h(z)|<\eps$.
Then, a sequence $h_n:\Dom h_n\ra \cc$, $n=1,2,\dots$, \textit{converges} to $h$, if for every 
neighborhood of $h$ defined as above, $h_n$ is contained in that neighborhood for sufficiently large $n$. 
Note that the maps $h_n$ are not necessarily defined on the same set.

%As the class $\IS_A$ naturally embeds into the space of normalized univalent maps on the unit disk, 
%by Koebe distortion theorem, it is a pre-compact class in the above topology.

Every $h \in \IS_{\ga}$, with $h(z)=f_0(\ea z)$ for some $f_0\in \IS$ and $\ga\in \mathbb{R}$, fixes $0$ with multiplier  
$h'(0)=\ea$. 
Provided $\ga$ is small enough and non-zero, $h$ has a non-zero fixed point in $\Dom h$, denoted by $\sigma_h$, 
that has split from $0$ at $\ga=0$.
The $\sigma_h$ fixed point depends continuously on $f_0$ and $\ga$, with asymptotic expansion 
$\sigma_h=-4\pi \ga \B{i}/f_0''(0)+o(\ga)$, as $\ga$ tends to $0$. 
Clearly, $\sigma_h \ra 0$ as $\ga \ra 0$.  
Indeed, the choices of the domain $U$ and the polynomial $P$ guarantees (using the area theorem) that 
$f_0''(0)$ is uniformly away from $0$. 

\begin{lem}[\cite{IS06}]\label{L:bounds-on-second-derivative}
The set $\{h''(0)\mid h \in \IS\}$ is relatively compact in $\cc\setminus \{0\}$.
\end{lem}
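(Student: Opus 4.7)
I would first reduce the statement to a problem about normalized univalent maps on the fixed domain $U$. Write $h=P\circ \vfi^{-1}$ with $\vfi\colon U\to U_h$ univalent, $\vfi(0)=0$, $\vfi'(0)=1$. Series inversion gives $\vfi^{-1}(z)=z-\tfrac12\vfi''(0)\,z^2+O(z^3)$, and the polynomial expansion $P(w)=w+2w^2+w^3$ yields
\[
 h(z)=z+\Big(2-\tfrac12\vfi''(0)\Big)z^2+O(z^3),
\]
so $h''(0)=4-\vfi''(0)$. Thus the lemma is equivalent to showing that the set $A:=\{\vfi''(0):h=P\circ\vfi^{-1}\in\IS\}$ is bounded in $\cc$ and stays away from the value $4$.

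For boundedness of $A$, I would use that the class of univalent maps $\vfi\colon U\to \cc$ with $\vfi(0)=0$, $\vfi'(0)=1$ is a normal family: since $U$ contains some disk $\{|z|<r_0\}$, Bieberbach's inequality applied to $z\mapsto \vfi(r_0 z)/r_0$ gives $|\vfi''(0)|\le 4/r_0$, and Koebe's distortion theorem uniformly bounds $\vfi$ on compact subsets of $U$. Hurwitz's theorem, together with the normalization $(\vfi^*)'(0)=1\ne 0$, guarantees that any locally uniform limit $\vfi^*$ of admissible $\vfi_n$ is again univalent on $U$.

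To rule out that $h''(0)$ accumulates at $0$, I would argue by contradiction. Suppose $h_n''(0)\to 0$, i.e.\ $\vfi_n''(0)\to 4$. Extract a subsequence with $\vfi_n\to \vfi^*$ locally uniformly on $U$; by the previous paragraph $\vfi^*$ is univalent and $(\vfi^*)''(0)=4$, so $h^*:=P\circ (\vfi^*)^{-1}$ is a holomorphic germ with $h^*(0)=0$, $(h^*)'(0)=1$, and $(h^*)''(0)=0$, i.e.\ a degenerate (non-simple) parabolic fixed point at $0$. Now I would leverage the quasi-conformal extension requirement in the definition of $\IS$: in the construction of Inou and Shishikura the dilatation of the extension of $\vfi$ is in fact uniformly bounded across the class, which upgrades the normal-family limit to a genuine element $h^*\in \overline{\IS}$ carrying the same simple parabolic structure at $0$ as the $h_n$; this is incompatible with $(h^*)''(0)=0$, giving the contradiction.

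The main obstacle is the second half: the actual content is the lower bound on $|h''(0)|$, and within that, the delicate step is not the compactness of $\{\vfi_n\}$ (which is classical) but promoting this to an ambient compactness statement for $\IS$ that remembers the qualitative shape of the parabolic germ at $0$. That promotion depends on having uniform control over the quasi-conformal extensions of $\vfi$, which is precisely the property the Inou--Shishikura class is engineered to supply, and which I expect to be the point where the proof appeals specifically to the construction of the domain $U$ and the model polynomial $P$.
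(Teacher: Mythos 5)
Your reduction is correct and is the right first move: since $P(w)=w+2w^2+w^3$, one gets $h''(0)=4-\vfi''(0)$, so the lemma amounts to showing $\{\vfi''(0)\}$ is bounded and stays uniformly away from $4$. The upper bound via Bieberbach/Koebe on a disk inside $U$ is fine. Note that the paper does not prove this lemma at all -- it is quoted from Inou--Shishikura -- but the surrounding text tells you what the actual mechanism is: ``the choices of the domain $U$ and the polynomial $P$ guarantee (using the area theorem) that $f_0''(0)$ is uniformly away from $0$.''

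The lower bound is where your argument has a genuine gap. Your compactness step only shows that if $\vfi_n''(0)\to 4$ then some univalent limit $\vfi^*$ on $U$ has $(\vfi^*)''(0)=4$; it does not explain why such a $\vfi^*$ cannot exist, which is the entire content of the lemma. The appeal to quasiconformal extensions cannot close this: the definition of $\IS$ only requires the existence of \emph{some} qc extension, with no uniform bound on dilatation, so the closure of $\IS$ in the compact-open topology is exactly $\{P\circ\vfi^{-1}:\vfi \text{ univalent on } U,\ \vfi(0)=0,\ \vfi'(0)=1\}$, and the claimed ``incompatibility with the simple parabolic structure'' is just a restatement of the conclusion. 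The correct argument is a concrete coefficient estimate tied to the specific domain $U$: if $\psi:\D\to U$ is the Riemann map with $\psi(0)=0$ and $\psi'(0)=r>0$, then $g=(\vfi\circ\psi)/r$ lies in the class $S$, and $\vfi''(0)=g''(0)/r-\psi''(0)/r^2$ with $|g''(0)|\le 4$ by the area theorem (Bieberbach). Hence $\vfi''(0)$ ranges over the closed disk of radius $4/r$ centered at $-\psi''(0)/r^2$, and the lemma holds if and only if the point $4$ lies outside this disk -- a numerical verification that genuinely uses the explicit construction of $U$ from the ellipse $E$. For a general domain $U$ the statement is simply false, which is why no soft compactness argument can succeed.
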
  

We summarize the basic local dynamics of maps in $\IS_{\ga}$, for small $\ga$, in the following theorem. 
See Figure~\ref{F:petal}.
\begin{thm}[Inou--Shishikura \cite{IS06}]\label{T:Ino-Shi1} 
There exists $\ga_* >0$ such that for every $h\colon U_h \ra \cc$ in $\IS_\ga \cup \{Q_\ga\}$ with $\ga \in (0,\ga_*]$,
there exist a Jordan domain $\p_h \subset U_h$ and a univalent map $\Phi_h\colon \p_h \ra \cc$ 
satisfying the following properties:
\begin{itemize}  
\item[(a)] The domain $\p_h$ is bounded by piecewise smooth curves and is compactly contained in $U_h$. 
Moreover, $\cp_h$, $0$, and $\sigma_h$ belong to the boundary of $\p_h$, while $\cv_h$ belongs to the interior of $\p_h$.
\item[(b)] $\gF_h(\p_h)$ contains the set $\{w\in \mathbb{C}\mid \Re w \in (0,1]\}$. 
\item[(c)] $\Im \Phi_h(z) \ra +\infty$ when $z\in \p_h\ra 0$, and $\Im \Phi_h(z)\ra-\infty$ when 
$z \in \p_h \ra \sigma_h$.
\item[(d)] $\Phi_h$ satisfies the Abel functional equation on $\p_h$, that is, 
\[\Phi_h(h(z))=\Phi_h(z)+1, \text{ whenever $z$ and $h(z)$ belong to $\p_h$}.\] 
\item[(e)] The map $\Phi_h$ satisfying the above properties is unique, once normalized by setting 
$\Phi_h(\cp_h)=0$. 
Moreover, the normalized map $\Phi_h$ depends continuously on $h$.     
\end{itemize}
\end{thm}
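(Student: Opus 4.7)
The plan is to construct $\Phi_h$ by perturbation from the parabolic limit $\ga=0$ and to extract $\ga_*$ from a compactness-based uniform estimate over $\IS$, with a parallel direct check for the quadratic family. The crucial enabling input is \refL{L:bounds-on-second-derivative}: since $h''(0)$ is bounded away from $0$ and $\infty$ over $\IS$, every parabolic limit at $\ga=0$ is a simple parabolic point with one attracting and one repelling direction, and all quantitative constants in the associated Leau--Fatou normal form depend continuously on $h$. The analogous computation $Q_0''(0)=27/8$ covers $\{Q_\ga\}$ separately.

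At the limit $\ga=0$, I would invoke the classical Fatou flower theorem: conjugating $h$ by $w=-2/(h''(0)z)$ sends $0$ to $\infty$ and brings $h$ into the form $w\mapsto w+1+O(1/w)$ on a large right half-plane, yielding a Fatou coordinate on an attracting petal (and a matching one on a repelling petal). The critical orbit enters the attracting petal in a bounded number of steps by compactness of $\IS$, so one can carve a Jordan fundamental domain $\p_h$ with $\cp_h$ on its boundary, $\cv_h=h(\cp_h)$ in its interior, and $\Phi_h(\p_h)\supset \{w\in\cc\mid 0<\Re w\le 1\}$; the normalisation $\Phi_h(\cp_h)=0$ then pins down $\Phi_h$. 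For $\ga\in (0,\ga_*]$ the parabolic point splits into $0$ and $\sigma_h$ (with $\sigma_h\to 0$ as $\ga\to 0$), and the same construction survives in a coordinate that straightens both fixed points: two transverse arcs anchored at $0$ and $\sigma_h$, together with their $h$-images, bound a Jordan domain $\p_h$ on which $h$ is injective, and the Fatou coordinate is produced by iterating the near-translation in that chart. The Abel equation is then built in by construction, and (c) reflects the fact that approaching either fixed point along $\p_h$ forces the orbit length in the chart to blow up.

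The principal obstacle is proving uniformity of $\ga_*$ while simultaneously enforcing the combinatorial requirements that $\cp_h\in\partial \p_h$, that $\cv_h$ remain interior, and that $\Phi_h(\p_h)$ cover $\{0<\Re w\le 1\}$. I would handle this by a normal-families contradiction: were some sequence $h_n$ with $\ga_n\to 0$ to witness a failure of any of the three, a subsequential limit of $(h_n,\Phi_{h_n})$ would, by uniqueness of the parabolic Fatou coordinate up to an additive constant, agree on compacta with the coordinate constructed for the parabolic limit, contradicting the failure for large $n$. The uniqueness clause (e) then follows from the Abel equation (two normalised coordinates on the connected domain $\p_h$ differ by a translation, which is killed by the common value at $\cp_h$), and continuous dependence on $h$ is automatic because every ingredient of the construction depends continuously on $h$. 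The quadratic case $Q_\ga$ is handled identically, appealing to $Q_0''(0)\neq 0$ in place of \refL{L:bounds-on-second-derivative}.
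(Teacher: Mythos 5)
This theorem is not proved in the paper at all: it is quoted from Inou--Shishikura, and the text immediately following it says only that the listed properties ``follow from Theorem 2.1 as well as Main Theorems 1 and 3 in \cite{IS06}.'' So there is no in-paper argument to compare against, and what you have written must be judged as a from-scratch proof sketch of a result whose actual proof occupies a large part of \cite{IS06}. As such, your outline correctly identifies the two structural inputs (non-degeneracy of the parabolic point via \refL{L:bounds-on-second-derivative}, and compactness of $\IS$), but the two steps you describe as routine are precisely where the real work lies, and as stated they do not go through.

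First, ``the Fatou coordinate is produced by iterating the near-translation in that chart'' does not yield the object the theorem asserts. Iterating $w\mapsto w+1+o(1)$ and taking limits of $F_h^{\circ n}(w)-n$ produces an Abel coordinate only in the attracting direction, on a region invariant under forward iteration; it does not produce a \emph{single univalent} map on a croissant $\p_h$ joining $0$ to $\sigma_h$ whose image contains an entire bi-infinite vertical strip (property (b) together with (c)). The known constructions achieve this by quasiconformal interpolation between $w$ and $F_h(w)$ on a fundamental strip followed by the measurable Riemann mapping theorem (or by a careful contour-integral construction), and verifying univalence and the geometry of $\Phi_h(\p_h)$ is the technical core of \cite{IS06}. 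Second, your normal-families contradiction is both circular and too weak: to identify a subsequential limit of $\Phi_{h_n}$ with the parabolic Fatou coordinate of the limit map you must already know locally uniform convergence of perturbed Fatou coordinates as $\ga_n\to 0$, which is essentially statement (e); and even granting that, the convergence holds only on compact subsets of the attracting and repelling petals, whereas the domains $\p_{h_n}$ degenerate (their images are strips of width $\ga_n^{-1}-\Bk\to\infty$), so a failure of (b), or a drift of $\cp_{h_n}$ off $\partial\p_{h_n}$, occurring in the ``gate'' region between $0$ and $\sigma_{h_n}$ would escape every compact set and would not be detected in the limit. Finally, the location of $\cp_h$ on $\partial\p_h$ with $\cv_h$ interior is not a consequence of $h''(0)\neq 0$ alone; it uses the specific covering structure of $P$ on $U$ shared by all elements of $\IS$, which your argument never invokes. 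Given that this is an external black-box theorem for the present paper, citing \cite{IS06} is the appropriate ``proof''; reproving it would require substantially more than the sketch you give.
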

In the above theorem and in the following statements, when $h=Q_\ga$, $U_h$ is taken as $\D{C}$.
 
The class $\IS$ is denoted by $\mathcal{F}_1$ in \cite{IS06}. 
The properties listed in the above theorem follow from Theorem 2.1 as well as Main Theorems 1 and 3 in \cite{IS06}.
We state some crucial geometric properties of the domains $\p_h$ in the following proposition. 
See \cite[Prop.~1.4]{Ch10-I}  or \cite[Prop.~12]{BC12} for different proofs of it. 

\begin{propo}[\cite{Ch10-I}, \cite{BC12}] \label{P:petal-geometry}
There exist $\ga_*'>0$ and positive integers $\B{k}, \B{k}'$ such that for every map $h\colon U_h \ra \cc$ in $\IS_\ga\cup\{ Q_\ga\}$ 
with $\ga \in (0,\ga_*']$, the domain $\p_h \subset U_h$ in the above theorem may be chosen to satisfy the additional properties:
\begin{itemize}  
\item[(a)] there exists a continuous branch of argument defined on $\p_h$ such that 
\[\max_{w,w'\in \p_h} |\arg(w)-\arg(w')|\leq 2 \pi \B{k}',\]
\item[(b)] $\Phi_h(\p_h)=\{w \in \cc \mid 0 < \Re(w) < \ga^{-1} -\Bk\}$. 
\end{itemize}
\end{propo}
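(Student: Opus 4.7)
The strategy is to start from the petal and Fatou coordinate provided by \refT{T:Ino-Shi1} and refine them using the Abel functional equation, first extending $\gF_h$ univalently as far as possible, then cutting out the subregion whose image is a clean vertical strip.

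First I would maximally extend the Fatou coordinate $\gF_h$ of \refT{T:Ino-Shi1}. For any $z\in \p_h$ with $\gF_h(z)$ close to the right edge of the current image, the point $h(z)$ still belongs to $\Dom h$, and the Abel equation $\gF_h(h(z)) = \gF_h(z)+1$ gives a consistent enlargement of the domain. Iterating this, the image under the extended $\gF_h$ grows to a region of the form $\{0<\Re w<A_h\}$ for some $A_h>0$, cut off by the first instant at which forward iteration by $h$ exits $U_h$, encounters a critical point, or loses injectivity. Univalence of the extension is preserved because in the coordinate the dynamics is translation by $1$, hence globally injective on each horizontal strip as long as the pulled-back branches are locally injective.

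The quantitative heart of the matter is the estimate $A_h = \ga^{-1}-O(1)$, with a universal constant $C$, for every $h\in \IS_\ga\cup\{Q_\ga\}$ and every $\ga$ in a suitable interval $(0,\ga_*']$. This is a near-parabolic flow statement: since $\gF_h$ conjugates $h$ to translation by $1$, the strip width equals the number of iterates needed to travel from a neighborhood of $0$ to a neighborhood of $\gs_h$ along an orbit through $\cv_h$. Linearising $h$ at its two fixed points $0$ and $\gs_h$, and using the asymptotic $\gs_h\sim -4\pi\ga\B{i}/h''(0)$ together with \refL{L:bounds-on-second-derivative}, gives an upper bound on that travel time of the form $\ga^{-1}+C$; the \emph{a priori} distortion and petal-size bounds from the Inou--Shishikura theorem supply the matching lower bound $\ga^{-1}-C$. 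Choosing $\Bk$ to be any integer strictly greater than $C$ and setting $\p_h := \gF_h^{-1}\bigl(\{w\in\cc\mid 0<\Re w<\ga^{-1}-\Bk\}\bigr)$ then yields part (b).

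For part (a), I would argue by compactness and continuity. After pre-composition with rotations the class $\IS$ is a normal family, and as $\ga\to 0$ through $(0,\ga_*']$ any subsequential limit of $h\in\IS_\ga$ is an $h_0\in \IS$ with a parabolic fixed point of multiplier $1$ at $0$. The refined petal $\p_h$ just constructed then Hausdorff-converges, after appropriate rescaling, to an attracting parabolic petal of $h_0$ at $0$; the latter is a Jordan domain contained in a bounded-angle sector at $0$, and therefore has argument variation bounded by a universal $2\pi\B{k}'$. For $\ga>0$ small this bound persists by the continuous dependence of $\gF_h$ on $h$ from \refT{T:Ino-Shi1}(e). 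The principal obstacle is making both the width asymptotic $A_h=\ga^{-1}-O(1)$ and this Hausdorff convergence uniform across the non-compact parameter range $\ga\in(0,\ga_*']$, where the strip width blows up as $\ga\to 0$; this is precisely what the near-parabolic renormalisation analysis of \cite{Ch10-I} and \cite{BC12} provides, and once it is in place (a) and (b) follow along the lines outlined.
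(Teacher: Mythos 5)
The paper itself does not prove this proposition; it is imported verbatim from \cite[Prop.~1.4]{Ch10-I} and \cite[Prop.~12]{BC12}, so your proposal has to be judged on its own merits rather than against an internal argument. For part (b) your outline is in the right spirit: one does extend the Fatou coordinate along orbits via the Abel relation, and the maximal strip width is indeed $\ga^{-1}-O(1)$. But the mechanism you offer for the width estimate is not the one that works. The uniform bound does not come from ``linearising $h$ at its two fixed points'' (neither fixed point is linearizable in any uniform way as $\ga\to 0$, and the travel-time heuristic gives no control on the error without further input); it comes from the explicit covering $T_h(w)=\gs_h/(1-e^{-2\pi \ga w\textbf{i}})$, which commutes with translation by $1/\ga$, together with the facts that the lift $F_h$ of $h$ under $T_h$ is uniformly close to $w\mapsto w+1$ on a vertical strip of width comparable to $1/\ga$ and that the conjugacy $L_h$ to the exact translation displaces points by at most $O(\log(1+1/\ga))$ (this is exactly Proposition~\ref{P:global-estimate} of the present paper). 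Without identifying this input, your ``matching lower bound $\ga^{-1}-C$'' is an assertion, not an argument.

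The genuine gap is in part (a). A compactness/Hausdorff-convergence argument cannot deliver the uniform bound $2\pi\B{k}'$ on the argument variation: as $\ga\to 0^+$ the strip width $\ga^{-1}-\Bk$ diverges and $\gs_h\to 0$, so the petals $\p_h$ degenerate rather than converge to ``an attracting parabolic petal'' (any limit is a union of attracting and repelling petals of the parabolic limit map), and the total argument variation over $\p_h$ is not upper semicontinuous under this degeneration. The whole danger to be excluded is precisely that the croissant $\p_h$ might wind around $0$ an unbounded number of times as $\ga\to 0$, and no soft limiting argument over the non-compact range $\ga\in(0,\ga_*']$ rules this out; one must show quantitatively that $T_h\circ L_h$ of the vertical strip has bounded winding, which again rests on the $O(\log(1/\ga))$ control of $L_h$ played against the explicit, boundedly spiraling images of vertical lines under $T_h$. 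Your closing remark that ``this is precisely what the analysis of \cite{Ch10-I} and \cite{BC12} provides'' concedes the difficulty but is circular here, since the proposition being proved is exactly the statement quoted from those papers.
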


The map $\Phi_h: \mathcal{P}_h\to \cc$ obtained in the above theorem is called the \textit{perturbed Fatou coordinate}, 
or the \textit{Fatou coordinate} for short, of $h$.
In this paper, by this coordinate we mean the map $\gF_h:\p_h\to \D{C}$, where $\p_h$ satisfies the extra properties in the 
above proposition. 
See Figure~\ref{F:petal}.

\begin{figure}[ht]
\begin{center}
\begin{pspicture}(9,5.5)
\rput(4.5,2.75){\includegraphics[width=8cm]{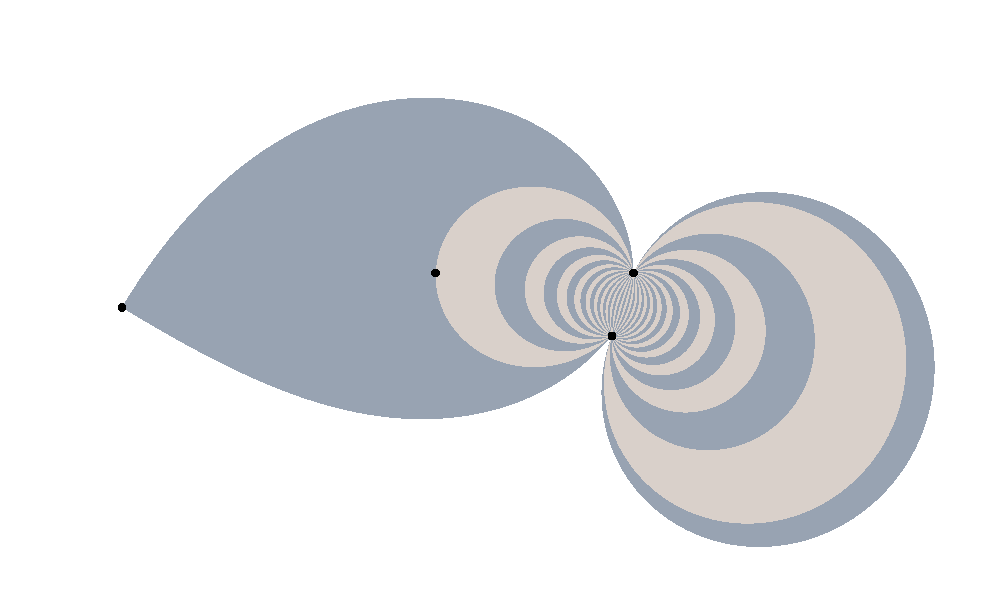}}
\rput(2.0,2.65){$\cp_h$}
\rput(3.65,2.95){$\cv_h$}
\rput(5.65,3.4){$0$}
\psline(5.2,2.05)(5.36,2.4)
\rput(5.1,1.9){$\sigma_h$}
\end{pspicture}
\end{center}
\caption{The domain $\C{P}_h$ and the special points associated to some $h\in\IS_\ga$. 
The map $\Phi_h$ sends each colored croissant to an infinite vertical strip of width one.}
\label{F:petal}
\end{figure}

%%%%%%%%%%%%%%%%%%%%%%%%%%%%%%%%%%%%%%%%%%%%%%%%%%%%%%%%%%

\subsection{Near parabolic renormalization}\label{SS:renormalization-def}
Let $h\colon U_h \ra \cc$ be in $\IS_{\ga} \cup \{Q_\ga\}$, with $\ga \in (0,\ga_*']$, and let $\Phi_h\colon \p_h \ra \cc$ 
denote the Fatou coordinate of $h$ defined in the previous section. 
Define 
\begin{equation}\label{E:sector-def}
\begin{gathered}
\C{C}_h=\{z\in \p_h : 1/2 \leq \Re(\Phi_h(z)) \leq 3/2 \: ,\: -2< \Im \Phi_h(z) \leq 2 \}, \\
\Csh_h=\{z\in \p_h : 1/2 \leq \Re(\Phi_h(z)) \leq 3/2 \: , \: 2\leq \Im \Phi_h(z) \}.
\end{gathered}
\end{equation}
By definition, $\cv_h\in \darun(\C{C}_h)$ and $0\in \partial(\Csh_h)$. 
We are also assuming that $\ga$ is small enough so that $1/\ga -\B{k}\geq 3/2$ 
(see Equation~\eqref{E:high-type-restriction}).  
See Figure~\ref{F:sectorpix}.

Assume for a moment that there exists a positive integer $k_h$, depending on $h$, such that the following four properties hold.
\begin{itemize}
\item For every integer $k$, with $1 \leq k \leq k_h$, there exists a unique connected component of $h^{-k}(\Csh_h)$ 
which is compactly contained in $\Dom h$ and contains $0$ on its boundary. We denote this component by 
$(\Csh_h)^{-k}$. 
\item For every integer $k$, with $1\leq  k \leq k_h$, there exists a unique connected component of $h^{-k}(\C{C}_h)$ which has 
non-empty intersection with $(\Csh_h)^{-k}$, and is compactly contained in $\Dom h$. 
This component is denoted by $\C{C}_h^{-k}$. 
\item The sets $\C{C}_h^{-k_h}$ and $(\Csh_h)^{-k_h}$ are contained in\footnote{The notation 
$\langle r \rangle$ stands for the closest integer to $r\in \D{R}$.} 
\[\{z\in\p_h \mid  \frac{1}{2}< \Re \Phi_h(z) < \langle \frac{1}{\ga} \rangle -\Bk-\frac{1}{2}\}.\] 
\item The maps $h: \C{C}_h^{-k}\to \C{C}_h^{-k+1}$, for $2\leq k \leq k_h$, and $h: (\Csh_h)^{-k}\to (\Csh_h)^{-k+1}$, for 
$1\leq k \leq k_h$, are univalent. 
The map $h: \C{C}_h^{-1} \to \C{C}_h$ is a degree two proper branched covering.
\end{itemize}
Let $k_h$ denote the smallest positive integer for which the above conditions hold, and define
\[S_h=\C{C}_h^{-k_h}\cup(\Csh_h)^{-k_h}.\]
\begin{figure}[ht]
\begin{center}
 \begin{pspicture}(-.5,1.2)(11.4,9)
%\psgrid
\epsfxsize=6.3cm
\rput(3.5,5.9){\epsfbox{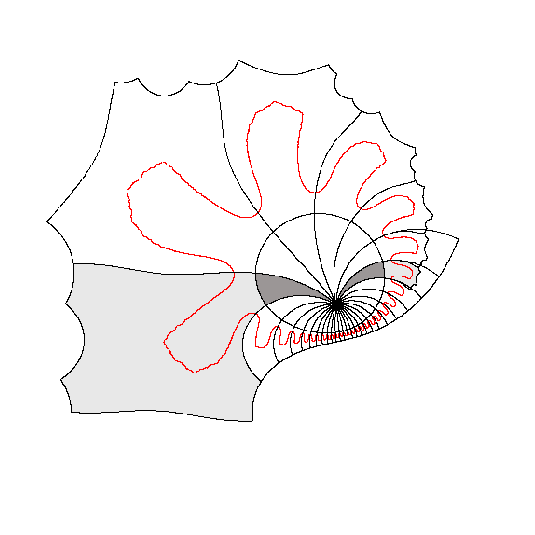}}  
  \psset{xunit=1cm}
  \psset{yunit=1cm}
  % \psframe(0,0)(12,6)
    \pscurve[linewidth=.6pt,linestyle=dashed,linecolor=black]{->}(5.3,5.1)(5.3,5.5)(5.2,5.8)
    \pscurve[linewidth=.6pt,linestyle=dashed,linecolor=black]{->}(2.2,3.7)(2.3,4.1)(2.2,4.5)
    \pscurve[linewidth=.6pt,linestyle=dashed,linecolor=black]{->}(.7,6.4)(2,6.4)(3.2,6.2)(3.6,5.6)
    \rput(5.3,4.7){$S_h$}
    \rput(2.2,3.4){$\C{C}_h^{-1}$}
    \rput(.1,6.4){$(\Csh_h)^{-1}$}
    \psdots[dotsize=2pt](2.3,5.04)(3.4,4.97)
    \rput(2.,5){\small{$\cp_h$}}
    \rput(3.33,4.77){\small{$\cv_h$}}
\newgray{Lgray}{.99}
\newgray{LLgray}{.88}
\newgray{LLLgray}{.70}
%\pspolygon[linecolor=Lgray,fillstyle=solid,fillcolor=Lgray](7.6,7.3)(7.6,4.9)(10.8,4.9)(10.8,7.3)
\psdots(7.6,5.5)(8,5.5)(11,5.5)
\pspolygon[fillstyle=solid,fillcolor=LLLgray](7.8,7.3)(7.8,6.1)(8.2,6.1)(8.2,7.3)
\pspolygon[fillstyle=solid,fillcolor=LLgray](7.8,6.1)(7.8,4.9)(8.2,4.9)(8.2,6.1)

\pspolygon[fillstyle=solid,fillcolor=LLLgray](10.2,7.3)(10.05,6.2)(10.45,6.2)(10.6,7.3)
\pspolygon[fillstyle=solid,fillcolor=LLgray](10.05,6.2)(9.8,5.1)(9.9,5)(10,4.97)(10.1,5)(10.2,5.1)(10.45,6.2)
\psdots(7.6,5.5)(8,5.5)
\psline{->}(7.6,5.5)(11.3,5.5)
\psline{->}(7.6,4.7)(7.6,7.3)
\rput(8,5.3){\tiny{$1$}}
\rput(10.9,5.3){\tiny{$\frac{1}{\ga}-\Bk$}}
\rput(7.3,4.9){\tiny{$-2$}}

\psline{->}(6,5.9)(7.5,5.9)
\rput(6.8,6.1){$\Phi$}

\pscurve[linestyle=dashed]{<-}(7.9,6.7)(8.9,6.9)(10.3,6.7)
\rput(9.1,7.5){\tiny{$\gF_h \circ h\co{k_h}\circ \gF_h^{-1}$}}

\psline{->}(7.4,4.7)(6.6,3.2)
\rput(7.7,4.){$e^{2\pi \B{i} w}$}
\psellipse(6,2.1)(1.2,.9)
\psdots[dotsize=2pt](6,2.1)
\rput(4,2.6){$\rr' (h)$}
\rput(6.2,2.1){\small{$0$}}
\rput(1,8){$h$}
\psline[linewidth=.5pt]{->}(5.7,1.55)(5.85,1.45)(6.05,1.4)
\NormalCoor
\psdot[dotsize=1pt](5.65,1.6)
\psdot[dotsize=1pt](6.1,1.4)
   %\psgrid[gridcolor=lightgray]
 \end{pspicture}
\caption{The figure shows the sets $\C{C}_h$, $\Csh_h$,..., $\C{C}_h^{-k_h}$, $(\Csh_h)^{-k_h}$, and the sector $S_h$. 
The induced map $\gF_h \circ h\co{k_h}\circ \gF_h^{-1}$ projects via $e^{2\pi \B{i} w}$ to a well-defined map 
$\rr (h)$ on a neighborhood of $0$. 
The amoeba curve around $0$ is a large number of iterates of $\cp_h$ under $h$.}
\label{F:sectorpix}
\end{center}
\end{figure}

Consider the map 
\begin{equation}\label{E:renorm-def}
E_h=\Phi_h \circ h\co{k_h} \circ \Phi_h^{-1}:\Phi_h(S_h) \ra \cc. 
\end{equation}
By the functional equation in Theorem~\ref{T:Ino-Shi1}-d), $E_h(z+1)=E_h(z)+1$, when both $z$ and $z+1$ belong to the boundary of 
$\gF_h(S_h)$. 
Hence, $E_h$ projects via $z=\frac{-4}{27}e^{2 \pi \B{i} w}$ to a map $\rr (h)$ defined on a set containing a 
punctured neighborhood of $0$. 
However, zero is a removable singularity of this map, and one can see that $\rr(h)$ must be of the form  
$z \mapsto e^{2 \pi \frac{-1}{\ga}\B{i}}z+ O(z^2)$ near zero. 
The map  $\rr (h)$, restricted to the interior of $\frac{-4}{27}e^{2\pi \B{i}(\Phi_h(S_h))}$, is called the 
\textit{near-parabolic renormalization} of $h$. 
We may simply refer to the near-parabolic renormalization as \textit{renormalization} for short, 
\footnote{Inou and Shishikura give a somewhat different definition of this renormalization operator using slightly 
different regions $\C{C}_h$ and $\Csh_h$ compared to the ones here. 
However, the resulting maps $\rr(h)$ are the same modulo their domains of definition.
More precisely, there is a natural extension of $\gF_h$ onto the sets $\C{C}_h^{-k} \cup (\Csh_h)^{-k}$, for $0\leq k\leq k_h$, 
such that each set $\Phi_h(\C{C}_h^{-k}\cup (\Csh_h)^{-k})$ is contained in the union 
\[D^\sharp_{-k} \cup D_{-k} \cup D''_{-k} \cup D'_{-k+1} \cup D_{-k+1} \cup D^\sharp_{-k+1}\] 
in the notations used in \cite[Section 5.A]{IS06}.}. 
Note that $\gF_h$ maps the critical value of $h$ to one, and the projection $w\mapsto \frac{-4}{27}e^{2 \pi \B{i} w}$ 
maps integers to $-4/27$.
Thus, the critical value of $\rr(h)$ is at $-4/27$. 
See Figure~ \ref{F:sectorpix}. 
%The study of such renormalization operators has already proved useful in many settings. 
%For some applications of a closely related renormalization (Douady-Ghys \cite{Do86} renormalization) 

Define 
\begin{equation}\label{E:V}
V=P^{-1}\big (B(0,\frac{4}{27}e^{4\pi}) \big ) \setminus  \big ( (-\infty,-1]\cup B \big)
\end{equation}
where $B$ is the component of $P^{-1}(B(0,\frac{4}{27}e^{-4\pi}))$ containing $-1$. 
By an explicit calculation (see \cite[Prop.~5.2]{IS06}) one can see that the closure of $U$ is contained in the interior of $V$.   
See Figure~\ref{F:poly}. 

\begin{figure}[ht]
\begin{center}
  \begin{pspicture}(8,3.2)
  \rput(4.5,1.6){\epsfbox{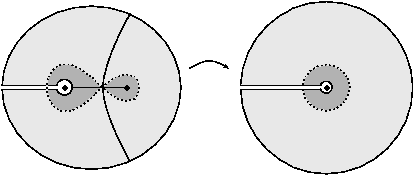}}
%  \psgrid[gridcolor=gray]
      \rput(6.12,1.6){{\small $\times$}}
      \rput(5.9,1.4){{\small $cv_P$}}
      \rput(6.7,1.8){{\small $0$}} 
      \rput(2.7,1.6){{\small $\times$}}
      \rput(2.7,1.3){{\small $cp_P$}}
      \rput(3.2,1.8){{\small $0$}}
      \rput(1.85,1.3){{\small $-1$}}
      \rput(4.5,2.2){{\small $P$}}
      \rput(1.1,2.7){{\small $V$}} 
\end{pspicture}
\caption{A schematic presentation of the polynomial $P$; its domain, and its range. 
Similar colors and line styles are mapped on one another.}
\label{F:poly}
\end{center}
\end{figure}

The following theorem \cite[Main thm~3]{IS06} guarantees that the above definition of 
renormalization $\rr$ can be carried out for certain perturbations of maps in $\IS$. 
In particular, this implies the existence of $k_h$ satisfying the four properties needed for the definition of renormalization. 

\begin{thm}[Inou-Shishikura]\label{T:Ino-Shi2} 
There exist a constant $\ga^*>0$ such that if $h \in \IS_\ga$ with $\ga \in (0,\ga^*]$, then $\rr(h)$ is 
well-defined and belongs to the class $\IS_{-1/\ga}$. 
That is, there exists a univalent map $\psi:U\to \cc$ with $\psi(0)=0$ and $\psi'(0)=1$ so that 
\[\rr(h)(z)=P \circ \psi^{-1}(e^{\frac{-2\pi}{\ga}\textnormal{\B{i}}} z), \; \forall z\in \psi(U)\cdot e^{\frac{2\pi}{\ga}\textnormal{\B{i}}}.\] 
Furthermore,  $\psi:U\to \cc$ extends to a univalent map on $V$.    

Similarly, for $\ga\in (0,\ga^*]$, $\rr(Q_\ga)$ is well-defined and belongs 
to $\IS_{-1/\ga}$.
\end{thm}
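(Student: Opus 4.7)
The plan follows Inou--Shishikura: construct the first-return map by spiraling preimages of $\C{C}_h$ and $\Csh_h$ back into the petal, descend through the Abel functional equation and the exponential projection to obtain $\rr(h)$, identify the branched-covering structure of $\rr(h)$ with that of the cubic model $P$ on $U$, and finally push the resulting uniformization $\psi$ across the strictly larger disc $V$.

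Existence of $k_h$ is an inductive construction. Starting from $\C{C}_h,\Csh_h\subset\p_h$ at Fatou-coordinate real parts in $[1/2,3/2]$, the unique connected component of $h^{-1}(\Csh_h)$ whose closure meets $0$ lies \emph{outside} $\p_h$ (it spirals around $0$ rather than being the Abel pullback), and Proposition~\ref{P:petal-geometry}(b) lets one identify it back inside $\p_h$ at real parts near $\langle 1/\ga\rangle-\Bk-1/2$. Iterating this scheme produces $\C{C}_h^{-k},(\Csh_h)^{-k}$, with the Abel equation now shifting real parts downwards by $1$ at each step. After $k_h\approx\langle 1/\ga\rangle-\Bk-2$ steps the preimages return to real parts in $[1/2,3/2]$, so $k_h$ is finite. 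Compact containment in $\Dom h$ is verified step by step via Koebe distortion for the univalent inverse branches together with Proposition~\ref{P:petal-geometry}(a), and the only ramification encountered is the expected one at $\cp_h\in\partial\p_h$ in the step producing $\C{C}_h^{-1}$. The induced map $E_h=\gF_h\circ h\co{k_h}\circ\gF_h^{-1}$ commutes with $w\mapsto w+1$ on overlaps by Theorem~\ref{T:Ino-Shi1}(d) and hence descends under $w\mapsto -\frac{4}{27}e^{2\pi\B{i}w}$. The puncture at $0$ is removable because Theorem~\ref{T:Ino-Shi1}(c) gives $\Im\gF_h(z)\to+\infty$ as $z\to 0$ while $h'(0)=\ea$ forces $E_h(w)=w-1/\ga+o(1)$ at $+\B{i}\infty$; hence $\rr(h)(z)=e^{-2\pi\B{i}/\ga}z+O(z^2)$ near $0$.

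Among the pieces of $S_h$ only $\C{C}_h^{-1}$ carries a critical point, and there $h$ acts as a degree-two branched cover ramified at $\cp_h$; every other piece maps univalently under $h$. This is precisely the covering structure of $P\colon U\to P(U)$ ramified at $\cp_P=-1/3$, so $\rr(h)$ admits a unique factorization $\rr(h)(z)=P\circ\psi^{-1}(e^{-2\pi\B{i}/\ga}z)$ with $\psi\colon U\to\cc$ univalent and $\psi(0)=0$. The normalization $\psi'(0)=1$ comes from Theorem~\ref{T:Ino-Shi1}(e) together with Lemma~\ref{L:bounds-on-second-derivative}, while the critical value $-4/27$ is forced by $\gF_h(\cv_h)=1$ projecting to $-4/27$ under $w\mapsto -\frac{4}{27}e^{2\pi\B{i}w}$.

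The principal obstacle is the univalent extension of $\psi$ from $U$ across the strictly larger $V\supset\ol U$; this is where forward invariance of the class $\IS$ under renormalization is genuinely encoded. I would build the extension by enlarging $\C{C}_h$ and $\Csh_h$ to a slightly fatter collar on which the Abel equation still defines a consistent univalent inverse branch of $h$, pulling back $k_h$ times, and verifying that the enlarged preimages cover the region whose image through the exponential chart is exactly $V$. The ellipse $E$ and the resulting domains $U$, $V$ were selected in \cite{IS06} precisely to make this verification possible: one combines Koebe distortion to propagate univalence with uniform control through the $k_h-1$ univalent preimage steps, with a normal-families/compactness argument anchored at the parabolic limit $\ga\to 0$, where $h$ becomes a parabolic map in $\IS$ and the construction reduces to the classical parabolic renormalization of Douady. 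The quadratic case $h=Q_\ga$ proceeds in parallel, with the simplification that $Q_\ga$ is entire so that no compactness in $\Dom h$ need be tracked.
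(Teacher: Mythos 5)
This statement is not proved in the paper at all: it is Main Theorem 3 of Inou--Shishikura \cite{IS06}, imported verbatim, and the surrounding text makes clear that the authors treat it as a black box (the whole point of restricting to $\irr$ is to be allowed to quote it). So there is no in-paper proof to compare against, and within this paper the correct ``proof'' is a citation. Read as a standalone argument, your proposal is a roadmap of the Inou--Shishikura strategy rather than a proof: the two steps that constitute the actual content of the theorem --- that the glued-up first-return map has exactly the covering structure of $P$ over $U$ (so that the factorization $\rr(h)=P\circ\psi^{-1}$ with $\psi$ univalent \emph{exists}), and that $\psi$ extends univalently to the strictly larger domain $V$ --- are both asserted or deferred (``the ellipse $E$ \dots\ were selected in \cite{IS06} precisely to make this verification possible''). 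That verification is the bulk of \cite{IS06} and rests on explicit, delicate estimates tied to the specific $E$, $U$, $V$; it cannot be replaced by a generic Koebe/normal-families appeal.

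There is also a concrete error in your description of the construction. You assert $k_h\approx\langle 1/\ga\rangle-\Bk-2$, with the preimages marching leftward one unit per pullback until they return to real parts $[1/2,3/2]$. This contradicts Proposition~\ref{P:turning}, which gives $k_h\leq \B{k}''$ uniformly in $\ga$ (and is used crucially later, e.g.\ in Lemma~\ref{L:renorm} and Proposition~\ref{P:sector-geometry}). The actual picture is the reverse of yours: a uniformly bounded number $k_h$ of pullbacks carries $\C{C}_h\cup\Csh_h$ out of the petal, around the critical point, and back in through the \emph{right} end, so that $S_h$ sits at $\Re\gF_h\approx \langle 1/\ga\rangle-\Bk-1$ while $h\co{k_h}(S_h)=\C{C}_h\cup\Csh_h$ sits at $\Re\gF_h\in[1/2,3/2]$; the factor $1/\ga$ enters through the width of $\gF_h(\p_h)$, not through $k_h$. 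With your version of $k_h$, the induced map $E_h$ would be close to the identity in the horizontal direction and would not have the asymptotic translation $w\mapsto w-1/\ga$ that you (correctly) need at $+\B{i}\infty$; so the error is not cosmetic. For the purposes of this paper, replace the argument by the citation; if you want to actually prove the theorem, the covering-structure identification and the $U$-to-$V$ extension must be carried out with the explicit computations of \cite[Sections 5--6]{IS06}.
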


For $h\in \IS_\ga$ or $h=Q_\ga$ with $\ga\in [-\ga^*, 0)$, the conjugate map $\hat{h}=s\circ h\circ s$,
where $s(z)=\ol{z}$ is the complex conjugation, satisfies $\hat{h}(0)=0$ and $\hat{h}'(0)=e^{-2\pi \ga \B{i}}$. 
Since the class of maps $\IS$ is invariant under conjugation by $s$, $\hat{h}$ belongs to the class $\IS_{-\ga}\cup \{Q_{-\ga}\}$. 
In particular, by the above theorems the near parabolic renormalization of $\hat{h}$ is defined.    
Through this, we extend the domain of definition of $\rr$ to contain the set of maps $h\in \IS_\ga\cup \{Q_\ga\}$, $\ga\in [-\ga^*,0)$. 

The near parabolic renormalization and a small variation of Theorem~\ref{T:Ino-Shi2} is 
extended to cover unisingular holomorphic maps in \cite{C14}. 
A numerical study of the parabolic renormalization of Inou-Shishikura has been carried out in 
\cite{LaYa11}.  
%%%%%%%%%%%%%%%%%%%%%%%%%%%%%%%%%%%%%%%%%%%%%%%%%%%%%%%

\subsection{Fatou coordinates}\label{SS:extending-F-coord}

In this section we state some basic properties of the Fatou coordinates that will be used throughout 
this paper. 
One may refer to \cite{Ch10-I} for detailed arguments on these. 
We say that a smooth curve $\gga:(0,1) \to \D{C}\setminus \{0\}$ lands at $0$ if $\gga(t) \to 0$ as $t\to 0$. 
Moreover, we say that $\gga$ lands at $0$ at a well-defined angle if there is a branch of $\arg$ 
defined on $\gga(t)$ for all small values of $t$ and $\lim_{t\to 0} \arg \gga(t)$ exists.  
The next proposition follows from the estimates on Fatou coordinates in \cite{Ch10-I}. 
As it is not stated in that paper, we present a proof of it in Section~\ref{SS:estimates-F-coord}.

\begin{propo}\label{P:landing-angle}
For all $h\in \IS_\ga \cup \{Q_\ga\}$ with $0 < |\ga| \leq \ga^*$ and every $r\in [0, 1/\ga -\Bk]$, 
the curve $\gF_h^{-1}(\mathbb{R}+ r\textnormal{\B{i}})$ lands at $0$ at a well-defined angle. 
\end{propo}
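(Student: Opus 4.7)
The plan is to derive the landing property from a precise asymptotic expansion of the Fatou coordinate near $0$, which will be supplied by the analytic estimates of \cite{Ch10-I} to be recalled in \refSS{SS:estimates-F-coord}. Set $c_h = h''(0)/2$. By \refL{L:bounds-on-second-derivative} together with the definition $\IS_\ga = \{f_0 \circ R_\ga : f_0 \in \IS\}$, the quantity $c_h$ varies in a compact subset of $\cc \setminus \{0\}$ as $h$ ranges over the maps under consideration, so $\arg c_h$ is well defined. Heuristically, the local expansion $h(z) = \ea z + c_h z^2 + O(z^3)$ at the origin combined with the Abel equation $\gF_h \circ h = \gF_h + 1$ forces an asymptotic of the form
\[
\gF_h(z) = \frac{C_h}{z} + o\!\left(\frac{1}{|z|}\right) \qquad \text{as } z \to 0 \text{ in } \p_h,
\]
where $C_h\neq 0$ is a constant depending on $c_h$ (and, in the perturbed case $\ga\neq 0$, also on $\sigma_h$). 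Inverting,
\[
\gF_h^{-1}(w) = \frac{C_h}{w}\bigl(1 + o(1)\bigr) \qquad \text{as } \Im w \to +\infty \text{ within } \gF_h(\p_h).
\]

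Granting this expansion, the landing follows by a direct computation. For $r \in [0, 1/\ga - \Bk]$ fixed and $t \to +\infty$, interpreting the curve as the preimage of the vertical line $r + \B{i}\D{R}$ (the only reading consistent with accumulation at $0$, by \refT{T:Ino-Shi1}(c)), one has
\[
\frac{C_h}{r + t\B{i}} = \frac{C_h}{t\B{i}}\bigl(1 + O(1/t)\bigr),
\]
so, using the continuous branch of argument on $\p_h$ provided by \refP{P:petal-geometry}(a),
\[
\lim_{t \to +\infty} \arg \gF_h^{-1}(r + t\B{i}) = \arg(C_h/\B{i}) \pmod{2\pi},
\]
a value that is independent of $r$. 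Since \refT{T:Ino-Shi1}(c) guarantees that $\gF_h^{-1}(r + t\B{i})\to 0$ as $t\to +\infty$, this limit is indeed the landing angle in the sense defined just before the statement.

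The hard part will be the first step: establishing the asymptotic expansion of $\gF_h$ near $0$ with a quantitatively controlled error, uniformly over $h \in \IS_\ga \cup \{Q_\ga\}$ and $\ga \in (0, \ga^*]$. This is not a soft consequence of the defining properties of the Fatou coordinate; it requires propagating local estimates along backward orbits through the Abel equation, combined with the compactness of $\IS$ and, in the perturbed case, a careful treatment of the near-coincidence of the fixed points $0$ and $\sigma_h$, in order to obtain constants independent of $h$ and $\ga$. These estimates are precisely what is reviewed in \refSS{SS:estimates-F-coord}; once they are in hand, everything downstream—the existence of the limit, its independence of $r$, and the identification of the landing angle—reduces to the elementary calculation above.
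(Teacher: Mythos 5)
Your reduction of the problem to an asymptotic expansion of $\gF_h$ at the origin is the right instinct, and you correctly read the curve as the preimage of the vertical line $r+\B{i}\D{R}$, but the expansion you posit is the wrong one for the case at hand. The model $\gF_h(z)=C_h/z+o(1/|z|)$ is the \emph{parabolic} asymptotic ($\ga=0$). For $0<|\ga|\leq\ga^*$ the map $h$ is, to first order at $0$, the rotation $z\mapsto \ea z$, and the perturbed Fatou coordinate is modeled on the inverse of the covering $T_h(w)=\gs_h/(1-e^{-2\pi\ga w\B{i}})$ recalled in Section~\ref{SS:estimates-F-coord}; that is, $\gF_h(z)\sim\frac{1}{2\pi\ga\B{i}}\log z$ as $z\to 0$ in $\p_h$, so $\gF_h^{-1}(r+t\B{i})$ behaves like a constant times $e^{2\pi\ga\B{i}(r+t\B{i})}$. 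Two consequences. First, your conclusion that the landing angle is independent of $r$ cannot be right: in the correct model the angle is roughly $2\pi\ga r$ plus a constant, and indeed the strip $\{0<\Re w<1/\ga-\Bk\}$ pulls back to a region opening up around $0$ through nearly a full turn (cf.\ \refP{P:petal-geometry} and Figure~\ref{F:petal}), which is incompatible with all vertical lines landing in one direction. Second, and more seriously, with the correct model the existence of the limit of the argument is governed entirely by the \emph{correction} to the model along the vertical line, and nothing in your argument controls it: a bounded but oscillating error would destroy the landing while being invisible to an $o(1/|z|)$-type statement about $\gF_h$.

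This is exactly where the paper does the work. Writing $\gF_h^{-1}=\ex\circ\gc_{h,0}$ (or $s\circ\ex\circ\gc_{h,0}$), one has $\arg\gF_h^{-1}(r+t\B{i})=\pm(\pi+2\pi\Re\gc_{h,0}(r+t\B{i}))$, so landing at a well-defined angle is equivalent to convergence of $\Re\gc_{h,0}(r+t\B{i})$ as $t\to+\infty$. The paper then invokes the sharp derivative estimate of \refP{P:fine-estimate}, $|\gc_{h,0}'(\gz)-\ga|\leq M\ga e^{-2\pi\ga\Im\gz}$, whose right-hand side is integrable in $\Im\gz$; integrating along the vertical segment from $r+t_2\B{i}$ to $r+t_1\B{i}$ (the term $\ga$ contributes nothing to the real part) yields $|\Re\gc_{h,0}(r+t_1\B{i})-\Re\gc_{h,0}(r+t_2\B{i})|\leq\frac{M}{2\pi}e^{-2\pi\ga t_2}\to 0$, a Cauchy criterion. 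The cruder global estimate of \refP{P:global-estimate} would only bound the total turning, not establish convergence. So the missing piece is not a refinement of your plan but a replacement of its first step: one needs the exponentially decaying control on $\gc_{h,0}'-\ga$ from \cite{Ch10-II}, and that is the actual content of the proof.
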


It follows from the above proposition that for all $h\in \IS_\ga \cup \{Q_\ga\}$ with 
$0 < |\ga| \leq \ga^*$ the sector $S_h$ is bounded by piecewise smooth curves two of which land 
at $0$ at well-defined angles. 
Then, one can see that for every $h\in \IS_\ga \cup \{Q_\ga\}$ with $0 < |\ga| \leq \ga^*$ we have 
\begin{equation}\label{E:bounded-remaining}
k_h \geq 2.
\end{equation}
On the other hand we have the following upper bound on $k_h$. 
\begin{propo}[\cite{Ch10-I}]\label{P:turning}
There exists $\Bk''\in \D{Z}$ such that for every $h$ in $\IS_\ga\cup\{ Q_\ga\}$ with $0 < |\ga| \leq \ga^*$, $k_h\leq \Bk''$. 
\end{propo}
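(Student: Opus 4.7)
My plan is to establish the uniform bound $k_h\leq \Bk''$ by combining the angular-extent control on $\p_h$ supplied by Proposition~\ref{P:petal-geometry}(a) with a compactness argument for the class $\IS_\ga\cup\{Q_\ga\}$, $|\ga|\leq \ga^*$, built on the continuous dependence of the Fatou coordinate on $h$ from Theorem~\ref{T:Ino-Shi1}(e).

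First, I would analyze how the pullback chain $\{\C{C}_h^{-k}\cup(\Csh_h)^{-k}\}_{k\geq 1}$ is laid out around the fixed point $0$. By the conditions singling out each component, each pullback contains $0$ on its boundary, is compactly contained in $\Dom h$, and maps forward by $h$ to the previous one; moreover $h\colon \C{C}_h^{-1}\to \C{C}_h$ is a degree-two branched cover ramified at $\cp_h$. Together with the fact that $\cp_h$, $0$ and $\sigma_h$ all lie on $\partial\p_h$ (Theorem~\ref{T:Ino-Shi1}(a)), this forces the successive pullbacks to sit adjacent to one another in $U_h$ in a controlled way along the boundary of $\p_h$. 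The key geometric claim is that each such pullback consumes a definite angular sector (seen from $0$) whose size admits a uniform positive lower bound $c_0$, independent of $h$ and $\ga$. This lower bound would come from the compactness of $\IS$, the normalization $\cv_h=-4/27$, and the lower bound on $|h''(0)|$ provided by Lemma~\ref{L:bounds-on-second-derivative}.

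Second, I would combine this lower bound with the upper bound $2\pi\Bk'$ on the total argument variation across $\p_h$ (Proposition~\ref{P:petal-geometry}(a)). Since the pullbacks $\C{C}_h^{-k}\cup (\Csh_h)^{-k}$ must ultimately re-enter $\p_h$ at the sub-strip $\{1/2<\Re\gF_h<\langle 1/\ga\rangle-\Bk-1/2\}$, their cumulative angular occupation at $0$ cannot exceed $2\pi\Bk'$. Hence only finitely many pullbacks can fit, and setting $\Bk'':=\lceil 2\pi\Bk'/c_0\rceil$ delivers the desired uniform bound.

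The main obstacle is establishing the uniform positive lower bound $c_0$ on the per-pullback angular increment in the near-parabolic limit $\ga\to 0$, where $\p_h$ itself degenerates (becoming infinite in Fatou coordinates by Proposition~\ref{P:petal-geometry}(b)). I would handle this via a normal family / Carath\'eodory-convergence argument: assuming no such $c_0$ exists, there would be a sequence $h_n\in\IS_{\ga_n}\cup\{Q_{\ga_n}\}$ and an index $k_n\leq k_{h_n}$ for which the angular extent of the $k_n$-th pullback tends to zero. Using Lemma~\ref{L:bounds-on-second-derivative} to ensure nontrivial limiting dynamics, and the continuous dependence of $\gF_h$ on $h$ (Theorem~\ref{T:Ino-Shi1}(e)), one would extract a limit map whose Fatou coordinate admits a pullback with degenerate angular extent, contradicting either the univalence of the limiting Fatou coordinate on its petal or the covering structure in Theorem~\ref{T:Ino-Shi1}. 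The delicate technical input here is precisely the refined Fatou-coordinate estimates of~\cite{Ch10-II}, which make the limit procedure uniform as $\ga\to 0$.
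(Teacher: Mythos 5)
The paper does not prove this proposition; it cites \cite{Ch10-I}, and the argument it relies on there (mirrored in this paper's proof of Proposition~\ref{P:well-contained-in-domain}) is a pure compactness/continuity argument: $k_h$ is finite for each fixed $h$, including the parabolic maps at $\ga=0$ by Inou--Shishikura's construction of the parabolic renormalization, it is locally bounded in $h$ because the sets $\C{C}_h^{-k}$, $(\Csh_h)^{-k}$ and the Fatou coordinate depend continuously on $h$, and the class $\bigcup_{\ga\in[0,\ga^*]}\IS_\ga\cup\{Q_\ga\}$ is pre-compact.

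Your central mechanism has a genuine gap: the uniform lower bound $c_0$ on the angular increment per pullback, seen from $0$, does not exist. Near $0$ the inverse Fatou coordinate behaves like $w\mapsto -\gs_h e^{2\pi \B{i}\ga w}$ for $\Im w$ large, so a unit-width vertical strip in the $\gF_h$-plane (such as $\gF_h(\Csh_h)$) subtends an angle of only about $2\pi\ga$ at the origin; each backward iterate rotates these horns by roughly $2\pi\ga$ as well, since $h$ is close to the rotation $R_\ga$ there. Hence the angular extent of each $(\Csh_h)^{-k}$ at $0$ tends to $0$ as $\ga\to 0$, and the quantity $\lceil 2\pi\Bk'/c_0\rceil$ is vacuous. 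The reason $k_h$ nevertheless stays bounded is that the angular gap the chain must bridge — the complement at $0$ of $\p_h$, whose $\gF_h$-image has width $1/\ga-\Bk$ and hence angular extent $2\pi-2\pi\ga\Bk$ — is itself only of size about $2\pi\ga\Bk$; both the gap and the step are $O(\ga)$, so their ratio is bounded, which is a completely different counting from the one you propose. A second, lesser issue is that the bound $2\pi\Bk'$ of Proposition~\ref{P:petal-geometry}(a) controls the argument variation of $\p_h$ only; the intermediate pullbacks $\C{C}_h^{-k}$, $1\le k<k_h$, are not contained in $\p_h$, so that bound does not directly cap the cumulative turning of the chain. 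Your final paragraph does reach for the right tool (normal families and continuous dependence of $\gF_h$ on $h$), but you aim it at salvaging $c_0$ rather than at bounding $k_h$ directly; redirected at $k_h$ itself, together with the finiteness of $k_h$ at the parabolic limit $\ga=0$, it yields the proposition without any angular counting.
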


\begin{propo}\label{P:well-contained-in-domain}
There is a constant $\gd>0$ such that for every $h$ in $\IS_\ga\cup\{ Q_\ga\}$ 
with $0 < |\ga| \leq \ga^*$ and every $z\in \cup_{i=0}^{k_h} h\co{i}(S_h) \cup \C{P}_h$, 
we have $|z|\leq 1/\gd$, $B_\gd(z) \subset \Dom h$, and 
$B_\gd(\C{C}_h^{-k_h}) \subset \Dom h\setminus \{0\}$. 
\end{propo}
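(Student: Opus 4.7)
The plan is to derive all three estimates from a single compactness argument applied to the family $\C{H} := \{h \in \IS_\ga \cup \{Q_\ga\} : 0 < |\ga| \leq \ga^*\}$. First I would establish that $\C{H}$ is precompact in the compact-open topology. Every $h \in \IS$ has the form $h = P \circ \vfi^{-1}$ with $\vfi \colon U \to \cc$ univalent and normalized by $\vfi(0) = 0$, $\vfi'(0) = 1$; such $\vfi$ form a normal family by the Koebe distortion theorem, so precomposition with $R_\ga$ preserves precompactness of $\IS_\ga$ uniformly in $\ga \in [-\ga^*, \ga^*]$, and $\{Q_\ga : |\ga| \leq \ga^*\}$ is trivially compact. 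Let $\ol{\C{H}}$ denote the closure of $\C{H}$. Its added limit points at $\ga = 0$ are parabolic germs, and the normalization $\gF_h(\cp_h) = 0$ from Theorem~\ref{T:Ino-Shi1}(e) pins down the Fatou coordinate so that it converges to the classical attracting Leau--Fatou coordinate of the parabolic limit, avoiding any parabolic-implosion ambiguity.

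Next, by Theorem~\ref{T:Ino-Shi1}(e) the Fatou coordinate $\gF_h$ and the petal $\C{P}_h$ depend continuously on $h \in \ol{\C{H}}$, and hence so do the sets $\C{C}_h$ and $\Csh_h$. The preimage components $\C{C}_h^{-k}$ and $(\Csh_h)^{-k}$ for $1 \leq k \leq k_h$ are uniquely specified by connectivity together with the conditions $0 \in \partial (\Csh_h)^{-k}$ and $\C{C}_h^{-k} \cap (\Csh_h)^{-k} \neq \emptyset$; these characterizations are stable under Hausdorff-continuous limits, so each of these sets also varies continuously with $h$. By Proposition~\ref{P:turning}, $k_h \leq \Bk''$ takes only finitely many values, so partitioning $\ol{\C{H}}$ according to $k_h$ reduces the problem to finitely many compact sub-families on each of which
\[
\C{A}_h := \bigcup_{i=0}^{k_h} h\co{i}(S_h) \cup \C{P}_h
\]
varies continuously in the Hausdorff topology.

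The three bounds then follow from continuity plus compactness. The set $\C{A}_h$ is contained in the bounded region $U_h$ for $h \in \IS_\ga$ and in a bounded neighborhood of $0$ for $Q_\ga$, so $|z|$ is uniformly bounded on $\C{A}_h$, giving $|z| \leq 1/\gd$. Since $\C{P}_h$ is compactly contained in $U_h$ by Theorem~\ref{T:Ino-Shi1}(a) and the sets $\C{C}_h^{-k}, (\Csh_h)^{-k}$ are by construction compactly contained in $\Dom h$, the function $h \mapsto \operatorname{dist}(\C{A}_h, \partial \Dom h)$ is strictly positive and continuous on each partition class of $\ol{\C{H}}$, hence uniformly bounded below, yielding $B_\gd(z) \subset \Dom h$. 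Finally, since $h\co{k_h}$ maps $\ol{\C{C}_h^{-k_h}}$ onto $\ol{\C{C}_h}$ and the latter is bounded away from $0$, we have $0 \notin \ol{\C{C}_h^{-k_h}}$; hence $\operatorname{dist}(\C{C}_h^{-k_h}, \{0\})$ is a positive continuous function of $h$, again bounded below by compactness, giving $B_\gd(\C{C}_h^{-k_h}) \subset \Dom h \setminus \{0\}$.

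The main obstacle is the continuous dependence of $\gF_h$ and of the finite-level preimage sectors across the parabolic limit $\ga \to 0$, where the strip $\gF_h(\C{P}_h) = \{0 < \Re w < 1/\ga - \Bk\}$ widens without bound. This is resolved by the pinning normalization $\gF_h(\cp_h) = 0$ together with the boundedness $k_h \leq \Bk''$: only finitely many bounded-level preimage structures are involved, and each converges continuously to its classical parabolic counterpart as $\ga \to 0$.
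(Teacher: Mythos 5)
Your overall architecture coincides with the paper's: precompactness of the class, continuous dependence of $\gF_h$ and of the finitely many ($k_h\leq \Bk''$) preimage sectors on $h$, and a compactness argument to extract a uniform $\gd$, with the only delicate point being the degeneration as $\ga\to 0$. The first two paragraphs of your argument are fine and match the paper's for fixed $\ga$ bounded away from $0$.

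The gap is in your treatment of the parabolic limit. You assert that the normalization $\gF_h(\cp_h)=0$ makes $\gF_h$ converge to the classical \emph{attracting} Leau--Fatou coordinate, ``avoiding any parabolic-implosion ambiguity.'' But the sets that matter here, $S_h=\C{C}_h^{-k_h}\cup(\Csh_h)^{-k_h}$ and the intermediate preimages $\C{C}_h^{-k}$, sit at the opposite end of the perturbed petal: by construction $\Re\gF_h$ on $\C{C}_h^{-k_h}$ is close to $\langle 1/\ga\rangle-\Bk-1/2$, i.e.\ these sectors accumulate on the $\gs_h$-end of $\C{P}_h$, which as $\ga\to 0$ is described by the \emph{repelling} Fatou coordinate of the parabolic limit. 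The passage from the attracting normalization at $\cp_h$ across the whole strip of width $1/\ga-\Bk$ to the repelling end is exactly the horn-map/parabolic-implosion structure; its continuity in $h$ across $\ga=0$ is a nontrivial theorem of Inou--Shishikura (it is what makes the parabolic renormalization the limit of the near-parabolic one), not a consequence of the pinning $\gF_h(\cp_h)=0$. The paper's proof accordingly states that $\gF_h$ tends to the attracting \emph{and} repelling Fatou coordinates of $h_0$ and invokes the Inou--Shishikura continuity of the (parabolic) renormalization to conclude that $\C{C}_{h_0}$, $\Csh_{h_0}$ and their preimages are defined, compactly contained, and vary continuously. To close your argument you should replace the ``attracting coordinate only'' claim by this citation; as written, that step would fail. (A secondary, fixable point: the partition classes $\{h: k_h=k\}$ need not be closed, so ``bounded below by compactness on each class'' should be run as a subsequence/contradiction argument.)
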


\begin{proof}
Fix $\ga$ satisfying the hypothesis. 
According to Inou-Shishikura \cite{IS06}, for every $h\in \IS_\ga\cup\{ Q_\ga\}$ the set 
$\cup_{i=0}^{k_h} h\co{i}(S_h) \cup \C{P}_h$ is compactly contained in $\Dom h$. 
Indeed, they show that \cite[Section 5.N, the value of $\gh$]{IS06}, one may use the set 
$\{z\in \C{P}_h \mid 1/2 \Re \gF_h \leq 3/2,  -13 \leq \Im w \leq 2 \}$ in place of $\C{C}_h$ (considered here) to define the near parabolic renormalization of $h$. 
That is, corresponding pre-images of this (larger) set are defined and contained in $\Dom h$. 
In particular, the pre-images of the set $\C{C}_h \cup \Csh_h$, up to $k_h$, are compactly 
contained in $\Dom h$. 
Similarly, for every $h$, the set $\C{C}_h^{-k_h}$ is compactly contained in $\Dom h \setminus \{0\}$.
By the pre-compactness of the class $\IS_\ga$ and the continuous dependence of $\gF_h$ on $h$, 
we conclude that there are constants $\gd$ and $c$ satisfying the conclusion of the lemma for $h$
in $\IS_\ga \cup \{Q_\ga\}$. 
It remains to see what happens as $\ga\to 0$. 

As $h \in \cup_{\ga\in (0,\ga^*]} \IS_\ga \cup \{Q_\ga\}$ tends to some map 
$h_0\in \IS_0 \cup \{Q_0\}$, the Fatou coordinate $\gF_h$ tends to the attracting and 
repelling Fatou coordinates of $h_0$. 
According to Inou and Shishikura \cite{IS06}, the sets $\C{C}_{h_0}$, $\Csh_{h_0}$, 
and their corresponding pre-images are defined for all $h_0\in \IS_0 \cup \{Q_0\}$, 
with $S_{h_0}$ contained in the domain of the repelling Fatou coordinate of $h_0$. 
Moreover, these domains are compactly contained in the domain of $h_0$.
Then, one defines the (parabolic) renormalization of $h_0$ as in the previous section.
By the work of Inou-Shishikura, the Fatou coordinate and the renormalization depend continuously 
on the map $h\in \cup_{\ga\in [-\ga^*,\ga^*]} \IS_\ga \cup \{Q_\ga\}$. 
This implies that there are positive constants $\gd$ and $c$ satisfying the properties 
in the lemma. 
As this is the only place we use the (parabolic) renormalization of maps in $\IS_0\cup \{Q_0\}$, 
we do not explain this further and refer the reader to \cite{IS06} for more details on this.
\end{proof}

Let $h\in \IS_\ga \cup \{Q_\ga\}$ with $0 < |\ga| < \ga^*$.
Define the set 
\[\C{D}_h':= \gF_h(\C{P}_h) \cup \bigcup_{j=0}^{k_h+ \langle 1/\ga \rangle -\B{k}-2} (\gF_h(S_h)+j).\]
Using the dynamics of $h$ we may extend the domain of definition of $\gF_h^{-1}$. 
\begin{lem}\label{L:extending-inverse-F-coord}
The map $\gF_h^{-1}: \gF_h(\C{P}_h) \to \C{P}_h$ extends to a holomorphic map 
\[\gF_h^{-1}: \C{D}_h' \to (\cup_{i=0}^{k_h} h\co{i}(S_h) \cup \C{P}_h) \setminus \{0\},\]
such that for all $w\in \mathbb{C}$ with $w, w+1\in \C{D}_h'$ we have 
\[\gF_h^{-1}(w+1)= h \circ \gF_h^{-1}(w). \] 
\end{lem}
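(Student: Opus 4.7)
For $w\in\C{D}_h'$, I would define the extension by
\[
\gF_h^{-1}(w):=
\begin{cases}
\gF_h^{-1}(w) & \text{if }w\in\gF_h(\C{P}_h),\\
h\co{j}\!\bigl(\gF_h^{-1}(w-j)\bigr) & \text{if }w\in\gF_h(S_h)+j,\ 1\le j\le k_h+\langle 1/\ga\rangle-\Bk-2,
\end{cases}
\]
where the second clause makes sense because $w-j\in\gF_h(S_h)\subset\gF_h(\C{P}_h)$ lies in the original domain of $\gF_h^{-1}$, and the iterates $h\co{i}\circ\gF_h^{-1}(w-j)$ for $0\le i\le j$ stay in $\Dom h$ by the construction of $S_h=\C{C}_h^{-k_h}\cup(\Csh_h)^{-k_h}$ (plus $h\co{k_h}(S_h)=\C{C}_h\cup\Csh_h\subset\C{P}_h$) together with \refP{P:well-contained-in-domain}.

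\textbf{Well-definedness on overlaps.} Since $\gF_h(S_h)\subset\{1/2\le\Re u\le 3/2\}$, two translates $\gF_h(S_h)+j_1$ and $\gF_h(S_h)+j_2$ with $j_1<j_2$ can meet only when $j_2=j_1+1$, along the common vertical boundary segment $\{\Re u=j_1+3/2\}$. On that segment both $w-j_1$ and $w-j_2$ lie in $\gF_h(\C{P}_h)$, and the functional equation of \refT{T:Ino-Shi1}(d) applied at $w-j_2$ gives $h(\gF_h^{-1}(w-j_2))=\gF_h^{-1}(w-j_1)$, so $h\co{j_1}\!\circ\gF_h^{-1}(w-j_1)=h\co{j_2}\!\circ\gF_h^{-1}(w-j_2)$. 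Consistency with the first clause on $\gF_h(\C{P}_h)\cap(\gF_h(S_h)+j)$ is obtained by iterating the same equation $j$ times; the iteration is legitimate because each intermediate coordinate $w-j+i$ has real part in $[\Re w-j,\Re w]\subset(0,1/\ga-\Bk)$ and hence lies in $\gF_h(\C{P}_h)$. The resulting map is holomorphic as a composition of holomorphic maps, and the functional equation $\gF_h^{-1}(w+1)=h\circ\gF_h^{-1}(w)$ follows immediately: both sides equal $h\co{j+1}\!\circ\gF_h^{-1}(w-j)$ whenever $w\in\gF_h(S_h)+j$ and $w+1\in\gF_h(S_h)+(j+1)$.

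\textbf{Image analysis.} For $0\le j\le k_h$, the image lies in $h\co{j}(S_h)\subset\bigcup_{i=0}^{k_h}h\co{i}(S_h)$. For $k_h<j$, factor $h\co{j}=h\co{(j-k_h)}\circ h\co{k_h}$; the inner iterate lands in $\C{C}_h\cup\Csh_h\subset\C{P}_h$, and I would argue that subsequent applications of $h$ correspond precisely to $+1$ shifts of the Fatou coordinate and remain in $\C{P}_h$, provided the resulting coordinate stays in $\gF_h(\C{P}_h)$. The upper bound $j\le k_h+\langle 1/\ga\rangle-\Bk-2$ is tailored exactly to guarantee this. Avoidance of $0$ follows from \refP{P:well-contained-in-domain}: the full forward orbit segment $\bigcup_{i=0}^{k_h}h\co{i}(S_h)$ and its continuation inside $\C{P}_h$ are at positive distance from the origin.

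\textbf{Main obstacle.} The delicate point is upgrading the local functional equation on $\C{P}_h$ to the \emph{strip-wide} identity ``$h$ acts as $u\mapsto u+1$ whenever both $u$ and $u+1$ are in $\gF_h(\C{P}_h)$,'' which is what drives the image analysis for $j>k_h$. I would handle this by a standard open-closed-connected argument: the set $A=\{u\in\gF_h(\C{P}_h):u+1\in\gF_h(\C{P}_h),\ h(\gF_h^{-1}(u))=\gF_h^{-1}(u+1)\}$ is open by continuity of $h$ and $\gF_h^{-1}$, relatively closed in $\{u\in\gF_h(\C{P}_h):u+1\in\gF_h(\C{P}_h)\}$ by the same continuity (taking limits on both sides of the defining equation), nonempty because the functional equation of \refT{T:Ino-Shi1}(d) automatically produces points in $A$, and the ambient set is the connected strip $\{0<\Re u<1/\ga-\Bk-1\}$. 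Hence $A$ equals the whole strip, which is the missing forward-invariance needed for the lemma.
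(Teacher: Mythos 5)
Your construction is the same as the paper's: the identical two-case formula, consistency via the Abel equation of Theorem~\ref{T:Ino-Shi1}(d), and the same bookkeeping showing that $h\co{j}$ is defined on $S_h$ for $0\le j\le k_h+\langle 1/\ga\rangle-\Bk-2$ (renormalizability up to $k_h$, then unit translations inside $\gF_h(\C{P}_h)$). The paper's proof is essentially your first and third paragraphs plus a one-line appeal to the conjugacy relation for well-definedness.

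There is, however, a concrete error in your overlap analysis: $\gF_h(S_h)$ is \emph{not} contained in $\{1/2\le\Re u\le 3/2\}$. That strip is where $\gF_h(\C{C}_h\cup\Csh_h)=\gF_h(h\co{k_h}(S_h))$ lives; the set $S_h=\C{C}_h^{-k_h}\cup(\Csh_h)^{-k_h}$ is a $k_h$-fold \emph{preimage}, and by the third bullet in the definition of renormalization (see also Figure~\ref{F:sectorpix} and the computation in the proof of Lemma~\ref{L:return-on-tail}) $\gF_h(S_h)$ sits near the right-hand end of the strip, with $\Re$ close to $\langle 1/\ga\rangle-\Bk-1/2$, and has horizontal width larger than $1$, since $\ex(\gF_h(S_h))$ must cover a full punctured neighbourhood of $0$. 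Consequently consecutive translates $\gF_h(S_h)+j$ overlap in open sets rather than along a single vertical segment, the translates with $j\ge 1$ protrude to the right of $\gF_h(\C{P}_h)$, and for $1\le j<k_h$ the intermediate images $h\co{j}(S_h)=\C{C}_h^{-k_h+j}\cup(\Csh_h)^{-k_h+j}$ need not lie in $\C{P}_h$ (they pass through the gate near the critical point), so the overlaps cannot be disposed of by the boundary-segment computation you give. The construction is nonetheless saved by exactly the strip-wide identity of your last paragraph: since $\gF_h(S_h)\subset\gF_h(\C{P}_h)$, any two clauses that overlap differ by a chain of unit translates whose intermediate points all stay in the strip, and iterating $h\circ\gF_h^{-1}(u)=\gF_h^{-1}(u+1)$ gives consistency. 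So you should discard the claim about the location of $\gF_h(S_h)$ and route the entire consistency check through that final paragraph (where, incidentally, it is the identity theorem for holomorphic maps, not mere continuity, that makes the agreement set open; with that correction the argument there is sound).
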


\begin{proof}
If $z \in S_h$, then for all integers $j$, with $0 \leq j \leq k_h+ \langle 1/\ga \rangle -\B{k}-2$, 
$h\co{j}(z)$ is defined and belongs to $\Dom h$. 
That is because, since $h$ is near-parabolic renormalizable, the iterates $z, h(z), \dots, h\co{k_h}(z)$ are defined with  
$h\co{k_h}(z) \in \C{P}_h$ and $\Re \gF_h(h\co{k_h}(z))\in [1/2, 3/2]$. 
Then it follows from the conjugacy relation for $\gF_h$ and Proposition~\ref{P:petal-geometry} that for all $j$ with 
$k_h \leq j \leq k_h+ \langle 1/\ga \rangle -\B{k}-2$, $h\co{j}(z)$ is defined and belongs 
to $\C{P}_h$. 

Define the map $\gF_h^{-1}$ on $\C{D}_h'$ as 
\begin{equation}\label{E:extended-IFC}
\gF_h^{-1}(w)=
\begin{cases}
\gF_h^{-1}(w) & \tif  0<  \Re w < \ga^{-1}-\B{k} \\
h \co{j} \circ \gF_h^{-1}(w-j) & \tif w \in \gF_h(S_h)+j.
\end{cases}
\end{equation}
The conjugacy relation in Theorem~\ref{T:Ino-Shi1}-d implies that the above map is a well-defined holomorphic map on $\C{D}_h'$, 
and satisfies the desired functional equation on $\C{D}_h'$.
However, $\gF_h^{-1}$ is not univalent on $\C{D}_h'$. 
\end{proof}

Let us define the notation
\[\ex(\zeta) = \frac{-4}{27} e^{2\pi \B{i} \zeta}, \quad \ex:\D{C} \to \D{C}\setminus \{0\}.\]
Then, we may lift the maps $\gF_h^{-1}: \C{D}_h' \to \D{C}\setminus \{0\}$ and 
$s\circ \gF_h^{-1}: \C{D}_h' \to \D{C}\setminus \{0\}$ under the covering map $\ex: \D{C} \to \D{C}\setminus \{0\}$. 
That is, there is a map $\gc_{h}: \C{D}_h' \to \D{C}$ such that 
\begin{equation*}
\forall w \in \C{D}_h, \quad 
\begin{cases}
\ex \circ \gc_{h}(w)= \gF_h^{-1}(w), & \tif \ga\in (0, 1/2) \\
\ex \circ \gc_{h}(w)= s \circ \gF_h^{-1}(w), & \tif  \ga \in (-1/2,0).
\end{cases}
\end{equation*}
Each $\gc_{h}$ is either holomorphic or anti-holomorphic.
The lift $\gc_{h}$ is determined up to translations by integers. 

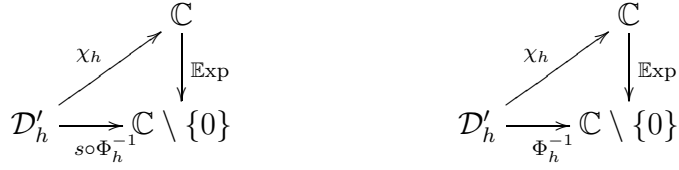
\begin{figure}[h]
\centering
\begin{minipage}[b]{0.35\linewidth}
\[
\xymatrix{
& \mathbb{C} \ar[d]^\ex \\ 
\C{D}_h' \ar[r]_{s\circ \gF_h^{-1}} \ar[ur]^{\gc_h}  &  \mathbb{C}\setminus \{0\}} 
\]
\end{minipage}
\begin{minipage}[b]{0.35\linewidth}
\[
\xymatrix{
& \mathbb{C} \ar[d]^\ex \\ 
\C{D}_h' \ar[r]_{\gF_h^{-1}} \ar[ur]^{\gc_h}  &  \mathbb{C}\setminus \{0\}} 
\]
\end{minipage}
\caption{The lift $\gc_h$ depends on the sign of $\ga$.}
\end{figure}

%\begin{figure}
%\begin{pspicture}(12,10)\psgrid
%\end{pspicture}
%\caption{This is a schematic presentation of the changes of coordinates and the domains of the maps.}
%\end{figure}
\begin{propo}\label{P:sector-geometry}
There exits an integer $\hat{\B{k}}$ such that for every $h\in \IS_\ga \cup \{Q_\ga\}$ with $0 < |\ga| \leq \ga^*$ and any choice of 
the lift $\gc_{h}$ we have 
\[\sup \{ |\Re w- \Re w'| :  {w,w' \in \gc_h(\C{D}_h')} \} \leq \hat{\Bk}.\]
\end{propo}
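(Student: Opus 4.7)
The plan is to recast the quantity $\sup\{|\Re w - \Re w'| : w, w' \in \gc_h(\C{D}_h')\}$ as $(2\pi)^{-1}$ times the oscillation of a continuous determination of $\arg \gF_h^{-1}$ on the connected set $\C{D}_h'$, and then bound that oscillation using the geometric estimates already available. Since $\gc_h$ is a lift of $\gF_h^{-1}$ (resp.\ of $s\circ \gF_h^{-1}$) under $\ex(\zeta) = -\tfrac{4}{27} e^{2\pi \B{i} \zeta}$, a direct computation shows
\[
|\Re \gc_h(w) - \Re \gc_h(w')| = \tfrac{1}{2\pi}\,|\theta(w) - \theta(w')|,
\]
where $\theta$ is the continuous determination of $\arg \gF_h^{-1}$ along a path in $\C{D}_h'$ joining $w'$ to $w$. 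In particular this is independent of the choice of lift (two choices differ by an integer), and it suffices to produce a constant $\hat{\B{k}}$ bounding the oscillation of $\theta$ uniformly in $h$.

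By Lemma~\ref{L:extending-inverse-F-coord}, the image set is
\[
\gF_h^{-1}(\C{D}_h') = \Bigl(\bigcup_{i=0}^{k_h} h\co{i}(S_h) \cup \C{P}_h\Bigr)\setminus \{0\},
\]
a connected set compactly contained in $\Dom h$ and bounded in $|z|$ by Proposition~\ref{P:well-contained-in-domain}. On $\C{P}_h$ alone, Proposition~\ref{P:petal-geometry}(a) supplies a continuous branch of argument of oscillation at most $2\pi \B{k}'$. I will extend this branch successively across the pieces $h\co{i}(S_h)$ for $i = k_h, k_h - 1, \dots, 0$, beginning from $h\co{k_h}(S_h) \subset \C{P}_h$ where it is already defined. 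Each such piece has $0$ on its boundary, but by Proposition~\ref{P:landing-angle} the relevant boundary curves land at $0$ at well-defined angles, and the interior lies in a bounded region separated from $0$ by Proposition~\ref{P:well-contained-in-domain}; hence the oscillation contributed by each piece is bounded by a geometric constant. Since $k_h + 1 \leq \B{k}'' + 1$ by Proposition~\ref{P:turning}, the total oscillation is bounded by a constant depending only on $\B{k}'$ and $\B{k}''$.

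Uniformity in $h$ and $\ga \in (0, \ga^*]$ is obtained from the same compactness and continuity framework used in the proof of Proposition~\ref{P:well-contained-in-domain}: the Fatou coordinate $\gF_h$, the petal $\C{P}_h$, the iterates $h\co{i}(S_h)$, and their landing directions at $0$ all depend continuously on $h \in \bigcup_{\ga \in [-\ga^*,\ga^*]} \IS_\ga \cup \{Q_\ga\}$, a pre-compact family that includes the parabolic limit $\ga = 0$. The main obstacle is precisely this limit: although the Fatou-coordinate strip $\gF_h(\C{P}_h)$ has width $\ga^{-1} - \B{k} \to \infty$, the geometric petal $\C{P}_h \subset \D{C}$ remains bounded with a uniform $2\pi \B{k}'$-bound on its arg oscillation, and the bridge $\bigcup_i h\co{i}(S_h)$ consists of boundedly many pieces whose landing directions at $0$ stabilize under the continuous extension to the parabolic class. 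Once this continuous-dependence picture is in place, pre-compactness delivers the required uniform bound $\hat{\B{k}}$.
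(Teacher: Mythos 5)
Your proof is correct and follows essentially the same route as the paper's: bound the total spiral (argument oscillation) of $\C{P}_h$ via Proposition~\ref{P:petal-geometry}, use pre-compactness to get a uniform bound on the spiral of each piece $h\co{i}(S_h)$, and combine with the uniform bound $k_h\leq \B{k}''$ from Proposition~\ref{P:turning} to conclude that the lift has bounded horizontal width.
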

 
\begin{proof}
By Propositions~\ref{P:petal-geometry}, there is a uniform bound on the total spiral of the 
set $\C{P}_h$ about zero. 
By the pre-compactness of the class $\IS_\ga$, this implies that there is a uniform bound on the total spiral of each set $h\co{i}(S_h)$, for $0\leq i \leq k_h$, about zero. 
(See the proof of Proposition~\ref{P:well-contained-in-domain} for further details.)
In other words, the lifts of these sets under $\ex$ has uniformly bounded horizontal width. 
Combining with the uniform bound on $k_h$ in Proposition~\ref{P:turning}, this implies the 
existence of the constant $\hat{\Bk}$ satisfying the conclusion of the lemma. 
\end{proof}

Since $\gF_h^{-1}(\C{D}_h')$ is contained in the image of $h$,  for any choice of the lift $\gc_h$ and 
every $w\in \gc_h(\C{D}_h')$, we must have $\Im w > -2$. 
Hence, combining with the above proposition, we conclude that there is a choice of $\gc_h$, denoted by $\gc_{h,0}$ 
such that 
\[\gc_{h,0}(\C{D}_h') \subset \{w\in \D{C} \mid  1 \leq \Re w \leq \hat{\B{k}}+2, \Im w > -2\} .\]

Define the set 
\[\C{D}_h:= \gF_h(\C{P}_h) \cup \bigcup_{j=0}^{k_h+ \hat{\B{k}} +\B{k}+ 2} (\gF_h(S_h)+j).\]

\begin{lem}\label{L:enough-iterates}
For every $h\in \IS_\ga \cup \{Q_\ga\}$ with 
$0 < |\ga| \leq \min\{\ga^*, 1/(2\B{k} +\hat{\B{k}}+4.5)\}$, we have 
\[\C{D}_h \subset \C{D}_h'.\] 
\end{lem}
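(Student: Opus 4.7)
The plan is to notice that the containment $\C{D}_h \subset \C{D}_h'$ can be read off directly from the definitions of the two sets. Both are assembled from the same ingredients: a common base piece $\gF_h(\C{P}_h)$, together with integer translates $\gF_h(S_h)+j$ for $j$ ranging over an initial segment of $\mathbb{Z}_{\geq 0}$. The translates attached to $\C{D}_h$ form an initial segment of those attached to $\C{D}_h'$ if and only if the upper index of the union in $\C{D}_h$ does not exceed the upper index in $\C{D}_h'$. So the lemma reduces to the single numerical inequality
\[
k_h + \hat{\B{k}} + \B{k} + 2 \;\leq\; k_h + \langle 1/\ga \rangle - \B{k} - 2,
\]
or equivalently, after cancelling $k_h$ and rearranging,
\[
\langle 1/\ga \rangle \;\geq\; \hat{\B{k}} + 2\B{k} + 4.
\]

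The remaining step is to deduce this from the hypothesis. Assume first $\ga>0$. The bound $\ga \leq 1/(2\B{k} + \hat{\B{k}} + 4.5)$ gives
\[
1/\ga \;\geq\; 2\B{k} + \hat{\B{k}} + 4.5 \;=\; (\hat{\B{k}} + 2\B{k} + 4) + \tfrac{1}{2}.
\]
Since the nearest-integer function satisfies $\langle r \rangle \geq n$ whenever $r \geq n + \tfrac{1}{2}$ (the only candidates for $\langle r \rangle$ being $n$ and $n+1$), this forces $\langle 1/\ga \rangle \geq \hat{\B{k}} + 2\B{k} + 4$, as required. For $\ga<0$ we invoke the conjugation trick introduced after Theorem~\ref{T:Ino-Shi2}: replacing $h$ by $\hat h = s\circ h\circ s$ reduces the construction of $\C{D}_h$, $\C{D}_h'$, and the associated combinatorial constants to the positive-$\ga$ case, so the same inequality applies to $\langle 1/|\ga| \rangle$.

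The main obstacle is really just the careful bookkeeping of the nearest-integer convention, and this is precisely what motivates the specific constant $4.5$ appearing in the hypothesis: the buffer of $\tfrac{1}{2}$ in the denominator is exactly what is needed to convert the lower bound on $1/|\ga|$ into a lower bound on $\langle 1/\ga \rangle$ that is independent of how half-integers are rounded. No dynamical input beyond the already-established uniform bounds on $\B{k}$, $\hat{\B{k}}$, and $k_h$ (from Propositions~\ref{P:petal-geometry}, \ref{P:sector-geometry}, and \ref{P:turning}) is required; the statement is a purely arithmetic comparison of the indexing ranges of the two unions.
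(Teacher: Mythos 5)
Your proof is correct and follows essentially the same route as the paper: reduce the inclusion to comparing the upper indices of the two unions, i.e.\ to $\langle 1/\ga \rangle \geq 2\B{k}+\hat{\B{k}}+4$, which the hypothesis $1/|\ga| \geq 2\B{k}+\hat{\B{k}}+4.5$ delivers via the nearest-integer convention. The paper's own proof is exactly this two-line observation; your version merely spells out the rounding step and the negative-$\ga$ convention more explicitly.
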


\begin{proof}
The condition on $\ga$ guarantees that $\langle 1/\ga \rangle \geq 2\Bk+ \hat{\Bk}+4$. 
Hence, $k_h+ \hat{\B{k}} +\B{k}+ 2 \leq k_h+ \langle 1/\ga \rangle -\B{k}-2$, and 
by the definitions of the sets $\C{D}_h$ and $\C{D}_h'$, the inclusion follows.
\end{proof}

\begin{rem}
Although the above lemma easily follows from the definitions and the condition imposed on $\ga$, it has been emphasized to 
make it clear where the high type condition is used in this paper. 
That is, to extend $\gF_h^{-1}$ on $\C{D}_h$, we need that the map $h$ can be iterated at 
least $k_h + \hat{\B{k}} + \B{k}+ 2$ times on $S_h$.
This is crucial if one wishes to prove the main theorems of this article for all rotation numbers using 
an invariant class under a similar renormalization operator. 
Note that the constants $\B{k}$ and $\hat{\B{k}}$ depend only on the class of maps invariant under the renormalization. 
\end{rem}

%%%%%%%%%%%%%%%%%%%%%%%%%%%%%%%%%%%%%%%%%%%%%%%%%%%%%%%%

\subsection{Modified continued fraction and infinitely renormalizable maps}\label{S:continued-fraction--renormalizations}
For irrational $\ga\in \D{R}$, define 
\begin{equation}\label{E:rotations}
\ga_0=d(\ga, \D{Z}), \text{ and } \ga_{i+1}=d(1/\ga_i, \D{Z}), \text{ for } i\geq 0,
\end{equation} 
where $d$ denotes the Euclidean distance on $\D{R}$. 
Then, choose $a_{-1}\in \D{Z}$ with $\ga-a_{-1} \in (-1/2,+1/2)$, and $a_i\in \D{Z}$ with 
$1/\ga_{i}-a_i\in (-1/2, +1/2)$, for $i=0,1,2,\dots$. 
Define $\gep_0=1$ if $\ga-a_{-1} \in (0, 1/2)$ and $\gep_0=-1$ if $\ga-a_{-1}\in (-1/2, 0)$.
Similarly, for $i=0,1,2,\dots$, let 
\begin{equation*}
\gep_{i+1}= 
\begin{cases}
1 & \text{if } \frac{1}{\ga_i}-a_i\in (0, 1/2), \\
-1 & \text{if } \frac{1}{\ga_i}-a_i\in (-1/2, 0).
\end{cases}
\end{equation*}
Note that for all $i\geq 0$, $\ga_i\in (0,1/2)$ and $a_i\geq 2$.  
It follows that $\ga$ is given as the infinite continued fraction 
\[\ga=a_{-1}+\cfrac{\gep_0}{a_0+\cfrac{\gep_1}{a_1+\cfrac{\gep_2}{a_2+\dots}}}.\]

Recall the class of high type numbers $\irr$ defined in the introduction. 
Fix an integer 
\[N\geq 1/\ga^*+1/2.\] 
Using the formula $\ga_{i-1}=1/(a_{i-1}+\gep_i\ga_i)$, $\ga\in \irr$ implies that for all $i\geq 0$ we have $\ga_{i}\in (0,\ga^*]$.
We also need to assume the constant $N$ is large enough so that 
\begin{equation}\label{E:high-type-restriction}
N\geq 2 \Bk+\hat{\Bk}+4.5.
\end{equation}
This guarantees that for every $n \geq 0$, we have $\ga_n \leq 1/(\Bk+\hat{\Bk}+4)$ needed in Lemma~\ref{L:enough-iterates}. 

Let $\ga\in \irr$ and $f_\ga \in \IS_\ga \cup \{Q_\ga\}$. 
Define 
\[
f_0= 
\begin{cases}
f_{\ga} & \text{ if } \gep _0=+1 \\
s\circ f_{\ga} \circ s   &   \text{ if } \gep_0=-1 
\end{cases}
,\]
where $s(z)=\ol{z}$ denotes the complex conjugation.
Now, $f_0$ has asymptotic rotation $\ga_0\in (0,\ga^*]$ at $0$. 
Then, by Theorem~\ref{T:Ino-Shi2}, we may inductively define the sequence of maps 
\[
f_{n+1}= 
\begin{cases}
\rr (f_n) & \text{ if } \gep _n=-1 \\
s \circ \rr(f_n) \circ s   &   \text{ if } \gep_n=+1 
\end{cases}
.\] 
Let $U_n=U_{f_n}$ denote the domain of definition of $f_n$, for $n\geq 0$. 
It follows that, for every $n\geq 1$,  
\[f_n:U_n\to \cc, f_n\in \IS_{\ga_n},   f_n(0)=0, f_n'(0)= e^{2\pi \ga_n \B{i}}.\]

The reason for considering the above notion of continued fractions instead of the standard one is that the 
set of high type numbers in this expansion is strictly bigger than the set of high type numbers in the 
standard expansion. 
It is the nature of near parabolic renormalization that makes this notion of continued fraction more 
suitable to work with. 
\section{Symbolic dynamics near the attractor}\label{S:tower}
\subsection{Changes of coordinates}\label{SS:change-coordinates}
Recall the sequence of maps $f_n$, $n\geq 0$, defined in Section~\ref{S:continued-fraction--renormalizations}. 
In this section we shall define some changes of coordinates between the dynamic planes of these maps. 
Because of the complex conjugation $s$ that appears in the definition of the sequence $f_n$, 
extra care is needed in defining these changes of coordinates. 

For $n\geq 0$, let $\Phi_n=\Phi_{f_n}$ denote the Fatou coordinate of $f_n\colon U_n\ra \cc$ 
defined on the set $\p_n=\p_{f_n}$; see Theorem~\ref{T:Ino-Shi1} and Proposition~\ref{P:petal-geometry} and the definition after them. 

For every $n\geq 0$, let $\C{C}_n$ and $\Csh_n$ denote the corresponding sets for $f_n$ defined in \eqref{E:sector-def} 
(i.e., replace $h$ by $f_n$). 
Denote by $k_n$ the smallest positive integer with  
\[S_n^0=\C{C}_n^{-k_n}\cup (\Csh_n)^{-k_n}\subset 
\{z\in \p_n \mid 1/2 < \Re \Phi_n(z)< a_n -\Bk-1/2\}.\]
By definition, the critical value of $f_n$ is contained in $f_n\co{k_n}(S_n^0)$.
For each $n\geq 0$ we define the set 
\[\C{D}_n:= \gF_n(\C{P}_n) \cup \bigcup_{j=0}^{k_n+\B{k}+\hat{\B{k}}+2} (\gF_n(S_n^0)+j).\]
With $h=f_n$ in Section~\ref{SS:extending-F-coord} and Lemma~\ref{L:enough-iterates}, 
we have holomorphic maps  
\[\gF_n^{-1}: \C{D}_n \to \Dom f_n \setminus \{0\},\]
such that 
\[\gF_n^{-1}(w+1)= f_n \circ \gF_n^{-1}(w), \quad \forall w, w+1 \in \C{D}_n. \]
We denote the corresponding lifts $\gc_{f_n,0}$ by $\gc_{n,0}$. 
Then, note that for $n\geq 1$, 
\begin{equation*}
\forall w \in \C{D}_n, \quad 
\begin{cases}
\ex \circ \gc_{n,0}(w)= \gF_n^{-1}(w), & \tif \gep_n= -1 \\
\ex \circ \gc_{n,0}(w)= s \circ \gF_n^{-1}(w), & \tif \gep_n= +1,
\end{cases}
\end{equation*}
and 
\begin{equation}\label{E:width-of-image-chi}
\gc_{n,0}(\C{D}_n) \subset \{w\in \D{C} \mid  1 \leq \Re w \leq \hat{\B{k}}+2, \Im w > -2\}\subset \gF_{n-1}(\C{P}_{n-1}).
\end{equation}
Each $\gc_{n,0}$ is either holomorphic or anti-holomorphic, depending on the sign of $\gep_n$.
Define the maps $\gc_{n,i}= \gc_{n,0}+i$, for $i \in \D{Z}$. 
The condition \eqref{E:high-type-restriction} on $\ga$ implies that 
for all $n\geq 1$ and all integers $i$ with $0 \leq i \leq a_{n-1}$, we have 
\begin{equation}
\gc_{n,i}: \C{D}_n \to \C{D}_{n-1}.
\end{equation}
Indeed, we show in the next lemma that an stronger form of inclusion holds. 
 
\begin{lem}\label{L:well-contains}
There exists a constant $\gd_0>0$, depending only on the class $\IS$, such that for every 
$n\geq 1$ and every $i$ with $0\leq i \leq a_{n-1}$, we have 
\[\forall w\in \gc_{n,i}(\C{D}_n), B_{\gd_0}(w) \subset \C{D}_{n-1}.\]
\end{lem}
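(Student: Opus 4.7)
Plan: My strategy is to combine the geometric constraint on $\gc_{n,0}(\C{D}_n)$ from \refE{E:width-of-image-chi} with pre-compactness of the class $\IS$. By \refE{E:width-of-image-chi}, $\gc_{n,0}(\C{D}_n)$ is contained in the fixed half-strip
\[
K:=\{w\in\D{C}\mid 1\leq \Re w\leq \hat{\B{k}}+2,\ \Im w>-2\},
\]
so $\gc_{n,i}(\C{D}_n)\subset K+i$; on the other hand, by \refP{P:petal-geometry} the domain $\C{D}_{n-1}$ contains the open vertical strip $\gF_{n-1}(\p_{n-1})=\{w\in\D{C}\mid 0<\Re w<a_{n-1}-\B{k}\}$. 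I split the admissible range $0\leq i\leq a_{n-1}$ into a \emph{bulk range} $0\leq i\leq a_{n-1}-\B{k}-\hat{\B{k}}-3$ (non-empty by the high-type condition \refE{E:high-type-restriction}) and a \emph{boundary range} $a_{n-1}-\B{k}-\hat{\B{k}}-2\leq i\leq a_{n-1}$, consisting of at most $\B{k}+\hat{\B{k}}+3$ integers near $a_{n-1}$.

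The bulk range is handled by direct inspection. For $i$ in that range, $K+i$ lies inside the open strip $\gF_{n-1}(\p_{n-1})$ with horizontal distance at least $1$ from both vertical edges; since this strip is unbounded vertically, $B_{1/2}(w)\subset \gF_{n-1}(\p_{n-1})\subset \C{D}_{n-1}$ for every $w\in K+i$, and any $\gd_0\leq 1/2$ works in this range.

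The boundary range is handled by pre-compactness. For each fixed $r\in\{0,1,\dots,\B{k}+\hat{\B{k}}+3\}$, the quantity
\[
\gd_r(f_{n-1},f_n)\ :=\ \inf\bigl\{\operatorname{dist}(w,\D{C}\setminus \C{D}_{n-1})\mid w\in \gc_{n,a_{n-1}-r}(\C{D}_n)\bigr\}
\]
depends continuously on the pair $(f_{n-1},f_n)$ in the pre-compact family $\bigcup_{\ga\in[-\ga^*,\ga^*]}(\IS_\ga\cup\{Q_\ga\})$, exploiting the continuous dependence of the Fatou coordinate, the sector $S_h^0$, the integer $k_h$, the domain $\C{D}_h$, and the lift $\gc_{h,0}$ on $h$ (as in the proof of \refP{P:well-contained-in-domain}). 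For each fixed triple, the inclusion $\gc_{n,i}(\C{D}_n)\subset \C{D}_{n-1}$ stated immediately before the lemma, combined with the continuous extension of $\gc_{n,0}$ to the strictly larger domain $\C{D}_n'\supset \C{D}_n$ provided by \refL{L:extending-inverse-F-coord} and \refL{L:enough-iterates}, shows that the closure of $\gc_{n,i}(\C{D}_n)$ is compactly contained in the interior of $\C{D}_{n-1}$, so $\gd_r>0$; compactness then promotes this pointwise positivity to a uniform lower bound $\gd_0'>0$. Setting $\gd_0:=\min(1/2,\gd_0')$ completes the proof.

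The most delicate aspect is ensuring that all the continuity and extension arguments survive in the parabolic limit $\ga=0$; in this direction, however, $a_{n-1}\to\infty$ so the boundary range becomes empty and only the bulk estimate is needed. This parabolic continuity is built into the Inou--Shishikura framework \cite{IS06} used throughout the paper, and once it is granted, the compactness argument is routine and the entire proof essentially reduces to the simple geometric observation carried out in the bulk range.
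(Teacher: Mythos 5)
Your bulk-range observation is correct and disposes of $0\leq i\leq a_{n-1}-\B{k}-\hat{\B{k}}-3$ painlessly, since there $B_{1/2}(\gc_{n,i}(\C{D}_n))$ sits inside the full vertical strip $\gF_{n-1}(\C{P}_{n-1})\subset\C{D}_{n-1}$. But the boundary range is where the entire content of the lemma lies, and your treatment of it has two genuine gaps. First, the assertion that ``the closure of $\gc_{n,i}(\C{D}_n)$ is compactly contained in the interior of $\C{D}_{n-1}$'' is false: both sets are unbounded in the $+\Im$ direction (as $\Im\zeta\to+\infty$ in $\C{D}_n$ one has $\gF_n^{-1}(\zeta)\to 0$, hence $\Im\gc_{n,0}(\zeta)\to+\infty$), so even the pointwise positivity of your $\gd_r$ requires an argument at the top end, and ``compactness promotes pointwise positivity to a uniform bound'' does not apply to these non-compact sets. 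Second, your escape route in the parabolic limit is wrong: the boundary range $a_{n-1}-\B{k}-\hat{\B{k}}-2\leq i\leq a_{n-1}$ always contains exactly $\B{k}+\hat{\B{k}}+3$ integers, however large $a_{n-1}$ is, so it never becomes empty as $\ga_{n-1}\to 0$; uniformity of $\gd_r$ in that limit is precisely what must be proved, and it is not delivered by abstract continuity (note that $a_{n-1}=\langle 1/\ga_{n-1}\rangle$, $k_{n-1}$, and hence $\C{D}_{n-1}$ itself vary discontinuously with the map).

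The missing ingredient is the one the paper uses. By \refP{P:well-contained-in-domain} there is a uniform $\gd$ with $B_\gd(\gF_n^{-1}(\C{D}_n))\subset\Dom f_n$, and $\Dom f_n$ coincides with $\ex\circ\gF_{n-1}(S_{n-1}^0)$ by the definition of renormalization. Since $\ex\circ\gc_{n,0}$ equals $\gF_n^{-1}$ (or $s\circ\gF_n^{-1}$) and $|\ex'|$ is uniformly bounded on $\{\Im w>-2\}$, this Euclidean collar pulls back through the covering $\ex$ to a uniform collar $B_{\gd_0}(\gc_{n,0}(\C{D}_n))\subset\gF_{n-1}(S_{n-1}^0)+\D{Z}$, hence the same for every integer translate $\gc_{n,i}(\C{D}_n)$; this is what controls both the top end and the part of the image protruding past the right edge of $\gF_{n-1}(\C{P}_{n-1})$. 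One then checks from \refE{E:width-of-image-chi} and $k_{n-1}\geq 2$ that every $w\in\gc_{n,i}(\C{D}_n)$ satisfies $1\leq\Re w\leq\hat{\B{k}}+2+a_{n-1}<(a_{n-1}-\B{k}-1/2)+k_{n-1}+\B{k}+\hat{\B{k}}+1/2$, so that only translates $\gF_{n-1}(S_{n-1}^0)+j$ with $0\leq j\leq k_{n-1}+\B{k}+\hat{\B{k}}+2$ are met, and these lie in $\C{D}_{n-1}$ by its very definition. As written, your argument never uses these extra translates in the definition of $\C{D}_{n-1}$, yet for $i$ near $a_{n-1}$ the set $\gc_{n,i}(\C{D}_n)$ genuinely leaves the strip $\gF_{n-1}(\C{P}_{n-1})$ and lands in exactly that region.
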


\begin{proof}
By Equation~\eqref{E:width-of-image-chi}, $\gc_{n,0}(\C{D}_n) \subset \C{D}_{n-1}$. 
By Proposition~\ref{P:well-contained-in-domain}, $B_\gd(\gF_n^{-1}(\C{D}_n))$ 
is contained in the domain of $f_n$ and $\ex \circ \gF_{n-1} (S_{n-1}^0)= \Dom f_n$.
This implies that there is a uniform constant $\gd_0$ such that  
\[B_{\gd_0}(\gc_{n,0}(\C{D}_n)) \subset  \gF_{n-1}(S_{n-1}^0)+ \D{Z}.\]
On the other hand, by the first inclusion in Equation~\eqref{E:width-of-image-chi} and the lower 
bound on $k_n$ in Equation~\eqref{E:bounded-remaining}, for all $n\geq 1$, 
all integers $i$ with $0 \leq i \leq a_n$, and for all $w\in \gc_{n,i} (\C{D}_n)$ we must have 
\[
1\leq \Re w \leq \hat{\Bk}+2 + a_n 
\leq \hat{\Bk} + k_n+ a_n  
< (a_n -\Bk -1/2) + k_n +\Bk +\hat{\Bk} +1/2. 
\] 
This finishes the proof of the proposition by making $\ga_0$ less than or equal to $1/2$. 
\end{proof}

For $n \geq 1$, define $\psi_n\colon \p_{n}\ra \p_{n-1}$ as 
\[\psi_n =  \gF_{n-1}^{-1}\circ  \gc_{n,0}   \circ  \gF_n.\]
Each $\psi_n$ is a holomorphic or anti-holomorphic map, depending on the sign of 
$\gep_n$, and extends continuously to $0\in \partial \p_n$ by mapping it to $0$. 
Define the compositions 
\begin{gather*} 
\gY_1= \psi_1: \C{P}_1 \to \C{P}_0, \gY_2=\psi_1 \circ \psi_2 \colon \p_2 \to \p_0, \\
\Psi_n=\psi_1 \circ \psi_2 \circ \dots \circ \psi_n\colon \p_n \to \p_0, \tfor n\geq 3.
\end{gather*}
These are holomorphic or anti-holomorphic maps, depending on the sign of $(-1)^n \gep_1 \gep_2 \dots \gep_n$.

For every $n\geq 0$ and $i\geq 2$, define the sectors 
\begin{gather*}
S_n^1=\psi_{n+1}(S_{n+1}^0)\subset \p_n, \\
S_n^i=\psi_{n+1}\circ \dots \circ\psi_{n+i}(S_{n+i}^0)\subset \p_n, \text{ for } i\geq 2.
\end{gather*}
All these sectors contain $0$ on their boundaries. 
For the readers convenience, the lower index of each map $\gy_n$, $\gY_n$, $\gc_{n,i}$ 
determines the level of its domain of definition, that is, for example, $\gy_n$ is defined on a set 
that is on the dynamic plane of $f_n$.    
Similarly, the set $S_n^i$ is contained in the dynamic plane of $f_n$ (and is at depth $i$).
However, we shall mainly work with $S_i^0$ and $S^i_0$, for $i\geq 0$. 

There are two collections of changes of coordinates $\gy_n$ and $\gY_n$, for $n\geq 1$, 
as well as $\gc_{n,i}$, for $n\geq 1$ and $0\leq i \leq a_n$ that function in parallel. 
The former set of changes of coordinates are more convenient for the combinatorial 
study of the dynamics of the map using the tower of maps $f_n$. 
This is presented in Sections~\ref{SS:orbit-relations} and \ref{SS:petals}.
The latter set of changes of coordinates are more suitable for the analytic aspects of 
the associated problems. 
This analysis appears in Section~\ref{S:geometry-arithmetic}. 
The relations between the two collection are discussed in Section~\ref{SS:lifts-vs-iterates}.  

%%%%%%%%%%%%%%%%%%%%%%%%%%%%%%%%%%%%%%%%%%%%%%%%%%

\subsection{Orbit relations}\label{SS:orbit-relations}
By the definition of renormalization, one time iterating $\rr(h)$ corresponds to several times iterating $h$, 
via the changes of coordinates between the dynamic planes of $h$ and $\rr(h)$. 
This is made more precise in the next lemma. 

\begin{lem}\label{L:renorm}
Let $n\geq 0$ and $z\in\p_n$ be a point with $w=\ex\circ \Phi_n(z)\in \Dom \rr(f_n)$.
There exists an integer $\ell_z$ with  $1\leq \ell_z \leq a_n -\Bk+k_n-1/2$, 
such that
\begin{itemize}
\item the orbit $z,f_{n}(z),f_{n}\co{2}(z),\dots,f_{n}\co{\ell_z}(z)$ is defined, and $f_{n}\co{\ell_z}(z)\in \p_{n}$;
\item $\ex\circ \Phi_{n}(f_{n}\co{\ell_z}(z))= \rr(f_n)(w)$.
\end{itemize}
\end{lem}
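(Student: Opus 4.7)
The plan is to construct $\ell_z$ by identifying a specific lift $z' \in S_n^0$ of $w$ whose $k_n$-th iterate is captured by the orbit of $z$, using the definition of $\rr(f_n)$ together with the translation equation of Theorem~\ref{T:Ino-Shi1}(d).

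First, since $w \in \Dom \rr(f_n) = \darun(\ex(\Phi_n(S_n^0)))$, choose a lift $\zeta \in \Phi_n(S_n^0)$ with $\ex(\zeta) = w$ and set $z' := \Phi_n^{-1}(\zeta) \in S_n^0$. By the construction of the near-parabolic renormalization, the orbit $z', f_n(z'),\dots, f_n\co{k_n}(z')$ is defined, $f_n\co{k_n}(z') \in \C{C}_n \cup \Csh_n \subset \p_n$, and
\[
\rr(f_n)(w) = \ex\bigl(\Phi_n(f_n\co{k_n}(z'))\bigr).
\]
The identity $\ex \circ \Phi_n(z) = \ex(\zeta)$ forces $m := \Phi_n(z) - \zeta$ to be an integer. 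The defining inclusion $S_n^0 \subset \{1/2 < \Re \Phi_n < a_n - \Bk - 1/2\}$, the uniform bound $k_n \leq \Bk''$ of Proposition~\ref{P:turning}, and the high-type condition \eqref{E:high-type-restriction} together imply that $\Phi_n(S_n^0)$ has real width at least $k_n$, so the integer lifts of $w$ inside $\Phi_n(S_n^0)$ span an arithmetic progression of length at least $k_n$. Replacing $\zeta$ by $\zeta + j$ for a suitable integer $j$ (still keeping $\zeta+j$ in $\Phi_n(S_n^0)$), we may assume $m \leq k_n - 1$; simultaneously $m > -(a_n - \Bk - 1/2)$ from the real parts of $\zeta$ and $\Phi_n(z)$.

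Set $\ell_z := k_n - m$, so that $1 \leq \ell_z < a_n - \Bk + k_n - 1/2$. To show $f_n\co{\ell_z}(z) = f_n\co{k_n}(z')$, split on the sign of $m$. If $m \geq 0$, then for each $0 \leq j \leq m$ the point $\Phi_n^{-1}(\zeta + j)$ lies in $\p_n$ (since $\Re \zeta + j \leq \Re \Phi_n(z) < a_n - \Bk$), and Theorem~\ref{T:Ino-Shi1}(d) yields $\Phi_n^{-1}(\zeta + j) = f_n\co{j}(z')$; injectivity of $\Phi_n$ on $\p_n$ then gives $f_n\co{m}(z') = z$, whence $f_n\co{k_n}(z') = f_n\co{k_n - m}(z)$. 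If $m < 0$, the symmetric argument of iterating $z$ forward $|m|$ steps within $\p_n$ (whose $\Re \Phi_n$ values interpolate between $\Re \Phi_n(z)$ and $\Re \zeta$, all in $(0, a_n - \Bk)$) gives $z' = f_n\co{|m|}(z)$, and again $f_n\co{k_n}(z') = f_n\co{k_n - m}(z)$. Thus $f_n\co{\ell_z}(z) = f_n\co{k_n}(z') \in \C{C}_n \cup \Csh_n \subset \p_n$ and $\ex \circ \Phi_n(f_n\co{\ell_z}(z)) = \rr(f_n)(w)$. Definedness of the orbit $z, f_n(z),\dots, f_n\co{\ell_z}(z)$ is inherited from this chain of iterates.

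The main obstacle is the arithmetic selection step producing a $\zeta \in \Phi_n(S_n^0)$ with $m \leq k_n - 1$; it rests on the width estimate for $\Phi_n(S_n^0)$ indicated above, which in turn relies on the high-type condition together with Propositions~\ref{P:petal-geometry} and~\ref{P:turning}.
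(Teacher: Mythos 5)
Your overall strategy coincides with the paper's: pick a lift $\zeta\in\Phi_n(S_n^0)$ of $w$, observe that $\Phi_n(z)-\zeta$ is an integer $m$, set $\ell_z=k_n-m$, and transport everything with the Abel equation. The upper bound on $\ell_z$ and the identification $f_n\co{\ell_z}(z)=f_n\co{k_n}(z')$ are handled correctly (indeed more carefully than in the paper, since you split on the sign of $m$).

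There is, however, a genuine gap in your justification of the lower bound $\ell_z\geq 1$, i.e.\ of $m\leq k_n-1$. You claim that $\Phi_n(S_n^0)$ has real width at least $k_n$ and that the integer lifts of $w$ inside $\Phi_n(S_n^0)$ form an arithmetic progression of length at least $k_n$, among which you may reselect $\zeta$. This is false: the set $\C{C}_n\cup\Csh_n$ has width one in the $\Phi_n$-coordinate, and $E_n=\Phi_n\circ f_n\co{k_n}\circ\Phi_n^{-1}$ carries $\Phi_n(S_n^0)$ onto it commuting with $\zeta\mapsto\zeta+1$, so $\Phi_n(S_n^0)$ has width $O(1)$ (roughly one on the $\sharp$-part and two on the $\C{C}$-part), independent of $k_n$; in fact $\ex$ is essentially injective on $\darun\Phi_n(S_n^0)$ — this is precisely what makes $\rr(f_n)$ single-valued — so generically there is a \emph{unique} lift $\zeta$ of $w$ in $\Phi_n(S_n^0)$ and no freedom to translate it. Neither does the cited combination of Proposition~\ref{P:turning} and the high-type condition yield a lower bound on the width. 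What actually forces $m\leq k_n-1$ is the \emph{position} of $S_n^0$, not its width: the first preimage $\C{C}_n^{-1}\cup(\Csh_n)^{-1}$ re-enters the petal near its right-hand edge, so $\Re\Phi_n$ on $S_n^0$ is bounded below by roughly $\ga_n^{-1}-\Bk-k_n$, while $\Re\Phi_n(z)<\ga_n^{-1}-\Bk$; the difference is therefore less than about $k_n$. The paper states the corresponding inequality $\ell\geq -k_n+1$ without proof, so you are not alone in leaving this point implicit, but the mechanism you supply for it does not work and should be replaced by the positional estimate on $\Re\Phi_n(S_n^0)$.
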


\begin{proof}
Since $w\in \Dom \rr(f_{n})$, $\rr(f_n)(w)$ is defined.
By the definition of renormalization, there are 
$\zeta \in \Phi_{n}(S^0_{n})$ and $\zeta' \in \Phi_{n}( \C{C}_{n}\cup\Csh_{n})$, 
such that
\begin{equation*} 
\ex(\zeta)=w,\quad \ex(\zeta')=\rr(f_n)(w),\text { and} \quad \zeta'=\Phi_{n}\circ f_{n}\co{k_{n}}\circ\Phi_{n}^{-1}(\zeta).
\end{equation*}
Since $\ex (\Phi_{n}(z))=w$, there exists an integer $\ell$ with 
\[-k_{n}+1\leq\ell\leq a_n -\Bk-1/2,\] 
such that $\Phi_{n}(z)+\ell=\zeta$.

By the functional equation in Theorem~\ref{T:Ino-Shi1}-d), we have
\begin{align*}
\zeta' &=\Phi_{n}\circ f_{n}\co{k_{n}}\circ\Phi_{n}^{-1}(\zeta)\\
          &=\Phi_{n}\circ f_{n}\co{k_{n}}\circ\Phi_{n}^{-1}(\Phi_{n}(z)+\ell)\\
          &=\Phi_{n}\circ f_{n}\co{k_{n}+\ell}(z).
\end{align*}

Letting $\ell_z=k_{n}+\ell$, we have 
\begin{gather*} 
1 \leq  \ell_z \leq k_{n}+ a_n -\Bk-1/2, \\
f_{n}\co{\ell_z}(z)=\Phi_{n}^{-1}(\zeta') \in \p_{n},\\
\ex\circ\Phi_{n}(f_{n}\co{\ell_z}(z))
=\ex \circ \Phi_{n}(\Phi_{n}^{-1}(\zeta'))
=\ex (\zeta')
=\rr(f_n)(w).
\end{gather*}
\end{proof}

On the practical side, if $\gep_n=-1$, then $\rr(f_n)=f_{n+1}$ and the above lemma holds for $f_{n+1}$ instead of $\rr(f_n)$. 
If $\gep=+1$, then one may replace $\rr(f_n)$ by $f_{n+1}$ and $w$ by $s(w)$ in the above lemma. 

It should be clear that there are many choices for $\ell_z$ in the above lemma. 
We need precise formulas relating the number of iterates on consecutive renormalization levels. 
This may be achieved by imposing some condition on the beginning and termination point of an orbit of $f_n$ that reduces to 
one iterate of $\rr(f_n)$.  
Here, we ask an orbit of $f_n$ to start and terminate in $\psi_{n+1}(\p_{n+1})$. 
Then combining these relations together, for several values of $n$, we relate appropriate 
iterates of $f_0$ to one iterate of $f_n$, through the change of coordinates $\gY_n$. 

Let $R_\ga$ denote the rotation of angle $2 \pi \ga$ about $0$; $R_\ga(z)=e^{2\pi \ga i} z$, 
$z\in \D{C}$.
The \textit{closest return times} of $R_\ga$ are define as the sequence of positive integers 
$q_0< q_1 < q_2< \dots$ as follows.
Let $q_0=1$, $q_1=a_0$, and for $i\geq 1$, $q_{i+1}$ is the smallest integer bigger than 
$q_{i-1}$ such that $|R_\ga\co{q_{i+1}}(1)-1| < |R_\ga\co{q_i}(1)-1|$. 

Define 
\[\p_n'=\{w\in\p_n\mid 0 <\Re \Phi_{n}(w) < \ga_n^{-1} -\Bk-1\}.\]
We have $f_n(\p_n') \subset \p_n$.

\begin{lem}\label{L:one-level}
For every $n\geq 1$ we have 
\begin{itemize}
 \setlength{\itemsep}{.2em}
\item[(a)]for every $w\in \p_n'$, $f_{n-1}\co{a_{n-1}}\circ \psi_n(w)=\psi_n\circ f_n(w)$, and 
\item[(b)]for every $w\in S^0_n$, $f_{n-1}\co{(k_n a_{n-1}+1)}
\circ \psi_n(w)=\psi_n\circ f_n\co{k_n}(w)$. 
\end{itemize}
\end{lem}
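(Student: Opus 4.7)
I plan to prove both parts by unfolding the definition $\psi_n=\gF_{n-1}^{-1}\circ\gc_{n,0}\circ\gF_n$ and combining three ingredients: (i) the Abel functional equation at level $n$, (ii) the renormalization semiconjugacy $\rr(f_{n-1})\circ\ex=\ex\circ E_{f_{n-1}}$ linking levels $n-1$ and $n$, and (iii) the normalization of the lift $\gc_{n,0}$ to the strip $\{1\le \Re w\le \hat{\Bk}+2\}$ established in \refP{P:sector-geometry}. Together with the arithmetic identity $\ga_{n-1}^{-1}=a_{n-1}+\gep_n\ga_n$ from the modified continued fraction, these ingredients determine the single integer that governs the number of $f_{n-1}$-iterates in each identity.

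For part (a), $w\in\p_n'$ makes Abel applicable (\refT{T:Ino-Shi1}(d)), giving
\[\psi_n(f_n(w))=\gF_{n-1}^{-1}\bigl(\gc_{n,0}(\gF_n(w)+1)\bigr).\]
Since $\ex\circ\gc_{n,0}$ equals $\gF_n^{-1}$ or $s\circ\gF_n^{-1}$ depending on the sign of $\gep_n$, and $f_n$ is $\rr(f_{n-1})$ or $s\circ\rr(f_{n-1})\circ s$ correspondingly, the semiconjugacy together with the relation $E_{f_{n-1}}(z+1)=E_{f_{n-1}}(z)+1$ forces $\gc_{n,0}(\gF_n(w)+1)=E_{f_{n-1}}(\gc_{n,0}(\gF_n(w)))+M$ for an integer $M$ that is locally constant on $\C{D}_n$. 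Unwinding $E_{f_{n-1}}=\gF_{n-1}\circ f_{n-1}\co{k_{n-1}}\circ\gF_{n-1}^{-1}$ and applying $\gF_{n-1}^{-1}$ converts this into $\psi_n(f_n(w))=f_{n-1}\co{k_{n-1}+M}(\psi_n(w))$, so (a) reduces to showing $M=a_{n-1}-k_{n-1}$. I would pin this integer down by combining the strip localization of $\gc_{n,0}$ with the geometry of $f_{n-1}$-iterates of $\psi_n(w)$: these first traverse $\p_{n-1}$ by Abel shifts of $+1$ each, then exit $\p_{n-1}$ and wrap around to re-enter near $\C{C}_{n-1}$; the continued-fraction identity and the high-type bound \eqref{E:high-type-restriction} force the unique admissible integer to be $a_{n-1}-k_{n-1}$.

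For part (b), the same template applies with $f_n\co{k_n}$ replacing $f_n$; the twist is that for $w\in S_n^0$ the intermediate iterates $f_n\co{j}(w)$ leave $\p_n$ for $1\le j\le k_n-1$, so Abel at level $n$ cannot be iterated directly. Instead I would use the identity $\gF_n(f_n\co{k_n}(w))=E_{f_n}(\gF_n(w))$, encoding the $k_n$ level-$n$ iterates as a single jump in the Fatou coordinate, and then repeat the lift computation from part (a): the correspondence of part (a) applied $k_n$ times contributes $k_n a_{n-1}$ $f_{n-1}$-iterates, and the offset between the arrival set $\C{C}_n\cup\Csh_n$ and the starting set $S_n^0$ contributes the extra single iterate, producing the total count $k_n a_{n-1}+1$. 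The main obstacle I anticipate in both parts is the clean identification of the integer ambiguity; this requires combining the strip localization of $\gc_{n,0}$, the asymptotic behavior of the Fatou coordinate across $\p_{n-1}$, and the continued-fraction arithmetic, with the high-type hypothesis \eqref{E:high-type-restriction} guaranteeing that only one integer is admissible.
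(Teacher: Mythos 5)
Your overall architecture is sound and matches the standard treatment: unfolding $\psi_n=\gF_{n-1}^{-1}\circ\gc_{n,0}\circ\gF_n$, the Abel equation, and the semiconjugacy $\ex\circ E_{f_{n-1}}=\rr(f_{n-1})\circ\ex$ do show that $\psi_n\circ f_n=f_{n-1}\co{m}\circ\psi_n$ on $\p_n'$ for a single integer $m$ (constant by connectedness), so both parts reduce to identifying $m$. (A small inaccuracy along the way: $E_{f_{n-1}}$ is only defined on $\gF_{n-1}(S_{f_{n-1}})$, so your displayed identity needs an integer translate on the input as well as on the output; this is exactly the content of \refL{L:renorm}.) The genuine gap is at the crux: the claim that ``the strip localization and the high-type bound force the unique admissible integer to be $a_{n-1}-k_{n-1}$'' is an assertion, not an argument. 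Localizing $\gc_{n,0}(\C{D}_n)$ in a strip of width $\hat{\Bk}+1$ only pins $m$ down to within a window of comparable width around $1/\ga_{n-1}$; it cannot by itself distinguish $a_{n-1}=\langle 1/\ga_{n-1}\rangle$ from $a_{n-1}\pm1$, and that distinction is precisely what the lemma asserts. What is needed is the return combinatorics, and the paper's route is to verify the identity near $0$, where $f_{n-1}$ and $f_n$ are compared with the rigid rotations $R_{\ga_{n-1}}$ and $R_{\ga_n}$ and the arithmetic $1/\ga_{n-1}=a_{n-1}+\gep_n\ga_n$ becomes the first-return count, and then to propagate the identity to all of $\p_n'$ by uniqueness of analytic (or anti-analytic) continuation on a connected set; the details are deferred to \cite{Ch10-I}.

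The same issue recurs in part (b), compounded by the fact that you cannot ``apply part (a) $k_n$ times'': part (a) requires the point to lie in $\p_n'$, and for $w\in S_n^0$ the intermediate iterates $f_n\co{j}(w)$, $1\le j\le k_n-1$, leave $\p_n$. You notice this and propose to use $\gF_n(f_n\co{k_n}(w))=E_{f_n}(\gF_n(w))$ instead, but the extra ``$+1$'' is again only attributed to ``the offset between the arrival and starting sets'' without computation. The clean fix, consistent with how the paper proves the analogous \refL{L:the-first-lift}, is to verify the identity of part (b) on a boundary curve of $S_n^0$ landing at $0$ (where it follows from the rotation comparison and the already established part (a)) and then extend to all of $S_n^0$ by uniqueness of (anti-)analytic continuation.
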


\begin{lem}\label{L:conjugacy}
For every $n\geq 1$ we have 
\begin{itemize}
 \setlength{\itemsep}{.2em}
\item[(a)]for every $w\in \p_n'$, $f_0\co{q_n}\circ\Psi_n(w)=\Psi_n \circ f_n(w)$,
\item[(b)]for every $w\in S^0_n$, $f_0\co{(k_nq_n+q_{n-1})}\circ\Psi_n(w)=\Psi_n\circ f_n\co{k_n}(w)$, and
\item[(c)]similarly, for every $m<n-1$, $f_n\colon \p_n' \ra \p_n$ and $f_n\co{k_n}:S_n^0\ra (\C{C}_n\cup\Csh_n)$ 
are conjugate to some appropriate iterates of $f_m$ on the set $ \psi_{m+1} \circ \dots \circ \psi_n(\p_n)$. 
\end{itemize}
\end{lem}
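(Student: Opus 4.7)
The plan is to prove (a), (b), (c) by induction on $n$, using \refL{L:one-level} as the one-step reduction from level $n$ to level $n-1$, and then chaining that reduction via the inductive hypothesis down to level $0$. The base case $n=1$ is immediate: since $\gY_1=\gy_1$, $q_1=a_0$, and $q_0=1$, parts (a) and (b) are literally \refL{L:one-level}(a) and (b) specialized to $n=1$, and part (c) is vacuous.

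For the inductive step of part (a), given $w\in\p_n'$, I would first apply \refL{L:one-level}(a) to write
\[\gY_n\circ f_n(w)=\gY_{n-1}\circ f_{n-1}\co{a_{n-1}}\circ\gy_n(w).\]
The main step is to decompose the orbit $z_0=\gy_n(w),z_1,\dots,z_{a_{n-1}}$ of $f_{n-1}$ into blocks on which the inductive hypothesis applies directly. Since $\gF_{n-1}$ shifts by $1$ under $f_{n-1}$ inside $\p_{n-1}$, and \refP{P:sector-geometry} confines $\gc_{n,0}(\C{D}_n)$ to a bounded horizontal strip, the Fatou-coordinate arithmetic forces a unique index $j$ with $z_j\in S_{n-1}^0$; the $a_{n-1}-k_{n-1}$ remaining iterates (both before $z_j$ and after $z_{j+k_{n-1}}$) lie in $\p_{n-1}'$, while the $k_{n-1}$ consecutive steps from $z_j$ to $z_{j+k_{n-1}}$ form one renormalization cycle. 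Applying IH(a) to each plain step ($q_{n-1}$ iterates of $f_0$ apiece) and IH(b) to the renormalization cycle ($k_{n-1}q_{n-1}+q_{n-2}$ iterates), the total number of $f_0$-iterates is
\[(a_{n-1}-k_{n-1})q_{n-1}+\bigl(k_{n-1}q_{n-1}+q_{n-2}\bigr)=a_{n-1}q_{n-1}+q_{n-2}=q_n,\]
the last equality being the standard recurrence satisfied by the closest return times of $R_\ga$.

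For part (b), \refL{L:one-level}(b) converts $k_n$ iterates of $f_n$ on $S_n^0$ into $k_n a_{n-1}+1$ iterates of $f_{n-1}$ on $\gy_n(w)$. This orbit now makes $k_n$ excursions around $\p_{n-1}$ plus one extra step, decomposing into $k_n(a_{n-1}-k_{n-1})+1$ plain steps and $k_n$ renormalization cycles; the analogous accounting gives
\[\bigl[k_n(a_{n-1}-k_{n-1})+1\bigr]q_{n-1}+k_n(k_{n-1}q_{n-1}+q_{n-2})=k_n q_n+q_{n-1}\]
iterates of $f_0$, as required. Part (c) is obtained by running exactly the same induction only down to level $m$ rather than all the way to level $0$, the \emph{appropriate iterates} being the analogous counts at the intermediate depth $m$.

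The main obstacle is the combinatorial bookkeeping, namely verifying rigorously that the orbit $z_0,\ldots,z_{a_{n-1}}$ visits $S_{n-1}^0$ exactly once in case (a) and exactly $k_n$ times in case (b), with every other iterate lying in $\p_{n-1}'$ so that IH(a) applies. This forces combining the width bound from \refP{P:sector-geometry}, the location of $S_{n-1}^0$ near the upper end of $\p_{n-1}$ from \refSS{SS:renormalization-def}, and the high-type condition \eqref{E:high-type-restriction}, which together ensure that $a_{n-1}$ is large enough for the petal to be traversed cleanly with exactly one renormalization loop per excursion.
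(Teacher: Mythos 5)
Your proof takes a genuinely different route from the paper's. The paper does not argue combinatorially at all: it observes that both sides of each identity are holomorphic or anti-holomorphic on a connected set, verifies the identity in a neighbourhood of $0$ by comparing each $f_j$ with the rigid rotation $R_{\ga_j}$ (where the iterate count is classical closest-return arithmetic), and concludes by uniqueness of analytic continuation, deferring the details to \cite{Ch10-I} and \cite{Ch10-II}. Your induction instead chains \refL{L:one-level} down the tower. What the paper's route buys is precisely the avoidance of your ``main obstacle'': no orbit ever has to be tracked through $\p_{n-1}$. What your route buys is that it \emph{derives} the recursion $q_n=a_{n-1}q_{n-1}+q_{n-2}$ rather than importing it; both of your counts, $(a_{n-1}-k_{n-1})q_{n-1}+(k_{n-1}q_{n-1}+q_{n-2})$ in (a) and the analogous sum in (b), check out, and your block structure --- plain steps handled by IH(a), each passage through $S_{n-1}^0$ handled by IH(b) as a single indivisible block, which is forced since the intermediate points $z_{j+1},\dots,z_{j+k_{n-1}-1}$ leave $\p_{n-1}$ altogether --- is exactly right, with a correct base case at $n=1$.

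Two cautions before the argument is complete. (i) The claim that the $f_{n-1}$-orbit meets $S_{n-1}^0$ exactly once (resp.\ exactly $k_n$ times in part (b)), with every other visited point lying in $\p_{n-1}'$, is the real content of the lemma in your formulation and still needs a proof: the natural argument is that $E_{f_{n-1}}$ commutes with translation by $1$ and maps $\gF_{n-1}(S_{n-1}^0)$ onto $\{1/2\le\Re w\le 3/2,\ \Im w>-2\}$, so the integer translates of $\gF_{n-1}(S_{n-1}^0)$ tile the strip the orbit traverses and the horizontal orbit $\gF_{n-1}(z_0)+j$ selects a unique translate per lap; one must also dispose of orbit points landing on the shared boundary curves of consecutive sectors, and check the half-integer discrepancies between $a_{n-1}$ and $1/\ga_{n-1}$ so that the pre- and post-cycle iterates really lie in $\p_{n-1}'$. (ii) ``The standard recurrence satisfied by the closest return times'' is, for the signed continued fraction used here, $q_{n+1}=a_nq_n+\gep_nq_{n-1}$; the paper itself uses the $+$ sign throughout (e.g.\ in the proof of Proposition~\ref{P:lifts-vs-iterates}), so you are consistent with its conventions, but when some $\gep_i=-1$ the identification of your derived iterate count with the literal closest return times of $R_\ga$ is not automatic and deserves a sentence.
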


To find the correct number of iterates in the above lemmas, we compare the maps $f_n$ near $0$ 
to the rotations of angle $2\pi \ga_n$ about $0$. 
That is, the relations hold near zero, and hence, must hold on the region where the equations 
are defined. 
For a detail proof of the above lemmas one may refer to \cite{Ch10-I} or \cite{Ch10-II}. 

%%%%%%%%%%%%%%%%%%%%%%%%%%%%%%%%%%%%%%%%%%%%%%%%%%%%%%%%%%%%%%%%%%

\subsection{Petals covering the postcritical set}\label{SS:petals}
For every $n\geq 1$, we define the sets 
\[I^n_a=
\bigcup_{i=0}^{k_n+ a_n-\Bk-2} f_0\co{(i q_n)}(S_0^n),\quad 
I^n_b=f_0\co{q_{n-1}}(I^n_a).\]
Note that by Equation~\eqref{E:bounded-remaining} for all $n\geq 1$ we have $k_n\geq 2$.
Set 
\[I^n= I^n_a \cup I^n_b.\]
The set $I^n$ defined through the above mechanism has some crucial dynamical properties 
necessary for our purpose. 
We investigate these in the remaining of this section.

\begin{lem}\label{L:petal}
For every $n\geq 1$, the sets $I^n_a$, $I^n_b$, and $I^n$ are connected subsets of $\D{C}$. 
Moreover, each of $I^n_a$, $I^n_b$, and $I^n$ is bounded by piecewise smooth curves two of 
which land at $0$ at well-defined angles.
\end{lem}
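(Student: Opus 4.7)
The plan is to exploit the conjugacy relations of \refL{L:conjugacy} to transport the problem onto the Fatou coordinate plane of $f_n$, where the relevant set becomes a simple rectangle. Iterating \refL{L:conjugacy}(c) one has $f_0\co{(iq_n)}(S^n_0)=\Psi_n(f_n\co{i}(S_n^0))$ for $i=0,\dots,k_n+a_n-\Bk-2$ (the orbits stay in $\p_n'$ thanks to the definition of $k_n$ and the high-type assumption $a_n\geq N$), so
\[
I^n_a=\Psi_n\Bigl(\bigcup_{i=0}^{k_n+a_n-\Bk-2}f_n\co{i}(S_n^0)\Bigr)=\Psi_n\bigl(\gF_n^{-1}(R_n)\bigr),
\]
where $R_n:=\bigcup_{i=0}^{k_n+a_n-\Bk-2}(\gF_n(S_n^0)+i)$ is a union of adjacent width-one vertical strips in the Fatou plane of $f_n$, and $\gF_n^{-1}$ denotes the extended inverse of \refL{L:extending-inverse-F-coord}. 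Since $R_n$ is connected and both $\gF_n^{-1}$ and $\Psi_n$ are continuous, $I^n_a$ is connected; likewise $I^n_b=f_0\co{q_{n-1}}(I^n_a)$ is the continuous image of a connected set.

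For the connectedness of $I^n=I^n_a\cup I^n_b$ I would transfer to the Fatou plane of $f_{n-1}$ via $\Psi_n=\Psi_{n-1}\circ\psi_n$. The $\Psi_{n-1}^{-1}$-preimages of $I^n_a$ and $I^n_b$ then have $\gF_{n-1}$-images $\gc_{n,0}(R_n)$ and $\gc_{n,0}(R_n)+1$ respectively (the shift by $1$ records one iterate of $f_{n-1}$, via the next lift $\gc_{n,1}=\gc_{n,0}+1$). The crucial point is that $\gc_{n,0}$ compresses the very wide rectangle $R_n$, whose $\Re$-extent is of order $a_n$, into the bounded-width strip $\{1\leq\Re w\leq\hat{\Bk}+2\}$ of \refP{P:sector-geometry}. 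Since the dynamic image $\gF_n^{-1}(R_n)$ spans approximately one full rotation around $0$ in $\Dom f_n$, and one full turn around $0$ corresponds to a shift by $1$ under the covering $\ex$, one produces $w,w'\in R_n$ with $\gc_{n,0}(w)=\gc_{n,0}(w')+1$, giving the required intersection $\gc_{n,0}(R_n)\cap(\gc_{n,0}(R_n)+1)\neq\emptyset$ and hence connectedness of $I^n$ by pushing forward through the continuous maps $\gF_{n-1}^{-1}$ and $\Psi_{n-1}$.

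For the landing at $0$ the boundary of $R_n$ has two vertical edges at $\Re w=-k_n+1/2$ and $\Re w=a_n-\Bk-1/2$ extending to $\Im w=+\infty$, together with a bounded bottom arc at $\Im w=-2$. By \refP{P:landing-angle} (applied to the appropriate translates of vertical lines, using that $f_n$ is a conformal rotation at $0$), each vertical edge pulls back under the extended $\gF_n^{-1}$ to a curve landing at $0$ at a well-defined angle. Since $\Psi_n$ is built as a composition of the maps $\psi_k$, each holomorphic or anti-holomorphic and extending continuously to $0\mapsto 0$, the landing-angle property propagates through the renormalization tower inductively, yielding the two boundary curves of $I^n_a$ landing at $0$. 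The same argument applies to $I^n_b$, and in $I^n$ only the ``outer'' two of the four total landing curves remain as boundary (the other two lie in the overlap $I^n_a\cap I^n_b$ established in the previous paragraph). The main obstacle is the intersection claim of the second paragraph: a careful geometric analysis of $\gc_{n,0}$ on the full rectangle $R_n$, invoking both \refP{P:sector-geometry} and the covering structure of $\ex$, is needed to rigorously produce the pair $w,w'$.
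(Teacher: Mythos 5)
Your treatment of $I^n_a$, $I^n_b$ and of the landing angles is essentially the paper's argument: the two boundary curves of $S_n^0$ land at $0$ at well-defined angles (Proposition~\ref{P:landing-angle}) and are exchanged by $f_n$, hence those of $S_0^n=\gY_n(S_n^0)$ are exchanged by $f_0\co{q_n}$, so consecutive sectors $f_0\co{(iq_n)}(S_0^n)$ share a boundary curve and $I^n_a$ is connected, with the landing property propagated through the tower. Two cautions on your first paragraph, though: $\gY_n$ is only defined on $\p_n$, while the extended $\gF_n^{-1}(R_n)=\cup_i f_n\co{i}(S_n^0)$ leaves $\p_n$ (for $i<k_n$ these sets are the preimage components $\C{C}_n^{-k_n+i}\cup(\Csh_n)^{-k_n+i}$, not subsets of $\p_n$), so the display $I^n_a=\gY_n(\gF_n^{-1}(R_n))$ is not well-posed as written; the correct identity is the one you switch to in your second paragraph, $I^n_a=\gY_{n-1}\circ\gF_{n-1}^{-1}\circ\gc_{n,0}(R_n)$, i.e.\ Lemma~\ref{L:the-first-lift}(c). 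Also $\gF_n(S_n^0)$ is not a width-one vertical strip, only a connected set whose integer translates share boundary arcs; this does not affect the connectedness argument.

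The genuine gap is exactly the step you flag at the end: producing $w,w'\in R_n$ with $\gc_{n,0}(w)=\gc_{n,0}(w')+1$. This overlap is the entire content of the last assertion of the lemma (connectedness of $I^n=I^n_a\cup I^n_b$), so it cannot be deferred. The paper closes it at level $n$ rather than level $n-1$: since $f_n$ is asymptotic to the rotation $R_{\ga_n}$ at $0$ and the boundary curves of $S_n^0$ land at $0$ at well-defined angles exchanged by $f_n$, the sectors $f_n\co{i}(S_n^0)$, $0\leq i\leq k_n+a_n-\Bk-2$, sweep once around the origin; hence there exist a point $z$ in the interior of $S_n^0$, sufficiently close to $0$, and an integer $i$ with $k_n\leq i\leq k_n+a_n-\Bk-2$ such that $w=f_n\co{i}(z)\in S_n^0$. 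Pushing down with Lemma~\ref{L:conjugacy}, $w'=\gY_n(w)$ equals $f_0\co{(q_{n-1}+iq_n)}(\gY_n(z))$, so $w'\in S_0^n\subset I^n_a$ and simultaneously $w'\in f_0\co{q_{n-1}}\circ f_0\co{(iq_n)}(S_0^n)\subset I^n_b$. This recurrent point is equivalent to your lift-overlap claim (a self-intersection of $\gF_n^{-1}$ on $R_n$ forces $\gc_{n,0}(w)-\gc_{n,0}(w')\in\D{Z}\setminus\{0\}$, and the near-rotation comparison pins the shift to $1$), but the level-$n$ formulation is the one that can actually be verified directly; you should supply it rather than leave it as an acknowledged obstacle.
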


\begin{proof}
By Proposition~\ref{P:landing-angle}, the set $S_n^0$ is bounded by piecewise smooth curves two 
of which landing at $0$ at well-defined angles. 
Moreover, one of these boundary curves is mapped to the other boundary curve by $f_n$. 
Combining with Lemma~\ref{L:conjugacy}, we conclude that 
$S_0^n=\gY_n(S_n^0)$ is bounded by piecewise smooth curves two of which land at $0$ at 
well-defined angles, and one boundary curve is mapped to the other by $f_0\co{q_n}$. 
In particular, each $f_0\co {i q_n}(S_0^n)$ is defined and is a connected set.
Moreover, the consecutive sets $f_0\co {i q_n}(S_0^n)$ and $f_0\co {(i+1) q_n}(S_0^n)$ share a 
boundary curve. 
This implies that $I^n_a$ is a connected set bounded by piecewise smooth curves landing at $0$ at 
well-defined angles. 
Clearly, $I^n_b= f_0\co{q_{n-1}}(I^n_a)$ must enjoy the same properties. 

It remains to show that $I^n$ is a connected set.
There is a point $z$ in the interior of $S_n^0$ (sufficiently close to $0$) and  
an integer $i$ with $k_n \leq i \leq a_n+k_n-\Bk-2$ such that $w=f_n\co{i}(z)\in S_n^0$. 
The points $z'=\gY_n(z)$ and $w'=\gY_n(w)$ belong to $S_0^n$.
Using Lemma~\ref{L:conjugacy}, we conclude that $f_0\co{(q_{n-1} + i q_n)}(z')=w'$. 
Thus, $w'\in S_0^n \subset I^n_a$ and $w' \in f_0\co{q_{n-1}} \circ f_0\co{i q_n}(S_0^n)\subset I^n_b$. 
This implies that $I^n_a$ and $I^n_b$ have non-empty intersection, and therefore, 
their union must be connected.
\end{proof}
 
For $n\geq 1$, define the union 
\[\gU^n= \bigcup_{i=0}^{q_n-1} f_0\co{i}(I^n) \cup \{0\}.\]
The sets $S_0^n$ are bounded by piecewise smooth curves two of which land at zero at well-defined 
angles. 
Comparing $f_0$ with the rotation of angle $2\pi \ga_0$ near $0$, one can verify that  
$\gU^n$ contains a neighborhood of $0$. 

When $f\in \IS_\ga \cup \{Q_\ga\}$ is linearizable at $0$, $\gD(f)$ denotes the Siegel disk of 
$f$ centered at $0$, and when it is not linearizable at $0$ we define $\gD(f)$ as the empty set.  

Some similarly defined union of sectors, denoted by $\gO^n_0$, have been studied in 
\cite{Ch10-I,Ch10-II} in detail. 
Indeed, each $\gU^n$ involves $q_n-1$ more iterates of $S_0^n$ than the set $\gO^n_0$. 
This modification is made to achieve some nice combinatorial features that were not 
available through the sets $\gO_0^n$. 
It is proved in \cite[Propositions 2.4 and 4.9]{Ch10-I} that the sets 
$\gO^n_0$ form a nest of domains shrinking to $\C{PC}(f_0) \cup \gD(f_0)$. 
The same arguments may be repeated here to prove this result for the domains $\gU^n$.
We shall skip repeating these arguments here as they are not the main focus of this paper, but state 
them in the next two propositions for reference.
(Alternatively, one can see that for every $n\geq 2$, $\gU^{n+1} \subseteq \gO^n_0 \subseteq \gU^{n-1}$, 
and therefore, $\cap_{n\geq 1} \gU^n =\cap_{n\geq 1} \gO^n_0$.)

Recall that $\pc(f_j)$ denotes the postcritical set of $f_j$. 

\begin{propo}\label{P:pc-neighbor}
Let $\ga\in \irr$. Then, 
\begin{itemize}
\item[(a)] for every $n\geq 0$, 
\[\pc(f_n) \subset \bigcup_{i=0}^{k_n+ a_n -\Bk-2} f_n\co{i}(S_n^0) \cup \{0\};\] 
\item[(b)] for every $n\geq 1$, 
\[\pc(f_0) \ci \gU^n.\]
\end{itemize}
\end{propo}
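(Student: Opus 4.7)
I would adapt the arguments of Propositions~2.4 and 4.9 of \cite{Ch10-I}, which established the analogous containments for the closely related sets $\gO_0^n$. The parenthetical observation $\gU^{n+1} \subseteq \gO_0^n \subseteq \gU^{n-1}$ immediately reduces (b) to the known statement $\pc(f_0) \subseteq \gO_0^n$ from \cite{Ch10-I}. To verify the comparison between $\gU^n$ and $\gO_0^n$, I would just match the two definitions and use the orbit relations of Lemma~\ref{L:conjugacy}: the two families differ only by a bounded number of $f_0$-iterates of $S_0^n$, and the extra iterates on one side are absorbed on the other by using $q_{n\pm 1}$ in place of $q_n$. This requires no new estimate.

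For a direct proof of (a), I would argue inductively on the index of the iterate of $\cv_n$. The base case is immediate from $\cv_n \in \C{C}_n = f_n\co{k_n}(\C{C}_n^{-k_n}) \subseteq f_n\co{k_n}(S_n^0)$. For the inductive step, if $z \in f_n\co{k_n+j}(S_n^0)$ with $j \leq a_n - \Bk - 3$, then $z \in \C{P}_n$ with $\Re\,\gF_n(z) \in [k_n + j + 1/2,\,k_n + j + 3/2]$, so by Theorem~\ref{T:Ino-Shi1}(d) we have $f_n(z) \in f_n\co{k_n+j+1}(S_n^0)$, remaining in the claimed union. The only nontrivial case is $j = a_n - \Bk - 2$, where the next iterate leaves $\C{P}_n$ through the ``top''. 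I would handle this via Lemma~\ref{L:renorm}: the corresponding $\rr(f_n)$-iterate lands back in $\C{C}_n \cup \Csh_n$, while the intermediate $f_n$-iterates pass through the pre-image sectors $f_n\co{i}(S_n^0)$ for $0 \leq i < k_n$, which are precisely the sectors wrapping around $0$ built into the definition of near-parabolic renormalization. Iterating this argument, the entire forward orbit of $\cv_n$ stays inside $\bigcup_{i=0}^{k_n + a_n - \Bk - 2} f_n\co{i}(S_n^0)$; taking closure and using that $0$ is an accumulation point (by Ma\~n\'e's recurrence theorem, quoted in the introduction) yields (a).

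Part (b) would then follow from (a) by transporting through the change of coordinates $\gY_n$. Combining Lemma~\ref{L:conjugacy}(a) and (b), $\gY_n$ conjugates $f_n|_{\p_n'}$ to $f_0\co{q_n}$ and $f_n\co{k_n}|_{S_n^0}$ to $f_0\co{k_n q_n + q_{n-1}}|_{S_0^n}$, so $\gY_n\!\big(\bigcup_{i=0}^{k_n + a_n - \Bk - 2} f_n\co{i}(S_n^0)\big) \subseteq I^n_a \cup I^n_b = I^n$. However, the $f_0$-orbit of $\cv_0$ visits these $\gY_n$-images only at times that are $\D{Z}$-linear combinations of $q_n$ and $q_{n-1}$; the remaining $f_0$-iterates lying in between consecutive corresponding times are swept up by the outer union $\bigcup_{i=0}^{q_n - 1} f_0\co{i}(I^n)$ in the definition of $\gU^n$. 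The main obstacle is the bookkeeping in this last step: one must verify that every index $i \geq 0$ decomposes as $i = r + s\, q_n$ (or $i = r + q_{n-1} + s\, q_n$) with $0 \leq r \leq q_n - 1$ and $0 \leq s \leq k_n + a_n - \Bk - 2$ after at most a bounded initial segment, so that the corresponding $f_0$-iterate of $\cv_0$ is captured; this is exactly the role of the extra $q_n - 1$ iterates distinguishing $\gU^n$ from $\gO_0^n$ in \cite{Ch10-I}.
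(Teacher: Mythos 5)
Your proposal matches the paper's treatment: the paper gives no self-contained proof of this proposition, instead citing Propositions~2.4 and 4.9 of \cite{Ch10-I} for the analogous sets $\gO_0^n$ and noting the sandwich $\gU^{n+1}\subseteq\gO_0^n\subseteq\gU^{n-1}$, which is exactly your primary route. Your additional direct sketch of part (a) is a reasonable outline of what \cite{Ch10-I} proves, though as written it glosses over the key combinatorial step (why the orbit re-enters the pre-image sectors $f_n\co{i}(S_n^0)$, $0\leq i<k_n$, after leaving the right edge of $\p_n$), so the reduction to \cite{Ch10-I} should be regarded as the actual proof.
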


\begin{propo}\label{P:Siegel-disk-enclosed}
Assume that $f\in \IS_\ga \cup \{Q_\ga\}$ in linearizable at $0$ and $\ga\in \irr$. 
Then, for every $n_0\geq 1$ we have $\cap_{n \geq n_0} \gU^n= \gD(f) \cup \C{PC}(f)$ and 
$\partial \gD(f) \subset \C{PC}(f)$.
\end{propo}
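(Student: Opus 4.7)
My plan is to follow the alternative route flagged in the parenthetical remark preceding the statement: bootstrap from the known analysis of the companion nest $\{\gO^n_0\}$ in \cite{Ch10-I,Ch10-II} by sandwiching $\{\gU^n\}$ between consecutive $\gO^n_0$'s. The bulk of the work lies in verifying $\gU^{n+1}\subseteq \gO^n_0\subseteq \gU^{n-1}$ for every $n\ge 2$. Both nests are built from the same sectors $S_0^n$; they differ only in how many iterates under $f_0$ are collected and in whether the $q_{n-1}$-shifted piece $I^n_b$ is included. The embedding $S_0^{n+1}\subseteq \gy_{n+1}(\p_{n+1})$, together with the conjugation relation $f_0\co{q_n}\circ \Psi_n=\Psi_n\circ f_n$ of Lemma~\ref{L:conjugacy}(a), lets one identify finitely many iterates of $S_0^{n+1}$ with bounded collections of iterates of $S_0^n$; the uniform bounds $k_n\le \Bk''$ (Proposition~\ref{P:turning}) and $\hat{\Bk}$ (Proposition~\ref{P:sector-geometry}) ensure that the discrepancy between the two recipes stays bounded independently of $n$.

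Once the sandwich is established, I combine it with \cite[Propositions~2.4 and 4.9]{Ch10-I}, which give $\gO^{n+1}_0\subseteq \gO^n_0$ and
\[
\bigcap_{n\ge 1}\gO^n_0=\gD(f)\cup \pc(f)
\]
in the linearizable case. The inclusions $\gO^{n+1}_0\subseteq \gU^n$ and $\gU^{n+1}\subseteq \gO^n_0$ then squeeze $\bigcap_{n\ge n_0}\gU^n$ between $\bigcap_{m\ge n_0}\gO^m_0$ and $\bigcap_{m\ge n_0-1}\gO^m_0$, both of which equal $\gD(f)\cup \pc(f)$, settling the first claim for any $n_0\ge 1$.

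For $\partial \gD(f)\subseteq \pc(f)$, I invoke the theorem of Mañé \cite{Man87}, whose argument applies equally to maps in the Inou-Shishikura class: in the linearizable case the orbit of the critical point of $f$ is recurrent and accumulates on all of $\partial \gD(f)$. Since $\partial \gD(f)\subseteq J(f)$ and $\pc(f)$ is the closure in $J(f)$ of the forward orbit of the critical value of $f$, this immediately yields the desired inclusion.

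The main obstacle is the sandwich in the first step. It reduces to a careful but routine combinatorial matching between two slightly different thickenings of the postcritical set, and requires no new estimates beyond Propositions~\ref{P:petal-geometry}, \ref{P:turning}, \ref{P:sector-geometry} and Lemma~\ref{L:conjugacy}. This is essentially the bookkeeping already performed for $\gO^n_0$ in \cite{Ch10-I}, adapted to the slightly larger sets $\gU^n$; this is presumably why the authors elect to skip the proof.
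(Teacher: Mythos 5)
Your proposal follows exactly the route the paper itself indicates: the paper likewise defers the substance to \cite[Propositions 2.4 and 4.9]{Ch10-I} and records the sandwich $\gU^{n+1}\subseteq\gO^n_0\subseteq\gU^{n-1}$ as the way to transfer the shrinking-nest result from $\gO^n_0$ to $\gU^n$, so your argument is correct and essentially identical to the paper's (sketched) one. The only caveat is in the last step: Ma\~n\'e's theorem is stated for rational maps, so for general $f\in\IS_\ga$ it is safer to obtain $\partial\gD(f)\subseteq\pc(f)$ either by citing \cite{Ch10-I} directly, as the paper does, or by the classical argument that the linearizing coordinate would extend past any boundary point of the Siegel disk admitting a neighborhood disjoint from $\pc(f)$, contradicting maximality.
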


\begin{propo}\label{P:invariance-of-pc}
For every $n\geq 1$ we have $\pc(f_0) \cap \gY_n(\p_n) = \gY_n (\pc(f_n) \cap \p_n)$.
\end{propo}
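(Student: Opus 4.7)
The plan is to reduce the proposition by induction on $n$ to the one-level identity
\[
\pc(f_{n-1}) \cap \psi_n(\p_n) = \psi_n\bigl(\pc(f_n) \cap \p_n\bigr), \quad n\geq 1.
\]
Granting this identity and the induction hypothesis, the factorization $\gY_n=\gY_{n-1}\circ\psi_n$, the inclusion $\psi_n(\p_n)\subseteq \p_{n-1}$, and injectivity of $\gY_{n-1}$ on $\p_{n-1}$ let the intersection $\pc(f_0)\cap \gY_n(\p_n)$ be pushed through $\gY_{n-1}$ and identified with $\gY_n(\pc(f_n)\cap \p_n)$.

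The starting point for the one-level identity is the observation that $\psi_n(\cv_n)$ lies on the forward orbit of $\cv_{n-1}$ under $f_{n-1}$. Since $\gF_n(\cv_n)=1$ and $\ex\circ\gc_{n,0}(1)=\gF_n^{-1}(1)=-4/27$, the bound~\eqref{E:width-of-image-chi} forces $\gc_{n,0}(1)=m_n\in\{1,\dots,\hat{\Bk}+2\}$; the high-type condition~\eqref{E:high-type-restriction} then ensures $m_n<\ga_{n-1}^{-1}-\Bk$, so the Abel functional equation yields $\psi_n(\cv_n)=f_{n-1}\co{m_n-1}(\cv_{n-1})$. The inclusion $\psi_n(\pc(f_n)\cap\p_n)\subseteq \pc(f_{n-1})\cap\psi_n(\p_n)$ then follows by splitting each finite segment $\{\cv_n,f_n(\cv_n),\dots,f_n\co{j_k}(\cv_n)\}$ into successive sojourns in $\p_n'$ and applying \refL{L:one-level}(a) on each sojourn, so that $\psi_n(f_n\co{j_k}(\cv_n))$ is realized as a forward iterate of $\psi_n(\cv_n)$ under $f_{n-1}$; passing to the limit in a convergent sequence $f_n\co{j_k}(\cv_n)\to w\in\p_n$ via continuity of $\psi_n$ and closedness of $\pc(f_{n-1})$ completes this direction.

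For the reverse inclusion, given $z=\lim f_{n-1}\co{i_k}(\cv_{n-1})\in\pc(f_{n-1})\cap\psi_n(\p_n)$, openness of $\psi_n(\p_n)$ lets us assume $f_{n-1}\co{i_k}(\cv_{n-1})\in\psi_n(\p_n)$ for $k$ large. The delicate technical step is to show that every iterate $f_{n-1}\co{i}(\cv_{n-1})$ lying in $\psi_n(\p_n)$ is of the form $\psi_n(f_n\co{j}(\cv_n))$ for some $j\geq 0$ with $f_n\co{j}(\cv_n)\in\p_n$. Granting this, the preimages $w_k:=\psi_n^{-1}(f_{n-1}\co{i_k}(\cv_{n-1}))$ lie in $\pc(f_n)\cap\p_n$; since $\psi_n$ is a univalent or antiunivalent homeomorphism from $\p_n$ onto its image, $w_k\to\psi_n^{-1}(z)\in\pc(f_n)\cap\p_n$, and hence $z\in\psi_n(\pc(f_n)\cap\p_n)$.

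The main obstacle is the italicized technical step, namely ruling out visits of the critical orbit of $f_{n-1}$ to $\psi_n(\p_n)$ that are not $\psi_n$-images of $\p_n$-iterates of $\cv_n$ under $f_n$. This is essentially the assertion that, through the conjugacy $f_{n-1}\co{a_{n-1}}\circ \psi_n=\psi_n\circ f_n$ on $\p_n'$ from \refL{L:one-level}(a), $f_n$ encodes the return-type dynamics of $f_{n-1}$ on the sector $\psi_n(\p_n)$. I would prove it by iterating this conjugacy to generate all the returns $\psi_n(f_n\co{j}(\cv_n))$, and then, using the uniform bound on the real width of $\gc_{n,0}(\gF_n(\p_n))$ from \refP{P:sector-geometry} to control which $\gF_{n-1}$-coordinates correspond to $\psi_n(\p_n)$, verify in $\gF_{n-1}$-coordinates that the initial segment $\cv_{n-1},\dots,f_{n-1}\co{m_n-2}(\cv_{n-1})$ and all iterates strictly between two consecutive returns $\psi_n(f_n\co{j}(\cv_n))$ and $\psi_n(f_n\co{j+1}(\cv_n))$ fall outside $\psi_n(\p_n)$.
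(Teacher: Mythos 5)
Your proposal matches the paper's proof: the same induction via $\gY_n=\gY_{n-1}\circ\psi_n$ reducing to the one-level identity, the same use of the conjugacy (Lemma~\ref{L:one-level}) for the forward inclusion, and the same key step for the reverse inclusion, namely that the visits of the critical orbit of $f_{n-1}$ to $\psi_n(\p_n)$ are exactly the $\psi_n$-images of the visits of the critical orbit of $f_n$ to $\p_n$, with all intermediate iterates excluded. The paper carries out that last step by observing (via Proposition~\ref{P:pc-neighbor}) that the return times of the orbit of $\cv_n$ to $\p_n$ are either $1$ or between $2$ and $k_n$, and that $a_{n-1}$ (resp. $l_i a_{n-1}+1$) is the first return time to $\gy_n(\p_n)$ by the definition of renormalization --- essentially the verification in Fatou coordinates you sketch.
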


\begin{proof}
Recall that $\cv_n$ denotes the critical value of  $f_n$ and $\gF_n(\cv_n)=1$.
As $\ex(1)=\cv_n$ for all $n$, it follows from the definition of $\gy_n$ that there is a 
non-negative integer $j_n$ with $\gy_n(\cv_n)= f_{n-1}\co{j_n}(\cv_{n-1})$. 

First we show that for every $n\geq 1$ we have 
\begin{equation*} 
\C{PC}(f_{n-1}) \cap \gy_n(\C{P}_n)= \gy_n(\C{PC}(f_n) \cap \C{P}_n).
\end{equation*}
Fix $z \in \gy_n(\C{P}_n)$ and $w \in \C{P}_n$ with $\gy_n(w)=z$. 
It is enough to show that $z\in \C{PC}(f_{n-1})$ if and only if $w\in \C{PC}(f_n)$. 

Assume that $w\in \C{PC}(f_n)$.  
As $\C{P}_n$ is an open set, there is an increasing sequence of positive integers $n_i$, for $i\geq 0$, 
such that the iterates $f_n\co{n_i}(\cv_n)$ belong to $\C{P}_n$ and converge to $w$. 
Then, one infers from Lemma~\ref{L:conjugacy} that there is an increasing sequence of positive integers 
$m_i$ such that $f_{n-1}\co{m_i}(\gy_n(\cv_n))=\gy_n(f_n\co{n_i}(\cv_n))$.
Thus, $f_{n-1}\co{(j_n+m_i)}(\cv_{n-1})=f_{n-1}\co{m_i}(\gy_n(\cv_n))$ converges to $z$. 
That is, $z\in \C{PC}(f_{n-1})$. 

Let $x_0=cv_n, x_1, x_2, \dots$ denote the (ordered) points in the orbit of $\cv_n$ that 
are contained in $\C{P}_n$.
Define the sequence of positive integers $l_i$, for $i\geq 0$, so that $x_{i+1}= f_n\co{l_i}(x_i)$.
By Proposition~\ref{P:pc-neighbor}-a), for every $i\geq 0$, either $l_i=1$ or $2\leq l_i \leq k_n$.
For every $i$ with $l_i=1$, $\gy_n(x_{i+1})=f_{n-1}\co{a_{n-1}}(\gy_n(x_n))$ and by the definition of renormalization, $a_{n-1}$ is the smallest positive integer $s$ with $f_{n-1}\co{s}(\gy_n(x_n))\in \gy_n(\C{P}_n)$. 
That is, all intermediate iterates are outside of $\gy_n(\C{P}_n)$. 
Similarly, when some $l_i\geq 2$ then the intermediate iterates $f_n\co{1}(x_i)$, \dots, 
$f_n\co{l_i-1}(x_i)$ are outside of $\C{P}_n$. 
This implies that $\gy_n(x_{i+1})=f_{n-1}\co{(l_ia_{n-1}+1)}(\gy_n(x_i))\in \gy_n(\C{P}_n)$, 
and  $l_i a_{n-1}+1$ is the smallest positive integer $s$ with $f_{n-1}\co{s}(\gy_n(x_i))\in\gy_n(\C{P}_n)$. 

Now assume that $z$ belongs to $\C{PC}(f_{n-1})$. 
As $\gy_n(\C{P}_n)$ is an open neighborhood of $z$, by the above paragraph, there is an increasing 
sequence of integers $n_0 < n_1 < n_2 < \dots$ such that $\gy_{n}(x_{n_i})$ converge to $z$. 
Therefore, the sequence $x_{n_i}$ converges to $w$. 
That is, $w\in \C{PC}(f_n)$. 

The equality in the proposition holds for $n=1$ by the above equation. 
Assume that we have $\pc(f_0) \cap \gY_{n-1}(\C{P}_{n-1}) = \gY_{n-1} (\pc(f_{n-1}) \cap \C{P}_{n-1})$ 
(induction hypothesis for $n-1$). 
As each $\gY_n: \C{P}_n \to \C{P}_0$ is univalent and $\gY_n= \gY_{n-1} \circ \gy_n$, we obtain  
\begin{align*}
\pc(f_0) \cap \gY_n(\p_n) 
&= \pc(f_0) \cap \big (\gY_{n-1}(\C{P}_{n-1})  \cap \gY_{n-1}(\gy_n(\C{P}_n))\big) && \\
&= \big (\pc(f_0) \cap \gY_{n-1}(\C{P}_{n-1}) \big)  \cap \gY_{n-1}(\gy_n(\C{P}_n))&& \\
&=\gY_{n-1} (\pc(f_{n-1})\cap\C{P}_{n-1})\cap\gY_{n-1}(\gy_n(\C{P}_n)) &&\text{(induction hypothesis)}\\
&= \gY_{n-1} ( \pc(f_{n-1}) \cap \gy_n(\C{P}_n)) &&  \\
&=\gY_{n-1} (\gy_n(\C{PC}(f_n) \cap \C{P}_n )) && \text{(above equation)} \\
&= \gY_n (\pc(f_n) \cap \p_n)
\end{align*}
This finishes the proof of the proposition. 
\end{proof}

\begin{lem}\label{L:return-on-tail}
For every $n\geq 1$, $f_0\co{q_n}( \pc(f_0) \cap I^n_b) \subset I^n$. 
\end{lem}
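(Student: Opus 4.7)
Given $z \in \pc(f_0) \cap I^n_b$, write $z = f_0\co{q_{n-1}+iq_n}(u)$ for some $u \in S_0^n$ and some integer $0 \le i \le m := k_n+a_n-\B{k}-2$, so that $f_0\co{q_n}(z) = f_0\co{q_{n-1}+(i+1)q_n}(u)$. If $i \le m-1$, the point $f_0\co{(i+1)q_n}(u)$ lies in $I^n_a$ by the very definition of that set, hence $f_0\co{q_n}(z) \in f_0\co{q_{n-1}}(I^n_a) = I^n_b \subset I^n$; the hypothesis $z \in \pc(f_0)$ plays no role here.

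The entire content of the lemma is therefore contained in the boundary case $i=m$. I would lift this case to level $n$: writing $u = \gY_n(v)$ with $v \in S_n^0$ and iterating Lemma~\ref{L:conjugacy}(a) along the intermediate points $v, f_n(v), \dots, f_n\co{m-1}(v)$ (which lie in $\p_n'$ by the choice of $m$), I identify
\[
w := f_0\co{mq_n}(u) = \gY_n(\zeta), \qquad \zeta := f_n\co{m}(v) \in T_n,
\]
where $T_n := \bigcup_{j=0}^{m} f_n\co{j}(S_n^0)$. Since $\gy_n(\zeta)\in \gy_n(\p_n)\subset \p_{n-1}'$ (a consequence of Proposition~\ref{P:sector-geometry} together with the high-type assumption \eqref{E:high-type-restriction}), Lemma~\ref{L:conjugacy}(a) at level $n-1$ yields $z = \gY_{n-1}(f_{n-1}(\gy_n(\zeta)))$. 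The hypothesis $z \in \pc(f_0)$ combined with Proposition~\ref{P:invariance-of-pc} (at level $n-1$) then forces $f_{n-1}(\gy_n(\zeta)) \in \pc(f_{n-1})$, and Proposition~\ref{P:pc-neighbor}(a) places this image inside $T_{n-1}$ (not $0$, since $\zeta\neq 0$).

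The goal then becomes to show $f_0\co{q_n}(w) \in I^n_a$, which would give $f_0\co{q_n}(z) = f_0\co{q_{n-1}}(f_0\co{q_n}(w)) \in I^n_b \subset I^n$. The mechanism is the forward invariance of $\pc(f_n)$ under $f_n$: even though $f_n(\zeta)$ may escape $\p_n$, one has $f_n(\zeta) \in \pc(f_n)\subset T_n\cup\{0\}$ by Proposition~\ref{P:pc-neighbor}(a), so the postcritical orbit wraps from the top cell $f_n\co{m}(S_n^0)$ back into some cell $f_n\co{i'}(S_n^0)$ with $0 \le i' \le m$; transporting this wrap-around through the iteratively extended $\gY_n$ lands $f_0\co{q_n}(w)$ inside $I^n_a = \bigcup_{j=0}^{m} f_0\co{jq_n}(S_0^n)$.

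\textbf{Main obstacle.} The delicate point is precisely this last transport. The direct semiconjugacy $f_0\co{q_n}\circ \gY_n = \gY_n \circ f_n$ of Lemma~\ref{L:conjugacy}(a) is valid only on $\p_n'$, while our $\zeta$ sits in $\p_n \setminus \p_n'$, so one cannot naively write $f_0\co{q_n}(\gY_n(\zeta)) = \gY_n(f_n(\zeta))$. Pushing the forward invariance of $\pc(f_n)$ through the change of coordinates, and identifying the correct combinatorial cell into which the level-$0$ orbit re-enters, is the technical heart of the argument.
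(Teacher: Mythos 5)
Your handling of $i\le m-1$ coincides with the paper's, and your strategy for the boundary case $i=m$ --- descend to level $n$ via Proposition~\ref{P:invariance-of-pc} and use the confinement of $\pc(f_n)$ to $\bigcup_{i=0}^{m}f_n\co{i}(S_n^0)\cup\{0\}$ from Proposition~\ref{P:pc-neighbor}(a) --- is also the paper's. But the proposal has a bookkeeping error and, more importantly, leaves open precisely the step that constitutes the lemma. The bookkeeping: the points $v,f_n(v),\dots,f_n\co{k_n-1}(v)$ with $v\in S_n^0$ do \emph{not} lie in $\p_n'$; the sets $f_n\co{j}(S_n^0)$ for $j<k_n$ are the pre-petal cells $\C{C}_n^{-k}\cup(\Csh_n)^{-k}$, which mostly sit outside $\p_n$. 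So you cannot iterate Lemma~\ref{L:conjugacy}(a) starting from $v$; you must first invoke part (b), which costs an extra $q_{n-1}$. The correct identity is $f_0\co{q_{n-1}+mq_n}(u)=\gY_n\bigl(f_n\co{m}(v)\bigr)$, i.e.\ $z=\gY_n(\zeta)$ itself, not $f_0\co{mq_n}(u)=\gY_n(\zeta)$. Your subsequent detour through level $n-1$ and the point $f_{n-1}(\gy_n(\zeta))$ rests on this false identity and is in any case unnecessary: Proposition~\ref{P:invariance-of-pc} applies directly at level $n$ and gives $\zeta\in\pc(f_n)\cap f_n\co{m}(S_n^0)$.

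The decisive gap is the transport of the wrap-around back to level $0$, which you explicitly defer. The paper sidesteps the difficulty you flag (pushing $f_n(\zeta)$, which leaves $\p_n'$, through the semiconjugacy) by applying the wrap-around to $\zeta$ itself rather than to its image: since $\zeta\in\pc(f_n)$ lies in the top cell, where $\Re\gF_n\in[a_n-\B{k}-3/2,\,a_n-\B{k}-1/2]$, and its forward orbit must stay in the union of cells, the mapping properties of $f_n$ in the definition of renormalization force $\zeta$ to belong \emph{already} to $\bigcup_{j=0}^{k_n-1}f_n\co{j}(S_n^0)\cap\p_n$ --- the top petal cell meets the postcritical set only inside the overlapping pre-petal cells. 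From there $z=\gY_n(\zeta)\in\bigcup_{j=0}^{k_n-1}f_0\co{(jq_n)}(S_0^n)$, and one further application of $f_0\co{q_n}$ lands in $\bigcup_{j=1}^{k_n}f_0\co{(jq_n)}(S_0^n)\subset I^n_a\subset I^n$, where the needed identifications are built into the definitions of the sets $f_0\co{(jq_n)}(S_0^n)$. Without this (or an equivalent) argument, your proposal only establishes the easy half of the statement.
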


\begin{proof}
Fix $z \in \pc(f_0) \cap I^n_b$. 
By the definition of $I_b^n$, $z$ belongs to $f_0\co{q_{n-1}}  \circ f_0\co{i q_n}(S_0^n)$, for some 
$i$ with $0\leq i\leq k_n+ a_n -\Bk-2$. 
We consider two cases below.

If $i < k_n+ a_n-\Bk-2$, then 
$f_0\co{q_n}(z)\in f_0\co{q_{n-1}}  \circ f_0\co{(i+1) q_n}(S_0^n)\subset I^n_b \subset I^n$.  

If $i= k_n+ a_n -\Bk-2$, by Lemma~\ref{L:conjugacy}, $z$ belongs to  
\begin{align*}
f_0\co{q_{n-1}}\circ f_0\co{(k_n+ a_n -\Bk-2)q_n}(S_0^n)
& = f_0\co{(a_n - \Bk-2)q_n } \circ f_0\co{k_n q_n+ q_{n-1}} (S_0^n)\\
&= f_0\co{(a_n-\Bk -2)q_n} \circ \gY_n \circ f_n\co{k_n}(S_n^0)\\
&=  \gY_n \circ f_n\co{(k_n+ a_n - \Bk-2)}(S_n^0).
\end{align*}
On the other hand, since $z$ belongs to $\pc(f_0)$ and 
$f_n\co{(k_n+ a_n - \Bk-2)}(S_n^0)$ is contained in $\p_n$, 
by Proposition~\ref{P:invariance-of-pc} there is 
\[w\in f_n\co{(k_n+ a_n - \Bk-2)}(S_n^0) \cap \pc(f_n), \text{ with } \gY_n(w)=z.\] 
In particular, 
\[\Re \gF_n(w)\in [ a_n - \Bk-3/2, a_n -\Bk-1/2].\]
As $w\in \pc(f_n)$, by Proposition~\ref{P:pc-neighbor}, its forward orbit remains in 
the union of the sectors $ f_n\co{i}(S_n^0)$, for $0\leq i \leq k_n+ a_n - \Bk-2$.  
Hence, by the mapping property of $f_n$ on these sectors in the definition of the renormalization, 
we must have $w\in \cup_{j=0}^{k_n-1} f_n\co{j}(S_n^0) \cap \p_n$.
Then, by Lemma~\ref{L:conjugacy}, this implies that 
$\gY_n(w)\in \cup_{j=0}^{k_n-1} f_0\co{(j q_n)}(S_0^n)$. 
Therefore, 
\[f_0\co{q_n}(z)=
f_0\co{q_n}(\gY_n(w))\in \cup_{j=1}^{k_n} f_0\co{(j q_n)}(S_0^n) \subset I_a^n \subset I^n.\] 
This finishes the proof of the lemma. 
\end{proof}

We also need the following lemma.

\begin{lem}\label{L:beginning-related-to-end}
For every $n\geq 1$ and every $i$ with $0\leq i\leq q_{n-1}-1$ we have 
\[f_0\co{i} (f_0\co{(mq_n)} (S_0^n)) \subset f_0\co{(q_n-q_{n-1}+i)}(I^n_b), 
\text{ where } m=k_n+ a_n  -\Bk -2.\]
\end{lem}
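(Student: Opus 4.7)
The plan is to unfold the definition of $I^n_b$ and exploit a single arithmetic identity, after which the inclusion reduces to identifying the relevant summand in a union. From $I^n_b = f_0\co{q_{n-1}}(I^n_a)$ together with the definition of $I^n_a$, I would first record
\[
I^n_b \;=\; \bigcup_{j=0}^{m} f_0\co{(q_{n-1}+jq_n)}(S_0^n),
\]
where $m=k_n+a_n-\Bk-2$. The key observation is then the arithmetic identity
\[
mq_n + i \;=\; (q_n - q_{n-1} + i) \;+\; \bigl((m-1)q_n + q_{n-1}\bigr),
\]
which splits the iterate on the left as $f_0\co{(mq_n+i)}(S_0^n)=f_0\co{(q_n-q_{n-1}+i)}\bigl(f_0\co{((m-1)q_n+q_{n-1})}(S_0^n)\bigr)$. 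Setting $j=m-1$ in the display for $I^n_b$, the inner set is one of the pieces of $I^n_b$, so pushing forward by $f_0\co{(q_n-q_{n-1}+i)}$ yields the desired inclusion.

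For this decomposition to be legitimate I would verify two small points. First, that $m-1\ge 0$: this follows from $k_n\ge 2$ in~\eqref{E:bounded-remaining} together with the lower bound $a_n\ge N\ge 2\Bk+\hat{\Bk}+4.5$ from~\eqref{E:high-type-restriction}, which gives $m=k_n+a_n-\Bk-2\ge 1$. Second, that every intermediate iterate is defined: points of $S_0^n$ can be iterated at least $mq_n+q_{n-1}$ times under $f_0$, since $f_0\co{(mq_n)}(S_0^n)\subset I^n_a$ and $I^n_b=f_0\co{q_{n-1}}(I^n_a)$ have already been used as bona fide sets in the preceding development, while $mq_n+i\le mq_n+q_{n-1}-1$.

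I do not anticipate a genuine obstacle here. The statement is essentially a bookkeeping lemma whose only substantive content is that the range of $j$ in the explicit union for $I^n_b$ reaches down to $j=m-1$; everything else is a single application of $f_0\co{(a+b)}=f_0\co{a}\circ f_0\co{b}$ to one well-chosen decomposition of $mq_n+i$. The main purpose of the lemma is to feed into later arguments about how iterates of $S_0^n$ at depths close to $mq_n$ relate to the tail $I^n_b$, so keeping the proof short and purely arithmetic is the natural approach.
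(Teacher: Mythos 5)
Your proof is correct and follows essentially the same route as the paper's: the paper likewise rearranges $f_0\co{(mq_n)}(S_0^n)=f_0\co{(q_n-q_{n-1})}\big(f_0\co{q_{n-1}}\circ f_0\co{((m-1)q_n)}(S_0^n)\big)$ and observes that the inner set lies in $I^n_b$ because $f_0\co{((m-1)q_n)}(S_0^n)\subset I^n_a$. The only cosmetic difference is that the paper first reduces to $i=0$ while you carry the $i$ through the arithmetic identity directly.
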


\begin{proof}
It is enough to prove the inclusion for $i=0$.
We may rearranging the iterates as in
\[f_0\co{(mq_n)}(S_0^n)=f_0\co{(q_n-q_{n-1})} (f_0\co{q_{n-1}} \circ f_0\co{((m-1)q_n)} (S_0^n)).\]
On the other hand, as $f_0\co{((m-1)q_n)} (S_0^n) \subset I^n_a$, we have 
$f_0\co{q_{n-1}} \circ f_0\co{((m-1)q_n)} (S_0^n) \subset I^n_b$. 
\end{proof}

The next proposition is the ultimate statement in this section we need for the proofs of the main 
results of this paper. 

\begin{propo}\label{P:orbits-organized}
For every $\ga\in \irr$ and every integer $n\geq 1$ we have the following.  
For every non-zero $z\in \pc(f_0)$ there is an integer $\ell$ with $0 \leq \ell \leq q_n-1$ such that the 
following hold: 
\begin{itemize}
\item[a)] for all $j$ with $0\leq j \leq q_n-\ell-1$ we have $f_0\co{j}(z) \in f_0\co{(\ell+j)}(I^n)$;
\item[b)] for all $j$ with $q_n-\ell \leq  j \leq q_n-1$ we have $f_0\co{j}(z) \in f_0\co{(j-q_n+\ell)}(I^n)$.
\end{itemize}
\end{propo}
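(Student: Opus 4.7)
The plan is to reduce everything to the single key invariance
\[
f_0\co{q_n}\bigl(\pc(f_0) \cap I^n\bigr) \subseteq I^n.
\]
Granting this, one chooses $\ell \in \{0, \dots, q_n - 1\}$ and $w \in \pc(f_0) \cap I^n$ with $z = f_0\co{\ell}(w)$. Part (a) is then immediate: $f_0\co{j}(z) = f_0\co{\ell + j}(w) \in f_0\co{\ell + j}(I^n)$ for $0 \leq j \leq q_n - \ell - 1$. For part (b), the key invariance yields $f_0\co{q_n}(w) \in I^n$, so
\[
f_0\co{j}(z) = f_0\co{j + \ell - q_n}\bigl(f_0\co{q_n}(w)\bigr) \in f_0\co{j + \ell - q_n}(I^n)
\]
for $q_n - \ell \leq j \leq q_n - 1$.

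The existence of such $\ell$ and $w$ follows from \refP{P:pc-neighbor}(b), which gives $\pc(f_0) \subseteq \gU^n$, together with the recurrence of the critical orbit. The plan is to approximate $z$ by iterates $f_0\co{n_k}(\cv_0)$, pass to a subsequence along which the index $\ell_k$ with $f_0\co{n_k}(\cv_0) \in f_0\co{\ell_k}(I^n)$ stabilizes to some $\ell$, and take $w$ as a subsequential limit of the postcritical points $f_0\co{n_k - \ell}(\cv_0)$. The key invariance itself ensures that these indices shift consistently by $+1$ modulo $q_n$ along the critical orbit.

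For the key invariance I would case-split on $w \in \pc(f_0) \cap (I^n_a \cup I^n_b)$. If $w \in I^n_b$, the conclusion is exactly \refL{L:return-on-tail}. If $w \in f_0\co{iq_n}(S_0^n) \subset I^n_a$ with $0 \leq i \leq m - 1$, where $m = k_n + a_n - \Bk - 2$, then trivially $f_0\co{q_n}(w) \in f_0\co{(i+1)q_n}(S_0^n) \subset I^n_a$. In the remaining case $w \in f_0\co{mq_n}(S_0^n)$, I would lift to level $n$ via \refP{P:invariance-of-pc}: there exists $w_n \in \pc(f_n) \cap f_n\co{m}(S_n^0)$ with $\gY_n(w_n) = w$. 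Applying $f_n$ once and invoking \refP{P:pc-neighbor}(a) places $f_n(w_n) \in f_n\co{i'}(S_n^0)$ for some $0 \leq i' \leq m$ (noting that $f_n(w_n) \neq 0$ since the critical orbit of $f_n$ does not hit the fixed point for $\ga \in \irr$). Transferring back via $\gY_n$ and \refL{L:conjugacy} gives $f_0\co{q_n}(w) \in f_0\co{i' q_n}(S_0^n) \subset I^n_a$.

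The hard part will be this top-sector transfer. \refL{L:conjugacy}(a) is stated on $\p_n'$, while $w_n \in f_n\co{m}(S_n^0)$ has Fatou coordinate with real part close to the outer boundary of $\p_n'$. The observation that unlocks the step is that $f_n(w_n) \in \pc(f_n) \subset \p_n \cup \{0\}$, together with the Abel equation $\gF_n \circ f_n = \gF_n + 1$ and the explicit range $\gF_n(\p_n) = \{w : 0 < \Re w < \ga_n^{-1} - \Bk\}$, forces $w_n \in \p_n'$: otherwise $f_n(w_n)$ would fall outside $\p_n$, contradicting $f_n(w_n) \in \pc(f_n) \setminus \{0\}$. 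Once $w_n \in \p_n'$ is secured, one composes \refL{L:conjugacy}(a) on the ``linear'' iterates with \refL{L:conjugacy}(b) to traverse the double-cover $f_n\co{k_n}$ and complete the transfer.
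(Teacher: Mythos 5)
Your reduction hinges on the ``key invariance'' $f_0\co{q_n}(\pc(f_0)\cap I^n)\subseteq I^n$, and this is where the argument breaks. The invariance holds on $I^n_b$ (that is exactly \refL{L:return-on-tail}) and trivially on $f_0\co{(iq_n)}(S_0^n)$ for $i<m$, but it fails on the top sector $f_0\co{(mq_n)}(S_0^n)$ of $I^n_a$, $m=k_n+a_n-\Bk-2$, and your proof of it there contains two concrete errors. First, the set corresponding to $f_n\co{m}(S_n^0)$ under $\gY_n$ is not $f_0\co{(mq_n)}(S_0^n)$ but $f_0\co{(mq_n+q_{n-1})}(S_0^n)\subset I^n_b$: by \refL{L:conjugacy}(b) the $k_n$-fold iterate of $f_n$ on $S_n^0$ corresponds to $k_nq_n+q_{n-1}$ iterates of $f_0$, so there is a $q_{n-1}$ offset that your lift to level $n$ ignores. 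Second, the claim that $\pc(f_n)\subset\p_n\cup\{0\}$ forces $w_n\in\p_n'$ is false: \refP{P:pc-neighbor}(a) only places $\pc(f_n)$ in $\bigcup_{i=0}^{m}f_n\co{i}(S_n^0)\cup\{0\}$, and the sectors $f_n\co{i}(S_n^0)$ with $i<k_n$ are not contained in $\p_n$ (they wander around the domain of $f_n$; see Figure~\ref{F:sectorpix}), so $f_n(w_n)$ landing outside $\p_n$ is no contradiction. In fact the dynamics contradicts the invariance you want: a postcritical point of the top sector of $I^n_a$ first returns to $I^n$ after $q_{n-1}$ iterates (landing in $I^n_b$), and after $q_n$ iterates it sits in a set of the form $f_0\co{((j+1)q_n-q_{n-1})}(S_0^n)$ with $j<k_n$, which is not among the sectors whose union defines $I^n$. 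The paper handles precisely this by \emph{not} taking $\ell=i$ for points of $f_0\co{i}(f_0\co{(mq_n)}(S_0^n))$: it uses \refL{L:beginning-related-to-end} to re-read the top sector of $I^n_a$ as lying inside $f_0\co{(q_n-q_{n-1})}(I^n_b)$ and assigns $\ell=q_n-q_{n-1}+i$, after which \refL{L:return-on-tail} applies. This reindexing is the essential idea you are missing.

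A second, independent gap is the construction of the base point $w\in\pc(f_0)\cap I^n$ with $z=f_0\co{\ell}(w)$. From $f_0\co{n_k}(\cv_0)\in f_0\co{\ell}(I^n)$ you cannot conclude $f_0\co{(n_k-\ell)}(\cv_0)\in I^n$, because $f_0\co{\ell}$ is not injective; and the ``consistent shift by $+1$ modulo $q_n$'' along the critical orbit is essentially the statement being proved, so invoking it here is circular. The paper never produces such a $w$: part (a) needs only the set-level fact $z\in f_0\co{\ell}(I^n)$, which comes from $\pc(f_0)\subset\gU^n$ and the explicit decomposition of $\gU^n\setminus\{0\}$ into $I^n_a$, its first $q_{n-1}-1$ iterates, and the $q_n$ iterates of $I^n_b$; and part (b) needs only $f_0\co{(q_n-\ell)}(z)\in f_0\co{q_n}(I^n_b)\subset I^n$, again argued on sets via \refL{L:return-on-tail}.
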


\begin{proof}
By Proposition~\ref{P:pc-neighbor}, $z$ belongs to $\gU^n$. 
We may rewriting this set as 
\begin{align*}
\gU_n \setminus \{0\}=\bigcup_{i=0}^{q_n-1} f_0\co{i}  (I^n_a\cup I^n_b) 
&=\bigcup_{i=0}^{q_n-1} f_0\co{i}  (I^n_a) \cup  \bigcup_{i=0}^{q_n-1} f_0\co{i} (I^n_b) \\
&=\bigcup_{i=0}^{q_{n-1}-1} f_0\co{i}  (I^n_a) \cup  \bigcup_{i=0}^{q_n-1} f_0\co{i} (I^n_b) \\
&=I^n_a \cup \bigcup_{i=1}^{q_{n-1}-1} f_0\co{i}  (I^n_a) \cup  \bigcup_{i=0}^{q_n-1} f_0\co{i} (I^n_b)
\end{align*}
Now, we consider four cases below. 

\medskip

1) If $z\in I^n_a$, we let $\ell=0$. 
Here, $z\in I^n_a\subset I^n$, and therefore, we have part a) in the proposition. 
There is no $j$ satisfying b).

\medskip

2) If $z\in f_0\co{i}(I^n_b)$, for some $i$ with $0\leq i \leq q_n-1$, we let $\ell=i$. 
Then, $z\in f_0\co{\ell}(I^n_b)\subset f_0\co{\ell}(I^n)$, and hence, a) holds.  
On the other hand, since $z\in f_0\co{\ell}(I^n_b)$, by Lemma~\ref{L:return-on-tail}, 
$f_0\co{(q_n-\ell)}(z)\in f_0\co{q_n}(I^n_b)\subset I^n$. 
This implies the statement b) in the proposition.

\medskip

It remains to prove the proposition for $z\in  \cup_{i=1}^{q_{n-1}-1} f_0\co{i}  (I^n_a)$.
Fix $i$ with $z\in  f_0\co{i}  (I^n_a)$. 
We treat this in two cases below. 

\medskip

3) If $z\in f_0\co{i}(f_0 \co{(m q_n)}(S_0^n))$, for some $m$ with 
$0\leq m \leq k_n+ a_n  -\Bk -3$. 
With $\ell=i$, we have $z\in f_0\co{\ell}(I^n_a)\subset f_0\co{\ell}(I^n)$. 
This implies a). 

On the other hand, 
\[f_0\co{(q_n-\ell)}(z)\in f_0\co{q_n} (f_0\co{(mq_n)}(S_0^n))=f_0\co{((m+1)q_n)}(S_0^n)\subset I^n_a \subset I^n.\] 
Part b) follows from the above inclusion. 

\medskip

4) If $z\in f_0\co{i}(f_0 \co{(m q_n)}(S_0^n))$, for $m=k_n+ a_n -\Bk -2$. 
We let $\ell= q_n-q_{n-1}+i$. 
By Lemma~\ref{L:beginning-related-to-end}, $z\in f_0\co{\ell}(I^n_b)\subset f_0\co{\ell}(I^n)$, 
and hence, a) holds. 
On the other hand, by Lemma~\ref{L:return-on-tail}, $z\in f_0\co{\ell}(I^n_b)$ implies that 
\[f_0\co{(q_n-\ell)}(z)\in f_0\co{q_n}(I^n_b)\subset I^n.\]
This implies the statement b) in the proposition.
\end{proof}

%%%%%%%%%%%%%%%%%%%%%%%%%%%%%%%%%%%%%%%%%%%%%%%%%%%%%
\subsection{Lifts versus iterates} \label{SS:lifts-vs-iterates}

In this section we give an alternative definition of the sectors $I^n$, and its forward iterates, 
in terms of the lifts $\gc_{n,i}$. 
We give an alternative definition in Proposition~\ref{P:lifts-vs-iterates} which makes later 
arguments simpler. 

\begin{lem}\label{L:the-first-lift}
For every $n\geq 1$ and every integer $i$ with $0 \leq i \leq k_n+ a_n - \B{k}-2$ we have 
\begin{itemize}
\item[a)] 
\[\gF_{n-1} \circ f_{n-1}\co{(i a_{n-1} )}(S_{n-1}^1) 
\subset \{w \in \D{C} \mid -2 < \Im w , 0 \leq \Re w \leq  \hat{\B{k}}+2\},\] 
\item[b)]
\[\gF_{n-1} \circ f_{n-1}\co{(i a_{n-1} +1)}(S_{n-1}^1) 
\subset \{w\in \D{C} \mid  -2 < \Im w , 1 \leq \Re w \leq  \hat{\B{k}}+3\};\] 
\item[c)] 
\[f_0\co{(i q_n)} (S^n_0) = 
\gF_0^{-1} \circ \gc_{1,0} \circ \gc_{2,0} \circ \dots \circ \gc_{n-1,0} \circ \gF_{n-1} \circ 
f_{n-1}\co{(i a_{n-1})}(S_{n-1}^1), \] 
\item[d)]
\[f_0\co{(i q_n+q_{n-1})} (S^n_0) = 
\gF_0^{-1} \circ \gc_{1,0} \circ \gc_{2,0} \circ \dots \circ \gc_{n-1,0}\circ \gF_{n-1} \circ 
f_{n-1}\co{(i a_{n-1} +1)}(S_{n-1}^1).\]
\end{itemize}
\end{lem}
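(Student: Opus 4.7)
My plan is to first establish the algebraic identities (c) and (d), which track how iterates at the three levels $0$, $n-1$, and $n$ correspond through the tower of coordinate changes, and then to deduce the geometric bounds (a) and (b) from the containment $\gc_{n,0}(\C{D}_n)\subset\{1\leq\Re w\leq\hat{\Bk}+2,\ \Im w>-2\}$ supplied by~\eqref{E:width-of-image-chi}.

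For (c), the starting point is the telescoping decomposition
\[\gY_{n-1} = \gF_0^{-1}\circ\gc_{1,0}\circ\gc_{2,0}\circ\cdots\circ\gc_{n-1,0}\circ\gF_{n-1},\]
obtained by substituting $\psi_k = \gF_{k-1}^{-1}\circ\gc_{k,0}\circ\gF_k$ into $\gY_{n-1}=\psi_1\circ\cdots\circ\psi_{n-1}$. Since $\gY_n=\gY_{n-1}\circ\psi_n$ and $S_{n-1}^1=\psi_n(S_n^0)$, one has $S_0^n=\gY_n(S_n^0)=\gY_{n-1}(S_{n-1}^1)$. The level-$n$ conjugacy of \refL{L:conjugacy}(a), iterated $i$ times (using the extended $\gY_n$ when intermediate iterates of $f_n$ exit $\p_n$), yields $f_0\co{(iq_n)}(S_0^n)=\gY_n(f_n\co{i}(S_n^0))$, and then \refL{L:one-level}(a) iterated $i$ times identifies $\psi_n(f_n\co{i}(S_n^0))$ with $f_{n-1}\co{(ia_{n-1})}(S_{n-1}^1)$, producing (c). Part (d) is the parallel statement obtained by running \refL{L:conjugacy}(b) and \refL{L:one-level}(b) in place of (a); the first $k_n$ iterates contribute the extra $q_{n-1}$ iterates of $f_0$ (equivalently, the extra single iterate of $f_{n-1}$) recorded in the critical-orbit passage.

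Granting (c), parts (a) and (b) reduce to locating $\gF_{n-1}$ of $f_{n-1}\co{(ia_{n-1})}(S_{n-1}^1)$ in the Fatou plane. Unwinding the identity gives
\[\gF_{n-1}\bigl(f_{n-1}\co{(ia_{n-1})}(S_{n-1}^1)\bigr) = \gc_{n,0}\bigl(\gF_n(f_n\co{i}(S_n^0))\bigr),\]
where $\gF_n$ is the extended Fatou coordinate from \refL{L:extending-inverse-F-coord}: for $i\geq k_n$ the natural value $\gF_n(f_n\co{i}(S_n^0))\subset \gF_n(\p_n)$ is used, while for $0\leq i<k_n$ one uses the translate $\gF_n(S_n^0)+i$. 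Both lie in $\C{D}_n$ (the range of $i$ here is covered by the cutoff $k_n+\Bk+\hat{\Bk}+2$ built into $\C{D}_n$ together with the high-type condition~\eqref{E:high-type-restriction}), so~\eqref{E:width-of-image-chi} places the output of $\gc_{n,0}$ in $\{1\leq\Re w\leq\hat{\Bk}+2,\ \Im w>-2\}$, which is contained in the box of (a). For (b), apply the Abel equation $\gF_{n-1}(f_{n-1}(z))=\gF_{n-1}(z)+1$ once more; this is legitimate because the resulting point is still in $\p_{n-1}$, its real part being at most $\hat{\Bk}+3<\ga_{n-1}^{-1}-\Bk$, again by~\eqref{E:high-type-restriction}.

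The hard part, as I see it, will be the careful bookkeeping of the two legitimate representatives of $\gF_n$ at a point of $f_n\co{k_n}(S_n^0)\subset \C{C}_n\u\Csh_n$: the natural value in $\{1/2\leq\Re w\leq 3/2\}$ on one hand, and the shifted value in $\gF_n(S_n^0)+k_n$ on the other. Both lie in $\C{D}_n$ and correspond to the same point under the extended $\gF_n^{-1}$, but their $\gc_{n,0}$-images differ by precisely the shift responsible for the $+1$ correction in \refL{L:one-level}(b) and, correspondingly, for the $+q_{n-1}$ separating (c) from (d). Keeping this discrepancy straight at every level of the tower is what drives the argument; once it is made precise, the remaining verifications are routine applications of the functional equations at each level.
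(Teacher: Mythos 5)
Your proposal is correct and takes essentially the same route as the paper's proof: the same extended conjugacy identity $f_{n-1}\co{(i a_{n-1})}\circ\gy_n(w)=\gF_{n-1}^{-1}\circ\gc_{n,0}(\gF_n(w)+i)$ established via \refL{L:one-level} and analytic continuation, the containment \eqref{E:width-of-image-chi} for the geometric bounds in (a), the Abel equation for the shift in (b), and \refL{L:conjugacy} plus uniqueness of (anti-)holomorphic continuation for (c) and (d). The only difference is organizational (the paper proves (a),(b) before (c),(d), while you reverse the order), and the subtlety you flag at the end — the two representatives of the Fatou coordinate on $f_n\co{k_n}(S_n^0)$ accounting for the $+1$, respectively $+q_{n-1}$, discrepancy — is exactly the point the paper's two-case induction is designed to handle.
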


\begin{proof}
{\em Part a)}
Fix $n\geq 1$. 
We consider two cases: 

\medskip

{\em Case 1:} Assume that $i$ satisfies $0 \leq i \leq k_n$.

\noindent First, by an inductive argument we show that for all $w \in S^0_n$ we have 
\begin{equation}\label{E:extended-conjugacy}
f_{n-1}\co{(i a_{n-1})} \circ \gy_n (w)=\gF_{n-1}^{-1} \circ \gc_{n,0} (\gF_n(w)+i).
\end{equation}
Note that for all $w \in S_n^0$, $\gF_n(w)+i \in \C{D}_n$ and hence the right hand side of the 
above equation is defined. 
Moreover, the right hand side of the equation is either a holomorphic or anti-holomorphic function 
of $w$. 
%But, we still don't know that the left hand side of the equation is defined.

By definition, we have $\gy_n=\gF_{n-1}^{-1} \circ \gc_{n,0} \circ \gF_n$, and hence the equation holds for $i=0$. 

Assume that Equation~\eqref{E:extended-conjugacy} holds for all integers less than or equal to 
some $0 \leq i < k_n$. 
We wish to show that it holds for $i+1$.
For $|w|$ small enough, the left hand side of the equation is defined for $i+1$. 
Moreover, for $w$ on a smooth curve on $\partial S_n^0$ landing at $0$ there is $w'$ on $\partial S^0_n$ with $f_n(w')=w$. 
By Lemma~\ref{L:one-level} for $w'$, we obtain 
\begin{align*} 
f_{n-1}\co{((i+1) a_{n-1})} \circ \gy_n (w') &= f_{n-1}\co{(i a_{n-1})} \circ \gy_n (w) \\
&= \gF_{n-1}^{-1} \circ \gc_{n,0} (\gF_n(w)+i)= \gF_{n-1}^{-1} \circ \gc_{n,0} (\gF_n(w')+1+i).
\end{align*}
That is, the equation holds for $i+1$ on a curve landing at $0$ on $S_n^0$. 
Hence, by the uniqueness of the analytic continuation, it must hold for $w \in S^0_n$ close to $0$. 
On the other hand, since by the open mapping property of holomorphic and anti-holomorphic maps, it follows that 
the set of points on which the equality holds forms an open and closed subset of $S_n^0$. 
This implies that the equality must hold on $S_n^0$. 

Equation~\eqref{E:extended-conjugacy} and $S_{n-1}^1= \gy_n(S_n^0)$, implies that 
$\gF_{n-1} \circ f_{n-1}\co{(i a_{n-1})}(S_{n-1}^1)$ is contained in $\gc_{n,0}(\C{D}_n)$. 
Combining with the inclusion in Equation~\eqref{E:width-of-image-chi}, we conclude the inclusion in 
a) for these values of $i$. 

\medskip

{\em Case 2:} Assume that $i$ satisfies $k_n \leq i \leq k_n+ a_n - \B{k}-2$. 
For all $i$ with $k_n \leq i \leq k_n+ a_n - \B{k}-2$, $f_n\co{i}(w) \in \C{P}_n$, and by 
Lemma~\ref{L:one-level}, we have 
\[f_{n-1}\co{(i a_{n-1} +1)} (\gy_n(w))= \gy_n (f_n\co{i}(w))=\gF_{n-1}^{-1} \circ \gc_{n-1,0} \circ \gF_n (f_n\co{i}(w)).\]
Therefore, it follows from Equation~\eqref{E:width-of-image-chi} that 
\begin{multline*}
\gF_{n -1}(f_{n-1}\co{(i a_{n-1} +1)} (\gy_n(w))) 
\subset \gc_{n-1,0}(\gF_n(\C{P}_n)) \\
\subset \gc_{n-1,0}(\C{D}_n)
\subset \{w \in \D{C} \mid -2 < \Im w , 1 \leq \Re w \leq  \hat{\B{k}}+2\}
\end{multline*}
Hence, the relation $\gF_{n-1} \circ f_{n-1}(z)= \gF_{n_1}(z)+1$ implies the inclusion in a) for 
these values of $i$ (i.e.\ $1$ is replaced by $0$).

\medskip

{\em Part b)} 
By the restriction in Equation~\eqref{E:high-type-restriction} on the rotation, 
we have $\hat{\B{k}}+3 \leq \ga_n^{-1}-\B{k} -1$.  
This implies that the sets $f_{n-1}\co{(i a_{n-1})}(S_{n-1}^1)$ and 
$f_{n-1}\co{(i a_{n-1} +1)}(S_{n-1}^1)$ are contained in $\C{P}_{n-1}'$. 
As $\gF_{n-1}$ conjugates $f_{n-1}$ to the translation by one on $\C{P}_{n-1}'$, 
the inclusion in b) follows from the one in a). 

\medskip

{\em Part c)}
Recall the changes of coordinate $\gy_n=\gF_{n-1}\circ \gc_{n,0} \circ  \gF_n$, and their 
compositions $\gY_{n}$, for $n\geq 1$. 
It follows from the inclusion in part a) that for every $w\in S^1_{n-1}$, 
$\gF_{n-1} \circ f_{n-1}\co{(i a_{n-1})}(w)$ is contained in the domain of $\gc_{n-1,0}$. 
Thus, $\gY_{n-1} \circ f_{n-1} \co{(i a_{n-1})}(w)$ is defined. 
On the other hand, by Lemma~\ref{L:conjugacy}-b), 
for every $w\in S_0^n$, $f_0\co{k_n q_n + q_{n-1}}(w)$ is defined and 
$\Re f_n\co{k_n}(w) \in [1/2,3/2]$. 
In particular, this implies that for every $i$ with $0 \leq i \leq k_n$, 
$f_0\co{(i q_n)}(w)$ is defined. 
Then, one infers from Lemma~\ref{L:conjugacy}-a), that for all $w\in S_0^n$ and all $i$ with 
$k_n \leq i \leq a_n+k_n-\Bk -2$, $f_0\co{(i q_n)}(w)$ is defined. 

We want to show that the equality 
\[f_0\co{(i q_n)}\circ \gY_n(w)= \gY_{n-1} \circ f_{n-1}\co{(i a_{n-1})} \circ \gy_n(w),  w\in S_n^0,\]
holds for all $i$ in the lemma.  
The set $S_n^0$ is bounded by piecewise smooth curves two of which land at $0$ where 
one boundary curve is mapped to another by $f_n$. 
By Lemma~\ref{L:one-level}, this implies that $S_{n-1}^1$ is bounded by piecewise smooth curves, 
one of which is mapped to another by $f_{n-1}^{a_{n-1}}$. 
Similarly, by Lemma~\ref{L:conjugacy} the set $S_0^n$ is bounded by piecewise smooth curves, 
one of which is mapped to another by $f_0\co{q_n}$. 
These imply that the above equation, for each $i$, is valid on a boundary curve of $S_n^0$. 
These imply that the above equation holds on a curve on the boundary of $S_n^0$.
By the uniqueness of holomorphic and anti-holomorphic continuations, we conclude that the 
above equation must hold on the connected set $S_n^0$. 
This finishes the proof of the lemma. 

\medskip

{\em Part d)}
By the inclusion in part b), the set $\gF_{n-1} \circ f_{n-1}\co{(i a_{n-1} +1)}(S_{n-1}^1)$ is 
contained in the domain of $\gc_{n-1,0}$. 
Thus, the right hand side of the equation is defined. 
On the other hand, since $f_{n-1}\co{(i a_{n-1} +1)}(S_{n-1}^1) $ is contained in $\C{P}_{n-1}'$, 
the equality follows from the equality in part c) and the conjugacy relation in 
Lemma~\ref{L:conjugacy}-a). 
\end{proof}

Define the sets 
\[J_{n-1}= \bigcup_{j=0}^{1}  \bigcup_{i=0}^{k_n+ a_n -\B{k}-2} f_{n-1}\co{(i a_{n-1} +j)}(S_{n-1}^1), 
n\geq 1.\]
By Lemma~\ref{L:the-first-lift}, for all $n\geq 1$, we have
\begin{equation}\label{E:width-of-J}
\gF_{n-1}(J_{n-1}) \subset \{w\in \D{C} \mid  -2 < \Im w, 1 \leq \Re w \leq \hat{\B{k}}+3\}  \subset \C{D}_{n-1},
\end{equation}
and  
\[I^n=
\gF_0^{-1} \circ \gc_{1,0} \circ \gc_{2,0} \circ \dots \circ \gc_{n-1,0} \circ \gF_{n-1} ( J_{n-1})
= \gY_{n-1}(J_{n-1}).\]

\begin{propo}\label{P:lifts-vs-iterates}
For every $n\geq 1$ and every $i$ with $0 \leq i \leq q_n$, there are integers $i_j$ 
with $0 \leq i_j \leq a_{j-1}$, for $1 \leq j \leq n$, such that 
\begin{equation}\label{E:lifts-vs-iterates}
\forall w\in J_{n-1},  f_0\co{i}(\gY_{n-1}(w)) = \gF_0^{-1} \circ \gc_{1,i_1} \circ \gc_{2,i_2} \circ \dots \circ \gc_{n-1,i_{n-1}} (\gF_{n-1} (w)+i_n),
\end{equation}
in particular, 
\[f_0\co{i}(I^n) =
\gF_0^{-1} \circ \gc_{1,i_1} \circ \gc_{2,i_2} \circ \dots \circ \gc_{n-1,i_{n-1}} (J_{n-1}+i_n).\]
\end{propo}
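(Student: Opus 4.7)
The plan is to prove the displayed formula by induction on $i$, and then deduce the set equality from $I^n = \gY_{n-1}(J_{n-1})$. The driving algebraic identity will be
\[\gc_{j,0}(x + 1) = \gc_{j,0}(x) + a_{j-1}, \qquad 1 \le j \le n,\]
which is the translation of Lemma~\ref{L:one-level}(a), namely $f_{j-1}^{\circ a_{j-1}} \circ \psi_j = \psi_j \circ f_j$ on $\p_j'$, through the Fatou coordinates $\gF_{j-1}$ and $\gF_j$. Combined with the defining equality $\gc_{j,k} = \gc_{j,0} + k$, this identity will let me propagate a single $+1$ through the tower with ``carries''.

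For the base case $i = 0$, I will choose $i_j = 0$ for every $j$, so the right-hand side reduces to the definition of $\gY_{n-1}(w)$. For the inductive step, letting $z_i = f_0^{\circ i}(\gY_{n-1}(w))$ and applying $f_0$ via the extended functional equation $\gF_0^{-1}(w + 1) = f_0 \circ \gF_0^{-1}(w)$ from Lemma~\ref{L:extending-inverse-F-coord}, I will have $z_{i+1} = \gF_0^{-1}(\gF_0(z_i) + 1)$; the added $+1$ turns $\gc_{1, i_1}(\cdots)$ into $\gc_{1, i_1+1}(\cdots)$. If $i_1 + 1 \le a_0$ the new tuple $(i_1+1, i_2, \dots, i_n)$ works, and otherwise $i_1 = a_0$ and the key identity rewrites
\[\gc_{1, a_0+1}(y) = \gc_{1,0}(y) + a_0 + 1 = \gc_{1,0}(y + 1) + 1 = \gc_{1, 1}(y + 1);\]
since $y + 1 = \gc_{2, i_2}(\cdots) + 1 = \gc_{2, i_2+1}(\cdots)$, the same case analysis applies one level up, and the carry propagates until it halts at some level $j \le n$ with the new digit inside its allowed range.

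The principal obstacle will be the combined domain legality and carry termination. At every step the arguments of $\gc_{j,0}$ and of the extended $\gF_j^{-1}, \gF_0^{-1}$ must lie inside the sets $\C{D}_j$ (respectively $\C{D}_j'$) on which the extensions and the key identity are valid; this I will verify from the high-type hypothesis~\eqref{E:high-type-restriction} together with Lemma~\ref{L:enough-iterates} and the uniform safety margin $\gd_0$ of Lemma~\ref{L:well-contains}. In particular, iterating the key identity at level $n-1$ up to $i_n \le a_{n-1}$ times stays inside $\C{D}_{n-1}$ by the width estimate $\Re \gF_{n-1}(w) \le \hat{\B{k}} + 3$ from~\eqref{E:width-of-J}. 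For termination, unwrapping the nested $\gc_{j, i_j}$ via the key identity identifies the right-hand side with $\gF_0^{-1}\bigl(\gF_0(\gY_{n-1}(w)) + V\bigr)$ where
\[V(i_1, \dots, i_n) = i_1 + a_0 i_2 + a_0 a_1 i_3 + \dots + a_0 a_1 \cdots a_{n-2}\, i_n,\]
so the induction preserves the invariant $V = i$; the bound $i \le q_n$, the recurrence $q_n = a_{n-1} q_{n-1} + \gep_n q_{n-2}$, and the high-type lower bound $a_j \ge N$ together imply that the maximum value of $V$ with digits in the required ranges exceeds $q_n$, so the carry chain halts before $i_n$ is forced past $a_{n-1}$. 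Once the formula is established for every $w \in J_{n-1}$ with a tuple depending only on $i$, the set equality follows by varying $w$ and using $\gY_{n-1} = \gF_0^{-1} \circ \gc_{1,0} \circ \dots \circ \gc_{n-1,0} \circ \gF_{n-1}$ together with $I^n = \gY_{n-1}(J_{n-1})$.
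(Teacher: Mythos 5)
Your proof is built on the identity $\gc_{j,0}(x+1)=\gc_{j,0}(x)+a_{j-1}$, and this identity is false. By \eqref{E:width-of-image-chi} (equivalently Proposition~\ref{P:sector-geometry}), the image $\gc_{j,0}(\C{D}_j)$ is confined to the vertical strip $\{1\le \Re w\le \hat{\B{k}}+2\}$, so for $x,x+1\in\C{D}_j$ the point $\gc_{j,0}(x)+a_{j-1}$ has real part at least $1+a_{j-1}\ge 1+N>\hat{\B{k}}+2$ and cannot equal $\gc_{j,0}(x+1)$. What is true is only the much weaker statement $\gF_{j-1}^{-1}\bigl(\gc_{j,0}(x+1)\bigr)=\gF_{j-1}^{-1}\bigl(\gc_{j,0}(x)+a_{j-1}\bigr)$, i.e.\ equality in the dynamical plane after applying the \emph{non-injective} extended inverse Fatou coordinate; as points of $\C{D}_{j-1}$ the two sides differ. (Indeed Proposition~\ref{P:fine-estimate} shows $\gc_{j,0}'\approx\ga_j$, so $\gc_{j,0}(x+1)\approx\gc_{j,0}(x)+\ga_j$, a translation by a number in $(0,1/2)$, not by $a_{j-1}$.) Since this identity drives your entire carry mechanism, the inductive step collapses. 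A second, independent problem is the bookkeeping invariant $V=i_1+a_0i_2+a_0a_1i_3+\cdots$: the correspondence between iterates at consecutive levels is not uniformly ``one iterate of $f_j$ equals $a_{j-1}$ iterates of $f_{j-1}$''; when the orbit crosses the sector $S_j^0$ an extra iterate appears (Lemma~\ref{L:one-level}(b) and Lemma~\ref{L:the-first-lift}(c)--(d)), which is precisely why $q_n=a_{n-1}q_{n-1}+\gep_nq_{n-2}\neq a_{n-1}q_{n-1}$. The correct weights in any positional representation of $i$ are the $q_{j-1}$ (an Ostrowski-type numeration), not the products $a_0\cdots a_{j-2}$, so even granting a carry rule the invariant $V=i$ fails already at $i=q_2=a_0a_1\pm1$ versus $V(0,0,1,0,\dots)=a_0a_1$.

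The paper sidesteps this combinatorial minefield: it first verifies that both sides of \eqref{E:lifts-vs-iterates} are defined (domain propagation via \eqref{E:width-of-J}, \eqref{E:width-of-image-chi} and Lemma~\ref{L:conjugacy}), then observes that by the functional equations each right-hand side is automatically \emph{some} non-negative $f_0$-iterate of $\gY_{n-1}(w)$, identifies which iterate by comparing $f_0$ with the rigid rotation $R_{\ga_0}$ near $0$ (where the closest-return combinatorics are classical), and finishes by uniqueness of analytic/anti-analytic continuation on the connected set $J_{n-1}$. If you want an explicit digit-by-digit induction, you would have to formulate the carry at the level of the dynamical planes (through the non-injective extended $\gF_{j}^{-1}$) and use the $q_j$-weighted numeration, at which point you are essentially reconstructing the paper's comparison with the rotation.
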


\begin{proof}
The latter part of the proposition follows from the former part.
For the former part, first we need to show that both sides of Equation~\eqref{E:lifts-vs-iterates} 
are defined.
Fix $n\geq 1$. 
By Equations~\eqref{E:width-of-J} and \eqref{E:high-type-restriction}, for all integers $i_n$ with 
$0\leq i_n \leq  a_{n-1}$, we have 
\[\gF_{n-1} (J_{n-1}) +i_n \subset \C{D}_{n-1}.\]
Hence, $\gc_{n-1, i_{n-1}}$ is defined on $\gF_{n-1} (J_{n-1}) +i_n$.
Now, we have  
\[\gc_{n-1, i_{n-1}}(\gF_{n-1} (J_{n-1}) +i_n) \subset \gc_{n-1, i_{n-1}}(\C{D}_{n-1}).\] 
Combining the above inclusion with Equation~\eqref{E:width-of-image-chi}, we conclude that 
for all integers $i_{n-1}$ with $0\leq i_{n-1}\leq a_{n-2}$, the set 
$\gc_{n-1, i_{n-1}}(\gF_{n-1} (J_{n-1}) +i_n)$ is contained in $\C{D}_{n-2}$. 
Therefore, the expression 
$\gc_{n-2, i_{n-2}} \circ \gc_{n-1, i_{n-1}}(\gF_{n-1}(J_{n-1})+i_n)$ is defined. 
Continuing this argument, one infers that the right hand side of Equation~\eqref{E:lifts-vs-iterates} is 
defined. 
On the other hand, since $f_{n-1}$ may be iterated at least $a_{n-1}$ times on $J_{n-1}$ and 
$J_{n-1}$ is contained in $\C{P}_{n-1}$, Lemma~\ref{L:conjugacy} implies that 
$f_0$ may be iterated at least $a_{n-1} q_{n-1}+q_{n-2}=q_n$ times on $\gY_{n-1}(J_{n-1})=I^n$. 
Thus, for all such $i$, the left hand side of Equation~\eqref{E:lifts-vs-iterates} is defined.

By the conjugacy property of the coordinates $\gF_{m}^{-1}$ on $\C{D}_{m}$, 
each composition on the right hand side of Equation~\eqref{E:lifts-vs-iterates} corresponds to 
some non-negative iterate of $f_0$ on the left hand side of the equation.
Comparing $f_0$ with the rotation $R_{\ga_0}$ near zero, one can see that for all given $i$ 
there are integers $i_j$ as in the proposition such that the equality holds near $0$. 
By the uniqueness of the analytic continuation, they must hold on the connected set $J_{n-1}$.
\end{proof}

\begin{rem}
All the lemmas and propositions in Sections~\ref{SS:petals} and \ref{SS:lifts-vs-iterates} hold for 
all maps $f\in \IS_{\ga}$ with $\ga\in \irr$.
That is, it is not necessary that $f$ is a quadratic polynomial.
\end{rem}

\section{Geometry of the renormalization tower and the arithmetic of $\ga$}\label{S:geometry-arithmetic}
\subsection{Nearby sectors}\label{SS:nearby-orbits}.
Recall the sequence of numbers $\ga_i$, $i\geq 0$, defined in \refE{E:rotations}. 
Let $\gb_0=1$, and $\gb_k=\gP_{i=1}^k \ga_i$, $k\geq 1$.
The irrational number $\ga$ is called a Brjuno number if 
\[\sum_{j=0}^{+\infty}  \gb_{j-1} \log \ga_j^{-1} < +\infty.\] 
An equivalent characterization of the Brjuno numbers in terms of the best rational approximants of 
$\ga$ is $\sum_{j=0}^{+\infty} (q_j^{-1} \log q_{j+1})< +\infty$. 
By Siegel-Brjuno-Yoccoz theorem \cite{Sie42,Brj71,Yoc95} the quadratic map $Q_\ga$ is linearizable 
at $0$ if and only if $\ga$ is a Brjuno number. 
This optimality result has been extended to the maps in $\IS_\ga$, $\ga\in \irr$, in \cite{Ch10-I}. 
See the remark after Proposition~\ref{P:siegel-sectors}.

In this section we analyze the sizes of the sectors $f_0\co{i}(I^n)$, $0\leq i\leq q_{n-1}$, defined in 
Section~\ref{SS:petals}.
Our ultimate goal is to prove the following two propositions.  

For $\gd>0$ let $B_\gd(0)$ denote the open disk of radius $\gd$ centered at $0$.
For every $n\geq 1$ and $\gd>0$ define the set
\[G(n,\gd)= \{ i\in \D{Z}   \mid 0\leq i\leq q_n-1, f_0\co{i} (I^n) \subset B_\gd(0) \}.\]
Let $|X|$ denote the cardinality of a given set $X$.

\begin{propo}\label{P:cremer-sectors}
Let $\ga$ be a non-Brjuno number in $\irr$. 
Then, for every $\gd>0$ we have 
\[\limsup_{n\to\infty}  \frac{|G(n,\gd)|}{q_n}=1.\]
\end{propo}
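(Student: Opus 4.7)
The plan is to combine the combinatorial parametrization from Proposition~\ref{P:lifts-vs-iterates} with the sharp Fatou-coordinate estimates of \cite{Ch10-II}, turning the proposition into a counting problem on digit tuples that is then driven by the divergence of the Brjuno sum. By Proposition~\ref{P:lifts-vs-iterates}, for every $0\le i\le q_n-1$ there is a unique tuple $(i_1,\ldots,i_n)$ with $0\le i_j\le a_{j-1}$ such that
\[
f_0\co{i}(I^n)=\gF_0^{-1}\bigl(\Xi(i_1,\ldots,i_n)\bigr),\quad \Xi:=\gc_{1,i_1}\circ\cdots\circ\gc_{n-1,i_{n-1}}\bigl(\gF_{n-1}(J_{n-1})+i_n\bigr).
\]
Since $\gF_0^{-1}(w)\to 0$ as $\Im w\to+\infty$, there is $Y_\gd>0$ with $Y_\gd\to\infty$ as $\gd\to 0$ for which $f_0\co{i}(I^n)\subset B_\gd(0)$ is equivalent to $\Im\Xi(i_1,\ldots,i_n)\ge Y_\gd$, reducing the task to counting digit tuples whose associated composition produces imaginary part above this threshold.

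The analytic core is a single-level estimate for $\gc_{j,0}$ extracted from the Fatou-coordinate asymptotics of \cite{Ch10-II}. Its expected form, with constants depending only on $\IS$, is
\[
\Im\gc_{j,0}(w)\;\ge\;\ga_j\Im w+L_j(\Re w)-C\quad\text{on }\C{D}_j,
\]
where $L_j:[0,a_{j-1}]\to[0,\infty)$ captures the logarithmic gain coming from the close returns of the inverse Fatou coordinate (concentrated near the vertical sides of the strip, with $\int L_j\asymp\log\ga_{j-1}^{-1}$). Iterating along the composition and telescoping the $\ga_j$-factors into the weights $\gb_{j-1}=\prod_{k=1}^{j-1}\ga_k$ yields the key lower bound
\[
\Im\Xi(i_1,\ldots,i_n)\;\gtrsim\;\sum_{j=1}^{n-1}\gb_{j-1}L_j(i_{j+1})-C'.
\]

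Averaging over all admissible tuples and using $\tfrac{1}{a_{j-1}}\sum_{i_j}L_j(i_j)\asymp\log\ga_{j-1}^{-1}$,
\[
\tfrac{1}{q_n}\sum_{i=0}^{q_n-1}\Im\Xi(i)\;\gtrsim\;\sum_{j=1}^{n-1}\gb_{j-1}\log\ga_{j-1}^{-1}-O(1),
\]
which is a partial Brjuno sum and diverges to $+\infty$ under the non-Brjuno hypothesis on $\ga$. Choosing a subsequence $n_k\to\infty$ along which the partial sum surpasses $Y_\gd$ by a large factor, a Chebyshev-type concentration bound---using the pointwise control $L_j\le C\log\ga_{j-1}^{-1}$ and the independence of the digits under the product-uniform measure, with variance controlled by $\sum\gb_{j-1}^2\log^2\ga_{j-1}^{-1}=O(1)$ thanks to the geometric decay of $\gb_j$ on $\irr$---shows that the proportion of tuples with $\Im\Xi<Y_\gd$ vanishes along $n_k$. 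Hence $|G(n_k,\gd)|/q_{n_k}\to 1$, giving $\limsup_{n\to\infty}|G(n,\gd)|/q_n=1$.

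The main obstacle is establishing the single-level estimate with the correct form of $L_j$ and uniform-in-$j$ constants. This requires tracking $\gF_j^{-1}$ across the whole strip $\C{D}_j$, including the regions near the repelling fixed point $\sigma_{f_j}$ and the extension regions of Lemma~\ref{L:extending-inverse-F-coord} where the leading-order model of the Fatou coordinate breaks down; the optimal estimates of \cite{Ch10-II} are precisely the tool designed for this. A secondary issue is the concentration step: since successive lifts couple through the real-part shifts $i_j$, the digits are not fully independent as quantitative inputs to $\Im\Xi$, and an inductive decomposition over the renormalization levels may be needed in place of a direct Chebyshev bound.
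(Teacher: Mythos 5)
Your overall picture --- that each passage from level $j$ to level $j-1$ contracts heights by $\ga_j$ and adds a gain of order $\log\ga_j^{-1}$ when the point sits away from the vertical edges of the strip, so that heights at level $0$ are governed by the partial Brjuno sums $\sum\gb_{j-1}\log\ga_j^{-1}$ --- is the right one, and it is what the paper quantifies. But the mechanism you propose (additive single-level gains depending only on the digit, then averaging and a Chebyshev bound over product-uniform digit tuples) has gaps exactly where the work lies. First, the gain at the passage $j\to j-1$ is not a function $L_j(i_{j+1})$ of the digit alone: it depends on $\Im w_j$ as well as $\Re w_j$, and the available estimates (Proposition~\ref{P:global-estimate}, whose error $M\log(1+1/\ga_j)$ is of the same order as the gain, and Proposition~\ref{P:fine-estimate}, valid only for $\Im \gz\geq D/\ga_j$) do not yield a pointwise inequality $\Im\gc_{j,0}(w)\geq\ga_j\Im w+L_j(\Re w)-C$ on all of $\C{D}_j$ with uniform constants; in particular they give no usable control on chains whose height has dropped to $O(1)$ near the strip edges. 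The paper's proof is organized precisely to avoid ever handling such chains: the arithmetic Lemma~\ref{L:good-levels} produces levels $n\in\C{L}(\ga,T,0)$ at which the central chain of Proposition~\ref{P:nearly-right-height} stays above $T/\ga_j$ at \emph{every} intermediate level, Lemma~\ref{L:heights-of-lifts} then forces \emph{all} chains to track it within an additive constant, and Proposition~\ref{P:wide-set-swallowed} propagates the height of every point of $X_{\ga_n}$ down to level $0$. No averaging is needed, because at these special levels every tuple is good; the proportion $\gep$ is lost only at the top level (Lemma~\ref{L:density-on-hight-level}), which is why the conclusion is a $\limsup$. (Also, the gain is largest in the \emph{middle} of the strip and degrades to zero near the vertical sides, where $\gF_j^{-1}$ is comparable in size to the fixed points $0$ and $\gs_{f_j}$ --- the opposite of what you wrote.)

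Second, the counting step does not close. Proposition~\ref{P:lifts-vs-iterates} gives an injection from iterates $i\in\{0,\dots,q_n-1\}$ into digit tuples, not a bijection onto $\prod_j\{0,\dots,a_{j-1}\}$; the number of tuples exceeds $q_n$ by a factor comparable to $\prod_j(1+O(1/a_j))$, which grows with $n$, so a Chebyshev bound for the product-uniform measure on all tuples does not bound the proportion of bad \emph{iterates}, and restricting to the admissible tuples destroys the independence your variance computation relies on. Moreover the variance bound as stated is wrong: from the pointwise bound $L_j\leq C\log\ga_j^{-1}$ one only gets $\sum_j\gb_{j-1}^2\log^2\ga_j^{-1}$, which need not converge for a non-Brjuno $\ga$ (take $\gb_{j-1}\log\ga_j^{-1}\equiv 1$). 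An $O(1)$ variance would require $\operatorname{Var}(L_j)=O(1)$, which in turn requires the precise shape of the gain function that the first gap leaves unestablished. Note also that a correct version of your argument would prove the statement with $\lim$ in place of $\limsup$, something the authors explicitly state they do not prove; that alone should signal that the concentration step is not as routine as proposed.
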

Indeed we state and prove an stronger statement in Proposition~\ref{P:cremer-sectors-stronger}.
However, as we show in Section~\ref{S:Unique-Ergodicity}, the above statement is enough to derive 
the unique ergodicity (stated in the introduction) for non-Brjuno values of $\ga$.

For Brjuno values of $\ga$, let $\gD(f_i)$ denote the Siegel disk of $f_i$, $i\geq 0$. 
The $\gd$ neighborhood of $\gD(f_0)$ is denoted by $B_\gd(\gD(f_0))$.
Given $n\in \D{N}$ and $\gd>0$, define
\[H(n,\gd)=\{ i \in \D{Z} \mid 0 \leq i \leq q_n-1, f_0\co{i} (I^n) \subset B_\gd(\gD(f_0)\}.\]

\begin{propo}\label{P:siegel-sectors}
For every Brjuno $\ga\in \irr$  we have the following:
\begin{itemize}
\item[(a)] For every $\gd>0$ 
\[\limsup_{n\to\infty}  \frac{|H(n,\gd)|}{q_n}=1.\]
\item[(b)] For every $\gep>0$ there are $n_0\in \D{Z}$ and $\gd'>0$ such that for every $n\geq n_0$ and every $\gd< \gd_0$ 
we have 
\[\forall k\in H(n, \gd), \quad \diam (f_0\co{k}(I^n) \setminus \gD(f_0)) \leq \gep.\]
%\[\lim_{\substack{\gd\to 0\\ n\to \infty}}  
%\max \{\diam (f_0\co{k}(I^n) \setminus \gD(f_0)) \mid k\in H(n,\gd)\}=0\]
\end{itemize} 
\end{propo}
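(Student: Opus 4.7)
Part (a) reduces to a statement about proximity to $0$: the Brjuno condition on $\ga$ combined with Siegel's theorem gives $r_0=r_0(\ga)>0$ with $B_{r_0}(0)\subset\gD(f_0)$. Hence $G(n,r_0)\subset H(n,\gd)$ for every $\gd>0$, and the conclusion reduces to $\limsup_n|G(n,r_0)|/q_n=1$. This is the direct analog of Proposition~\ref{P:cremer-sectors} and admits the same proof, independently of the Brjuno/non-Brjuno dichotomy. The plan is to use Proposition~\ref{P:lifts-vs-iterates} to write each $f_0\co{i}(I^n)$ as $\gF_0^{-1}\circ\gc_{1,i_1}\circ\cdots\circ\gc_{n-1,i_{n-1}}(J_{n-1}+i_n)$, and to show that for most indices $i\in\{0,\ldots,q_n-1\}$, along a suitably chosen subsequence of $n$, the nested lift composition lands in $\gF_0(\C{P}_0)$ at imaginary part large enough that $\gF_0^{-1}$ maps it into $B_{r_0}(0)$. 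The combinatorial count of such ``deep'' indices uses the uniform horizontal-width bound $\hat{\Bk}$ preserved by each $\gc_{j,i_j}$ (Proposition~\ref{P:sector-geometry}).

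For part (b), fix $\gep>0$ and let $k\in H(n,\gd)$. The set $f_0\co{k}(I^n)\setminus\gD(f_0)$ lies in the annular shell $B_\gd(\gD(f_0))\setminus\gD(f_0)$ of radial width at most $\gd$. To bound its tangential extent along $\partial\gD(f_0)$ we again appeal to the lift representation: the source $J_{n-1}+i_n$ in $\gF_{n-1}(\C{P}_{n-1})$ has horizontal width at most $\hat{\Bk}+3$ by Lemma~\ref{L:the-first-lift}, and each $\gc_{j,i_j}$ preserves a horizontal-width bound of $\hat{\Bk}$ by Proposition~\ref{P:sector-geometry}. Composition yields $f_0\co{k}(I^n)=\gF_0^{-1}(\Omega)$ for some $\Omega\subset\gF_0(\C{P}_0)$ of horizontal width at most $\hat{\Bk}$. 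For the portion of $\Omega$ that $\gF_0^{-1}$ sends close to $\partial\gD(f_0)$, the distortion of $\gF_0^{-1}$ is comparable to that of the Siegel linearization, so the tangential arc-length of $\gF_0^{-1}(\Omega)$ along $\partial\gD(f_0)$ is bounded by $C\gd$ for a uniform constant $C$. Hence $\diam(f_0\co{k}(I^n)\setminus\gD(f_0))\leq(C+1)\gd$, and choosing $\gd'=\gep/(C+1)$ with $n_0$ sufficiently large completes the proof.

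The main obstacle is the uniform-in-$n$ comparison between the Fatou coordinate $\gF_0$ and the Siegel linearization of $f_0$ near $\partial\gD(f_0)$ needed in (b). Although the two coordinate systems coexist on overlapping regions of the dynamic plane, converting a horizontal-width bound in $\gF_0$ into a tangential arc-length bound along $\partial\gD(f_0)$ requires precise distortion estimates on $\gF_0^{-1}$ restricted to the part of its domain abutting $\partial\gD(f_0)$, which are supplied by the analytic machinery for perturbed Fatou coordinates developed in \cite{Ch10-II}.
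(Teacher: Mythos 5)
Your reduction in part (a) does not work, for two separate reasons. First, the target statement $\limsup_n|G(n,r_0)|/q_n=1$ is false for Brjuno $\ga$: by Proposition~\ref{P:Siegel-disk-enclosed} the sectors shrink onto $\gD(f_0)\cup\pc(f_0)$, not onto $\{0\}$, and a sector $f_0\co{i}(I^n)$ contained in $B_{r_0}(0)\subset\gD(f_0)$ would lie inside the open Siegel disk, which is incompatible with the fact that the $q_n$ sets $f_0\co{i}(I^n)$ must cover $\pc(f_0)\setminus\{0\}\supset\partial\gD(f_0)$ (Proposition~\ref{P:orbits-organized}); so $G(n,r_0)$ is essentially empty rather than of full density. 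Second, the proof of Proposition~\ref{P:cremer-sectors} is emphatically \emph{not} independent of the Brjuno/non-Brjuno dichotomy: it rests on Lemma~\ref{L:good-levels}(b), which uses the divergence of the Brjuno sum to produce, for every $T$, levels $n$ with $B_{n,j}\geq T/\ga_j$ all the way down the tower; only then does Proposition~\ref{P:wide-set-swallowed} force the lifts to arbitrarily large imaginary part. For Brjuno $\ga$ the heights $B_{n,j}$ converge as $n\to\infty$ and, by Proposition~\ref{P:nearly-right-height}(b), saturate at $\max_{w\in\partial\tilde{\gD}(f_j)}\Im w$ up to an additive constant, so the lifts stop near the height of the Siegel boundary and $\gF_0^{-1}$ sends them to points at definite distance from $0$. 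This is exactly why the paper introduces the dichotomy $\mathscr{A}$/$\mathscr{B}$ (Lemma~\ref{L:complementary-conditions}) and gives two distinct arguments, neither of which is the non-Brjuno proof.

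Part (b) also has a gap: the claim that the ``tangential arc-length'' of $\gF_0^{-1}(\Omega)$ along $\partial\gD(f_0)$ is $O(\gd)$ because $\Omega$ has horizontal width $\leq\hat{\Bk}$ is unjustified. One horizontal unit in $\gF_0(\C{P}_0)$ corresponds to one iterate of $f_0$, which near $\partial\gD(f_0)$ displaces points by a definite amount independent of $\gd$ and $n$; a strip of width $\hat{\Bk}$ therefore maps to a region of definite, not small, diameter. The smallness of $\diam f_0\co{k}(I^n)$ is not a level-$0$ distortion statement at all: it comes from writing $f_0\co{k}(I^n)=\gF_0^{-1}\circ\gc_{1,i_1}\circ\cdots\circ\gc_{n-1,i_{n-1}}(J_{n-1}+i_n)$ and composing $n-1$ changes of coordinates, each of which contracts the hyperbolic metrics of the $\C{D}_j$ by a uniform factor $\gr<1$ (Lemma~\ref{L:contraction}, made Euclidean via Lemma~\ref{L:well-contains}). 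That is the reason the statement requires $n\geq n_0$; your proposed bound $(C+1)\gd$ makes no use of $n$ and cannot be correct as argued.
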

When $\gd$ is small, the sets $H(n,\gd)$ may be empty for small values of $n$, but eventually these sets 
become non-empty by the first part of the above proposition. 

The remaining of this section is devoted to the proofs of the above propositions. 
The argument has two flavors: an arithmetic part and an analytic part.
Readers interested in the proofs of the unique ergodicity using the above propositions may safely 
skip the remaining of this section and go directly to Section~\ref{S:Unique-Ergodicity}.  

\begin{rem}
In \cite{Ch10-I} it is proved that for every $\gd>0$ the sets $G(n,\gd)$ and $H(n,\gd)$ are non-empty 
for large values of $n$.
The analysis presented in this paper uses an improved distortion estimate on the Fatou coordinates 
$\gF_n$ that has been established 
in \cite{Ch10-II}, but was not available at the time of writing \cite{Ch10-I}. 
%For closely related developments using the techniques developed in \cite{Ch10-I} and \cite{Ch10-II} 
%one may refer to \cite{CC13} and \cite{CS14}. 
\end{rem}

%%%%%%%%%%%%%%%%%%%%%%%%%%%%%%%%%%%%%%%%%%%%%%%%%%%%%%

\subsection{An arithmetic lemma}\label{SS:arithmetic}
Consider the sequence of numbers $\ga_i$, $i\geq 0$, and let 
\[\gb_0=1, \gb_k=\gP_{i=1}^k \ga_i, k\geq 1.\]
Fix an arbitrary $B\in \D{R}$ and define the sequence of numbers 
\begin{equation}\label{E:bisequence}
\begin{aligned}
&B_{k,k}=-2 , \qquad k=0,1,2,\dots ; \\
&B_{k,i-1}=\ga_{i} B_{k,i}+\log \ga_i^{-1}-B, \qquad 1\leq i\leq k.
\end{aligned}
\end{equation}
For $T \geq 0$ and integers $l \geq 0$ define
\[\C{L}(\ga,T,l)= \{k\in \{l+1, l+2, \dots\} \mid B_{k,i}\geq T\ga_{i}^{-1},\, \tfor  l< i< k\}.\]
By definition, $l+1\in \C{L}(\ga, T, l)$ as there is no condition to be satisfied. 
Hence, every $\C{L}(\ga, T, l)$ is a non-empty set. 
 
\begin{lem}\label{L:good-levels}
For every irrational $\ga$ and every $B\geq 0$ we have,
\begin{itemize}
\item[(a)] for every $k\geq 0$
\[B_{k,0}\leq \sum_{i=1}^{k}\gb_{i-1}\log\ga_i^{-1}  \leq B_{k,0}+2B+2 ;\] 
\item[(b)] if $\ga$ is non-Brjuno, for every $T\in\D{R}$, the set $\C{L}(\ga, T, 0)$ has infinite cardinality. 

%the set  
%\[\C{L}(\ga,T)= \{k\in\D{N}\mid B_{k,i}\geq T\ga_{i}^{-1},\, \tfor  0<i< k\}\]
%has infinite cardinality;
\item[(c)] if $\ga$ is a Brjuno number such that for some $T\geq 0$ and every $l \geq 0$, 
$\C{L}(\ga,T,l)$ has finite cardinality, then 
\[\liminf_{j\to \infty} \lim_{m\to \infty} B_{m,j}-T\ga_j^{-1}< + \infty.\]
\end{itemize}
\end{lem}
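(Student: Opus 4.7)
I would first unroll the linear recurrence $B_{k,i-1} = \alpha_i B_{k,i} + \log \alpha_i^{-1} - B$ from $i = k$ down to $0$ with initial datum $B_{k,k} = -2$; this gives the closed form
\[
B_{k,0} = \sum_{j=1}^k \beta_{j-1}\log \alpha_j^{-1} - 2\beta_k - B\sum_{j=1}^k \beta_{j-1},
\]
from which the two-sided inequality in (a) is immediate using $0 \le \beta_k \le 1$ and $\sum_{j \ge 0}\beta_j \le \sum_{j \ge 0} 2^{-j} = 2$ (both because each $\alpha_i \in (0,1/2)$). Stopping the same unrolling at an intermediate level $i$ yields the identity $\beta_i B_{k,i} = B_{k,0} - S_i$ with $S_i := \sum_{j=1}^i \beta_{j-1}(\log \alpha_j^{-1} - B)$, so the defining condition $B_{k,i} \ge T\alpha_i^{-1}$ of $\mathcal{L}(\alpha, T, l)$ rewrites as $B_{k,0} \ge S_i + T\beta_{i-1}$. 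I would also record the monotonicity $B_{k+1,i} - B_{k,i} = (\beta_k/\beta_i)(B_{k+1,k} + 2)$: in the high-type regime $B_{k+1,k} + 2 = 2(1 - \alpha_{k+1}) + \log \alpha_{k+1}^{-1} - B > 0$, so $B_{\infty, i} := \lim_m B_{m,i}$ exists, is $+\infty$ at every level at once, and equals $+\infty$ precisely when $\alpha$ is non-Brjuno, by (a).

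\textbf{Part (b).} The plan is to argue by contradiction. Assume $\mathcal{L}(\alpha, T, 0)$ is finite, and for each $k$ beyond its maximum let $i_k \in (0, k)$ be the minimal failure level. The reformulation gives $B_{k,0} < S_{i_k} + T\beta_{i_k - 1}$, and combined with (a) this forces the uniform tail bound $U_k - U_{i_k} \le 2 + 2B + T$, where $U_k := \sum_{j=1}^k \beta_{j-1}\log \alpha_j^{-1}$. Non-Brjunoness gives $U_k \to +\infty$, so $i_k \to \infty$. Minimality of $i_k$ yields $B_{k, i_k - 1} \ge T\alpha_{i_k - 1}^{-1}$, and running this through the level-$i_k$ step of the recurrence shows $\log \alpha_{i_k}^{-1} \ge T(\alpha_{i_k - 1}^{-1} - 1) + B \ge T(N-1) + B$, so $\alpha_{i_k}$ is uniformly exponentially small along the sequence $i_k$. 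Combining this quantitative smallness at infinitely many indices with the uniform tail bound above---by iterating the one-step propagation to neighbouring levels---should produce the desired contradiction.

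\textbf{Part (c).} For Brjuno $\alpha$, $B_{\infty, i}$ is finite, and the explicit formula $B_{\infty, i} - B_{k, i} = (\beta_k/\beta_i)(B_{\infty, k} + 2)$, combined with $\beta_k B_{\infty, k} = B_{\infty, 0} - S_k \to 0$, controls the rate $B_{k, i} \nearrow B_{\infty, i}$. The hypothesis that every $\mathcal{L}(\alpha, T, l)$ is finite gives, for each $l$, a pair $(k_l, i_l)$ with $l < i_l < k_l$ and $B_{k_l, i_l} < T\alpha_{i_l}^{-1}$; automatically $i_l \to \infty$. Choosing $k_l$ sufficiently large relative to $i_l$ upgrades the finite-$k$ failure to the limit bound $B_{\infty, i_l} - T\alpha_{i_l}^{-1} \le (\beta_{k_l}/\beta_{i_l})(B_{\infty, k_l} + 2)$, whose right-hand side can be made bounded uniformly in $l$ once the convergence rate is taken into account. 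The desired liminf estimate follows.

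\textbf{Main obstacle.} The tail bound $U_k - U_{i_k} \le 2 + 2B + T$ obtained in (b) is not on its own incompatible with non-Brjunoness (there exist non-Brjuno sequences with each $\beta_{j-1}\log \alpha_j^{-1}$ small), so the real content of (b) is the quantitative propagation step: one must use the minimality of the failure level $i_k$ together with the high-type estimate $\alpha_{i_k - 1}^{-1} \ge N$ to force growth of $\log \alpha_j^{-1}$ for several $j$ close to $i_k$, which then violates the tail bound. Part (c) is more technical than conceptual, the main difficulty being to handle the ratio $\beta_{k_l}/\beta_{i_l}$ of two quantities tending to zero.
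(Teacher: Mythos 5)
Part (a) of your proposal is correct and is exactly the paper's argument (unroll the recurrence, use $\sum_j \beta_{j-1}\le 2$). Parts (b) and (c), however, each have a genuine gap, and in both cases the missing ingredient is the same: a \emph{chain} of failure levels whose segment estimates carry the weight $\beta$, so that summing over the chain converges. In (b) you stop at the single (minimal) failure level $i_k$ and, applying the identity only at level $0$, get the unweighted bound $\sum_{j=i_k+1}^{k}\beta_{j-1}\log\alpha_j^{-1}\le 2+2B+T$ — which, as you note, is not a contradiction. The paper instead fixes $k_0=\max\mathcal{L}(\alpha,T,0)$ and, for each large $N_0$, builds a descending chain $N_0>N_1>\cdots>N_r\le k_0$ where each $N_l$ is a failure level for $N_{l-1}$. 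Your own intermediate identity, read at level $N_l$ rather than level $0$, gives $\sum_{j=N_l+1}^{N_{l-1}}\beta_{j-1}(\log\alpha_j^{-1}-B)=\beta_{N_l}B_{N_{l-1},N_l}+2\beta_{N_{l-1}}\le T\beta_{N_l-1}+2\beta_{N_{l-1}}$; since $\sum_j\beta_j\le 2$, summing over the chain bounds the whole tail of the Brjuno series by $2(2+T+B)$, contradicting non-Brjunoness. Your proposed repair cannot replace this: the lemma is stated for every irrational $\alpha$, so the high-type bound $\alpha_{i_k-1}^{-1}\ge N$ is unavailable (only $\ge 2$), and in any case exponential smallness of $\alpha_{i_k}$ at infinitely many indices is perfectly compatible with divergence of $\sum\beta_{j-1}\log\alpha_j^{-1}$, so no contradiction follows from it.

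In (c) your formula $B_{\infty,i}-B_{k,i}=(\beta_k/\beta_i)(B_{\infty,k}+2)$ is correct, but the step "choosing $k_l$ sufficiently large relative to $i_l$" is circular: the failure level $i_l$ is produced \emph{by} $k_l$, not chosen independently. For instance $i_l$ could equal $k_l-1$, in which case $(\beta_{k_l}/\beta_{i_l})B_{\infty,k_l}=\beta_{k_l-1}^{-1}(S_\infty-S_{k_l})$, which for a general Brjuno number (e.g.\ one with $\log\alpha_{j+1}^{-1}\sim j$) is unbounded in $l$; so the error term is not uniformly controlled and the limit bound does not follow. The paper's proof again uses a chain: set $\ell_{j+1}=\max\mathcal{L}(\alpha,T,\ell_j)$, prove $B_{\ell_{j+1},\ell_j}<T\alpha_{\ell_j}^{-1}$ (maximality plus the downward monotonicity of the recursion), telescope the increments $|B_{\ell_j,\ell_k}-B_{\ell_{j-1},\ell_k}|\le(T\alpha_{\ell_{j-1}}^{-1}+2)\beta_{\ell_{j-1}}/\beta_{\ell_k}$ — summable once more — to get $B_{\ell_n,\ell_k}-T\alpha_{\ell_k}^{-1}\le 2T+6$ uniformly, and finally use monotonicity to pass from the subsequence $\ell_n$ to arbitrary $m$. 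A minor additional slip: your monotonicity claim $B_{k+1,k}+2=2(1-\alpha_{k+1})+\log\alpha_{k+1}^{-1}-B>0$ fails for large $B$; it is also unnecessary, since existence of $\lim_m B_{m,i}$ follows directly from the closed formula.
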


Items (a) and (b) of the above lemma, when one starts with $B_{k,k}=0$ instead of $-2$ 
in \refE{E:bisequence} and $B=T$, appear in \cite[Section 1.6]{Yoc95}. 
The same argument works here as well. 
For the sake of completeness, and their use in part (c), we present a proof of them here.  

\begin{proof}
{\em Part (a):}  
First, by an inverse induction we see that for every $k\geq 1$ 
\begin{equation}\label{E:arithmetic-height}
B_{k,i-1}= -2\gb_{i-1}^{-1}\gb_k+\gb_{i-1}^{-1} \sum_{j=i}^{k}\gb_{j-1}(\log\ga_j^{-1}-B),\quad
 1\leq i\leq k.
\end{equation}
That is, for $i=k$ the formula becomes 
\[B_{k,k-1}=-2 \gb_{k-1}^{-1} \gb_k+ \gb_{k-1}^{-1} \gb_{k-1} (\log \ga_{k}^{-1}-B)=
 -2 \ga_k+ \log \ga_k^{-1}-B, \] 
which is valid by Equation~\eqref{E:bisequence}. 
Now assume that Equation~\eqref{E:arithmetic-height} holds for $i=m+1$. 
Then, by Equation~\eqref{E:bisequence}, we have   
\begin{align*}
B_{k,m-1}&=\ga_m B_{k,m}+ \log \ga_m^{-1} -B \\    
&= \ga_m (-2 \gb_{m}^{-1} \gb_k + \gb_m^{-1} \sum_{j=m+1}^{k}\gb_{j-1}(\log\ga_j^{-1}-B))+\log \ga_m^{-1} -B \\
&= -2 \gb_{m-1}^{-1} \gb_k + \gb_{m-1}^{-1} \sum_{j=m}^{k}\gb_{j-1}(\log\ga_j^{-1}-B),
\end{align*}
which finishes the proof of the induction step.

In particular, the formula \eqref{E:arithmetic-height} for $i=1$ becomes 
\begin{align*}
B_{k,0}&=-2\gb_k+\sum_{j=1}^{k}\gb_{j-1} \log\ga_j^{-1}-   B \sum_{j=1}^{k}\gb_{j-1}.
\end{align*}
On the other hand, since each $\ga_i \in (0,1/2)$, we have 
\begin{equation}\label{E:trivial-bound}
\sum_{j=1}^{k}\gb_{j-1} \leq  \sum_{j=1}^{\infty} (1/2)^{j-1}=2.
\end{equation} 
The above two equations prove the first part of the lemma. 

\medskip

{\em Part (b):} Assume on the contrary that $\C{L}(\ga,T,0)$ has finite cardinality for some $T\in \D{R}$. 
Let $k_0$ denote the largest element in $\C{L}(\ga,T,0)$.

Given a positive integer $N_0>k$, define the integers $N_0>N_1>N_2>\cdots >N_r\leq k_0$ according to 
\[B_{N_{l-1},N_l}< T\ga_{N_l}^{-1},\quad \tfor l=1,2,\dots, r.\]
By Equation~\eqref{E:arithmetic-height} for $k=N_{l-1}$ and $i=N_l+1$, the above equation implies that 
\[\sum_{j=N_l+1}^{N_{l-1}}\gb_{j-1}(\log \ga_j^{-1}-B)
=\gb_{N_l} B_{N_{l-1}, N_l} + 2 \gb_{N_{l-1}} 
\leq T\gb_{N_l-1} + 2\gb_{N_{l-1}}.\]
Adding the above sums together for $l=1,2, \dots, r$, we obtain
\[\sum^{N_0}_{j=N_r+1}\gb_{j-1}\log \ga_j^{-1}\leq  2\sum_{l=1}^{r}\gb_{N_{l-1}}+
T\sum_{l=1}^{r}\gb_{N_l-1}+B \sum^{N_0}_{j=N_r+1}  \gb_{j-1}.\]
As $N_0\to \infty$, using the bound in \eqref{E:trivial-bound}, we conclude that 
\[\sum^{\infty}_{j=k_0+1}\gb_{j-1}\log \ga_j^{-1}\leq 2 (2+T+B),\]
contradicting our assumption on the type of $\ga$.
\medskip

{\em Part (c):}
Define $\ell_0=0$, and $\ell_{j+1}=\max \C{L}(\ga,T,\ell_j)$, for $j\geq 0$. 
By definition, $\ell_{k+1}\geq \ell_k+1$, and hence $\ell_k\to +\infty$ as $k\to +\infty$. 

Since $\ga$ is a Brjuno number, by Equation~\eqref{E:arithmetic-height} and the uniform bound in
\eqref{E:trivial-bound}, for every $i\geq 0$, $\lim_{m\to +\infty} B_{m,i}$ exists and is finite.  
We claim that for every $k\geq 0$, 
\[\lim_{m\to +\infty}B_{m,\ell_k} -T\ga_{\ell_k}^{-1}\] 
is uniformly bounded from above independent of $k$. 
This would imply the statement in part (c).

First we see that  
\begin{equation}\label{E:increments}
\forall j\geq 0,   B_{\ell_{j+1}, \ell_j}< T\ga_{\ell_j}^{-1}.
\end{equation}
If the above statement is not correct, for some $j\geq 0$ we have 
\[ B_{\ell_{j+1}, \ell_j} \geq T\ga_{\ell_j}^{-1}\geq -2=B_{\ell_j, \ell_j}.\]
Then, by the recursive relation in \eqref{E:bisequence}, and the definition of $\ell_j$, 
for all $i$ with $\ell_{j-1} < i < \ell_j$ we must have 
\[B_{\ell_{j+1},i} \geq B_{\ell_j, i} \geq T \ga_i^{-1}.\]
However, the above inequality contradicts the choice of $\ell_j= \max \C{L}(\ga, T, \ell_{j-1})$ since 
$\ell_{j+1}> \ell_j$ and satisfies the inequality $B_{\ell_{j+1},i} > T\ga_i^{-1}$ for 
all $\ell_{j-1}<i<\ell_{j+1}$. 

Fix $k\in \D{N}$ and let $n>k$ be an arbitrary integer. 
Recall that $B_{k,k}=-2$, for $k\geq 0$.
By Equation~\eqref{E:increments}, for every $j$ with $k+1\leq j\leq n$ we have 
\[|B_{\ell_{j}, \ell_{j-1}}- B_{\ell_{j-1}, \ell_{j-1}}| < T\ga_{\ell_{j-1}}^{-1}+2.\]
Then, recursively multiplying the above equation by $\ga_{i}$ and then adding and subtracting 
$\log \ga_i^{-1}-B$ within the absolute value, for values of $i=\ell_{j-1}, \dots \ell_k+1$, we 
come up with the inequality
\begin{align*}
|B_{\ell_{j},\ell_{k}}-B_{\ell_{j-1}, \ell_k}|
&< (T\ga_{\ell_{j-1}}^{-1}+2)\ga_{\ell_{j-1}}\ga_{\ell_{j-1}-1}\dots \ga_{\ell_k+1}\\
&= T \gb_{\ell_{j-1}-1} \gb_{\ell_k}^{-1}+ 2 \gb_{\ell_{j-1}} \gb_{\ell_k}^{-1}.
\end{align*}
Then, by the triangle inequality, we have  
\begin{align*}
|B_{\ell_n,\ell_k}-B_{\ell_k, \ell_k}|
&\leq \sum_{j=k+1}^{n} |B_{\ell_j, \ell_k}- B_{\ell_{j-1}, \ell_k}| \\
&\leq \sum_{j=k+1}^{n} (T \gb_{\ell_{j-1}-1} \gb_{\ell_k}^{-1}+ 2 \gb_{\ell_{j-1}} \gb_{\ell_k}^{-1})\\
&\leq T \ga_{\ell_k}^{-1}+ T \sum_{j=k+2}^{+\infty} (\gb_{\ell_{j-1}-1} \gb_{\ell_k}^{-1}) 
+ 2 \sum_{j=0}^{+\infty} \gb_j \\
& \leq  T \ga_{\ell_k}^{-1}+ 2 T + 4. 
\end{align*}
It follows from the above inequalities that for every $n>k$, 
\[B_{\ell_n, \ell_k}- T\ga_{\ell_k}^{-1} \leq 2T+4+2.\]

If an integer $m \nin \{\ell_k,\ell_{k+1}, \ell_{k+2},\dots \}$, choose $n$ with $\ell_{n-1}<m<\ell_n$. 
By the definition of the sequence $\ell_n$ we have 
\[B_{\ell_n,m}\geq T\ga_m^{-1}>B_{m,m}.\]
This implies that 
\[B_{m, \ell_k} -T\ga_{\ell_k}^{-1} \leq B_{\ell_n, \ell_k}-T \ga_{\ell_k}^{-1}\leq 2T+ 6\]

All in all, we have shown that for all $m\geq \ell_k$ we have the uniform bound 
\[B_{m,\ell_k} - T\ga_{\ell_k}^{-1} \leq 2 (T+2)+2.\]
This finishes the proof of part (c).
\end{proof}

%%%%%%%%%%%%%%%%%%%%%%%%%%%%%%%%%%%%%%%%%%%%%%%%%%%

\subsection{Estimates on Fatou coordinates}\label{SS:estimates-F-coord}
To control the geometry of the sectors $f_0\co{i}(I^n)$, for $0\leq i\leq q_n-1$, through Proposition~\ref{P:lifts-vs-iterates}, 
we need some estimates on the Fatou coordinates.  
In this section we assemble two crucial estimates, a global one and an infinitesimal one, that we need for the analysis 
in this paper.  

Recall the Fatou coordinate $\gF_h: \p_h \to \D{C}$ of a map $h\in \IS_\ga$, $\ga\in (0, \ga_*]$, from Theorem~\ref{T:Ino-Shi1}.  
We denote the non-zero fixed point of $h$ that lies on the boundary of $\p_h$ by $\gs_h$. 
Consider the covering map  
\begin{equation}\label{E:covering-formula}
T_h(w)= \frac{\gs_h}{1-e^{-2\pi \ga w\B{i}}}: \D{C} \to  \hat{\D{C}} \setminus \{0, \gs_h\}.
\end{equation}
The map $T_h$ commutes with the translation by $1/\ga$. 

The connected components of $T_h^{-1}(\p_h)$ are simply connected sets that are disjoint from $\D{Z}/\ga$. 
Moreover, the projection of each such component onto the imaginary axis covers the whole imaginary axis.  
In particular, we denote the connected component of $T_h^{-1}(\p_h)$ that separates $0$ from $1/\ga$ by $\tilde{\p}_h$.
We may lift the map $\gF_h^{-1}: \C{D}_h \to \D{C}\setminus \{0,\gs_h\}$ under the covering map 
$T_h: \D{C} \to  \hat{\D{C}} \setminus \{ 0, \gs_h\}$ to obtain a map $L_h: \C{D}_h \to \D{C}$ such that 
\begin{equation}\label{E:intermediate-lift-of-F}
T_h  \circ  L_h(\gz)= \gF_h^{-1}(\gz), \forall \gz \in \C{D}_h.
\end{equation}
The above lifts are determined upto a translation by an element of $\D{Z}/\ga$. 
We choose the one that maps $\gF_h(\C{P}_h)$ to $\tilde{\C{P}}_h$.
In other words, $L_h$ is a unique extension of $T_h \circ \gF_h$, mapping $\gF_h(\C{P}_h)$ to $\tilde{\C{P}_h}$.  

Recall the lifts  
\[\gc_{h,0} :\C{D}_h \to \D{C}\] 
defined in Section~\ref{SS:extending-F-coord}.
We need to control the derivative of this map. 

One may lift $h$ via $T_h$ to obtained a holomorphic map 
$F_h$ defined on $L_h(\C{D}_h)$. 
Indeed, the lift is univalent on $\tilde{\C{P}}_h$ and has univalent  extension onto a larger set. 
The map $F_h$ on $\tilde{\p}_h$ is close to the translation by one, with explicit estimates that can be worked out using classical 
distortion estimates on $h$. 
It follows that $L_h^{-1}$ conjugates $F_h$ to the translation by one, and it is a classical non-trivial problem to prove 
estimates on $L_h$ and in particular $|L_h'-1|$, depending on the bounds on $|F_h(w)-(w+1)|$.  
One may refer to \cite{Yoc95,Sh98,Sh00,Ch10-I,Ch10-II}, for further details on this. 
Then, through the decomposition of $\gF_h^{-1}$ to $L_h  \circ T_h$ one obtains relevant estimates on the map $\gF_h$. 
The following estimates are the finest estimates known to us. 
They have been established in \cite{Ch10-I} and \cite{Ch10-II}.

\begin{propo}[\cite{Ch10-I}]\label{P:global-estimate}
For every $D\in \D{R}$ there exists $M>0$ such that for all $h$ in $\IS_\ga \cup \{Q_\ga\}$ with $0 < \ga \leq  \ga^*$ we have 
\[\forall \gz \in \C{D}_h \; \twith \Im \gz \geq D/\ga,  \quad |L_h(\gz) - \gz | \leq M \log (1+ \frac{1}{\ga}).\]
\end{propo}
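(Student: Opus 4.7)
The plan is to exploit the decomposition $\gF_h^{-1} = T_h \circ L_h$ to transfer the estimate into a question about a lifted near-identity map, and then iterate the resulting functional equation while carefully tracking the position of $L_h(\gz)$ inside the fundamental strip of $T_h$.

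First, I would construct the lift $F_h$ of $h$ under $T_h$: since $T_h$ is a universal cover of $\hat{\D{C}} \setminus \{0, \gs_h\}$ and $h$ fixes both $0$ and $\gs_h$ with respective multipliers $e^{2\pi\ga\B{i}}$ and a repelling value, one obtains a holomorphic $F_h$ on a neighborhood of $\tilde{\C{P}}_h$ satisfying $T_h \circ F_h = h \circ T_h$ and commuting with translation by $1/\ga$. Combining $T_h \circ L_h = \gF_h^{-1}$ with the Abel equation $\gF_h \circ h = \gF_h + 1$ gives the conjugacy
\[
L_h(\gz+1) = F_h(L_h(\gz)), \quad \gz \in \C{D}_h.
\]
The natural choice of base lift places $L_h(\gF_h(\C{P}_h))$ in $\tilde{\C{P}}_h$.

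Second, I would establish quantitative bounds on $\gep(w) := F_h(w) - (w+1)$. Since $T_h$ is the lift of the unique Möbius map sharing the fixed-point data $(0, \gs_h, e^{2\pi\ga\B{i}})$ with $h$, $\gep$ measures how far $h$ deviates from this Möbius normal form. Using the asymptotic expansion $h(z) = e^{2\pi\ga\B{i}}z + \tfrac{1}{2}h''(0) z^2 + \cdots$ with $|h''(0)|$ bounded away from $0$ and $\infty$ (Lemma~\ref{L:bounds-on-second-derivative}), together with the precompactness of $\IS_\ga$ and classical Koebe distortion applied to $\vfi^{-1}$, I would show
\[
|\gep(w)| \leq \frac{C}{1 + \min(\Im w,\; 1/\ga)(\text{dist to poles})}
\]
on the relevant strip, in particular $|\gep(w)| \to 0$ exponentially as $\Im w \to +\infty$ and a uniform bound $|\gep(w)| \leq C$ throughout.

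Third, setting $u(\gz) = L_h(\gz) - \gz$, the functional equation reads
\[
u(\gz+1) - u(\gz) = \gep(L_h(\gz)).
\]
Fix a base point $\gz_0 \in \C{D}_h$ for which $|u(\gz_0)|$ is controlled by the normalization $\gF_h(\cp_h) = 0$ and the continuous dependence in Theorem~\ref{T:Ino-Shi1}(e). For any $\gz$ with $\Im \gz \geq D/\ga$, telescoping along integer translates produces
\[
u(\gz) - u(\gz_0) = \sum_{k=0}^{n-1} \gep(L_h(\gz_0 + k)),
\]
after handling the real-part shift via a similar one-step estimate.

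Finally, and this is the crux, I would run a bootstrap to bound the sum. From the trivial a priori estimate $|u(\gz_0 + k)| \lesssim k \cdot \sup |\gep|$ one first shows that $\Im L_h(\gz_0 + k) \geq c D/\ga$ for a uniform $c > 0$, so $L_h(\gz_0 + k)$ stays in the region where the sharper decay of $\gep$ applies. Then $L_h$ being close to unit translation (Koebe again on the univalent $L_h^{-1}$) gives that $L_h(\gz_0 + k)$ is at horizontal distance $\asymp \min(k,\, 1/\ga - k)$ from the poles of $T_h$ at $\D{Z}/\ga$, so $|\gep(L_h(\gz_0+k))| \lesssim 1/\min(k,\, 1/\ga - k)$. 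Summing over $0 \leq k \leq \langle 1/\ga\rangle$ yields the harmonic-type bound
\[
\Big|\sum_{k=0}^{n-1}\gep(L_h(\gz_0+k))\Big| \leq 2C \sum_{k=1}^{\langle 1/\ga\rangle} \frac{1}{k} \leq M \log\!\big(1 + \tfrac{1}{\ga}\big),
\]
which closes the argument.

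The main obstacle is the circularity in the bootstrap: the pointwise bound on $\gep$ at step $k$ requires knowing that $L_h(\gz_0 + k)$ lies far from the poles of $T_h$, which in turn demands the estimate on $u$ we are trying to prove. Breaking this circularity requires running the inductive estimate jointly on $|u|$ and $|L_h' - 1|$, the latter via Cauchy estimates on disks shrinking as $k$ approaches $1/\ga$, and is where the finer control from \cite{Ch10-II} is essential.
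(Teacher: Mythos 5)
The first thing to note is that the paper does not actually prove this statement: it is imported verbatim from \cite{Ch10-I} (Proposition~5.15 there), and the only argument supplied in the present paper is the remark following Proposition~\ref{P:fine-estimate}, which explains that the cited estimate covers $\gz\in\gF_h(\C{P}_h)$ and that the extension to all of $\C{D}_h$ follows by applying the functional equation $F_h\circ L_h(\gz)=L_h(\gz+1)$ a uniformly bounded number of times ($\B{k}''+\B{k}+\hat{\B{k}}+2$). Your skeleton --- lift $h$ under $T_h$ to a map $F_h$ close to translation by one, conjugate it to $\gz\mapsto\gz+1$ by $L_h$, and extract the $\log(1/\ga)$ from a harmonic-type sum $\sum 1/\min(k,1/\ga-k)$ over the roughly $1/\ga$ steps between the two lifted fixed points --- is indeed the Yoccoz--Shishikura strategy that the cited reference follows, so you have correctly identified the route. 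You also correctly identify where the real work is.

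But as written the argument has genuine gaps rather than omitted routine details. First, the telescoped relation $u(\gz+1)-u(\gz)=\gep(L_h(\gz))$ only propagates the estimate along horizontal integer translates, whereas the region $\{\gz\in\C{D}_h:\Im\gz\geq D/\ga\}$ is vertically unbounded; an arbitrary $\gz$ there is not an integer translate of your base point, and the discrepancy is in the imaginary part, not the ``real-part shift'' you propose to absorb in one step. One needs a separate vertical argument, e.g.\ that $\gep(w)=O(e^{-2\pi\ga\Im w})$ as $\Im w\to+\infty$ so that $L_h-\mathrm{id}$ stabilizes along verticals. Second, the base-point bound is not free: the normalization only places $L_h(\gF_h(\C{P}_h))$ in $\tilde{\C{P}}_h$, and a priori $|u|$ could be of order $1/\ga$ from the real parts alone; bounding $|u(\gz_0)|$ by $O(\log(1/\ga))$ at even one point already requires locating $T_h^{-1}(\cp_h)$, which is part of what is being proved. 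Third, the bootstrap circularity you flag at the end is precisely the content of the proposition, and naming it (``run the induction jointly on $|u|$ and $|L_h'-1|$'') is not the same as closing it. Finally, a misattribution: this global logarithmic estimate is established in \cite{Ch10-I} and does not rest on \cite{Ch10-II}; the latter is needed only for the sharper infinitesimal estimate of Proposition~\ref{P:fine-estimate}.
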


\begin{propo}[\cite{Ch10-II}]\label{P:fine-estimate}
For all $D>0$ there exists $M>0$ such that for all $h\in \IS_\ga \cup \{Q_\ga\}$ with $0 < \ga \leq \ga_*$, we have 
\[\forall \gz \in \C{D}_h \; \twith \Im \gz \geq D/\ga, \quad |\gc_{h,0}'(\gz)-\ga|\leq M\ga e^{-2\gp \ga \Im \gz}.\]
%Moreover, as $D\to \infty$, $M\to 1.$
\end{propo}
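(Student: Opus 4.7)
The plan is to reduce the estimate to a fine derivative bound for the intermediate lift $L_h$, and then to outline how the latter is proved in the spirit of \cite{Ch10-II}. Starting from the factorization $\gF_h^{-1}=T_h\circ L_h$ from \eqref{E:intermediate-lift-of-F} together with the defining relation $\ex\circ \gc_{h,0}=\gF_h^{-1}$ (or its complex conjugate, depending on the sign of $\ga$), a logarithmic differentiation combined with the explicit identities
\[
\frac{T_h'(w)}{T_h(w)}=\frac{2\pi\ga\,\B{i}}{1-e^{2\pi\ga w\B{i}}}, \qquad \frac{\ex'(\zeta)}{\ex(\zeta)}=2\pi\B{i},
\]
gives the closed form
\[
\gc_{h,0}'(\zeta)=\frac{\ga\,L_h'(\zeta)}{1-e^{2\pi\ga L_h(\zeta)\,\B{i}}},
\]
from which the error decomposes as
\[
\gc_{h,0}'(\zeta)-\ga=\ga\cdot\frac{\bigl(L_h'(\zeta)-1\bigr)+e^{2\pi\ga L_h(\zeta)\,\B{i}}}{1-e^{2\pi\ga L_h(\zeta)\,\B{i}}}.
\]

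Next I would control the algebraic pieces via Proposition~\ref{P:global-estimate}: the bound $|L_h(\zeta)-\zeta|\leq M_0\log(1+1/\ga)$ yields $\Im L_h(\zeta)\geq \Im\zeta-M_0\log(1+1/\ga)$, and the elementary observation that $\ga\log(1+1/\ga)$ is uniformly bounded on $(0,\ga_*]$ implies that $(1+1/\ga)^{2\pi\ga M_0}$ is bounded by a constant depending only on $\ga_*$. This gives
\[
\bigl|e^{2\pi\ga L_h(\zeta)\,\B{i}}\bigr|\leq C\,e^{-2\pi\ga\Im\zeta};
\]
enlarging $D$ if needed, this quantity drops below $1/2$, so the denominator is bounded below and the summand $e^{2\pi\ga L_h(\zeta)\B{i}}$ in the numerator contributes an acceptable term of size $O\bigl(\ga\,e^{-2\pi\ga\Im\zeta}\bigr)$.

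The main obstacle is therefore the remaining \emph{fine derivative estimate}
\[
|L_h'(\zeta)-1|\leq M\,e^{-2\pi\ga\Im\zeta}, \qquad \Im\zeta\geq D/\ga.
\]
Here I would use that $L_h^{-1}$ conjugates the lift $F_h:=T_h^{-1}\circ h\circ T_h$ of $h$ under $T_h$ to the translation $w\mapsto w+1$. Combining the multiplier $e^{2\pi\ga\B{i}}$ of $h$ at $0$ with the asymptotic $T_h(w)\sim -\gs_h\,e^{2\pi\ga w\,\B{i}}$ as $\Im w\to+\infty$, a direct calculation yields $F_h(w)=w+1+E(w)$ with $|E(w)|\leq C\,e^{-2\pi\ga\Im w}$ for $\Im w$ large. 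Writing $L_h(\zeta)=\zeta+u(\zeta)$, the conjugacy identity $L_h(\zeta+1)=F_h(L_h(\zeta))$ becomes a cohomological equation for $u$ whose solution admits a power series in $X=e^{2\pi\ga\B{i}\zeta}$ with exponentially decaying coefficients; a Cauchy estimate applied to this series then delivers the desired bound on $L_h'-1$. The delicate technical point, and the reason \cite{Ch10-II} is a dedicated paper, is to keep all constants uniform across the non-compact family $\IS_\ga$ and to propagate the Fourier-type estimate all the way down to the boundary $\Im\zeta\asymp 1/\ga$ of the relevant region; achieving this requires supplementing the cohomological argument with the pre-compactness of $\IS$ and classical distortion estimates for $h$ in a neighbourhood of its pair of fixed points $0$ and $\gs_h$.
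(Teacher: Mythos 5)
The first thing to note is that the paper offers no proof of this statement: it is imported wholesale from \cite[Proposition~3.3]{Ch10-II}, as the remark following it in Section~\ref{SS:estimates-F-coord} makes explicit, so there is no internal argument to measure yours against. Judged on its own terms, your reduction is correct as far as it goes. The identity $\gc_{h,0}'(\gz)=\ga L_h'(\gz)\big/\bigl(1-e^{2\pi\ga L_h(\gz)\B{i}}\bigr)$ does follow from $\ex\circ\gc_{h,0}=T_h\circ L_h$ by logarithmic differentiation, and your use of Proposition~\ref{P:global-estimate} to get $\bigl|e^{2\pi\ga L_h(\gz)\B{i}}\bigr|\leq C e^{-2\pi\ga\Im\gz}$ (the factor $(1+1/\ga)^{2\pi\ga M_0}$ being uniformly bounded for $\ga\in(0,\ga_*]$) is sound. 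This correctly isolates the proposition as equivalent to the derivative bound $|L_h'(\gz)-1|\leq M e^{-2\pi\ga\Im\gz}$ on the region in question.

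That last bound, however, is the entire analytic content of the proposition, and your final paragraph asserts it rather than proves it. The conjugacy $L_h(\gz+1)=F_h(L_h(\gz))$ with $F_h(w)=w+1+O(e^{-2\pi\ga\Im w})$ is the right starting point, but extracting from it an \emph{exponential} bound on $L_h'-1$, uniformly over $h\in\IS_\ga\cup\{Q_\ga\}$, uniformly as $\ga\to 0$, and valid down to heights $\Im\gz\asymp 1/\ga$, is exactly the technical achievement of \cite{Ch10-II}; ``a power series in $e^{2\pi\ga\B{i}\gz}$ plus Cauchy estimates'' does not by itself produce this (for instance, a Cauchy estimate applied directly to the global bound $|L_h(\gz)-\gz|\leq M_0\log(1+1/\ga)$ on a disk of radius $\asymp 1/\ga$ only yields $|L_h'-1|=O(\ga\log(1/\ga))$, which fails to capture the decay needed when $\Im\gz\gg \ga^{-1}\log(1/\ga)$). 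A second, smaller issue: ``enlarging $D$ if needed'' is not available, since the statement quantifies over all $D>0$; because $M$ is allowed to depend on $D$, the band $D/\ga\leq\Im\gz\leq D'/\ga$ can be absorbed, but only after you supply a separate $O(\ga)$ bound on $|\gc_{h,0}'-\ga|$ there, which your argument does not provide. In short: the reduction to the fine estimate on $L_h'$ is correct and consistent with the framework the paper sets up around $T_h$ and $L_h$, but the heart of the proposition remains a citation in disguise --- defensible here only because the paper itself treats the statement the same way.
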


\begin{rem}
Proposition~\ref{P:global-estimate} is stated as Proposition~5.15 in \cite{Ch10-I}. 
In the notation of \cite{Ch10-I}, $x_h$ is bigger than or equal to $y_h$ (by definition) and by Proposition 5.14 of that paper 
$y_h\geq \ga^{-1} -\Bk$. 
That is, the inequality in Proposition~\ref{P:global-estimate} is proved for $\gz\in \gF_h(\C{P}_h)$. 
On the other hand, for $\gz\in \C{D}_h \setminus \gF_h(\C{P}_h)$ one uses the equation $F_h\circ  L_h(\gz)= L_h(\gz+1)$ and 
the uniform bounds on $F_h$ to bound $L_h(\gz) - \gz$.  
Since there are uniformly bounded number of iterates of $F_h$ involved ($\B{k}''+\B{k}+\hat{\B{k}}+2$), the estimate also holds on 
$\C{D}_h$.    
Note that the condition $\ga<\ga_2$ in that proposition is already incorporated in Proposition~\ref{P:petal-geometry} of this paper 
under the constant $\ga_*'$. 
Hence, we do not impose any further condition on $\ga$ here. 

Proposition~\ref{P:fine-estimate} stated above is proved in \cite[Proposition~3.3]{Ch10-II}.
Indeed, Proposition~3.3 in \cite{Ch10-II} proves an stronger statement where the dependence of $M$ on $D$ is established and 
the inequality holds on a larger domain. 
The latter part of Proposition~\ref{P:fine-estimate} follows from the proof of Proposition~3.3 in \cite{Ch10-II}.
\end{rem}

\begin{proof}[Proof of Proposition~\ref{P:landing-angle}]
Fix $r\in [0, 1/\ga -\B{k}]$. 
The curve $t\mapsto \gF_h^{-1}(t+ r\B{i})$ lands at $0$ at a well-defined angle as 
$t\to +\infty$ if and only if $\lim_{t\to +\infty} \Re \gc_{h,0}(t+r \B{i})$ exists and is finite. 
We use Proposition~\ref{P:fine-estimate} with $D=1$ to obtain a constant $M_1$ and the estimate on the 
derivative of $\gc_{h,0}$ above the horizontal line $1/\ga$. 
For all $t_1 > t_2 \geq 1/\ga$ we have 
\[
|\Re \gc_{h,0}(t_1+r\B{i}) -\Re \gc_{h,0}(t_2+r \B{i})| 
\leq \int_{t_2}^{t_1} M_1 \ga e^{-2\pi \ga t} dt 
\leq \frac{M_1}{2\pi} e^{-2\pi \ga t_2}. 
\]
Since $e^{-2\pi \ga t_2} \to 0$ as $t_2\to +\infty$, we conclude that  
$\Re \gc_{h,0}(t+ r\B{i})$ tends to a finite limit as $t\to +\infty$. 
\end{proof}

Recall the sets $\C{D}_n$ defined in Section~\ref{SS:change-coordinates}. 
Let $\gr_n(z) |dz|$ denote the Poincar\'e metric on $\C{D}_n$, i.e.\ the hyperbolic metric of constant curvature $-1$. 
The changes of coordinates $\gc_{n,i}: \C{D}_n \to \C{D}_{n-1}$ have the following nice property with respect to these metrics.  

\begin{lem}\label{L:contraction}
There exists a constant $\gr \in (0,1)$ such that for every $n\geq 1$ and all integers $i$ with 
$0 \leq i \leq a_i$, we have $\|\gc_{n,i}'\| \leq \gr$, where the norm is calculated with respect to the hyperbolic metrics on $\C{D}_n$ and $\C{D}_{n-1}$.
\end{lem}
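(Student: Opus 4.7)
The plan is to combine the Schwarz--Pick lemma with Lemma~\ref{L:well-contains}, and then upgrade the automatic non-expansion to a uniform strict contraction by a quantitative comparison of hyperbolic densities.

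Since each $\gc_{n,i}$ is either holomorphic or anti-holomorphic, Schwarz--Pick applied to $\gc_{n,i}\colon \C{D}_n\to \C{D}_{n-1}$ immediately yields $\|\gc'_{n,i}\|_{hyp}\leq 1$; the task is to extract a universal gap to $1$. To this end, let $W_{n,i}$ be the Euclidean $\gd_0/2$-neighborhood of $\gc_{n,i}(\C{D}_n)$. Lemma~\ref{L:well-contains} guarantees that $W_{n,i}\ci \C{D}_{n-1}$, so I can factor $\gc_{n,i}$ as $\C{D}_n\to W_{n,i}\hookrightarrow \C{D}_{n-1}$. Applying Schwarz--Pick to the first arrow and domain monotonicity of the hyperbolic density to the inclusion gives, for $w=\gc_{n,i}(z)$,
\[
\|\gc'_{n,i}\|_{hyp(\C{D}_n\to \C{D}_{n-1})}(z)\leq \frac{\gr_{\C{D}_{n-1}}(w)}{\gr_{W_{n,i}}(w)}.
\]
Thus it suffices to bound the right-hand side above by some universal $\gr<1$, over all $n\geq 1$, all admissible $i$ and all $w\in \gc_{n,i}(\C{D}_n)$.

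The numerator is easy: because $B_{\gd_0}(w)\ci \C{D}_{n-1}$, one has $\gr_{\C{D}_{n-1}}(w)\leq \gr_{B_{\gd_0}(w)}(w)=2/\gd_0$. For the denominator I would use that, by Proposition~\ref{P:sector-geometry}, \eqref{E:width-of-image-chi} and $\gc_{n,i}=\gc_{n,0}+i$, the image $\gc_{n,i}(\C{D}_n)$ lies in the translate $Y_0+i$ of the fixed half-strip $Y_0=\{1\leq \Re w\leq \hat{\Bk}+2,\ \Im w>-2\}$; consequently $W_{n,i}$ sits in a translate of the fixed enlarged half-strip $Y_0^*=\{1-\gd_0/2<\Re w<\hat{\Bk}+2+\gd_0/2,\ \Im w>-2-\gd_0/2\}$, whose hyperbolic density is translation-invariant. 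Domain monotonicity then gives $\gr_{W_{n,i}}(w)\geq \gr_{Y_0^*+i}(w)$, reducing the problem to a uniform upper bound for $\gr_{\C{D}_{n-1}}(w)/\gr_{Y_0^*+i}(w)$ on the image.

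On any Euclidean-bounded portion of the image this is immediate from continuity of the Fatou coordinates (Theorem~\ref{T:Ino-Shi1}(e)), pre-compactness of the classes $\IS_{\ga_n}\cup\{Q_{\ga_n}\}$, and the two-sided bounds on $\gr_{Y_0^*}$ on compact sets. The main obstacle, as I see it, is the asymptotic regime $\Im w\to+\infty$, where both hyperbolic densities collapse to $0$ and one must compare their rates. Here the high-type condition \eqref{E:high-type-restriction} is decisive: near $\Im w=+\infty$ the set $\C{D}_{n-1}$ coincides with a vertical strip of width at least $\ga_{n-1}^{-1}-\Bk\geq \Bk+\hat{\Bk}+4$, while $Y_0^*+i$ is a vertical strip of width $\hat{\Bk}+1+\gd_0$. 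Since the hyperbolic metric of an infinite vertical strip is $\pi/(\text{width})$, the limiting ratio is at most
\[
(\hat{\Bk}+1+\gd_0)\ga_{n-1}\leq \frac{\hat{\Bk}+1+\gd_0}{2\Bk+\hat{\Bk}+4.5},
\]
which is a universal constant strictly less than $1$. Patching the asymptotic estimate with the compact-region estimate then produces the desired universal $\gr\in(0,1)$. The technical core of the proof is to make the asymptotic comparison of hyperbolic densities genuinely uniform in $n$; this is precisely where the lower bound on $N$ imposed in \eqref{E:high-type-restriction} is essential.
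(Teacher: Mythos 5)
Your first half coincides with the paper's: Schwarz--Pick reduces everything to showing that the inclusion of $\gc_{n,i}(\C{D}_n)$ (or of its $\gd_0/2$-neighborhood) into $\C{D}_{n-1}$ is \emph{uniformly strictly} contracting, using Lemma~\ref{L:well-contains} and the bounded horizontal width from Proposition~\ref{P:sector-geometry}. But that reduction is the easy part; the entire content of the lemma is the strict gap below $1$, and the density comparisons you offer do not produce it. First, your asymptotic computation uses $\gr_{\C{D}_{n-1}}(w)\approx \pi/(\ga_{n-1}^{-1}-\Bk)$ for $\Im w$ large. That is the density at the \emph{centerline} of the wide strip. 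For $i$ near $0$ (and symmetrically near $a_{n-1}$) the image $\gc_{n,i}(\C{D}_n)$ sits at $\Re w\in[1,\hat{\B{k}}+2]$, within bounded distance of the lateral boundary $\{\Re w=0\}$ of $\C{D}_{n-1}$, where $\gr_{\C{D}_{n-1}}(w)\asymp 1/d(w,\partial\C{D}_{n-1})\geq 1/(\hat{\B{k}}+2)$ is a constant of the same order as $\gr_{Y_0^*+i}(w)$, not $O(\ga_{n-1})$. In that regime the general two-sided estimate $\tfrac{1}{2d}\leq\gr\leq\tfrac{2}{d}$ leaves a multiplicative slack of $4$, so nothing in your argument forces the ratio below $1$. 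Second, on the ``Euclidean-bounded portion'' you invoke compactness, but compactness only upgrades a pointwise strict inequality to a uniform one; the pointwise inequality $\gr_{\C{D}_{n-1}}(w)<\gr_{Y_0^*+i}(w)$ would require $Y_0^*+i\subset\C{D}_{n-1}$, which you never prove and which fails for $i$ near $a_{n-1}$: the half-strip $Y_0^*+i$ then overhangs the right edge $\Re w=\ga_{n-1}^{-1}-\Bk$ of $\gF_{n-1}(\C{P}_{n-1})$, and the remaining part of $\C{D}_{n-1}$ there consists of the irregular translates $\gF_{n-1}(S_{n-1}^0)+j$, not a full half-strip.

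The missing mechanism is a way to convert ``the $\gd_0$-neighborhood of a set of horizontal width $\leq\hat{\B{k}}$ lies in $\C{D}_{n-1}$'' into a definite contraction constant. The paper does this with an explicit auxiliary map: for an arbitrary $\xi_0\in\gc_{n,i}(\C{D}_n)$ it considers
\[
H(\xi)=\xi+\frac{\gd_0(\xi-\xi_0)}{\xi-\xi_0+2\hat{\B{k}}+1},
\]
notes that $|\Re(\xi-\xi_0)|\leq\hat{\B{k}}$ forces $|H(\xi)-\xi|<\gd_0$, so $H$ maps $\gc_{n,i}(\C{D}_n)$ holomorphically into $\C{D}_{n-1}$ while fixing $\xi_0$ with $H'(\xi_0)=1+\gd_0/(2\hat{\B{k}}+1)>1$; Schwarz--Pick applied to $H$ then yields $\gr_{\C{D}_{n-1}}(\xi_0)\leq\frac{2\hat{\B{k}}+1}{2\hat{\B{k}}+1+\gd_0}\,\tilde{\gr}_n(\xi_0)$ at every point at once, with no case distinction between bounded and unbounded regions and no appeal to the high-type condition. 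You would need to supply some device of this kind (or a genuinely uniform extremal-length/conformal comparison); as written, the proposal identifies the right ingredients but does not close the argument.
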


The above lemma appears in \cite{Ch10-I}, also proved here for the readers convenience.

\begin{proof}
Let $\tilde{\rho}_n(z)|d z|$ denote the Poincar\'e metric on the domain $\gc_{n,i}(\C{D}_n)$. 
We may decompose the map $\gc_{n,i}: (\C{D}_n, \gr_n) \to  (\C{D}_{n-1}, \gr_{n-1})$ as follows:
\begin{displaymath}
\xymatrix{
(\C{D}_n, \gr_n) \ar[r]^-{\gc_{n,i}} &\; (\gc_{n,i}(\C{D}_n), \tilde{\rho}_n) \ar@{^{(}->}[r]^-{inc.} &\;  
(\C{D}_{n-1}, \rho_{n-1}).}
\end{displaymath}

By Schwartz-Pick Lemma, the first map in the above chain is non-expanding. 
Hence, it is enough to show that the inclusion map is uniformly contracting in the respective metrics. 
For this, we use Lemma~\ref{L:well-contains}, which provides us with $\gd>0$, independent of $n$ and $i$, such that 
$\gd$ neighborhood of $\gc_{n,i}(\C{D}_n)$ is contained in $\C{D}_{n-1}$.  

To prove the uniform contraction, fix an arbitrary point 
$\xi_0$ in $\gc_{n,i}(\C{D}_n)$, and consider the map
\[H(\xi):=\xi+\frac{\delta (\xi-\xi_0)}{\xi-\xi_0+2\hat{\B{k}}+1},  \gx \in \gc_{n,i}(\C{D}_n).\]
By Proposition~\ref{P:sector-geometry}, for every $\gx \in \gc_{n,i}(\C{D}_n)$ we have $|\Re(\xi-\xi_0)| \leq \hat{\B{k}}$ 
(note that $\C{D}_h \subset \C{D}_h'$). 
This implies that $|\xi-\xi_0|<|\xi-\xi_0+2\hat{\B{k}}+1|$, and hence  $|H(\xi)-\xi| < \delta$.
In particular, $H$ is a holomorphic map from $\gc_{n,i}(\C{D}_n)$ into  $\C{D}_{n-1}$. 
By Schwartz-Pick Lemma, $H$ is non-expanding. 
In particular, at $H(\xi_0)=\xi_0$ we obtain  
\[\rho_{n-1}(\xi_0) |H'(\xi_0)|= \rho_{n-1}(\zeta_0)(1+\frac{\delta}{2\hat{\B{k}}+1}) \leq \hat{\rho}_n(\xi_0).\]
That is, 
\[\rho_{n-1}(\xi_0)\leq \big( \frac{2\hat{\B{k}}+1}{2\hat{\B{k}}+1+\delta} \big) \hat{\rho}_n(\xi_0).\]
As $\gx_0$ was arbitrary, this finishes the proof of the lemma.
\end{proof}

%%%%%%%%%%%%%%%%%%%%%%%%%%%%%%%%%%%%%%%%%%%%%%%%%%%

\subsection{Geometry of sectors on level $n$}\label{S:SectorsAtDeepLevels}
First we need the following basic property of the maps in $\IS_\ga \cup \{Q_\ga\}$. 

\begin{lem}\label{L:size-of-sigma}
There exists a constant $C_1>0$ such that for all $\ga\in (0, \ga^*]$ and $h\in \IS_\ga \cup \{Q_\ga\}$ we have 
\[ C_1^{-1} \ga \leq   |\gs_h | \leq C_1 \ga.\] 
\end{lem}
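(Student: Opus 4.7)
The plan is to isolate the leading order behavior $\gs_h \sim -4\pi \ga \B{i}/f_0''(0)$ that is already mentioned in the paper and then invoke compactness to pass from asymptotics to uniform bounds.

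Any $h \in \IS_\ga \cup \{Q_\ga\}$ has the form $h(z) = f_0(\ega z)$, where either $f_0 \in \IS$ (with $f_0(0) = 0$, $f_0'(0) = 1$) or $f_0(z) = z + \tfrac{27}{16}z^2$. In either case, the non-trivial fixed points of $h$ satisfy $F(z, f_0, \ga) = 0$, where
\[
F(z, f_0, \ga) := \frac{f_0(\ega z) - z}{z}, \qquad F(0, f_0, \ga) := \ega - 1.
\]
Expanding $f_0(w) = w + \tfrac12 f_0''(0) w^2 + O(w^3)$, one has $F(0, f_0, \ga) = \ega - 1$ and $\partial_z F(0, f_0, 0) = \tfrac12 f_0''(0)$, which by \refL{L:bounds-on-second-derivative} is uniformly bounded away from $0$ and $\infty$ as $f_0$ ranges over $\IS \cup \{Q_0\}$.

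By the implicit function theorem applied uniformly across the pre-compact class $\IS \cup \{Q_0\}$, there is a constant $\ga_1 \in (0, \ga^*]$ and a radius $r > 0$ such that for all $(f_0, \ga)$ with $|\ga| \leq \ga_1$, the equation $F(z, f_0, \ga) = 0$ has a unique solution $z = \gs_h$ in $B_r(0)$, depending holomorphically on $\ga$ and continuously on $f_0$, and satisfying
\[
\gs_h = -\frac{\ega - 1}{\tfrac12 f_0''(0)} + O(\ga^2) = -\frac{4\pi \ga \B{i}}{f_0''(0)} + O(\ga^2),
\]
with the implicit $O$-constant uniform in $f_0$. Combining with the uniform two-sided bound on $|f_0''(0)|$, one obtains a constant $C_1'$ such that $(C_1')^{-1} \ga \leq |\gs_h| \leq C_1' \ga$ for all $\ga \in (0, \ga_1]$.

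It remains to handle the range $\ga \in [\ga_1, \ga^*]$, which is compact and bounded away from $0$. Here one uses that $\gs_h$ depends continuously on $(f_0, \ga)$ (as stated in the preliminaries before \refL{L:bounds-on-second-derivative}) and that the parameter set $(\IS \cup \{Q_0\}) \times [\ga_1, \ga^*]$ is compact in the compact-open topology. Since $\gs_h$ is nonzero whenever $\ga \neq 0$ (it has split from $0$), the continuous function $(f_0, \ga) \mapsto |\gs_h|/\ga$ is positive and finite on this compact set, hence uniformly bounded above and below. Taking $C_1$ to be the maximum of $C_1'$ and the bounds obtained on $[\ga_1, \ga^*]$ completes the proof. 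The only step that requires care is confirming the uniformity of the implicit function theorem across the class $\IS$, which is immediate from its pre-compactness together with \refL{L:bounds-on-second-derivative}.
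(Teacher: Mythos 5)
Your proof is correct, but it follows a different route from the paper's. The paper handles $Q_\ga$ by the explicit formula for its nonzero fixed point, and for $h\in\IS_\ga$ it factors $h(z)-z=z(z-\gs_h)u_h(z)$ through the two fixed points and differentiates at $0$ to get the \emph{exact} identity $\gs_h=(1-\ega)/u_h(0)$; since $u_h(0)$ ranges in a compact subset of $\cc\setminus\{0\}$ (by pre-compactness of the class together with \refL{L:bounds-on-second-derivative}, since $u_h(0)=h''(0)/2$ when $\ga=0$ and $u_h(0)\neq 0$ when $0$ is a simple fixed point), the two-sided bound $|\gs_h|\asymp|1-\ega|\asymp\ga$ follows uniformly over the whole range $(0,\ga^*]$ in one step. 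You instead extract the asymptotic $\gs_h=-4\pi\ga\B{i}/f_0''(0)+O(\ga^2)$ via a uniform implicit function theorem, which only yields the bound for $\ga$ small, and then patch the remaining compact range $[\ga_1,\ga^*]$ by a separate continuity-and-compactness argument. Both arguments rest on the same two inputs (the two-sided bound on $f_0''(0)$ and pre-compactness of $\IS$), but the paper's factorization eliminates the error term and the case split; your version requires the extra (routine but real) checks that the IFT constants are uniform over the class and that $\gs_h$ remains well-defined, continuous, and nonzero on the closure of the parameter set for $\ga\in[\ga_1,\ga^*]$ — the latter because a collision of $\gs_h$ with $0$ would force $h'(0)=1$. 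Both are sound; the exact identity is simply the more economical device here.
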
 

\begin{proof}
The non-zero, or parabolic, fixed point of $Q_\ga$ is given by the formula 
$(1-e^{2\pi \ga \B{i}})\frac{16}{27} e^{-4\pi \ga \B{i}}$.
Thus, there is an explicit constant $C_1$ satisfying the inequalities for the map $Q_\ga$. 
Below, we assume that $h \in \IS_\ga$. 

For every $h\in \IS_\ga$, $0 \leq \ga\leq \ga^*$, there is a holomorphic map $u_h$, 
defined on the domain of $h$, such that $h(z)=z+ z(z-\gs_h)u_h(z)$. 
The map $u_h$ depends continuously on the map $h$. 
Consider the set 
\[A=\{u_h(0) \mid h\in \IS_\ga, 0\leq \ga \leq \ga^*\}.\] 
For $h\in \IS_\ga$ with $\ga\neq 0$, $0$ is a simple fixed point of $h$ and hence $u_h(0) \neq 0$. 
For $h\in \IS_0$, we have $u_h(0)= h''(0)/2$ where $|h''(0)|$ is uniformly bounded from above and away 
from $0$, by Lemma~\ref{L:bounds-on-second-derivative}. 
By the pre-compactness of the class $\cup_{\ga \in [0, \ga^*]}\IS_\ga$, the set $A$ is compactly 
contained in $\D{C}\setminus \{0\}$.

Differentiating equation $h(z)=z+ z(z-\gs_h)u_h(z)$ at $0$ we obtain 
$\gs_h=(1-e^{2\pi \ga \B{i}})/u_h(0)$. 
Combining with the uniform bounds from the previous paragraph, this finishes the proof of the lemma. \end{proof}

For $\ga\in (0,1/2)$ define the set 
\begin{equation}\label{E:X}
X_\ga = \left\{ \gz\in\D{C} \mid \Im \gz \geq -2, 
(\frac{1}{\ga})^{\frac{2}{3}} \leq \Re \gz \leq (\frac{1}{\ga})- (\frac{1}{\ga})^{\frac{2}{3}}  \right\}.
\end{equation}

Let $M$ be the constant obtained from Proposition~\ref{P:global-estimate} for $D=-1$, and then define the set 
 \[\hat{X}_\ga = \left\{ w \in\D{C} \mid \Im (w) \geq 2M \log \ga,  
\frac{1}{2} (\frac{1}{\ga})^{\frac{2}{3}} \leq \Re (w) \leq (\frac{1}{\ga})-\frac{1}{2}(\frac{1}{\ga})^{\frac{2}{3}} \right\}.\]

Choose $\gd_1>0$ such that  
\[\gd_1\leq 1/8, \gd_1 \leq (1/\B{k})^{3/2},\]
and for all $\ga \in (0, \gd_1]$ we have 
\begin{equation}\label{E:delta_1-introduced}
\begin{gathered}
-2-M \log (1+ 1/\ga) \geq -2 M \log (1/\ga), \\
\frac{1}{2} (1/\ga)^{\frac{2}{3}} \leq  (1/\ga)^{2/3}- M \log (1+1/\ga),\\
(1/\ga)-(1/\ga)^{2/3}+ M \log (1+1/\ga)  \leq (1/\ga)-\frac{1}{2}(1/\ga)^{2/3}
\end{gathered}
\end{equation}

\begin{lem}\label{L:step1}
For all $h\in \IS_\ga \cup \{Q_\ga\}$ with $\ga\in (0, \gd_1]$, we have $L_h(X_\ga) \subset \hat{X}_\ga$. 
\end{lem}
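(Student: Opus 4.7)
The plan is to apply the global distortion estimate of Proposition~\ref{P:global-estimate} to each $\gz \in X_\ga$ and then verify that the resulting point $L_h(\gz)$ lies in $\hat{X}_\ga$ coordinate by coordinate; the three inequalities bundled into the definition of $\gd_1$ at \eqref{E:delta_1-introduced} have been engineered precisely to make this work.

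First I will check that $L_h$ is defined on $X_\ga$, i.e.\ that $X_\ga \subset \C{D}_h$. The assumption $\gd_1 \leq (1/\B{k})^{3/2}$ forces $(1/\ga)^{2/3} \geq \B{k}$, so every $\gz \in X_\ga$ has real part in $[\B{k},\, 1/\ga - \B{k}]$; combined with the description $\gF_h(\C{P}_h) = \{0 < \Re w < 1/\ga - \B{k}\}$ from Proposition~\ref{P:petal-geometry}(b) and the extension of $\C{D}_h$ by the translates of $\gF_h(S_h)$, this shows $X_\ga \subset \C{D}_h$. I then apply Proposition~\ref{P:global-estimate} with the value $D = -1$ used to fix $M$; the required hypothesis $\Im \gz \geq -1/\ga$ holds on $X_\ga$ because $\gd_1 \leq 1/8$ yields $-1/\ga \leq -8 \leq -2 \leq \Im \gz$. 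The conclusion is the single bound
\[|L_h(\gz) - \gz| \leq M \log(1 + 1/\ga), \quad \gz \in X_\ga.\]

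From this estimate, writing $L_h(\gz) = \gz + (L_h(\gz) - \gz)$ and separating real and imaginary parts, the three conditions in \eqref{E:delta_1-introduced} deliver exactly the three defining inequalities of $\hat{X}_\ga$: the lower bound on $\Re L_h(\gz)$ comes from the second inequality, the upper bound on $\Re L_h(\gz)$ from the third, and the lower bound $\Im L_h(\gz) \geq -2M\log(1/\ga) = 2M\log \ga$ from the first. There is no serious obstacle here: all the analytic work has been absorbed into Proposition~\ref{P:global-estimate}, and the combinatorial bookkeeping into the specific choice of $\gd_1$; what remains is essentially a verification that these two ingredients interlock as designed.
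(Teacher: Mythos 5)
Your proof is correct and is essentially identical to the paper's: both verify that $X_\ga\subset\gF_h(\C{P}_h)$ using $\ga\leq(1/\B{k})^{3/2}$, apply Proposition~\ref{P:global-estimate} with $D=-1$ after checking $\Im\gz\geq-2\geq-1/\ga$, and then read off the three defining inequalities of $\hat{X}_\ga$ from the conditions in \eqref{E:delta_1-introduced}. No discrepancies to report.
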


\begin{proof}
First note that since $\ga < 1/8$, both sets $X_\ga$ and $\hat{X}_\ga$ are non-empty. 
Also, since $\ga \leq (1/\B{k})^{3/2}$ the set $X_\ga$ is contained in $\gF_h(\C{P}_h)$, and hence 
$L_h^{-1}$ is defined on $X_h$. 

Now we use Propositio~\ref{P:global-estimate} with $D=-1$. 
Note that for every $\gz\in X_\ga$, $\Re \gz \in (0, 1/\ga -\B{k})$ and $\Im \gz \geq -2 \geq -1/\ga$. 
Hence the inequality Proposition~\ref{P:global-estimate} holds at all points in $X_\ga$, with the constant $M$.
In particular, the inequality implies that for all $\gz\in X_\ga$ we have 
\[ (1/\ga)^{2/3}- M \log (1+1/\ga) \leq  \Re L_h(\gz) \leq (1/\ga)-(1/\ga)^{2/3}+ M \log (1+1/\ga) \]
and 
\[ \Im L_h(\gz)\geq -2 - M \log (1+1/\ga)\] 
Now the lemma follows from the conditions imposed on $\gd_1$ in Equation~\eqref{E:delta_1-introduced}. 
\end{proof}

\begin{lem}\label{L:step2}
There exists $C>0$ such that for all $h\in \IS_\ga \cup \{Q_\ga \}$ with $\ga\in (0, \gd_1]$ the following holds. 
For every $z\in \D{C}$ with $\ex(z)\in T_h(\hat{X}_\ga)$, we have 
\[\Im z \geq  \frac{1}{3\pi} \log \frac{1}{\ga}-C.\] 
\end{lem}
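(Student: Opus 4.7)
The plan is to reduce the inequality to a lower bound on $|1 - e^{-2\pi \ga w \B{i}}|$, and then exploit the explicit description of $\hat X_\ga$ to get a bound of order $\ga^{1/3}$. Concretely, from $\ex(z) = T_h(w) = \gs_h/(1 - e^{-2\pi\ga w\B{i}})$, taking moduli yields
\[
\frac{4}{27}\, e^{-2\pi \Im z} \;=\; \frac{|\gs_h|}{|1 - e^{-2\pi\ga w\B{i}}|},
\]
so the target inequality $\Im z \geq \frac{1}{3\pi}\log\ga^{-1} - C$ is equivalent, after rearrangement, to an inequality of the shape $|1 - e^{-2\pi\ga w\B{i}}| \geq c'\, |\gs_h|\, \ga^{-2/3}$. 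Since Lemma~\ref{L:size-of-sigma} gives $|\gs_h| \leq C_1 \ga$, it suffices to produce a uniform constant $c''>0$ with $|1 - e^{-2\pi\ga w\B{i}}| \geq c''\, \ga^{1/3}$ for all $w \in \hat X_\ga$.

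To establish this lower bound, set $\gz = -2\pi \ga w\B{i}$ and write $w = u + \B{i} v$. The defining inequalities of $\hat X_\ga$ translate to $\Im \gz = -2\pi\ga u \in [-2\pi + \pi\ga^{1/3},\, -\pi\ga^{1/3}]$ and $\Re \gz = 2\pi \ga v \geq 4\pi M \ga \log\ga$. Thus $\Im \gz$ is bounded away from $2\pi \D{Z}$ by at least $\pi \ga^{1/3}$, and $\Re \gz$ is only slightly negative (tending to $0$ as $\ga \to 0$, after possibly shrinking $\gd_1$). Using the identity
\[
|1-e^{\gz}|^2 \;=\; (1 - e^{\Re \gz})^2 + 4\, e^{\Re \gz} \sin^2\!\big(\tfrac{\Im \gz}{2}\big),
\]
I split into two cases. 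If $\Re \gz \geq 1$, then $|1 - e^\gz| \geq e^{\Re \gz} - 1 \geq e - 1$, which is trivially larger than $c''\ga^{1/3}$. Otherwise $\Re \gz < 1$, and after shrinking $\gd_1$ we may assume $\Re \gz \geq -1$, so that $e^{\Re \gz} \geq e^{-1}$; combined with the elementary estimate $|\sin(\Im \gz/2)| \geq \sin(\pi \ga^{1/3}/2) \geq \ga^{1/3}$ (for $\ga$ small, via $\sin x \geq 2x/\pi$), this yields $|1 - e^\gz|^2 \geq 4 e^{-1} \ga^{2/3}$, i.e.\ $|1-e^\gz| \geq 2 e^{-1/2} \ga^{1/3}$.

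Combining the two cases, $|1 - e^{-2\pi\ga w\B{i}}| \geq 2 e^{-1/2}\, \ga^{1/3}$ uniformly in $h$ and $w \in \hat X_\ga$. Plugging this back into the opening identity and using $|\gs_h| \leq C_1 \ga$ yields $|\ex(z)| \leq C'\, \ga^{2/3}$ for an explicit constant $C'$, which in turn gives the required lower bound on $\Im z$ with $C = \frac{1}{2\pi} \log(\tfrac{27 C'}{4})$. The main (minor) obstacle is the careful case analysis to ensure the estimate $|1 - e^\gz| \gtrsim \ga^{1/3}$ holds uniformly; everything hinges on the fact that the horizontal width of the strip defining $\hat X_\ga$ is cut down by exactly $\ga^{2/3}$ near each edge, producing the sharp exponent $1/3$ in the final inequality.
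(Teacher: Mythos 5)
Your proof is correct and follows essentially the same route as the paper's: both reduce to the lower bound $|1-e^{-2\pi\ga w\B{i}}|\gtrsim \ga^{1/3}$ on $\hat X_\ga$ (the paper via the distance from $1$ to the ray at angle $\pi\ga^{1/3}$, you via the identity for $|1-e^{\gz}|^2$ and a case split on $\Re\gz$), then combine with $|\gs_h|\leq C_1\ga$ and the formula $|\ex(z)|=\frac{4}{27}e^{-2\pi\Im z}$. The only cosmetic point is that "shrinking $\gd_1$" is unnecessary: for $\ga\leq 1/8$ one has $\Re\gz=2\pi\ga\Im w\geq -4\pi M\ga\log(1/\ga)$ bounded below by a uniform constant, so $e^{\Re\gz}$ is already uniformly bounded away from $0$ and the constant can simply be absorbed into $C$.
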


\begin{proof}
First note that for all $w\in \hat{X}_\ga$, 
\[ -2\pi + \pi  \ga ^{1/3}  \leq  \Im (-2\pi \ga w\B{i}) \leq  - \pi \ga ^{1/3}.\]
This implies that 
\[ |\arg (e^{-2\pi \ga w \B{i}})| \leq \pi \ga ^{1/3}.\]
Using the inequality $\sin x \geq 2 x/\pi$ on the interval  $(0, \pi/2)$, as $\ga \leq 1/8$, we conclude that 
\[|1- e^{-2\pi \ga w \B{i}} | \geq \sin(\pi \ga^{1/3}) \geq  2\ga ^{1/3}.\] 
On the other hand, by Lemma~\ref{L:size-of-sigma},  $|\gs_h|\leq C_1 \ga$. 
Therefore, for all $w\in \hat{X}_\ga$, the above inequality implies that 
\[|T_h(w)|\leq \frac{C_1}{2} \ga^{2/3}.\] 
Assume for some $z\in \D{C}$, $\ex(z)\in T_h(\hat{X}_\ga)$.
By the definition of $\ex$, the above inequality implies that 
\[\Im z\geq \frac{1}{2\pi} \log \frac{8}{27 C_1} + \frac{1}{2\pi} \frac{2}{3} \log \frac{1}{\ga}.\]
\end{proof}

Note that the statement of the above lemma holds similarly if for $z\in \D{C}$ we have $\ex(z)\in s\circ T_h(\hat{X_\ga})$. 

\begin{lem}\label{L:step3}
There exists $C'>0$ such that for all $h\in \IS_\ga \cup \{Q_\ga \}$ with $\ga\in (0, \gd_1]$, we have  
\[ \frac{1}{2\pi}  \log \frac{1}{\ga} - C'  \leq   \Im \gc_{h,0} (\frac{1}{2\ga}-2 \textnormal{\B{i}}) \leq \frac{1}{2\pi} \log \frac{1}{\ga} +C'   \]
\end{lem}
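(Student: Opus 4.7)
The strategy is to factor $\gc_{h,0}$ through $L_h$ and $T_h$: since $\ex\circ\gc_{h,0} = \gF_h^{-1}$ (or $s\circ\gF_h^{-1}$), while $\gF_h^{-1} = T_h\circ L_h$, the formula $|\ex(z)| = \tfrac{4}{27}e^{-2\pi\Im z}$ converts multiplicative control of $|T_h(L_h(\gz))|$ into additive control of $\Im \gc_{h,0}(\gz)$. So the whole task is to show that $|T_h(L_h(\gz))|$ is comparable to $\ga$ for $\gz := \tfrac{1}{2\ga} - 2\B{i}$.

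First I would check that $\gz\in X_\ga$: the conditions $\gd_1\leq 1/8$ and $(1/\ga)^{2/3}\leq 1/(2\ga) \leq 1/\ga - (1/\ga)^{2/3}$ are immediate. Proposition~\ref{P:global-estimate} (applied with $D = -1$, as in Lemma~\ref{L:step1}) then yields $L_h(\gz) = \gz + E_h$ with $|E_h|\leq M\log(1 + 1/\ga)$. The key explicit computation is
\[
-2\pi\ga\, L_h(\gz)\,\B{i} \;=\; -\pi\B{i} - 4\pi\ga - 2\pi\ga E_h\B{i},
\]
which gives
\[
e^{-2\pi\ga L_h(\gz)\B{i}} \;=\; -\,e^{-4\pi\ga}\cdot e^{-2\pi\ga E_h\B{i}},
\qquad
T_h(L_h(\gz)) \;=\; \frac{\gs_h}{1 + e^{-4\pi\ga}\,e^{-2\pi\ga E_h\B{i}}}.
\]
Writing $e^{-2\pi\ga E_h\B{i}} = e^{2\pi\ga\Im E_h}\,e^{-2\pi\ga\Re E_h\B{i}}$, its modulus and the modulus of its argument are both bounded by $e^{\pm 2\pi M\ga\log(1+1/\ga)}$ and $2\pi M\ga\log(1+1/\ga)$, respectively. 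Because $\ga\log(1+1/\ga)$ is monotone in $\ga$ on $(0, 1/8]$, further shrinking $\gd_1$ (an additional clause to the list in~\eqref{E:delta_1-introduced}) so that $2\pi M\gd_1\log(1+1/\gd_1) \leq 1$ forces $e^{-4\pi\ga}\,e^{-2\pi\ga E_h\B{i}}$ to have uniformly positive real part and bounded modulus. Consequently
\[
1 \;\leq\; \bigl|1 + e^{-4\pi\ga}\,e^{-2\pi\ga E_h\B{i}}\bigr| \;\leq\; 1 + e,
\]
and combining with Lemma~\ref{L:size-of-sigma} gives $C_2^{-1}\ga \leq |T_h(L_h(\gz))| \leq C_2\ga$ for a uniform constant $C_2$.

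Finally, since $|\ex(\gc_{h,0}(\gz))| = |\gF_h^{-1}(\gz)| = |T_h(L_h(\gz))|$ regardless of the sign of $\ga$, we have
\[
\tfrac{4}{27}\, e^{-2\pi \Im \gc_{h,0}(\gz)} \;=\; |T_h(L_h(\gz))| \;\in\; [C_2^{-1}\ga,\; C_2\ga],
\]
and taking $-\tfrac{1}{2\pi}\log$ produces the claimed two-sided bound with $C'$ absorbing $C_2$ and $\log(4/27)$. The one genuinely delicate point—and the only potential obstacle—is the lower bound on the denominator: a priori, the perturbation $E_h$ of size $M\log(1+1/\ga)$ could rotate $e^{-4\pi\ga}\,e^{-2\pi\ga E_h\B{i}}$ close to $-1$ and make the denominator small. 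This does not happen precisely because the choice of the test point $\tfrac{1}{2\ga} - 2\B{i}$ produces the factor $e^{-\pi\B{i}}$ at leading order, placing the unperturbed denominator near $2$ rather than near $0$, and the smallness of $\ga\log(1+1/\ga)$ on $(0,\gd_1]$ keeps us in the safe regime.
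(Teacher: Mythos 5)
Your proof is correct and follows essentially the same route as the paper's: decompose $\gF_h^{-1}=T_h\circ L_h$, apply Proposition~\ref{P:global-estimate} with $D=-1$ at the test point $\tfrac{1}{2\ga}-2\B{i}$, show the denominator $|1-e^{-2\pi\ga L_h\B{i}}|$ is bounded between two universal constants because the real part of $L_h$ stays near $\tfrac{1}{2\ga}$, invoke Lemma~\ref{L:size-of-sigma} for $|\gs_h|\asymp\ga$, and convert through $\ex$. The extra shrinking of $\gd_1$ you introduce is not actually needed (the existing conditions in~\eqref{E:delta_1-introduced} already force the argument perturbation $2\pi M\ga\log(1+1/\ga)\leq\pi\ga^{1/3}\leq\pi/2$, which keeps $\Re\bigl(e^{-4\pi\ga}e^{-2\pi\ga E_h\B{i}}\bigr)\geq 0$ and hence the denominator $\geq 1$), but this only affects constants.
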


\begin{proof}
Let $\gz_0=1/(2\ga)-2\B{i}$ and note that for $\ga\leq 1/(2\B{k})$, $\gz_0$ is contained in $\gF_h(\C{P}_h) \subset \Dom L_h$. 
Using Proposition~\ref{P:global-estimate} with $D=-1$, we know that 
\[|L_h(\gz_0)-\gz_0| \leq M \log (1+ 1/\ga).\] 
On the other hand, by the choice of $\gd_1$, $M\log (1+1/\ga)\leq 1/(3\ga)$. 
It follows that 
\[ -2-\frac{1}{3\ga} \leq  \Im L_h(\gz_0) \leq \frac{1}{3\ga}, \quad \frac{1}{6\ga}  \leq  \Re L_h(\gz_0)\leq  \frac{5}{6\ga}.\]
Then, 
\[ \frac{-5\pi}{3} \leq  \Im (-2\pi \ga L_h(\gz_0) \B{i}) \leq \frac{-\pi}{3}, 
\quad  \frac{-2\pi}{3} \leq \Re (-2\pi \ga L_h(\gz_0) \B{i}) \leq 4\pi \ga+\frac{2\pi}{3}.\]
Therefore, 
\[  1\leq |1- e^{-2\pi \ga L_h(\gz_0) \B{i}})|\leq 1+ e^{7\pi/6}.\] 
Combining the above bounds with the bounds on the size of $\gs_h$ in Lemma~\ref{L:size-of-sigma}, we obtain 
\[\frac{1}{C_1} \frac{1}{1+ e^{7\pi/6}} \ga \leq   |T_h(L_h(\gz_0))| \leq C_1 \ga.\]
Finally, as $\ex (\gc_{h,0}(\gz_0))= T_h(L_h(\gz_0))$, we conclude that  
\[\frac{1}{2\pi} \log \frac{1}{\ga} -C' \leq \Im \gc_{h,0}(\gz_0) \geq \frac{1}{2\pi} \log \frac{1}{\ga}+C',\]
for soma constant $C'$ depending only on $C_1$.
\end{proof}

In the following proposition, $C'$ is the constant obtained in the above Lemma.
\begin{propo}\label{P:density-high-levels-raw}
There is $\gd_0>0 $ such that for all $h\in \IS_\ga \cup \{Q_\ga \}$ with $\ga \in (0, \gd_0]$ we have 
\begin{equation}
\forall \gz\in X_\ga, \quad \Im \gc_{h,0} (\gz) \geq \frac{1}{2}\Im \gc_h (\frac{1}{2\ga}-2 \textnormal{\B{i}}) + C'. 
\footnote{Here, $C'$ may be replaced by any constant; with $\gd_0$ depending on that constant.}
\end{equation}
\end{propo}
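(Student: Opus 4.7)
The plan is to chain Lemmas \ref{L:step1}, \ref{L:step2}, and \ref{L:step3}. Together, the first two will yield a lower bound on $\Im\gc_{h,0}(\gz)$ of order $\tfrac{1}{3\pi}\log(1/\ga)$ for $\gz\in X_\ga$, while the third controls $\Im\gc_{h,0}\bigl(\tfrac{1}{2\ga}-2\B{i}\bigr)$ from above by $\tfrac{1}{2\pi}\log(1/\ga)$ up to an additive constant. Halving the latter produces a strictly positive gap of $\tfrac{1}{3\pi}-\tfrac{1}{4\pi}=\tfrac{1}{12\pi}$ in the leading-order term, which will dominate all additive constants once $\ga$ is chosen small enough.

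Concretely, for $\gz\in X_\ga$ with $\ga\in(0,\gd_1]$, Lemma \ref{L:step1} places $L_h(\gz)$ inside $\hat{X}_\ga$. Since the proposition concerns only $\ga>0$, the definition of the lift in Section \ref{SS:extending-F-coord} together with Equation \eqref{E:intermediate-lift-of-F} gives $\ex\circ\gc_{h,0}=\gF_h^{-1}=T_h\circ L_h$ on $\C{D}_h$. Hence $\ex(\gc_{h,0}(\gz))\in T_h(\hat{X}_\ga)$, and applying Lemma \ref{L:step2} to $z=\gc_{h,0}(\gz)$ yields
\[
\Im\gc_{h,0}(\gz)\;\geq\;\tfrac{1}{3\pi}\log(1/\ga)-C.
\]

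From the upper half of Lemma \ref{L:step3} one has
\[
\tfrac{1}{2}\,\Im\gc_{h,0}\bigl(\tfrac{1}{2\ga}-2\B{i}\bigr)+C'\;\leq\;\tfrac{1}{4\pi}\log(1/\ga)+\tfrac{3C'}{2}.
\]
Subtracting these two bounds, the conclusion of the proposition reduces to the inequality $\tfrac{1}{12\pi}\log(1/\ga)\geq C+\tfrac{3C'}{2}$, which is satisfied by choosing $\gd_0:=\min\bigl\{\gd_1,\exp(-12\pi(C+\tfrac{3C'}{2}))\bigr\}$.

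I do not anticipate a genuine obstacle: the analytic content is entirely absorbed by the three preceding lemmas, and the remainder is a comparison of logarithmic leading terms. The only mildly delicate point is reading off $\ex\circ\gc_{h,0}=T_h\circ L_h$ from the definitions so that Lemma \ref{L:step2} can be invoked; the anti-holomorphic branch of $\gc_{h,0}$ used for $\ga<0$ never appears here since the proposition restricts to $\ga>0$.
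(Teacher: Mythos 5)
Your proposal is correct and follows the same route as the paper: chain Lemma~\ref{L:step1} (to place $L_h(\gz)$ in $\hat{X}_\ga$), Lemma~\ref{L:step2} via the identity $\ex\circ\gc_{h,0}=T_h\circ L_h$ (to get $\Im\gc_{h,0}(\gz)\geq\tfrac{1}{3\pi}\log(1/\ga)-C$), and the upper bound of Lemma~\ref{L:step3}, reducing everything to $\tfrac{1}{12\pi}\log(1/\ga)\geq C+\tfrac{3C'}{2}$, which is exactly the condition the paper imposes on $\gd_0$. No gaps.
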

\begin{proof}
There is $\gd_0<\gd_1$ such that for all $\ga\in (0, \gd_0]$ we have 
\[\frac{1}{12\pi} \log \frac{1}{\ga} \geq C+ 3C'/2.\] 
For all $\gz$ in $X_\ga$, by Lemma~\ref{L:step1}, $L_h(\gz)$ belongs to $\hat{X}_\ga$. 
Then, by Lemma~\ref{L:step2}  
\[\Im \gc_{h,0} (\gz) \geq \frac{1}{2\pi} \frac{2}{3} \log \frac{1}{\ga} -C.\]
On the other hand, by Lemma~\ref{L:step3}, 
\[\Im \gc_{h,0} (\frac{1}{2\ga}-2 \textnormal{\B{i}}) \leq \frac{1}{2\pi} \log \frac{1}{\ga} +C' .\]
Thus, we need 
\[\frac{1}{2\pi} \frac{2}{3} \log \frac{1}{\ga} -C \geq \frac{1}{2} \big ( \frac{1}{2\pi} \log \frac{1}{\ga} +C'\big)+ C',\]
which is guaranteed by the condition $\ga\in (0, \gd_0]$. 
This finishes the proof of the Proposition.
\end{proof}

\begin{figure}\label{F:log-of-sectors}
\begin{center}
\begin{pspicture}(12,10)  %\psgrid
\epsfxsize=6cm 
\rput(3,7){\epsfbox{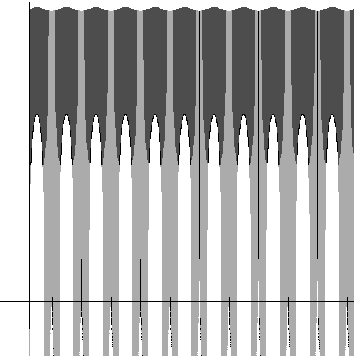}}

\pspolygon[fillstyle=solid,linecolor=grey67,fillcolor=grey67](6.4,1)(6.4,3.6)(7,3.6)(7,1)
\pspolygon[fillstyle=solid,linecolor=grey67,fillcolor=grey67](11,1)(11,3.6)(11.6,3.6)(11.6,1)
\pspolygon[fillstyle=solid,linecolor=grey30,fillcolor=grey30](7,1)(7,3.6)(11,3.6)(11,1)

\psline[linewidth=.5pt](6.4,1)(6.4,3.6)

\psline[linecolor=gray,linewidth=.3pt]{->}(5,1)(12,1) % horizontal axis
\psline[linecolor=gray,linewidth=.3pt]{->}(6.1,.5)(6.1,5) %vertical axis

\rput(6,.8){\tiny $0$}
\rput(6.4,.8){\tiny $.5$}
\rput(7.1,.8){\tiny $(\frac{1}{\ga})^{\frac{2}{3}}$}
%\rput(11,.8){\tiny $\frac{1}{\ga} - (\frac{1}{\ga})^{\frac{2}{3}}$}
\rput(11.6,.8){\tiny $\frac{1}{\ga}-.5$}

\pscurve[linewidth=.5pt]{<-}(5,2.6)(4,2.9)(3.4,3.7)
\rput(3,3){\tiny $T_{Q_\ga}^{-1} \circ \ex$}

\rput(6.9,7.2){\tiny $\sim \frac{2}{3}\frac{1}{2\pi} \log \frac{1}{\ga}$}
\rput(6.9,8){\tiny $\sim \frac{1}{2\pi} \log \frac{1}{\ga}$}

\end{pspicture}
\end{center}
\caption{The lower part shows a dark grey rectangle 
$[(\frac{1}{\ga})^{\frac{2}{3}}, \frac{1}{\ga} - (\frac{1}{\ga})^{\frac{2}{3}}]+\B{i} [0, .5/\ga]$, and 
two light grey rectangles 
$[.5, \frac{1}{\ga})^{\frac{2}{3}}]+\B{i} [0, .5/\ga]$ 
and 
$[\frac{1}{\ga} - (\frac{1}{\ga})^{\frac{2}{3}}, \frac{1}{\ga} -.5]+\B{i} [0, .5/\ga]$.
The top figure shows some of the lifts of these rectangles under $ T_{Q_\ga}^{-1} \circ \ex$.   
The dark grey rectangle lifts to periodic regions above the approximate height 
$\frac{1}{2\pi}\frac{2}{3} \log \frac{1}{\ga}$, upto an additive constant.
The lift of the line segment between $(.5,0)$ and $(.5,.5/\ga)$ lifts to the black curves separating 
different lifts.  
Here, $\ga=1/1000$ and the choice of $.5$ is only to make the pictures clearer.}
\end{figure}

\begin{lem}
We have 
\[\lim _{\ga\to 0^+} \frac{\sup_{w,w'\in X_\ga} |\Re w - \Re w'|}{1/\ga } =1.\]
\end{lem}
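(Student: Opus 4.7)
The plan is a direct computation from the definition of $X_\ga$. By inspection of
\[X_\ga = \Big\{\gz\in\D{C} \,\Big|\, \Im \gz \geq -2,\; (1/\ga)^{2/3} \leq \Re \gz \leq (1/\ga)-(1/\ga)^{2/3}\Big\},\]
the real parts of points in $X_\ga$ range over the interval $[(1/\ga)^{2/3}, (1/\ga)-(1/\ga)^{2/3}]$. Hence
\[\sup_{w,w'\in X_\ga} |\Re w - \Re w'| = \Big(\frac{1}{\ga}\Big) - 2 \Big(\frac{1}{\ga}\Big)^{2/3}.\]
Dividing by $1/\ga$ gives
\[\frac{\sup_{w,w'\in X_\ga} |\Re w - \Re w'|}{1/\ga} = 1 - 2\ga^{1/3},\]
which tends to $1$ as $\ga\to 0^+$. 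There is no real obstacle here; the content of the lemma is purely the arithmetic observation that the ``horizontal width'' of $X_\ga$ differs from $1/\ga$ only by the lower-order correction $2(1/\ga)^{2/3}$, and the ratio of this correction to $1/\ga$ vanishes. The lemma is presumably recorded so that it can be quoted later together with Propositions~\ref{P:global-estimate} and \ref{P:fine-estimate} to assert that the strip $X_\ga$ captures essentially the full horizontal extent $1/\ga$ of $\gF_h(\C{P}_h)$, up to a negligible fraction.
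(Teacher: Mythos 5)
Your computation is correct and is essentially identical to the paper's proof, which likewise notes that the horizontal width of $X_\ga$ is $(1/\ga)-2(1/\ga)^{2/3}$ and that this quantity divided by $1/\ga$ tends to $1$. No issues.
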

\begin{proof}
This is straightforward, since
\[\sup_{w,w'\in X_\ga} |\Re w - \Re w'| \geq  ( \frac{1}{\ga}) - ( \frac{1}{\ga})^{\frac{2}{3}} -  ( \frac{1}{\ga})^{\frac{2}{3}} \]
and 
\[\lim_{\ga\to 0^+} \frac{(1/\ga) - 2(1/\ga)^{2/3}}{1/\ga}=1.\]
\end{proof}

%%%%%%%%%%%%%%%%%%%%%%%%%%%%%%%%%%%%%%%%%%%%%%%%%%%%%%

\subsection{The heights of chains in the tower}
Recall the maps $\gc_{n,0} = \gc_{f_n}: \C{D}_n \to \C{D}_{n-1}$, for $n\geq 1$.
Given $n\geq 1$ and a point $\gz_n \in \C{D}_n$, define 
%\[\C{C}_n(\gz_n)= \left\{ \langle \gz_j \rangle_{j=n}^0 \; \bigg|
%\begin{array}{ll}
%\text{for all $j$ with } 0\leq j \leq n-1, \gz_j \in \C{D}_{j}, \tand \gz_{j-1}\in \gc_{j,0}(\gz_{j})+\D{Z}.
%\end{array}
%\right\}.\]
\[\C{C}_n(\gz_n)= \left\{ \langle \gz_j \rangle_{j=n}^0 \; \mid 
\forall j \text{ with } 0\leq j \leq n-1, \gz_j \in \C{D}_{j} \tand \gz_{j} \in \gc_{j+1,0}(\gz_{j+1})+\D{Z}.
\right\}.\]
That is, members of $\C{C}_n(\gz_n)$ are (ordered) sequences of length $n+1$ starting with $\gz_n$ and mapping an element to 
the next element in the sequence by a translation of some $\gc_{j,0}$. 
Given $\gt= \langle \gz_n, \gz_{n-1}, \dots, \gz_0 \rangle$ in $\C{C}_n(\gz_n)$, for every $k\in \{n, n-1,\dots, 0\}$ we define 
$\gt_k= \gz_k$. 
We aim to study the behavior of the sequences $\langle \Im \gt_j\rangle_{j=n}^0$, for $\gt$ in $\C{C}_{n}(t)$, as $t$ varies in 
$\C{D}_n$. 
This heavily  depends on the arithmetic of $\ga$. 

For Brjuno values of $\ga$, let $\gD(f_j)$, for $j\geq 0$, denote the Siegel disk of $f_j$. 
Then, define 
\[\tilde{\gD}(f_j)=\gF_j(\gD(f_j)\cap \p_j), \tfor j \geq 0.\]  
Also, recall the bi-sequence $B_{n,j}$ defined in Section~\ref{SS:arithmetic}.  

\begin{propo}\label{P:nearly-right-height}
There are constants $B$ and $M$ in $\D{R}$ satisfying the following. 
Let $B_{k, i}$ be the bi-sequence defined in \eqref{E:bisequence} with the constant $B$, and let $\gz_n= \frac{1}{2\ga_n} - 2 \textnormal{\B{i}}$.
Then, 
\begin{itemize}
\item[a)] for all $\ga\in \irr$ and every integer $n\geq 1$ there exists $\gt\in \C{C}_{n}(\gz_n)$  such that for all $j$ with 
$0\leq j \leq n-1$, 
\begin{equation*}%\label{E:arithmetic-height-nearly}
B_{n,j} \leq \Im \gt_j\leq B_{n,j}+M,\;
\end{equation*}
\item[b)] if $\ga$ is a Brjuno number, for every $j\geq 0$ we have 
\begin{equation*}%\label{E:arithmetic-size-nearly}
\big| \lim _{n\to\infty}B_{n,j}-\max_{w\in \partial \tilde{\gD}(f_j)} \Im w\big |<M.
\end{equation*}
\end{itemize}
\end{propo}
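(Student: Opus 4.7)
The argument rests on a one-step estimate for $\Im \gc_{h,0}$. From the factorization $\ex\circ\gc_{h,0}=T_h\circ L_h$, with $T_h(v)=\gs_h/(1-e^{-2\pi\ga v\B{i}})$ and $\ex(z)=-(4/27)e^{2\pi\B{i}z}$, equating moduli and taking logarithms yields
\[\Im \gc_{h,0}(w)=\ga\,\Im L_h(w)+\tfrac{1}{2\pi}\log\tfrac{4/27}{|\gs_h|}+\tfrac{1}{2\pi}\bigl(\log|1-e^{-2\pi\ga L_h(w)\B{i}}|-2\pi\ga\,\Im L_h(w)\bigr).\]
The final parenthesis is uniformly bounded in the ranges of $\Im L_h(w)$ we will use. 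Combined with Lemma \ref{L:size-of-sigma} ($|\gs_h|\asymp \ga$) and Proposition \ref{P:global-estimate} (giving $\Im L_h(w)=\Im w+O(\log(1+\ga^{-1}))$, so that $\ga\,|\Im L_h(w)-\Im w|$ is bounded on $(0,\ga^*]$), this produces the one-step estimate
\[\bigl|\Im \gc_{h,0}(w)-\ga\,\Im w - \log\ga^{-1}+B\bigr|\leq M_0\]
for universal constants $B$ and $M_0$ (the $1/(2\pi)$ prefactor being absorbed into the convention on $\log$ governing the bi-sequence).

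For part (a), the integer shifts in the definition of $\C{C}_n(\gz_n)$ affect only real parts, so the imaginary parts along any chain are determined by $\gz_n$ alone; the shifts are chosen merely to keep each $\gz_j$ inside $\C{D}_j$. I induct downward from $j=n$: the base case is $\Im \gz_n=-2=B_{n,n}$. Assuming $\Im \gz_{j+1}-B_{n,j+1}\in[0,M]$, applying the displayed estimate at $h=f_{j+1}$ with $\ga=\ga_{j+1}$ and subtracting the bi-sequence recursion $B_{n,j}=\ga_{j+1}B_{n,j+1}+\log\ga_{j+1}^{-1}-B$ yields $\Im \gz_j - B_{n,j} = \ga_{j+1}(\Im \gz_{j+1}-B_{n,j+1}) + R_j$ with $|R_j|\leq M_0$. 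Choosing $M$ so that $\ga^* M + M_0\leq M$ preserves the bound through the step, completing the induction.

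For part (b), assume $\ga$ is Brjuno; then $\tilde B_j:=\lim_n B_{n,j}$ exists and is finite by Lemma \ref{L:good-levels}(a). The lower bound $\max_{w\in \partial \tilde{\gD}(f_j)}\Im w \geq \tilde B_j - M$ follows by extracting a subsequential limit $w_\infty$ of the chain points $\gt_j^{(n)}$ obtained in (a) (after quotienting by $\D{Z}$-translation): Proposition \ref{P:lifts-vs-iterates} expresses $\gF_j^{-1}(\gt_j^{(n)})$ as an $f_j$-iterate of a point in $J_j$ of depth growing in $n$, and Proposition \ref{P:Siegel-disk-enclosed} applied at level $j$ forces the accumulation to lie in $\partial \gD(f_j)\cup \pc(f_j)$; since $\partial\gD(f_j)\subset \pc(f_j)$, we obtain $w_\infty\in \partial \tilde{\gD}(f_j)$ with $\Im w_\infty \geq \tilde B_j$. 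For the reverse inequality, fix a maximizer $w^\dagger$ on $\partial \tilde{\gD}(f_j)$ and use both the $f_j$-invariance and the renormalization-invariance of the Siegel-disk boundary (namely that $\gy_{n+1}$ sends $\partial\gD(f_{n+1})\cap \C{P}_{n+1}$ into $\partial\gD(f_n)\cap \C{P}_n$) to trace $w^\dagger$ upward through the tower, producing for every sufficiently large $n$ a valid chain with $\gt_j=w^\dagger$; the reversed one-step estimate then gives $\Im w^\dagger\leq B_{n,j}+M$, and taking $n\to\infty$ finishes the argument.

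The principal obstacle will be the upward tracing in the upper bound of (b): the semi-conjugacy via $\gy_{n+1}$ is only partial, so one must verify that a preimage of $w^\dagger$ on $\partial\gD(f_{n+1})\cap \C{P}_{n+1}$ always exists and can be chosen consistently as $n$ grows. This appeals to Siegel-disk invariance at each level together with a compactness argument, and is the analytic core of part (b).
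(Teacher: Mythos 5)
Part (a) of your argument contains a genuine error. The integer shifts do not ``affect only real parts'' in the sense you need: while $\Im \gt_j=\Im\gc_{j+1,0}(\gt_{j+1})$ for any shift applied at level $j$, the value of $\gc_{j+1,0}(\gt_{j+1})$ depends on $\Re\gt_{j+1}$, which was fixed by the shift chosen one level up, so the imaginary parts along a chain depend strongly on the branch. Correspondingly, your one-step estimate $|\Im\gc_{h,0}(w)-\ga\Im w-\log\ga^{-1}+B|\leq M_0$ is false for general $w\in\C{D}_h$: the term $\log|1-e^{-2\pi\ga L_h(w)\B{i}}|$ tends to $-\infty$ as $\Re L_h(w)$ approaches $\D{Z}/\ga$ with $\Im L_h(w)$ bounded, and bounded heights are exactly the relevant regime (the base point has $\Im\gz_n=-2$, and for bounded-type $\ga$ all the $B_{n,j}$ remain bounded while $1/\ga_j$ does not). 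This is why the paper pins $\Re\gz_j\in[\tfrac{1}{2\ga_j}-\tfrac12,\tfrac{1}{2\ga_j}+\tfrac12]$ at \emph{every} level, so that, by Proposition~\ref{P:global-estimate}, $L_j(\gz_j)$ stays at distance comparable to $1/\ga_j$ from $\D{Z}/\ga_j$ and the two-sided estimate of Lemma~\ref{L:size-of-sigma} converts into the recursion. With that normalization your downward induction is the paper's argument; without it you would be proving that \emph{every} chain satisfies the two-sided bound, which is false (Lemma~\ref{L:heights-of-lifts} obtains chain-independence only under the extra hypothesis $\Im\gt_j\geq(D+2)/\ga_j$).

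In part (b), the inequality $\max_{w\in\partial\tilde{\gD}(f_j)}\Im w\geq\lim_n B_{n,j}-M$ is essentially recoverable, though more directly than you suggest: since $\Im\gz_n=-2$, the point $\gF_n^{-1}(\gz_n)$ is not in $\gD(f_n)$, and the changes of coordinates preserve Siegel disks, so each $\gt_j^{(n)}\nin\tilde{\gD}(f_j)$ and hence $\Im\gt_j^{(n)}\leq\max_{\partial\tilde{\gD}(f_j)}\Im w$ with no limiting argument (your claim that a subsequential limit lands on $\partial\tilde{\gD}(f_j)$, rather than merely in $\gF_j(\pc(f_j))$, is unjustified but also unnecessary); the paper instead runs the parallel chain through the critical orbit. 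The reverse inequality is where your proposal breaks down, and not only for the reason you flag. Even granting that $w^\dagger$ lifts consistently to points $\gt_{j+k}\in\partial\tilde{\gD}(f_{j+k})$, the resulting chain is not an element of $\C{C}_n(\gz_n)$: it terminates at some $\gt_n$ with no relation to $\gz_n$, whereas $B_{n,j}$ is anchored at $B_{n,n}=-2=\Im\gz_n$. Running the one-step estimates downward along your chain gives $\Im w^\dagger\leq B_{n,j}+M+\gb_j^{-1}\gb_n\,\Im\gt_n$, so you must show $\gb_n\Im\gt_n\to0$; but the only available upper bound on $\Im\gt_n$ is $\max_{\partial\tilde{\gD}(f_n)}\Im w$, which is precisely the quantity under estimation at level $n$ --- the argument is circular. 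The paper closes this direction by importing Yoccoz's lower bound on the inradius of $\gD(f_{j+1})$ in terms of the Brjuno sum, an external analytic input that your compactness-and-consistency remark does not replace.
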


\begin{proof}
Part a) By the condition \eqref{E:high-type-restriction} on the rotation numbers $\ga_n$ we have 
\[\frac{1}{\ga_n} -\B{k} - \frac{1}{2 \ga_n} \geq \frac{\hat{\B{k}}}{2}+ \frac{5}{2} \geq 3. \]
Hence,   
\begin{equation}\label{E:center-points}
\gz_n= \frac{1}{2\ga_n} - 2 \B{i} \in \gF_n(\C{P}_n) \subset \C{D}_n, \forall n\geq 1.
\end{equation}
Inductively define the sequence $\gz_{j}$, for $n-1\leq j\leq 0$, 
such that 
\[\gz_{j} \in \gc_{j+1, 0}(\gz_{j+1}) +\D{Z}, \quad \Re \gz_j\in [\frac{1}{2\ga_j}-\frac{1}{2}, \frac{1}{2\ga_j}+\frac{1}{2}].\]
The above condition on the rotation implies that $\gz_j \in \C{D}_j$ for all $j$ with $0 \leq j \leq n-1$.
In particular, $\gt=\langle \gz_j \rangle_{j=n}^0$ belongs to $\C{C}_{n}(\gz_n)$. 

Note that for every $j\geq 0$, the set $\gF_j^{-1}(\C{D}_j)$ is contained in the image of $f_j$. 
That is because, $f_j(\cup_{i=0}^{k_j} f_j\co{i}(S_j^0) \cup \p_j)$ covers this set. 
By the definition of renormalization, the image of each $f_j$, $j\geq 1$, is equal to the disk of radius 
$e^{4\pi }4/27$ about zero.
In particular, $\gF_n^{-1}(\gz_n)$ is contained in the image of $f_n$. 
This implies that $\Im \gz_{n-1}\geq -2$. 
Continuing this process inductively, we conclude that for all $j$ with $0\leq j \leq n$, 
$\Im \gz_j \geq -2$.  
In particular, for all $j$ with $0\leq j \leq n$
\[\Im \gz_j\geq -1/\ga.\]

Let $L_j$ denote the map $L_{f_j}$ introduced in Equation~\eqref{E:intermediate-lift-of-F}. 
By the above inequality, we may applying Proposition~\ref{P:global-estimate} with $D=-1$ to obtain 
$M_1\in \D{R}$ such that for all $j$ with $0\leq j \leq n$, 
\[L_j(\gz_j) \in B(\gz_j, M_1 \log (1+ 1/\ga_j)).\]

Then, by a basic estimate there is a constant $M_1$ depending only on $M_1$ such that for all $j$ 
with $0\leq j \leq n$, we have 
\[M_2^{-1} e^{-2\pi \ga_j \Im \gz_j} 
\leq |1-e^{-2\pi \ga_j \B{i} L_j(\gz_j)}| 
\leq M_2 e^{-2\pi \ga_j \Im \gz_j}.\]  
Now, the uniform bounds in Lemma~\ref{L:size-of-sigma} imply that there exists a constant $M_3$ 
depending only on $M_2$ and $C_1$ such that for all $j$ with $0\leq j \leq n$, we have 
\[M_3 \ga_j  e^{-2\pi \ga_j \Im \gz_j}   \leq |\gF_j^{-1}(\gz_j)| \leq M_3 \ga_j  e^{-2\pi \ga_j \Im\gz_j}.\]
As $\ex {\gz_{j-1}}= \gF_j^{-1}(\gz_j)$ or  $\ex {\gz_{j-1}}=s\circ \gF_j^{-1}(\gz_j)$, an explicit estimate on $\ex$ implies that there is 
$M_4 \geq 0$ such that for every $j$ with $0\leq j \leq n$ we have 
\begin{equation}\label{E:actual-recursive}
\ga_j \Im \gz_j + \log \frac{1}{\ga_{j-1}} -M_4  
\leq  \Im (\gz_{j-1}) 
\leq \ga_j \Im \gz_j + \log \frac{1}{\ga_{j-1}} +M_4.
\end{equation}
Note that the constant $M_4$ depends only on the class of maps $\IS$.   

Let $B=M_4$, that is, the constant used in the definition of the bi-sequence $B_{k,j}$. 
By an inverse inductive process (from $j=n$ to $j=0$) we show that 
\begin{equation}\label{E:local-desire}
B_{n,j} \leq \Im \gz_j  \leq B_{n,j}+4 M_4 \Big( 1+ \sum_{i=j}^{n} (\prod_{l=i+1}^{n} \ga_{l-1})\Big )
\end{equation}

For $j=n$, Inequality~\eqref{E:local-desire} reduces to $B_{n,n}=-2\leq \Im \gz_n \leq -2$ 
(there is no sum), which holds by the definition of $\gz_n$. 
Assume that Inequality~\eqref{E:local-desire} holds for $j$. 
We want to prove it for $j-1$. 
Multiplying both sides of Equation~\eqref{E:local-desire}  by $\ga_j$, then 
adding $\log 1/\ga_j$, and then subtracting $B$ we come up with 
\begin{align*}
\ga_j B_{n,j}+\log \frac{1}{\ga_{j-1}} -B \leq 
\ga_j \Im \gz_j + \log \frac{1}{\ga_{j-1}} -B
\end{align*}
and 
\begin{align*} 
\ga_j \Im \gz_j + \log \frac{1}{\ga_{j-1}} -B
\leq \ga_j B_{n,j}+ \log \frac{1}{\ga_{j-1}} -B + 
4 \ga_j M_4 \big(1+ \sum_{i=j}^{n} \gb_i^{-1} \gb_n \big ),
\end{align*}
By the definition of the bi-sequence and Equation~\ref{E:actual-recursive}, the first 
inequality above provides us
\begin{align*}
B_{n,j-1}\leq \ga_j \Im \gz_j + \log \frac{1}{\ga_{j-1}} -B
\leq \Im \gz_{j-1}
\end{align*}
and, similarly, the second inequality gives us  
\begin{align*}
\Im (\gz_{j-1})  
& \leq \ga_j \Im \gz_j + \log \frac{1}{\ga_{j-1}} +M_4 \\
& \leq \ga_j \Im \gz_j + \log \frac{1}{\ga_{j-1}} - B +2M_4 \\
& \leq B_{n, j-1} + 4 \ga_j  M_4 \big (1+ \sum_{i=j}^{n} \gb_i^{-1} \gb_n \big )+ 2 M_4  \\
&= B_{n, j-1} + 4 M_4\big (\ga_j+ \sum_{i=j-1}^{n} \gb_i^{-1} \gb_n \big ) + 2M_4 \\
& \leq B_{n, j-1} + 4 M_4\big (1+ \sum_{i=j-1}^{n} \gb_i^{-1} \gb_n \big ). 
\end{align*}
In the last inequality we have used that $\ga_j+1/2\leq 1$.
This finishes the proof of the induction step.

Finally, since $\ga_j\in (0, 1/2)$, for all $j$, we have  
\[ 4 M_4 \big( 1+ \sum_{i=j}^{n} \gb_{i-1}^{-1} \gb_{n-1} \big ) 
\leq 4 M_4 \big(1+ \sum_{i=j}^{n}  (\frac{1}{2})^{n-i} \big)
\leq 4 M_4 \big(1+ \sum_{i=0}^{+\infty} (\frac{1}{2})^{i})\big)
\leq 12 M_4.\]
The above bound combined with Inequality~\eqref{E:local-desire} implies a). 
That is, we define $M=12 M_4$.

\medskip

Part b) 
By Equation~\eqref{E:bisequence}, for every $j\geq 0$, $\lim_{n\to \infty} B_{n,j}$ exists and 
\[\Big|\lim_{n\to \infty} B_{n,j} - \gb_{j}^{-1} \sum_{i=j}^{+\infty} \gb_{i}\log \ga_{i+1}^{-1} \Big| 
\leq B \gb_j^{-1} \sum_{i=j}^{+\infty} \gb_{i} \leq 2 B.\]

Each map $f_j$, $j\geq 0$, belongs to the class $\IS_{\ga_j}$ and must be univalent on the disk 
$B(0, 1/12)$. 
That is because, the polynomial $P$ is univalent on the disk $B(0, 1/3)$ and by the $1/4$-theorem 
$\gf(B(0, 1/3))$ contains $B(0,1/12)$.
By the classical result of Yoccoz~\cite{Yoc95}, there is a uniform constant $C>0$ (independent of $j$) 
such that the ball of radius $C \exp (-\gb_{j}^{-1} \sum_{i=j}^\infty \gb_{i} \log \ga_{i+1}^{-1})$ 
centered at $0$ is contained in $\gD(f_{j+1})$.
Since $\gD(f_{j+1})$ lifts under $\ex$ to the set $\tilde{\gD}(f_j)+\D{Z}$,  
\[\max_{w\in \partial \tilde{\gD}(f_j)} \Im w 
\leq \frac{1}{2\pi} \log \frac{4}{27C} +\frac{1}{2\pi} 
\gb_j^{-1} \sum_{i=j}^{+\infty} \gb_{i}\log\ga_{i+1}^{-1}\]
Indeed, one may recover the lower bound of Yoccoz from the estimates in part a), 
which we briefly sketch it here. 
In part a) we have chosen the branches of the lifts $\gc_{j,0}+\D{Z}$ (the ones at the center) 
to obtain the highest possible imaginary parts. 
That is, all other choices lead to smaller imaginary parts. 
This implies that all the sets $\gU_n$, similarly defined for $f_j$, contain the ball of radius 
$C'' \exp (-\gb_{j}^{-1} \sum_{i=j}^\infty \gb_{i} \log \ga_{i+1}^{-1})$ centered at $0$, 
for some uniform constant $C''$. 
Since Proposition~\ref{P:Siegel-disk-enclosed} holds for all maps $f_{j}$, with 
appropriately defined $\gU^n$, one concludes that $\gD(f_j)$ must contain the disk of that radius.  

On the other hand, we need to prove an upper bound on the size of the biggest ball that fits into 
the Siegel disk of $f_j$ in terms of the above infinite series.
%Such an upper bound is proved when $f_j$ is a quadratic polynomial in \cite{BC04}, 
%using the speed of parabolic explosions.
%This upper bound is generalized to maps with non-linearity in $\IS$ in \cite{Ch10-I}. 
%We briefly present the argument below for the readers convenience.  
This upper bound is proved in \cite{Ch10-I}, which we briefly present the argument 
below for the readers convenience.
There is a proof of the upper bound, which only works when $f_j$ is a quadratic polynomial, 
given in \cite{BC04}.

Fix $j \geq 0$ and let $n > j$ be an arbitrary integer. 
Let $\gt \in \C{C}_n(\gz_n)$ denote the sequence obtained in part a). 
Consider the sequence $\gt'\in \C{C}_n(\langle \frac{1}{2\ga_n} \rangle)$ defined along the same 
branches determining $\gt$. 
That is, for each $i$ with $1 \leq i\leq n$, if $\gz_{i-1}= \gc_{i,l}(\gz_i)$ for some integer $l$ 
then $\gt'_{i-1}= \gc_{i,l}(\gt'_i)$. 
The point $\gF_n^{-1}(\gt'_n)$ belongs to the forward orbit of the critical point of $f_n$. 
It follows from the prof of Proposition~\ref{P:lifts-vs-iterates} that $\gF_j^{-1}(\gt_j')$ belongs 
to the forward orbit of the critical point of $f_j$. 
In particular, $\gF_j^{-1}(\gt_j')$ does not belong to the Siegel disk of $f_j$.
By definition, $\gt_j' \notin \tilde{\gD}(f_j)$. 

By the uniform contraction in Lemma~\ref{L:contraction} and the uniform inclusion in 
Lemma~\ref{L:well-contains}, we conclude that provided  $n$ is sufficiently large, 
$|\Im \gz_j- \Im \gt'_j|\leq 1$. 
Thus, by the inequalities in part a), $\Im \gt'_j \geq B_{n,j}-1$. 
(Alternatively, one may repeated the estimates in the proof of part a) for the sequence $\gt'$ to 
obtain estimates as in Equation~\eqref{E:local-desire} for $\gt'$.)
Combining with the previous paragraph, we have 
\[\max_{w\in \partial \tilde{\gD}(f_j)} \Im w 
\geq \Im \gt'_j \geq B_{n,j}-1.\]
In particular, $\max_{w\in \partial \tilde{\gD}(f_j)} \Im w  - \lim_{n\to +\infty} B_{n,j} \geq -1$. 
This finishes the proof of part b.
\end{proof}

By Proposition~\ref{P:fine-estimate} with $D=1$ there is a constant $M_1$ such that the inequality in that proposition holds. 
In particular, for every $D \geq 1$, the inequalities hold with the constant $M_1$. 
This implies that we may choose $D\geq 1$ such that with the corresponding constant $M$ from the proposition,
for every $\ga\in (0,1/2)$ we have 
\begin{equation}\label{E:small-integral-1}
\int_{D/\ga}^\infty  M\ga e^{-2\gp \ga t}\,dt = \frac{M}{2\pi} e^{-2\pi D}  \leq  1/2, 
\end{equation}
and 
\begin{equation}\label{E:small-integral-2}
\int_{0}^{1/\ga+ \B{k}''+ \B{k}+\hat{\B{k}}+2}M\ga e^{-2\gp D}\,dt = M \ga e^{-2 \gp D} (1/\ga+ \B{k}''+ \B{k}+ \hat{\B{k}}+2)  \leq 1/2.
\end{equation}

\begin{lem}\label{L:heights-of-lifts}
With the constant $D$ obtained above we have the following. 
If for some positive integers $m<n$ there exists $\gt\in \C{C}_{n}(\frac{1}{2\ga_n}-2\mathbf{i})$       
satisfying 
\[\Im \gt_j \geq \frac{D+2}{\ga_j}, \; \forall j \; \twith  m \leq j \leq n-1,\] 
then for every $\gt'\in  \C{C}_{n}(\frac{1}{2\ga_n}-2\mathbf{i})$ we have 
\[|\Im \gt'_j - \Im\gt_j| \leq 2,\; \forall j \; \twith  m-1 \leq j \leq n-1.\] 
\end{lem}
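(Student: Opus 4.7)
The plan is to establish $|\Im(\gt_j - \gt'_j)| \leq 2$ by downward induction on $j$ from $j = n-1$ to $j = m-1$. By the defining relation for $\C{C}_n(\cdot)$, at each level one has $\gt_j = \gc_{j+1,0}(\gt_{j+1}) + k_j$ and $\gt'_j = \gc_{j+1,0}(\gt'_{j+1}) + k'_j$ for integers $k_j, k'_j$, so integer shifts never contribute to the imaginary difference, and
\[\Im(\gt_j - \gt'_j) = \Im\bigl(\gc_{j+1,0}(\gt_{j+1}) - \gc_{j+1,0}(\gt'_{j+1})\bigr).\]
The base case $j = n-1$ is immediate: the equality $\gt_n = \gt'_n = \frac{1}{2\ga_n} - 2\mathbf{i}$ forces $\gt_{n-1} - \gt'_{n-1} \in \D{Z}$, hence $\Im(\gt_{n-1} - \gt'_{n-1}) = 0$.

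For the inductive step, suppose the estimate holds at level $j+1$ for some $j$ with $m-1 \leq j \leq n-2$. Since $j+1 \in [m, n-1]$, the hypothesis of the lemma gives $\Im \gt_{j+1} \geq (D+2)/\ga_{j+1}$, and combined with the inductive bound we also obtain $\Im \gt'_{j+1} \geq D/\ga_{j+1}$. Let $\gamma$ be the rectangular path from $\gt'_{j+1}$ going horizontally to $\Re(\gt_{j+1}) + \mathbf{i}\,\Im(\gt'_{j+1})$ and then vertically to $\gt_{j+1}$; above the height $D/\ga_{j+1}$ the relevant portion of $\C{D}_{j+1}$ is a connected horizontal strip of width at most $1/\ga_{j+1} + \B{k}'' + \B{k} + \hat{\B{k}} + 2$ (the vertical strip $\gF_{j+1}(\C{P}_{j+1})$ from \refP{P:petal-geometry}(b) together with the overlapping translates of $\gF_{j+1}(S^0_{j+1})$), so $\gamma$ stays inside $\C{D}_{j+1} \cap \{\Im w \geq D/\ga_{j+1}\}$.

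Along $\gamma$, \refP{P:fine-estimate} yields $|\gc'_{j+1,0}(w) - \ga_{j+1}| \leq M\ga_{j+1}\, e^{-2\pi\ga_{j+1} \Im w}$. Expanding $\gc_{j+1,0}$ around its linear leading part and taking imaginary parts (and noting that in the anti-holomorphic case the leading contribution $\ga_{j+1}\bar w$ only flips the sign of $\Im(\gt_{j+1} - \gt'_{j+1})$ while leaving its modulus intact, with the error term controlled identically), the triangle inequality gives
\[|\Im(\gt_j - \gt'_j)| \leq \ga_{j+1}\,|\Im(\gt_{j+1} - \gt'_{j+1})| + \int_\gamma M\ga_{j+1}\,e^{-2\pi\ga_{j+1}\Im w}\,|dw|.\]
On the horizontal leg, $\Im w \equiv \Im \gt'_{j+1} \geq D/\ga_{j+1}$ with length at most $1/\ga_{j+1} + \B{k}'' + \B{k} + \hat{\B{k}} + 2$, so \refE{E:small-integral-2} bounds its contribution by $1/2$. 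On the vertical leg, $\Im w$ ranges through $[D/\ga_{j+1}, +\infty)$, so \refE{E:small-integral-1} bounds its contribution by $1/2$. Combining with the inductive bound $|\Im(\gt_{j+1}-\gt'_{j+1})| \leq 2$ and the high-type bound $\ga_{j+1} \leq 1/2$ from \eqref{E:high-type-restriction}, we obtain $|\Im(\gt_j - \gt'_j)| \leq 2\ga_{j+1} + 1 \leq 2$, closing the induction.

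The main subtle point to verify is that the rectangular path $\gamma$ genuinely lies in the domain $\C{D}_{j+1}$ where \refP{P:fine-estimate} applies, and that the unified treatment of the holomorphic and anti-holomorphic branches of $\gc_{j+1,0}$ is legitimate; both of these reduce to routine bookkeeping using \refP{P:petal-geometry}, \refP{P:sector-geometry}, and the high-type restriction \eqref{E:high-type-restriction}, which are precisely the ingredients already built into the setup of Section~\ref{SS:change-coordinates}.
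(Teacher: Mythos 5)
Your proof is correct and follows essentially the same route as the paper: downward induction along the chain, a rectangular path above the height $D/\ga_{j+1}$, and the derivative estimate of Proposition~\ref{P:fine-estimate} split via \eqref{E:small-integral-1} and \eqref{E:small-integral-2} into a $1/2+1/2$ error plus the linear term $\ga_{j+1}|\Im(\gt_{j+1}-\gt'_{j+1})|$. The only (harmless) difference is how you close the recursion: you use $2\ga_{j+1}+1\leq 2$ directly, whereas the paper tracks the slightly sharper telescoped bound $1+\sum_{k=j+1}^{n-1}\gb_j^{-1}\gb_k\leq 2$; both yield the stated conclusion.
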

\begin{proof}
Assume $\gt\in \C{C}_{n}(\frac{1}{2\ga_n}-2\mathbf{i})$ be a chain satisfying the hypotheses of the lemma, and
$\gt'\in  \C{C}_{n}(\frac{1}{2\ga_n}-2\mathbf{i})$ be an arbitrary chain. 
By an inductive argument we show that for all $j$ with $0 \leq j \leq n-1$ we have 
\begin{equation}\label{E:little-changes-in-height}
|\Im \gt_{j}' - \Im \gt_j | \leq 1+ \sum_{k=j+1}^{n-1} \gb_j^{-1} \gb_k  .
\end{equation}

First note that  since for all $i\geq 0$, $\ga_i\in (0, 1/2)$, we have 
\[1+ \sum_{k=j+1}^{n-1} \gb_j^{-1} \gb_k \leq 1+ \sum_{k=1}^{n-1} \gb_k  
\leq 1+ \sum_{k=1}^{\infty} \frac{1}{2^{k}} \leq 2.\]

For $j=n-1$, we have $\gt_{n-1}' \in \gc_{n,0}(\gz_n)+\D{Z}$ and hence, $\Im \gt'_{n-1}= \Im \gt_{n-1}$.  
Therefore, the inequality \eqref{E:little-changes-in-height} holds for $j=n-1$. 

Assume the inequality \eqref{E:little-changes-in-height} holds for some $j$. 
By the assumption in the lemma we have $\Im \gt_j \geq (D+2)/\ga_j$, and then by the induction assumption 
we obtain $\Im \gt_j' \geq (D+2)/\ga_j - 2 \geq D/\ga_j$. 
Let $\gga_j$ be a piecewise smooth curve in $\C{D}_j$ that lies above the horizontal line passing through 
$D\B{i}/\ga_j$ and connects $\gt_j$ to $\gt'_j$. 
We may choose $\gga_j$ such that it consists of a horizontal line segment of length at most $1/\ga_j +\B{k}''+ \B{k} + \hat{\B{k}}+2$ 
and vertical line segment of length at most $2$. 
Then, 
\begin{align*}
|\Im (\gt_{j-1}-\gt'_{j-1})|&=|\Im\int_{\gga_{j}} \gc'_{j,0}\, dz|   \\
& \leq \Im\int_{\gga_{j}} |\gc'_{j,0}-\ga_{j}|\, dz + |\Im\int_{\gga_{j}} \ga_{j}\, dz| \\
&\leq 1/2 + 1/2 + \ga_{j}|\Im (\gt_{j}-\gt'_{j})|, 
\end{align*}
where in the last inequality we have used the inequalities in Equations~\eqref{E:small-integral-1} and \eqref{E:small-integral-2}.
Combining with the induction hypothesis we obtain the inequality \eqref{E:little-changes-in-height} for $j-1$.
\end{proof} 

%%%%%%%%%%%%%%%%%%%%%%%%%%%%%%%%%%%%%%%%%%%%%%%%%

\subsection{Proof of Proposition~\ref{P:cremer-sectors}}
Recall the set $J_{n}$ defined in section~\ref{SS:lifts-vs-iterates} and the set $X_{\ga_n}$ defined in 
Section~\ref{S:SectorsAtDeepLevels}.
Consider the set 
\[G_{n} = \left\{ j\in \D{Z} \; \mid \; f_{n} \co{j}(J_{n}) \subset \C{P}_n, \gF_{n}(f_{n}\co{j}(J_{n})) \subset X_{\ga_{n}} \right\}.\]

\begin{lem}\label{L:density-on-hight-level}
$\forall \gep>0,\, \exists \gd>0 $ such that if $\ga_{n} \leq \gd$ for some $n\geq 1$, then  
\begin{equation}
\frac{|G_n|}{a_n +1}> 1-\gep. 
\end{equation}
\end{lem}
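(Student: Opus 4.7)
The plan is to observe that $\Phi_n(J_n)$ lives in a vertical strip of bounded width (independent of $\alpha_n$), while forward iteration of $f_n$ corresponds to horizontal translation by $1$ in the Fatou coordinate. Since the rectangle $X_{\alpha_n}$ has horizontal extent $\sim 1/\alpha_n$ while losing only $\sim (1/\alpha_n)^{2/3}$ at each vertical edge, most of the integer translates of the bounded-width strip will sit inside $X_{\alpha_n}$, giving a density approaching $1$.

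First I would invoke Equation~\eqref{E:width-of-J}, applied one level up (replacing $n-1$ by $n$), to get the uniform inclusion
\[
\gF_n(J_n) \subset \Omega := \{w\in\D{C}\mid -2<\Im w,\; 1\leq \Re w\leq \hat{\B{k}}+3\}.
\]
The key uniform quantity here is that $\Omega$ has horizontal width only $\hat{\B{k}}+2$. Next, Theorem~\ref{T:Ino-Shi1}(d) (the Abel equation) says that $\gF_n$ conjugates $f_n$ to $w\mapsto w+1$ on $\p_n$, and in particular $\gF_n(\p_n)=(0,\ga_n^{-1}-\B{k})\times\D{R}$. Therefore, for every nonnegative integer $j$ with $\Omega+j\subset\gF_n(\p_n)$, the iterate $f_n\co{j}(J_n)$ is defined, lies in $\p_n$, and satisfies $\gF_n(f_n\co{j}(J_n))=\gF_n(J_n)+j\subset\Omega+j$.

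Then I would characterize which $j$'s land $\Omega+j$ inside $X_{\ga_n}$. The condition $\Im w\geq -2$ is automatic from the definition of $\Omega$; the real-part constraints reduce to
\[
(1/\ga_n)^{2/3} -1 \leq j \leq (1/\ga_n) - (1/\ga_n)^{2/3} - \hat{\B{k}}-3,
\]
so every integer $j$ in this range belongs to $G_n$. Counting gives
\[
|G_n| \geq (1/\ga_n) - 2(1/\ga_n)^{2/3} - \hat{\B{k}}-3.
\]
On the other hand, $\ga_n = 1/(a_n+\gep_{n+1}\ga_{n+1})$ with $|\gep_{n+1}\ga_{n+1}|<1/2$, so $a_n+1\leq (1/\ga_n)+3/2$.

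Combining these two bounds,
\[
\frac{|G_n|}{a_n+1} \geq \frac{(1/\ga_n) - 2(1/\ga_n)^{2/3} - \hat{\B{k}}-3}{(1/\ga_n)+3/2},
\]
whose right-hand side tends to $1$ as $\ga_n\to 0^+$. Given $\gep>0$, choosing $\gd$ small enough so the right-hand side exceeds $1-\gep$ completes the argument. There is essentially no serious obstacle here: the lemma is a straightforward counting statement, and the only thing needing care is that Equation~\eqref{E:width-of-J} and the high-type restriction~\eqref{E:high-type-restriction} together guarantee $\Omega+j\subset\gF_n(\p_n)$ for all admissible $j$, so that the Abel equation can be iterated freely.
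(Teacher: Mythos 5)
Your proposal is correct and follows essentially the same route as the paper: both use the uniform horizontal width $\hat{\B{k}}+2$ of $\gF_n(J_n)$ from Equation~\eqref{E:width-of-J}, translate by $j$ via the Abel equation, count the integers $j$ for which the translate fits in $X_{\ga_n}$, and observe that the resulting lower bound $(1/\ga_n)-2(1/\ga_n)^{2/3}-O(1)$ divided by $a_n+1$ tends to $1$. The minor constant discrepancies (e.g.\ $\hat{\B{k}}+3$ versus $\hat{\B{k}}+2$) are immaterial to the asymptotic conclusion.
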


\begin{proof}
Recall that the set $\gF_n(J_n)$ satisfies Equation \eqref{E:width-of-J}. 
Hence, by Proposition~\ref{P:petal-geometry}, for every positive integer 
$j \leq a_n -\B{k}-\hat{\B{k}}-3$,  
$\gF_n(J_n)+j$ is contained in $\gF_n(\C{P}_n)$. 
In particular, this implies that for all such $j$, $f_n\co{j}(J_n)$ is defined and is contained in $\C{P}_n$. 

On the other hand, for all integers $j$ such that 
\[(\frac{1}{\ga_n})^{2/3} \leq j+1, \quad \tand \quad  \hat{\B{k}}+2+j \leq (\frac{1}{\ga_n}) -(\frac{1}{\ga_n})^{2/3},\]
we have $\gF_n(J_n)+j$ is contained in $X_{\ga_n}$.
Therefore, all such integers $j$ are contained in $G_n$. 
Since the number of integers in a closed interval of length $l$ is at least $l-1$, we conclude that 
\begin{align*}
|G_n| &\geq 
(\frac{1}{\ga_n}) - (\frac{1}{\ga_n})^{2/3} -\hat{\B{k}} - 2  - (\frac{1}{\ga_n})^{2/3} - 1 -1\\
& \geq (\frac{1}{\ga_n}) - 2(\frac{1}{\ga_n})^{2/3} - \hat{\B{k}} -4.
\end{align*}
Since the ratio 
\[\frac{(\frac{1}{\ga_n}) - 2(\frac{1}{\ga_n})^{2/3} - \hat{\B{k}} -4}{\langle 1/\ga_n \rangle +1} \]
tends to $+1$ as $\ga_n \to 0$, one concludes the statement of the lemma. 
\end{proof}

Recall the notation $\gb_0=1$, and $\gb_k= \prod_{i=1}^k \ga_i$, for $k\geq 1$.  
Also, recall the bi-sequence $B_{n, j}$, defined in~\refE{E:bisequence}. 
Consider the sets 
\[W^n_j=\left \{ w  \in \C{D}_j \; \big | \; \Im w\geq B_{n,j}- \frac{1}{2} \gb_j^{-1} \gb_{n-1} \log\frac{1}{\ga_n}-4\right \}.\]
Recall the set $\C{L}(\ga, T, l)$ defined in Section~\ref{SS:arithmetic}. 
In the following proposition $D$ is the constant satisfying Equations~\eqref{E:small-integral-1} and \eqref{E:small-integral-2} 
(or any constant larger than the one satisfying those equations.). 

\begin{propo}\label{P:wide-set-swallowed}
Let $T \in \D{R}$ be a constant satisfying 
\begin{equation}\label{E:T-margin}
T\geq (D+2)\frac{4\pi}{4\pi -1},
\end{equation}
and $m$ be a positive integer such that $\C{L}(\ga, T, m)$ is not empty.
Then, for all $n\in \C{L}(\ga,T,0)$ and for all integers $l_i$, for $ m+1 \leq i \leq n$, with $0 \leq l_i \leq a_{i-1} $, 
we have 
\[\gc_{m+1,l_{m+1}} \circ \gc_{m+2, l_{m+2}} \circ \dots \circ  \gc_{n, l_n}  (X_{\ga_n})  \subset W_m^n.\] 
\end{propo}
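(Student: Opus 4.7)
The plan is to descend through the tower by induction on the level $j$, from $n$ down to $m$, controlling the imaginary parts of the iterated lifts. Fix an arbitrary $\gz_n \in X_{\ga_n}$ and set $\gz_{j-1} := \gc_{j, l_j}(\gz_j)$ for $j = n, n-1, \ldots, m+1$. The goal is to show $\Im \gz_m \geq B_{n, m} - \tfrac{1}{2}\gb_m^{-1}\gb_{n-1}\log(1/\ga_n) - 4$, which is the defining inequality of $W^n_m$.

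The first lift $\gz_n \mapsto \gz_{n-1}$ is exceptional, because $X_{\ga_n}$ contains points of imaginary part as low as $-2$, well below the threshold $D/\ga_n$ required by Proposition~\ref{P:fine-estimate}. First I invoke Lemma~\ref{L:step1} to place $L_n(\gz_n)$ in $\hat X_{\ga_n}$, apply Lemma~\ref{L:step2} to bound $|\gF_n^{-1}(\gz_n)|$ from above by a multiple of $\ga_n^{2/3}$, and unwrap via $\ex$ (or equivalently combine Proposition~\ref{P:density-high-levels-raw} with Lemma~\ref{L:step3}) to obtain the lower bound $\Im \gz_{n-1} \geq \tfrac{1}{4\pi}\log(1/\ga_n) - C$ for a uniform constant $C$. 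Next, the arithmetic hypothesis $n \in \C{L}(\ga, T, 0)$ yields $B_{n, j} \geq T/\ga_j$ for every $0 < j < n$; in particular $B_{n, n-1}$, which is comparable to $\log(1/\ga_n)$, is at least $T/\ga_{n-1}$. The choice $T \geq (D+2)\cdot 4\pi/(4\pi - 1)$ is exactly what is needed so that the first-step bound pushes $\Im \gz_{n-1}$ above $(D+2)/\ga_{n-1}$, and a bootstrap then gives $\Im \gz_j \geq (D+2)/\ga_j$ for every $m \leq j \leq n-2$. Thus Proposition~\ref{P:fine-estimate} is applicable at every subsequent step of the descent.

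For $m+1 \leq j \leq n-1$, the strategy is to integrate the derivative estimate $|\gc_{j, 0}'(\gz) - \ga_j| \leq M \ga_j e^{-2\pi \ga_j \Im \gz}$ along suitable paths in $\C{D}_j$ to extract the refined recursion $\Im \gz_{j-1} \geq \ga_j \Im \gz_j + \log(1/\ga_j) - B - O(1)$, which matches the recursion defining $B_{n, j-1}$ up to a uniform additive constant $C_0$ per step; this is essentially the argument behind Equation~\eqref{E:actual-recursive} in the proof of Proposition~\ref{P:nearly-right-height}. Setting $E_j := B_{n, j} - \Im \gz_j$, this reads $E_{j-1} \leq \ga_j E_j + C_0$, whose telescoping from $j = n-1$ down to $j = m$ produces
\[ E_m \leq \gb_m^{-1}\gb_{n-1}\, E_{n-1} + C_0 \sum_{i = m}^{n-2} \gb_m^{-1}\gb_i. \]
The tail sum is bounded uniformly by $2 C_0$ because each $\ga_i < 1/2$, and the first-step analysis gives $E_{n-1} \leq \left(1 - \tfrac{1}{4\pi}\right)\log(1/\ga_n) + O(1)$; the explicit $\tfrac{1}{2}$ coefficient in the definition of $W^n_m$ is precisely what the geometric attenuation by $\gb_m^{-1}\gb_{n-1}$, combined with the chosen value of $T$, delivers for the leading term, producing $E_m \leq \tfrac{1}{2}\gb_m^{-1}\gb_{n-1}\log(1/\ga_n) + 4$ as required.

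The main obstacle is the calibration of $T$: it must simultaneously guarantee that the first lift raises $\Im \gz_{n-1}$ above the threshold $(D+2)/\ga_{n-1}$ so that Proposition~\ref{P:fine-estimate} engages on the very next step, and that the unavoidable defect incurred at the first step (where the fine derivative estimate is unavailable) compresses geometrically down the tower to within the $\tfrac{1}{2}\gb_m^{-1}\gb_{n-1}\log(1/\ga_n) + 4$ budget of $W^n_m$. The explicit factor $4\pi/(4\pi - 1)$ in the threshold on $T$ is exactly the bookkeeping constant reconciling these two demands.
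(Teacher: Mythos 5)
Your overall architecture matches the paper's: descend the tower level by level, use Lemmas~\ref{L:step1}--\ref{L:step3} and Proposition~\ref{P:density-high-levels-raw} for the exceptional first lift, keep all subsequent points above the threshold $D/\ga_j$ so that Proposition~\ref{P:fine-estimate} applies, and telescope a recursion of the form $E_{j-1}\leq \ga_j E_j+O(1)$. The structural difference is that you track the absolute heights $\Im \gz_j$ and compare them directly to $B_{n,j}$, whereas the paper compares the chain $\langle \gz_j\rangle$ with a reference chain $\gt'\in \C{C}_n(\frac{1}{2\ga_n}-2\B{i})$ following the same branches $l_j$, whose heights are pinned to $B_{n,j}$ (up to an additive constant) by Proposition~\ref{P:nearly-right-height}-a together with Lemma~\ref{L:heights-of-lifts}, and then bounds $|\Im \gz_j-\Im \gt'_j|$ by integrating $\gc_{j,l_j}'-\ga_j$ between two same-level points lying above the line $\Im w=D/\ga_j$.

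The gap is in the first-step accounting, and it is fatal to the stated conclusion. You bound $\Im \gz_{n-1}\geq \frac{1}{4\pi}\log(1/\ga_n)-O(1)$ and subtract this from $B_{n,n-1}$, obtaining $E_{n-1}\leq (1-\frac{1}{4\pi})\log(1/\ga_n)+O(1)$; telescoping then yields $E_m\leq (1-\frac{1}{4\pi})\gb_m^{-1}\gb_{n-1}\log(1/\ga_n)+O(1)$, and since $1-\frac{1}{4\pi}>\frac{1}{2}$ this does \emph{not} place $\gz_m$ in $W^n_m$. The ``geometric attenuation by $\gb_m^{-1}\gb_{n-1}$'' cannot rescue this: it multiplies the leading term of the budget of $W^n_m$ by exactly the same factor, and $T$ never enters the leading-order comparison. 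The paper obtains the coefficient $\frac{1}{4\pi}$ rather than $1-\frac{1}{4\pi}$ precisely because the first-step loss is measured \emph{relative} to $\Im \gt'_{n-1}=\Im \gc_{n,0}(\frac{1}{2\ga_n}-2\B{i})\geq B_{n,n-1}$, i.e.\ Proposition~\ref{P:density-high-levels-raw} is used in the form $\Im \gz_{n-1}\geq \Im \gt'_{n-1}-\frac{1}{4\pi}\log(1/\ga_n)+O(1)$, not as an absolute lower bound. Your calibration of $T$ is likewise off: the absolute bound gives only $\Im \gz_{n-1}\geq \frac{T}{4\pi\ga_{n-1}}-O(1)$, which exceeds $(D+2)/\ga_{n-1}$ only when $T\geq 4\pi(D+2)$, an order of magnitude more than \eqref{E:T-margin}; the actual mechanism is $\Im \gz_j\geq B_{n,j}-\frac{1}{4\pi}\gb_j^{-1}\gb_{n-1}\log(1/\ga_n)-4\geq (1-\frac{1}{4\pi})B_{n,j}-4\geq (1-\frac{1}{4\pi})T/\ga_j-4\geq D/\ga_j$, which rests on the inequality $\gb_j^{-1}\gb_{n-1}\log(1/\ga_n)\leq B_{n,j}$ coming from \eqref{E:arithmetic-height}. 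To repair the argument you must introduce the reference chain (or, equivalently, replace your absolute first-step bound by the relative one), so that the deficit below $B_{n,n-1}$ after the first lift is $\frac{1}{4\pi}\log(1/\ga_n)+O(1)$.
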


\begin{proof}
For arbitrary $w_n \in X_{\ga_n}$, define the sequence $w_n, w_{n-1}, \dots w_m$ according to 
\begin{gather*} 
w_{j-1}=\gc_{j, l_j}(w_j), \tfor m+1 \leq j \leq n.
\end{gather*}

As $n \in \C{L}(\ga, T ,m)$, by Proposition~\ref{P:nearly-right-height}-a, there exists $\gt\in \C{C}_{n}(\frac{1}{2\ga_n}-2\B{i})$ with 
\begin{equation}\label{E:wasted-height}
\Im \gt_j\geq B_{n,j}\geq \frac{T}{\ga_j},\; \tfor m+1 \leq j \leq n-1.
\end{equation}
Let $\gt'\in  \C{C}_{n}(\frac{1}{2\ga_n}-2\B{i})$ be the sequence defined along the same branches as the sequence 
$\langle w_j\rangle$, that is, 
\begin{gather*} 
\gt'_{j-1}=\gc_{j, l_j}(\gt'_j), \tfor m+1 \leq j \leq n.
\end{gather*}
By Lemma~\ref{L:heights-of-lifts}, we must have
\begin{equation}\label{E:safe-height-Cremer}
\Im \gt'_j \geq B_{n,j}-2,\; \tfor  m \leq j \leq n-1.
\end{equation}
  
By an inductive procedure we show that for all $j$ with  $m \leq  j \leq n-1$
\begin{equation}\label{E:recursive-height-Cremer}
\Im w_j \geq \Im \gt'_j - \frac{1}{4\pi} \gb_j^{-1} \gb_{n-1} \log \frac{1}{\ga_n} -  \gb_j^{-1} \sum_{k=j}^\infty \gb_k.
\end{equation}

By definition, $\Im \gt_{n-1}' = \Im \gt_{n-1} = \Im \gc_{n,0}(\frac{1}{2\ga_n} -2 \B{i})$. 
For $w_n\in X_{\ga_n}$, by Lemma~\ref{L:step3} and Proposition~\ref{P:density-high-levels-raw}, we have 
\begin{align*}
\Im w_{n-1} - \Im \gt_{n-1}' & \geq  \frac{1}{2} \Im \gt_{n-1}' +2 C'  - \Im \gt_{n-1}'    \\                         
& \geq - \frac{1}{2} \frac{1}{2 \pi } \log \frac{1}{\ga_n} - \frac{C'}{2} +C' 
 \geq - \frac{1}{4 \pi } \log \frac{1}{\ga_n}.
\end{align*}
Therefore, Equation~\eqref{E:recursive-height-Cremer} holds for $j=n-1$. 
 
Assume we have \refE{E:recursive-height-Cremer} for some $j$. We want to prove it for $j-1$.
By the hypothesis and Equation~\eqref{E:safe-height-Cremer}, $\Im w_j$ enjoys
\begin{align*}
\Im w_j &\geq \Im \gt'_j -  \frac{1}{4\pi} \gb_j^{-1} \gb_{n-1} \log \frac{1}{\ga_n} -  \gb_j^{-1} \sum_{k=j}^\infty \gb_k 
&& (\text{Eq.~\eqref{E:recursive-height-Cremer}}) \\
&\geq ( B_{n,j}-2 ) - \frac{1}{4\pi} \gb_j^{-1} \gb_{n-1} \log \frac{1}{\ga_n} - 2 
&& (\text{Eq.~\eqref{E:safe-height-Cremer} and Eq.~\eqref{E:trivial-bound}})\\
&\geq B_{n,j}  -  \frac{1}{4\pi} \gb_j^{-1} \gb_{n-1} \log \frac{1}{\ga_n} -  4 
&&\\
&\geq (1-\frac{1}{4\pi}) B_{n,j}-4 
&& (\text{Eq.~\eqref{E:arithmetic-height}})\\
&\geq  (1-\frac{1}{4\pi}) \frac{T}{\ga_j}-4 \geq  \frac{D}{\ga_j}
&& (\text{Eq.~\eqref{E:wasted-height} and  Eq.~\eqref{E:T-margin}})
\end{align*}
By the above inequality and Equations~\eqref{E:safe-height-Cremer}, \eqref{E:wasted-height}, and \eqref{E:T-margin}, 
the points $w_j$ and $\gt_j$ lie above the horizontal line $\Im w=D/\ga_j$.
On the other hand, since these points belong to the image of the same lift $\gc_{j+1,l_{i+1}}$, by Equation~\eqref{E:width-of-image-chi}
their real parts differ at most by $\hat{\B{k}}$. 
We may choose a piecewise horizontal and vertical curve $\gga_j$ connecting $w_j$ to $\gt_j'$ that lies above the horizontal line 
$\Im w=D/\ga_j$.
Hence, 
\begin{align*}
w_{j-1} -\gt_{j-1}' = \int_{\gga_j} \gc_{j,l_j}' =  \int_{\gga_j} (\gc_{j,l_j}'-\ga_j)  +  \int_{\gga_j} \ga_j  
\end{align*}
By Equations~\eqref{E:small-integral-1} and \eqref{E:small-integral-2}, $|\int_{\gga_j} (\gc_{j,l_j}'-\ga_j)|$ is uniformly bounded by 
$1$. 
By taking imaginary part of the above equation, and using Equation~\eqref{E:recursive-height-Cremer} for $j$ we obtain 
Equation~\eqref{E:recursive-height-Cremer} for $j-1$. 

Finally, by Equations \eqref{E:safe-height-Cremer}, and \eqref{E:recursive-height-Cremer} for $j=m$, as well as 
\eqref{E:trivial-bound}, imply the desired lower bound on $\Im w_m$.
That is, $w_m$ belongs to $W_m^n$.
\end{proof}

Now we are ready to prove the following stronger statement that implies \refP{P:cremer-sectors}.

\begin{propo}\label{P:cremer-sectors-stronger}
For every non-Brjuno $\ga\in \irr$ and every $f\in \IS_\ga \cup \{Q_\ga\}$ there exists a sequence of real numbers 
$\gd_i$, $i \geq 1$, converging to zero such that  
\[\limsup_{n\to\infty}  \frac{|G(n,\gd_n)|}{q_n}=1. 
\footnote{\text{It is likely that one can replace the $\limsup$ by $\lim$ in this proposition, but we do not need it here.}}\]
\end{propo}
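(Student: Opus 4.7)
The plan is to combine Proposition~\ref{P:lifts-vs-iterates}, which decomposes each $f_0\co{i}(I^n)$ as an image under compositions of the lifts $\gc_{j,\cdot}$, with the non-Brjuno hypothesis via Lemma~\ref{L:good-levels}(b), so that Proposition~\ref{P:wide-set-swallowed} pushes these composed images into a region $W_0^{n-1}$ on which the imaginary parts grow unboundedly. Since $\gF_0^{-1}$ shrinks its arguments toward $0$ as the imaginary part tends to $+\infty$, this will deliver the required shrinking sequence $\gd_n$.

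More concretely, first I apply Proposition~\ref{P:lifts-vs-iterates} to each $i\in\{0,1,\dots,q_n-1\}$ to obtain integers $(i_1,\dots,i_n)$ with $0\le i_j\le a_{j-1}$ satisfying
\[
f_0\co{i}(I^n) \;=\; \gF_0^{-1} \circ \gc_{1,i_1} \circ \cdots \circ \gc_{n-1,i_{n-1}}(J_{n-1}+i_n).
\]
I expect the correspondence $i\mapsto(i_1,\dots,i_n)$ to be injective (different orbit positions give geometrically distinct sectors), and the top index $i_n$ to take each value in $\{0,\dots,a_{n-1}\}$ for roughly $q_{n-1}$ choices of $i$. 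Then Lemma~\ref{L:density-on-hight-level} applied at depth $n-1$ says that the proportion of $i_n\in\{0,\dots,a_{n-1}\}$ for which $\gF_{n-1}(J_{n-1})+i_n\subset X_{\ga_{n-1}}$ tends to $1$ as $\ga_{n-1}\to 0$.

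Next, fix $T$ satisfying \eqref{E:T-margin}; by the non-Brjuno hypothesis and Lemma~\ref{L:good-levels}(b), the set $\C{L}(\ga,T,0)$ is infinite, and I select a subsequence of levels $n-1\in\C{L}(\ga,T,0)$ along which $\ga_{n-1}\to 0$. For every $i$ whose top index $i_n$ is good in the above sense, Proposition~\ref{P:wide-set-swallowed} (with upper index $n-1$ and $m=0$) yields
\[
\gc_{1,i_1}\circ\cdots\circ\gc_{n-1,i_{n-1}}(X_{\ga_{n-1}}) \;\subset\; W_0^{n-1},
\]
hence $f_0\co{i}(I^n)\subset \gF_0^{-1}(W_0^{n-1})$. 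Set $\gd_n:=\sup\{|\gF_0^{-1}(w)|:w\in W_0^{n-1}\}$ along the subsequence, extended arbitrarily elsewhere (the gaps do not affect the $\limsup$); combining the covering formula \eqref{E:covering-formula} with Proposition~\ref{P:global-estimate} and Lemma~\ref{L:size-of-sigma}, and using that $B_{n-1,0}-\tfrac{1}{2}\gb_{n-2}\log(1/\ga_{n-1})\to+\infty$ (which follows from Lemma~\ref{L:good-levels}(a), since the divergent Brjuno sum remains divergent after halving a single term), one obtains $\gd_n\to 0$.

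The hardest part will be the bookkeeping: one must verify that the correspondence in Proposition~\ref{P:lifts-vs-iterates} is essentially injective and that a density-one subset of top indices $i_n$ translates into a density-one subset of $\{0,\dots,q_n-1\}$, which is a combinatorial Ostrowski-type count adapted to the signed modified continued fraction of $\ga\in\irr$ together with the $\pm q_{n-2}$ corrections. A secondary technical point is quantifying the shrinking of $\gF_0^{-1}$ on $W_0^{n-1}$, which requires factoring $\gF_0^{-1}=T_{f_0}\circ L_{f_0}$ and tracking how the lower bound on $\Im w$ propagates through both maps, using the estimate $|\gs_{f_0}|$ of order $\ga_0$ from Lemma~\ref{L:size-of-sigma}.
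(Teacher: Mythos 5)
Your overall strategy is the one the paper uses: set $\gd_{n}=\sup_{w\in W^{n-1}_0}|\gF_0^{-1}(w)|$, show $\gd_n\to 0$ from the non-Brjuno hypothesis (your observation that the divergent Brjuno sum survives halving one term is exactly the point), and combine Lemma~\ref{L:density-on-hight-level}, Proposition~\ref{P:wide-set-swallowed} with $m=0$, and Proposition~\ref{P:lifts-vs-iterates}. Two steps, however, do not go through as written.

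First, for a \emph{fixed} $T$ satisfying \eqref{E:T-margin} you cannot in general ``select a subsequence of levels $n-1\in\C{L}(\ga,T,0)$ along which $\ga_{n-1}\to 0$.'' Membership $m\in\C{L}(\ga,T,0)$ with $m\geq 2$ only forces, via the condition $B_{m,m-1}\geq T\ga_{m-1}^{-1}\geq 2T$ and the recursion \eqref{E:bisequence}, the bound $\log\ga_m^{-1}\geq 2T+B$, i.e.\ $\ga_m\leq e^{-2T-B}$; nothing prevents $\ga_m$ from being bounded below along the whole set. The correct quantifier structure is to let $T$ depend on $\gep$: given $\gep$, take $\gd$ from Lemma~\ref{L:density-on-hight-level}, then take $T$ satisfying \eqref{E:T-margin} and so large that $e^{-2T-B}<\gd$. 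Every $m\geq 2$ in the (infinite, by Lemma~\ref{L:good-levels}(b)) set $\C{L}(\ga,T,0)$ then has $\ga_m<\gd$, and one picks such $m$ also large enough that $\gd_{m+1}\leq\gep$. Since \eqref{E:T-margin} is only a lower bound on $T$, Proposition~\ref{P:wide-set-swallowed} still applies.

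Second, the ``bookkeeping'' you defer is needed, but it requires neither injectivity of $i\mapsto(i_1,\dots,i_n)$ nor any Ostrowski-type count. The conjugacy $f_0\co{q_n}\circ\gY_n=\gY_n\circ f_n$ of Lemma~\ref{L:conjugacy} shows that for $k=iq_n+r$ with $0\leq r<q_n$ the set $f_0\co{k}(I^{n+1})$ is the $r$-th $f_0$-iterate of $\gY_n(f_n\co{i}(J_n))$, so by Proposition~\ref{P:lifts-vs-iterates} it equals $\gF_0^{-1}$ of some composition $\gc_{1,l_1}\circ\dots\circ\gc_{n,l_n}$ applied to $\gF_n(f_n\co{i}(J_n))$. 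When $i\in G_n$ this last set lies in $X_{\ga_n}$, and since the conclusion of Proposition~\ref{P:wide-set-swallowed} holds for \emph{every} admissible choice of the lower indices $l_1,\dots,l_n$, the entire block of $q_n$ consecutive iterates lands in $\gF_0^{-1}(W^n_0)\subset B_{\gd_{n+1}}(0)$. Hence $|G(n+1,\gd_{n+1})|\geq |G_n|\,q_n$ and $|G(n+1,\gd_{n+1})|/q_{n+1}\geq |G_n|/(a_n+1)\geq 1-\gep$, which is all that is needed; the $\pm q_{n-1}$ correction in $q_{n+1}$ is absorbed by the $(a_n+1)$ in the denominator.
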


\begin{proof}
The proof is just putting the above proposition and Lemma together. 
Define, 
\[\gd_{n+1}= \sup_{w\in W^n_0} |\gF_0^{-1}(w)|, n\geq 0.\]
Since $\ga$ is a non-Brjuno number, 
\[\lim_{n\to +\infty} B_{n,0}-\frac{1}{2}\ga_1\ga_2\dots\ga_{n-1}\log\frac{1}{\ga_n}-4= +\infty.\]
This implies that $\gd_n \to 0$ as $n \to +\infty$. 

Let $\gep>0$ be arbitrary. 
We need to find $j \in \D{N}$ such that $\gd_j \leq \gep$ and $|G(j, \gd_j)|/q_j \geq 1-\gep$. 
Let $\gd$ be the constant obtained in Lemma~\ref{L:density-on-hight-level} corresponding to this $\gep$. 

By Lemma~\ref{L:good-levels}, if $\ga$ is a non-Brjuno number, for any $T\in \D{R}$ the set $\C{L}(\ga,T,0)$ is non-empty.  
For $T > 0$, let $n=\min \C{L}(\ga, T, 0)$, where $n$ depends on $T$. 
By the definition of the set $\C{L}(\ga, T,0)$, as $T$ tends to $+\infty$, $n$ tends to $+\infty$ and $\ga_n$ tends to $0$.
That is because, if $n\in \C{L}(\ga, T, 0)$ we must have $B_{n,1} \geq T/\ga_1$ where $B_{n,1}$ is a finite number. 
Thus, as $T$ gets larger, $n$ must be larger. 
On the other hand, $n\in \C{L}(\ga, T, 0)$ requires that $B_{n,n-1} \geq T\ga_{n-1}^{-1}$, which by definition of the bi-sequence
implies that $-2 \ga_n+\log \ga_n^{-1} -B \geq T/\ga_{n-1}$. 
Hence, as $T$ tends to infinity, $\ga_n$ must tend to $0$. 
Therefore, we may choose $T>0$ large enough so that it satisfies Equation~\eqref{E:T-margin} and $n=\C{L}(\ga, T, 0)$ 
is large enough so that $\gd_n \geq \gep$ and $\ga_n \in (0, \gd)$. 

By Lemma~\ref{L:density-on-hight-level}, we have $|G_n|/q_n\geq 1-\gep$.  
And by definition, for all $j \in G_n$, $\gF_n(f_n\co {j}(J_n)) \subset X_{\ga_n}$. 
Then, by Proposition~\ref{P:wide-set-swallowed} with $m=0$, for $j\in G_n$, $\gF_n(f_n\co {j}(J_n))$ lifts to a set in $W^n_0$ under all 
possible lifts in the renormalization tower. 
By Proposition~\ref{P:lifts-vs-iterates}, for every $i \in G_n$ and every integer $k$ with $ i q_n \leq k < (i+1)q_n$, 
$f_0\co{k}(J_n)$ is contained in $B_{\gd_{n+1}}(0)$.  
Hence, 
\[\frac{|G(n+1, \gd_{n+1})|}{q_{n+1}} \geq \frac{|G_n|q_n}{q_{n+1}} \geq \frac{|G_n|q_n}{(a_n +1) q_n}\geq 1-\gep.\]
As $\gep$ was arbitrary, this finishes the proof of the proposition.
\end{proof}

\begin{rem}
One may extract an alternative proof of Proposition~\ref{P:cremer-sectors} from the above analysis. 
The steps in the above argument are set up to be used for the argument in the linearizable case.
But, briefly speaking, an alternative proof may go as follows. 
Given $\gep>0$ choose $D$ large enough so that the points $w\in \C{D}_0$ with $\Im w \geq D/\ga_0$ map into 
$B_\gep(0)$ under $\gF_0^{-1}$. 
Then, choose $T\in \D{R}$ that satisfies Equation~\eqref{E:T-margin}. 
The set $\C{L}(\ga, T,0)$ has infinite cardinality, and hence we may let $n$ tend to infinity within this set. 
We start with $\Im w_{n-1} \geq T/(2\ga_{n-1})$. 
Then, inductively, only using the five lines of inequalities in the proof of Proposition~\ref{P:wide-set-swallowed} 
to obtain the lower bound on $\Im w_j$, to show that $\Im w_0\geq D/\ga_0$. 
\end{rem}

%%%%%%%%%%%%%%%%%%%%%%%%%%%%%%%%%%%%%%%%%%%%%%%%%%%%

\subsection{Proof of Proposition~\ref{P:siegel-sectors}}
Recall that for $m \geq 0$, $\gD(f_m)$ denotes the Siegel disk of $f_m$, and $\tilde{\gD}(f_m)$ 
denotes the image of $\gD(f_m) \cap \C{P}_m$ under $\gF_m$. 
We consider the following two scenarios:

\medskip

\begin{description}
\item[$\mathscr{A}$] for every $T\in \D{R}$ there exist $m_0\in \D{N}$ and infinitely many integers $m>m_0$ such 
that there is $\gt \in \C{C}_{m}(\frac{1}{2\ga_m}-2\B{i})$ satisfying $\Im \gt_j \geq T/\ga_j$, for all integers $j$ with  
$m_0+1 \leq j \leq  m-1$.

\medskip

\item[$\mathscr{B}$] there exist real constants $T$ and $C$ as well as infinitely many integers $m$ such that 
for all $w\in \C{D}_m$ with $\Im w \geq \frac{T}{\ga_m}+C$ we have $\gF_m^{-1}(w)\in \gD(f_m)$. 
\end{description}

\medskip

\begin{lem}\label{L:complementary-conditions}
For every irrational $\ga\in \irr$, at least one of $\mathscr{A}$ or $\mathscr{B}$ holds. 
\end{lem}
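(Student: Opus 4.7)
The plan is to contrapose: assume $\mathscr{A}$ fails and derive $\mathscr{B}$. Suppose $\mathscr{A}$ fails, so there exists $T_0\in\D{R}$ such that for every $m_0\in\D{N}$ only finitely many $m>m_0$ admit a chain $\gt\in\C{C}_m(\frac{1}{2\ga_m}-2\B{i})$ with $\Im\gt_j\geq T_0/\ga_j$ for all $m_0<j<m$. The first observation is that whenever $m\in\C{L}(\ga,T_0,m_0)$, the central chain constructed in Proposition~\ref{P:nearly-right-height}(a) satisfies $\Im\gt_j\geq B_{m,j}\geq T_0/\ga_j$ throughout that range; hence $\C{L}(\ga,T_0,m_0)$ is contained in the (finite) chain-existence set and must itself be finite for every $m_0$.

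Next I would invoke Lemma~\ref{L:good-levels}. Part (b) forces $\ga$ to be a Brjuno number, since otherwise $\C{L}(\ga,T_0,0)$ would be infinite. Part (c) then yields $\liminf_{j\to\infty}\lim_{m\to\infty}(B_{m,j}-T_0/\ga_j)<+\infty$, so there are a constant $C_1\in\D{R}$ and an infinite set $J\subset\D{N}$ with $\lim_{m\to\infty}B_{m,j}\leq T_0/\ga_j+C_1$ for all $j\in J$. Combining this with Proposition~\ref{P:nearly-right-height}(b) upgrades the estimate to a height bound on the boundary of the Siegel disk in the Fatou coordinate, namely $\max_{w\in\partial\tilde{\gD}(f_j)}\Im w\leq T_0/\ga_j+C_1+M$ for every $j\in J$.

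Finally I would promote this bound to $\mathscr{B}$. Since $\gD(f_j)$ contains a neighborhood of $0$ and $\Im\gF_j(z)\to+\infty$ as $z\to 0$ in $\p_j$, the set $\tilde{\gD}(f_j)=\gF_j(\gD(f_j)\cap\p_j)$ is open in the strip $\gF_j(\p_j)$ and contains points of arbitrarily high imaginary part. Using the preceding bound on the relative boundary of $\tilde{\gD}(f_j)$ in the strip, the half-strip $\{w\in\gF_j(\p_j):\Im w>T_0/\ga_j+C_1+M\}$ is connected and meets $\tilde{\gD}(f_j)$, so it must lie entirely inside $\tilde{\gD}(f_j)$; equivalently, $\gF_j^{-1}(w)\in\gD(f_j)$ for every such $w$. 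To extend this to the whole of $\C{D}_j$ I would use the functional equation $\gF_j^{-1}(w+1)=f_j\circ\gF_j^{-1}(w)$: every $w\in\C{D}_j$ decomposes as $w=w'+k$ with $w'\in\gF_j(\p_j)$, $k$ a bounded non-negative integer (by the definition of $\C{D}_j$) and $\Im w'=\Im w$, so the $f_j$-invariance of $\gD(f_j)$ transfers the inclusion. This yields $\mathscr{B}$ with $T=T_0$ and a suitably enlarged additive constant $C$.

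The main obstacle will be this last step, namely turning an upper bound on the imaginary part of $\partial\tilde{\gD}(f_j)$ into the inclusion of an entire upper half-strip inside $\tilde{\gD}(f_j)$. This hinges on interpreting Proposition~\ref{P:nearly-right-height}(b) as a statement about the relative boundary of $\tilde{\gD}(f_j)$ in $\gF_j(\p_j)$, and combining planar connectedness with the accumulation of $\gD(f_j)$ at the fixed point $0$.
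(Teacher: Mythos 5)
Your proposal is correct and follows essentially the same route as the paper: contrapose, note that failure of $\mathscr{A}$ forces every $\C{L}(\ga,T_0,l)$ to be finite via the lower bound $\Im\gt_j\geq B_{m,j}$ from Proposition~\ref{P:nearly-right-height}(a), apply Lemma~\ref{L:good-levels}(c), and convert the resulting bound on $\lim_m B_{m,\ell_j}$ into a height bound on $\partial\tilde{\gD}(f_{\ell_j})$ via Proposition~\ref{P:nearly-right-height}(b). Your final connectedness argument promoting the boundary-height bound to the half-strip inclusion required by $\mathscr{B}$ is a correct filling-in of a step the paper leaves implicit ("This implies that statement $\mathscr{B}$ must hold").
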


\begin{proof}
By Proposition~\ref{P:nearly-right-height}-a, there are constants $M$ and $B$ such that the bi-sequence $B_{n,i}$ defined 
in Section~\ref{SS:arithmetic} using $B$ satisfies the following. 
For every $n\geq 1$ and every $j$ with $0 \leq j \leq n-1$ we have $B_{n,j} \leq \Im \gt_j \leq B_{n,j}+M$.
Recall the set $\C{L}(\ga,T, l)$ defined in Section~\ref{SS:arithmetic}. 

Assume that $\mathscr{A}$ does not hold. 
That means there is $T_0\in \D{R}$ such that for every $m_0\in \D{N}$ there is $m'\in \D{N}$ such that for every $m \geq m'$ 
and every $\gt\in \C{C}_{m}(\frac{1}{2\ga_m}-2\B{i})$ there is an integer $j$ with $m-1\leq j \leq m_0+1$ such that 
$\Im \gt_j < T_0/\ga_j$. 
In particular, $\mathscr{A}$ does not hold for any constant bigger than $T_0$. 
Below we assume that $T_0>0$.

By the first paragraph above, for some choice of $\gt\in \C{C}_{m}(\frac{1}{2\ga_m}-2\B{i})$, $B_{n,j} \leq \Im \gt_j$. 
Then, by the second paragraph above, this implies that $\C{L}(\ga,T_0,l)$ is finite for every $l\in \D{N}$. 
Combining this with \text{Lemmas~\ref{L:good-levels}-c}, we conclude that there exists a sequence of integers 
$\ell_0,\ell_1,\ell_2, \dots$, tending to $+\infty$, such that  
\[\sup_{j\geq 0} \; (\lim_{m\to \infty} B_{m,\ell_j}-T_0\ga_{\ell_j}^{-1}) <\infty.\]
Then, by \refP{P:nearly-right-height}-b, we obtain
\[\sup_{j\geq 0} \; (\max_{w\in \partial (\gF_j(\gD(f_j)))} \Im w - T\ga_{\ell_j}^{-1})<\infty.\]
This implies that statement $\mathscr{B}$ must hold.  
\end{proof}

\begin{lem}\label{L:small-rotations-A}
Assume that $\ga\in \irr$ satisfies property $\mathscr{A}$ and $T$ is a real constant satisfying Equation~\eqref{E:T-margin} and  
\begin{equation}\label{E:T-small-rotations}
T\geq M+3,
\end{equation}
where $M$ is the constant in Proposition~\ref{P:nearly-right-height}.
Let $m_0\in \D{N}$ and the infinite set of integers $A$ be such that for all $m\in A$ there is 
$\gt \in \C{C}_{m}(\frac{1}{2\ga_m}-2\B{i})$ satisfying $\Im \gt_j \geq T/\ga_j$, for all integers $j$ with  $m-1\leq j \leq m_0+1$.
Then, 
\[\liminf_{m\in A, m\to +\infty} \ga_m=0.\]
\end{lem}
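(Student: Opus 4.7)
The plan is to argue by contradiction: supposing $\ga_m\geq c>0$ for all but finitely many $m\in A$, I will iterate the chain-height condition at deeper and deeper levels of the renormalization tower to force the upper bound $\log(1/c)\geq TN^r$ for every $r$, and then let $r\to\infty$ using the infinitude of $A$ to conclude $c=0$.

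First, by the remark following \refP{P:nearly-right-height}-b, the centered chain $\tilde\gt^{(m)}$ built in part (a) of that proposition maximizes the imaginary parts among all chains in $\C{C}_{m}(\tfrac{1}{2\ga_m}-2\B{i})$. Combined with the pointwise bound $\Im\tilde\gt_j\leq B_{m,j}+M$, any chain satisfying the hypothesis $\Im\gt_j\geq T/\ga_j$ forces
\[
B_{m,j}\;\geq\;T/\ga_j-M,\qquad m_0+1\leq j\leq m-1.
\]
Expanding the left-hand side via Eq~\eqref{E:arithmetic-height} gives the family of telescoping inequalities
\[
\Sigma_r(m):=\log\ga_{m-r+1}^{-1}+\ga_{m-r+1}\log\ga_{m-r+2}^{-1}+\cdots+\ga_{m-r+1}\cdots\ga_{m-1}\log\ga_m^{-1}\;\geq\;T/\ga_{m-r}-C,
\]
valid for $1\leq r\leq m-m_0-1$, with a single constant $C=C(M,B)$.

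Setting $L:=\log(1/c)$ so that $\log\ga_m^{-1}\leq L$, I will prove by induction on $r$ that $L\geq TN^r-C'_r$ for constants $C'_r=O(N^{r-1})$. The base $r=1$ is immediate from $\Sigma_1(m)=\log\ga_m^{-1}\geq T/\ga_{m-1}-C\geq TN-C$, using $\ga_{m-1}\leq 1/N$. For the inductive step, the prior bounds together with the a priori constraint $\ga_{m-s}\leq 1/N$ give $\ga_{m-s}\geq TN^{s-1}/L$ and hence $\log\ga_{m-s}^{-1}\leq\log L-\log(TN^{s-1})=O(\log L)$ for $1\leq s\leq r$; plugging these upper estimates into $\Sigma_{r+1}(m)$ makes the dominant contribution $L/N^{r}$ (coming from the $\log\ga_m^{-1}$ term with coefficient $(1/N)^{r}$), so $\Sigma_{r+1}(m)\leq L/N^{r}+O(\log L)$. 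Combined with $\Sigma_{r+1}(m)\geq T/\ga_{m-r-1}-C\geq TN-C$, this yields $L\geq TN^{r+1}-O(N^{r})$, completing the induction.

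Because $A$ is infinite, for every $r\geq 1$ there is $m\in A$ with $m\geq m_0+r+2$, at which the induction applies and gives $L=\log(1/c)\geq TN^{r}-O(N^{r})$. Letting $r\to\infty$ forces $L=+\infty$, i.e.\ $c=0$, contradicting $c>0$; hence $\liminf_{m\in A,m\to+\infty}\ga_m=0$. The main obstacle is the bookkeeping in the inductive step: one must carefully propagate the upper bounds on $\log\ga_{m-s}^{-1}$ derived at earlier levels back into the telescoping sum $\Sigma_{r+1}(m)$, verify that $L/N^{r}$ indeed dominates the $O(\log L)$ error in the relevant regime (which follows from the inductive lower bound on $L$ itself), and ensure that all absorbed constants depend only on $M$ and $B$, not on $m$ or $r$; the hypothesis $T\geq M+3$ renders each of these technical points benign.
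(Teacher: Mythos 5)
Your reduction of the hypothesis to the arithmetic inequalities $\Sigma_r(m)\geq T/\ga_{m-r}-C$ is sound (the comparison of the hypothesized chain with the centered chain of Proposition~\ref{P:nearly-right-height}(a) can be justified either by the maximality remark you cite or, as the paper does, by Lemma~\ref{L:heights-of-lifts}). But the inductive step does not close, and this is a genuine gap rather than bookkeeping. The intermediate quantities $\log\ga_{m-s}^{-1}$, $1\leq s\leq r$, are controlled only in terms of $L$ itself, as $O(\log L)$; feeding these back into $\Sigma_{r+1}(m)\geq TN-C$ gives
\[
L\;\geq\;N^{r}\bigl(TN-C-C''\log L\bigr),
\]
not $L\geq TN^{r+1}-O(N^{r})$ with an absolute implied constant. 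Since $L=\log(1/c)$ is a fixed but possibly enormous number (nothing in the contradiction hypothesis prevents, say, $c=e^{-e^{N^2}}$, i.e.\ $\log L=N^2$), the factor $TN-C-C''\log L$ can be negative, in which case the inequality is vacuous for every $r$ and no contradiction is reached by letting $r\to\infty$. Your remark that the domination of $L/N^{r}$ over $O(\log L)$ ``follows from the inductive lower bound on $L$ itself'' is backwards: a larger lower bound on $L$ makes $\log L$ larger and the error worse, not better. Relatedly, the hypothesis $T\geq M+3$ plays no genuine role in your argument, whereas it is exactly the threshold that closes the paper's contradiction.

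The missing idea is that one must exploit levels \emph{outside} $A$. The paper first observes (via Lemma~\ref{L:step3}) that property $\mathscr{A}$ forces $\liminf_{i\to\infty}\ga_i=0$ over \emph{all} indices; assuming $\gh=\inf\{\ga_i:i\in A\}>0$, it picks $\gm\leq\min\{\gh/2,\,T(\tfrac{1}{2\pi}\log\tfrac1\gh+C')^{-1}\}$, finds $n\notin A$ with $\ga_n<\gm$, lets $m$ be the next element of $A$ and $n'$ the \emph{last} index in $[n,m]$ with $\ga_{n'}<\gm$. Then every intermediate level satisfies $\ga_i\geq\gm$, so the telescoped sum from $n'$ to $m$ is bounded by $2\log(1/\gm)+O(1)$ with no uncontrolled $\log L$ terms, while the chain condition at $j=n'$ gives $T/\ga_{n'}\geq T/\gm$ on the other side; this yields $T\leq 2\gm\log(1/\gm)+M+2<M+3$, contradicting $T\geq M+3$. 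It is precisely the lower bound $\ga_i\geq\gm$ on all intermediate levels — information your single-$m$ descent cannot access — that replaces your $O(\log L)$ errors by an absolute constant.
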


\begin{proof}
First we show that if some $\ga\in \irr$ satisfies property $\mathscr{A}$, then
\begin{equation}\label{E:unbounded-type}
\liminf_{i\to \infty} \ga_i=0.
\end{equation}
By property $\mathscr{A}$, for every $T\in \D{R}$ there are positive integers $m_0$ and $m\geq m_0+2$
and $\gt\in \C{C}_{m}(\frac{1}{2\ga_m}-2\B{i})$ such that $\Im \gt_{m-1}\geq T/\ga_{m-1}$.
Then, by Lemma~\ref{L:step3},
\[ T \leq \frac{T}{\ga_{m-1}} \leq \Im \gt_{m-1} \leq \frac{1}{2\pi} \log \frac{1}{\ga_m} + C'.\]
That is, for every $T>0$ there is $m\in \D{N}$ such that $\log \ga_m^{-1} \geq 2\pi(T-C')$.
This implies Equation~\ref{E:unbounded-type}.

Let $m_0$ and $A$ be as in the lemma. 
If the set $\D{N} \setminus A$ has finite cardinality, then the statement of the lemma follows from Equation~\eqref{E:unbounded-type}. 
Below we assume that $\D{N} \setminus A$ has infinite cardinality. 

Assume by contradiction that 
\[\gh=\inf \{ \ga_i \mid i\in A\} >0.\]
Let
\[\gm= \min \{\gh/2, T \big(\frac{1}{2\pi} \log \frac{1}{\gh}+C'\big)^{-1})\}.\]
By Equation~\eqref{E:unbounded-type}, there is $n\in \D{N} \setminus A$ such that $\ga_n < \gm$ and 
$n \geq m_0+2$. 
Then, let $m$ be the smallest element of $A$ bigger than $n$. 
Let $n'$ be the largest integer in the interval $[n,m]$ such that $\ga_{n'}< \gm$.
Since, $\ga_m \geq \gh \geq 2 \gm$, $n'< m$. 
By definition, for all integers $i$ with $n' +1 \leq i \leq m $ we have $\ga_i \geq \gm$.

First we note that $m \geq n'+2$.  
This is because, by Lemma~\ref{L:step3},
\[ \frac{T}{\ga_{m-1}} \leq \Im \gt_{m-1} \leq \frac{1}{2\pi} \log \frac{1}{\ga_m} + C' \leq \frac{1}{2\pi}\log \frac{1}{\gh} + C' .\]
By the definition of $\gm$, the above inequality implies that $\ga_{m-1} \geq \gm$.
Thus, since $\ga_{n'} < \gm$, $n'\neq m-1$.

Let $B$ and $M$ be the constants in Proposition~\ref{P:nearly-right-height}, and $B_{n,i}$ be the 
bi-sequence defined with the constant $B$. 
By Proposition~\ref{P:nearly-right-height}-a, there is $\gt'$ in $C_m(\frac{1}{2\ga_m}-2\B{i})$
such that $\Im \gt_j' \leq B_{m,j} +M$ for all $j$ with $0\leq j \leq m-1$. 
Then, by Lemma~\ref{L:heights-of-lifts}, $|\Im \gt_j -\Im \gt_j'|\leq 2$, for all $j$ with $m_0+1 \leq j \leq m$. 

Putting the above paragraphs together, we obtain,  
\begin{multline*}
\frac{T}{\ga_{n'}}
 \leq \Im \gt_{n'} 
 \leq \Im \gt_{n'}'+2
 \leq B_{m,n'}+M+2 \\
 \leq -2\gb_{n'}^{-1} \gb_m + \gb_{n'}^{-1} \sum_{j=n'+1}^{m} \gb_{j-1} (\log \frac{1}{\ga_j}-B) +M+2 \\
 \leq (\log \frac{1}{\gm}-B) \sum_{j=0}^{m-n'-1} \big(\frac{1}{2}\big)^{j}  +M+2
 \leq 2 \log \frac{1}{\gm} +M+2. 
\end{multline*}

Thus, as $2\log x < x$ for all $x\in (2, +\infty)$, the above inequality implies that 
\[ T \leq \ga_{n'} (2\log \frac{1}{\gm}+M+2) < 2\gm \log \frac{1}{\gm} +M+2 < M+3.\]
This contradicts the choice of $T$ in the lemma.
\end{proof}

\begin{rem}
In Lemma~\ref{L:small-rotations-A}, since the $\liminf$ is equal to zero along any infinite set $A$ 
on which the property $\mathscr{A}$ holds for the constant $T$, one may conclude that indeed the 
$\liminf$ can be replaced by limit. 
However, we have stated the least information enough to prove Proposition~\ref{P:siegel-sectors}. 
\end{rem}

\begin{proof}[Proof of Proposition~\ref{P:siegel-sectors} when $\mathscr{A}$ holds]  
The argument is similar to the one for the nonlinearizable maps. 
By Proposition~ \ref{P:nearly-right-height}-b, there is a constant $B$ such that with the corresponding bi-sequence $B_{n,j}$,
\[H=\sup_{j\in \D{N}} |\lim _{n\to\infty}B_{n,j}-\max_{w\in \partial \tilde{\gD}(f_j)} \Im w|,\]
is a finite number.

Fix an arbitrary $\gd>0$. 
Choose $\gd'>0$ such that if $w \in  B_{\gd'}(\tilde{\gD}(f_0)) \cap \C{D}_0$ then $\gF_0^{-1}(w) \in B_\gd(\gD(f_0))$. 
Recall that by Lemma~\ref{L:well-contains} for every $m\geq 0$ and every integer $j$ with $0 \leq j \leq a_m $, 
the $\gd_0$-neighborhood of $\gc_{m+1,j}(\C{D}_{m+1})$ is contained in $\C{D}_m$.
An elementary estimate shows that the Poincar\'e metric $\gr_m |dw|$ on $\C{D}_m$ satisfies 
$\gr_m(w) \leq 2/d(w, \partial \C{D}_m)$. 
This implies that for (every such $m$ and $j$, as well as) every $w \in \gc_{m+1,j}(\C{D}_{m+1})$ with 
$\Im w > \max_{w\in \partial \tilde{\gD}(f_m)} \Im w -H-6$ there is $w'\in \partial \tilde{\gD}(f_m)$ such that the hyperbolic distance 
between $w$ and $w'$ in $\C{D}_m$ is uniformly bounded from above by $2(H+6)/\gd_0$. 
Then, by Lemma~\ref{L:contraction}, there is $m_0'$ such that for every $m\geq m_0'$, every $w \in \gc_{m+1,j}(\C{D}_{m+1})$ with 
$\Im w \geq\max_{w\in \partial \tilde{\gD}(f_m)} \Im w -H-6$ and all integers 
$l_j$, $1\leq j \leq m$, with $0\leq l_j \leq a_{j-1}$, 
the point $\gc_{1,l_1}\circ \gc_{2, l_2} \circ \dots \circ \gc_{m, l_m}(w)$ belongs to $B_{\gd'}(\tilde{\gD}(f_0)) \cap \C{D}_0$. 

Let $T$ be a constant satisfying Equations~\eqref{E:T-margin} and \eqref{E:T-small-rotations}.  
Since $\mathscr{A}$ holds for $T$, there exists a positive integer $m_0$, and an infinite set of integers denoted by $A$ such that 
for all $m\in A$ there is $\gt \in \C{C}_{m}(\frac{1}{2\ga_m}-2\B{i})$ with  
\[\Im \gt_j> \frac{T}{\ga_j},\; \tfor  m_0+1 \leq j \leq m-1.\]
Note that by making $T$ larger, $m_0$ becomes larger. 
In particular, we may assume that beside satisfying Equations~\eqref{E:T-margin} and \eqref{E:T-small-rotations}, 
$T$ is large enough so that the corresponding $m_0$ is bigger than $m_0'$. 

Recall the set $G_m$ defined in Section~ \ref{S:SectorsAtDeepLevels}.  
By Lemmas~\ref{L:density-on-hight-level} and \ref{L:small-rotations-A}, we obtain 
\[\lim_{m \in A, m\to\infty} \frac{|G_m|}{a_m +1}=1.\]

The Brjuno sum for $\ga_{m_0}$, that is $\sum_{m=m_0+1}^{+\infty} \gb_{m_0}^{-1} \gb_{m-1} \log \ga_m^{-1}$, is finite. 
Also, by Lemma~\ref{L:good-levels}, $\lim_{n\to +\infty} B_{n,m_0}$ exists. 
Hence, there is $m_0''>0$ such that for all $m\geq m_0$, $\gb_{m_0}^{-1} \gb_{m-1} \log \ga_m^{-1} < 2$ and 
$|\lim_{n\to +\infty} B_{n,m_0} -B_{m,m_0}|<1$.  

Fix $m \in A$ such that $m \geq m_0''$, and let $w_m \in X_{\ga_m}$ be arbitrary, where $X_{\ga_m}$ is defined in 
Equation~\eqref{E:X}. 
Given integers $l_i$, $m \leq i \leq m_0+1$, with $0 \leq l_i \leq a_{i-1}$, 
define the sequence of points 
\[w_{i-1}=\gc_{i,l_i}(w_i), \tfor m \leq i \leq m_0+1.\] 
By Proposition~\ref{P:wide-set-swallowed}, we have 
\begin{align*}
\Im w_{m_0} 
&\geq B_{m, m_0}-\frac{1}{2} \gb_{m_0}^{-1} \gb_{m-1} \log\frac{1}{\ga_m}-4 && \\
& > \lim_{n\to +\infty} B_{n,m_0} -6. &&(\text{since }m\geq m_0'')
\end{align*}
Hence,  
\[ \Im w_0 \geq \max_{w\in \partial \tilde{\gD}(f_j)} \Im w -H -6.\]
Now since $m>m_0'$ (the argument in the second paragraph), all further lifts of $w_0$ to the level $0$ belong to the 
$\gd'$ neighborhood of $\tilde{\gD}(f_0)$. 

By definition, for all $j\in G_m$, we have $f_m\co{j}(J_m) \subset \C{P}_m$ and $\gF_m \circ f_m\co{j}(J_m) \subset X_{\ga_m}$. 
Therefore, by Proposition~\ref{P:lifts-vs-iterates}, and that $m\geq m_0'$, we conclude that 
\[1 \geq \lim_{m\to\infty, m\in A}  \frac{|H(m_i+1, \gd)|}{q_{m_i+1}} 
\geq \lim_{m\to\infty, m\in A} \frac{|G_m|}{ a_m +1}=1.\]

This finishes the proof of the Proposition when $\mathscr{A}$ holds.
\end{proof}

As a corollary of the above proof we state the following property for future reference. 

\begin{cor}\label{C:tips-near-Siegel-disk}
Assume $\ga\in \irr$ satisfies property $\mathscr{A}$. 
Then for every $\gd'>0$, there are infinitely many $m\in \D{N}$ such that for all $\gt\in \C{C}_m(\frac{1}{2\ga_m}+2\textnormal{\B{i}})$, 
we have $\gt_0\in B_{\gd'}(\tilde{\gD}(f_0)) \setminus \tilde{\gD}(f_0)$.
\end{cor}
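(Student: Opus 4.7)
The plan is to obtain the corollary by running the argument from the proof of Proposition~\ref{P:siegel-sectors}(a) in case~$\mathscr{A}$ for the specific base point $\frac{1}{2\ga_m}+2\B{i}$, then adding a short comparison argument to exclude $\tilde{\gD}(f_0)$.

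For the containment $\gt_0\in B_{\gd'}(\tilde{\gD}(f_0))$, I note that $\frac{1}{2\ga_m}+2\B{i}\in X_{\ga_m}$ for $\ga_m$ small enough. Choosing $T$ satisfying Equations~\eqref{E:T-margin} and~\eqref{E:T-small-rotations} and using property~$\mathscr{A}$ together with Lemma~\ref{L:small-rotations-A}, I extract an infinite set $A$ of levels $m$ along which $\ga_m\to 0$ and the hypotheses of Proposition~\ref{P:wide-set-swallowed} are met. That proposition forces every chain $\gt\in\C{C}_m(\frac{1}{2\ga_m}+2\B{i})$ to end with $\gt_0\in W_0^m$; together with the uniform hyperbolic contraction Lemma~\ref{L:contraction} and the uniform inclusion Lemma~\ref{L:well-contains}, the concluding paragraph of the proof of Proposition~\ref{P:siegel-sectors}(a) yields $\gt_0\in B_{\gd'}(\tilde{\gD}(f_0))$ for all $m\in A$ sufficiently large.

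For the non-membership $\gt_0\notin\tilde{\gD}(f_0)$, I would compare $\gt$ with the companion chain $\gt^\ast\in\C{C}_m(\langle 1/(2\ga_m)\rangle)$ defined along the same branches. As in the proof of Proposition~\ref{P:nearly-right-height}(b), $\gF_j^{-1}(\gt^\ast_j)$ lies on the forward critical orbit of $f_j$ for every $j$; in particular $\gF_0^{-1}(\gt^\ast_0)\in\pc(f_0)$, which for a linearisable $f_0$ is disjoint from $\gD(f_0)$ by Ma\~n\'e's theorem, so $\gt^\ast_0\notin\tilde{\gD}(f_0)$. The displacement $\gt_m-\gt^\ast_m=\frac{1}{2\ga_m}+2\B{i}-\langle 1/(2\ga_m)\rangle$ is uniformly bounded, so Lemma~\ref{L:contraction} (or the fine estimate in Proposition~\ref{P:fine-estimate} integrated along each level) yields a uniform bound on $|\gt_0-\gt^\ast_0|$ that can be made arbitrarily small by choosing $m\in A$ sufficiently large. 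Finally, since $\gc_{j,0}$ is a lift of $\gF_j^{-1}$ through $\ex$ and the Inou--Shishikura renormalisation of Theorem~\ref{T:Ino-Shi2} identifies $\gD(f_j)$ with the projection of $\gD(f_{j-1})\cap \C{P}_{j-1}$ under $\ex$, the two chains $\gt$ and $\gt^\ast$ lie on the same side of $\partial\tilde{\gD}(f_j)$ at every level, so $\gt_0$ inherits the non-membership from $\gt^\ast_0$.

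The main obstacle is the last step: ensuring that the small perturbation from $\gt^\ast_0$ to $\gt_0$ does not cross $\partial\tilde{\gD}(f_0)$. This requires combining the uniform hyperbolic contraction of Lemma~\ref{L:contraction} with a careful argument that each lift $\gc_{j,i}$ respects the local separation of $\tilde{\gD}(f_j)$ from its complement in $\C{D}_j$, i.e., sends $\C{D}_j\setminus\tilde{\gD}(f_j)$ into $\C{D}_{j-1}\setminus\tilde{\gD}(f_{j-1})$ for every admissible branch index $i$. This is a qualitative consequence of the renormalisation functoriality (the renormalised Siegel disk being the renormalised image of the Siegel disk), but it must be matched to the lift normalisation fixed in Section~\ref{SS:extending-F-coord} by tracking how the pieces of $\gD(f_{j-1})\cap \C{P}_{j-1}$ lift under $\ex$ to subsets of $\tilde{\gD}(f_j)+\D{Z}$.
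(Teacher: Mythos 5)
Your first half---deducing $\gt_0\in B_{\gd'}(\tilde{\gD}(f_0))$ by feeding the base point of the chain into the case-$\mathscr{A}$ argument of Proposition~\ref{P:siegel-sectors}, via Lemma~\ref{L:small-rotations-A}, Proposition~\ref{P:wide-set-swallowed}, Lemma~\ref{L:well-contains} and the contraction Lemma~\ref{L:contraction}---is exactly what the paper does; the paper simply cites that proof.

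The non-membership half has a genuine gap, and the detour through the companion chain $\gt^\ast$ cannot be repaired in the form you give. Knowing that $\gF_0^{-1}(\gt^\ast_0)\in\pc(f_0)$ and that $|\gt_0-\gt^\ast_0|$ is small proves nothing: by Proposition~\ref{P:Siegel-disk-enclosed} the boundary $\partial\gD(f_0)$ is \emph{contained} in $\pc(f_0)$, so $\gt^\ast_0$ may perfectly well lie on $\partial\tilde{\gD}(f_0)$, and an arbitrarily small displacement from it can land inside the Siegel disk. The property you flag at the end as the ``main obstacle''---that every admissible lift $\gc_{j,i}$ maps $\C{D}_j\setminus\tilde{\gD}(f_j)$ into $\C{D}_{j-1}\setminus\tilde{\gD}(f_{j-1})$---is not an auxiliary refinement of that comparison; it is the \emph{entire} content of the non-membership claim, and once you have it the chain $\gt^\ast$ is superfluous. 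The paper's proof is exactly this: the base point of the chain has imaginary part $\pm2$ and hence does not lie in $\tilde{\gD}(f_m)$, and since the changes of coordinates preserve the Siegel disks, $\gt_j\nin\tilde{\gD}(f_j)$ propagates down to $j=0$. The preservation of Siegel disks by the lifts is the same fact the paper invokes in case $\mathscr{B}$ of the proof of Proposition~\ref{P:siegel-sectors}, where it is derived from Propositions~\ref{P:Siegel-disk-enclosed} and~\ref{P:invariance-of-pc} together with the conjugacies of Lemma~\ref{L:conjugacy}; you should prove or cite that fact rather than leave it as an acknowledged obstacle, and then delete the $\gt^\ast$ comparison. (Note also that the paper's own proof takes the base point at height $-2$, consistent with $\C{C}_m(\frac{1}{2\ga_m}-2\B{i})$ used throughout; the $+2\B{i}$ in the statement appears to be a typo, and the exclusion $\gt_m\nin\tilde{\gD}(f_m)$ is what the height of the base point is there to guarantee.)
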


\begin{proof}
By virtue of the proof of Proposition~\ref{P:siegel-sectors} when $\mathscr{A}$ holds, we only need to 
show that for all $m\in \D{N}$, $\gt_0 \nin \tilde{\gD}(f_0)$. 
This is because, since $\Im \gt_m=-2 $, $\gt_m$ does not belong to $\tilde{\gD}(f_m)$, 
and since the changes of coordinates preserve the Siegel disks, $\gt_0$ may not belong to $\tilde{\gD}(f_0)$. 
\end{proof}

\begin{proof}[Proof of Proposition~\ref{P:siegel-sectors} when $\mathscr{B}$ holds]
For $n\geq 0$, consider the sets 
\[E_n=\bigcup_{i=0}^{a_n} \gc_{n+1,i}(\C{D}_{n+1}).\]
By Lemma~\ref{L:well-contains}, for every $n\geq 0$, $\gd_0$-neighborhood of $E_n$ is contained in $\C{D}_n$. 

Fix an arbitrary $\gd>0$. 
Choose $\gd'>0$ such that if $w \in  B_{\gd'}(\tilde{\gD}(f_0)) \cap \C{D}_0$ then $\gF_0^{-1}(w) \in B_\gd(\gD(f_0))$. 
As discussed in the proof of Proposition~\ref{P:siegel-sectors} when $\mathscr{A}$ holds, 
Lemma~\ref{L:well-contains} implies that for every $H>0$ there is $m_0'\geq 1$ satisfying the following property. 
For every $m\geq m_0'$ and all integers $l_j$, $1\leq j \leq m$, with $0\leq l_j \leq a_{j-1}$, 
as well as all $w\in E_m$ with either Euclidean distance $d(w, \partial \tilde{\gD}(f_m))\leq H$ or hyperbolic distance 
$d_{\textnormal{hyp}}(w, \partial \tilde{\gD}(f_m))\leq H$, 
$\gc_{1,l_1}\circ \gc_{2, l_2} \circ \dots \circ \gc_{m, l_m}(w)$ belongs to $B_{\gd'}(\tilde{\gD}(f_0)) \cap \C{D}_0$. 

Let $T$, $C$, and the sequence of integers $m_1 < m_2 < m_3< \dots $ be the data obtained from property $\mathscr{B}$.
%For $i\geq 1$, and integers $j$ with $0\leq j \leq a_{m_i} $, consider the sets
%\[B_{m_i,j}=(\gF_{m_i} (J_{m_i})+j) \setminus \tilde{\gD}(f_{m_i}) \subset \C{D}_{m_i}.\]
We break the proof into two cases.

\medskip

{\em I)} We have $\limsup_{i\to \infty} \ga_{m_i}>0$. 

\medskip 

Let $\gh>0$ and integers $n_1< n_2 < n_3< \dots $ be such that $\ga_{n_i}>\gh$ for all $i\geq 1$. 
It follows from property $\mathscr{B}$ that for all $i\geq 1$ and all $w\in E_{n_i}$ there is $w'\in \partial \tilde{\gD}(f_{n_i})$ with 
$d(w,w') \leq T/\gh+C$. 
Therefore, by the above paragraph, there is $n'$ in the sequence $n_i$, such that for all integers $l_j$, $1 \leq j \leq n'$, 
with $0\leq l_j \leq a_{j-1}$,  
$\gc_{1,l_1}\circ \gc_{2, l_2} \circ \dots \circ \gc_{n', l_{n'}}(E_{n'})$ is contained in $B_{\gd'}(\tilde{\gD}(f_0)) \cap \C{D}_0$.
Here, we have used that $\gc_{n,i}(\tilde{\gD}(f_n)) \subset \tilde{\gD}(f_{n-1})$, for all $n\geq 1$ and $i$ with 
$0\leq i \leq a_{i-1}$, which follows from Propositions~\ref{P:Siegel-disk-enclosed} and \ref{P:invariance-of-pc}.
Thus, by Proposition~\ref{P:lifts-vs-iterates}, for all $m \geq n'+1$, we have 
\[\frac{|H(m, \gd)|}{q_{m}}=1.\]
As $\gd$ was arbitrary, this finishes the proof of the proposition in this case.

\medskip

{\em II)} We have $\lim_{i\to \infty} \ga_{m_i}=0$. 

\medskip

Recall the constant $\hat{\Bk}$ obtained in Lemma~\ref{P:sector-geometry}. 
Consider the sets  
\[E'_n= \{w \in E_n \mid  \frac{1}{4\ga_n}-\hat{\Bk}\leq \Re w\leq \frac{3}{4\ga_n}+\hat{\Bk}\}.\]
Sine the sequence $\ga_i$ tends to $0$, there is $i_0\geq 1$ such that for all $i \geq i_0$, $E_n' \subset E_n$. 

By Proposition~\ref{P:fine-estimate}, for every $i \geq i_0$ the derivative of $\gc_{m_i,0}$ is comparable to $1/\ga_{m_i}$ on 
$E'_{m_i}$, with constants independent of $i$. 
Therefore, there is a constant $H>0$ such that for all $i \geq i_0$ and all integers $l_i$ with 
$0 \leq l_i \leq a_{m_i-1}$, the set $\gc_{m_i,l_i}(E'_n)$ has Euclidean diameter bounded from above by $H$.
Thus, by the second paragraph in this proof, there exists $i_1 \geq i_0$, such that for all $i\geq i_1$ all integers $l_j$, 
$1 \leq j \leq m_i$, with $0\leq l_j \leq a_{j-1}$,  
$\gc_{1,l_1}\circ \gc_{2, l_2} \circ \dots \circ \gc_{m_i, l_{m_i}}(E_{m_i})$ is contained in $B_{\gd'}(\tilde{\gD}(f_0)) \cap \C{D}_0$.

Each set $E'_n$ contains a vertical strip of width $1/(2\ga_n)+2\hat{\B{k}}$. 
Thus, by Lemma~\ref{L:well-contains}, for all $i\geq i_0$, $\gc_{m_i+1,l}(\C{D}_{m_i+1})$ is contained in $E'_{m_i}$, 
for at least half of the integers $l$ with $0 \leq l \leq a_{m_i}-1$ . 

Fix an arbitrary $\gep>0$. 
Choose an integer $i_2\geq i_1$ such that $(1/2)^{i_2-i_1} <\gep$.
Now consider an integer $m\geq m_{i_2}+1$. 
From all the lifts $\gc_{m_{i_2}}, l_{i_2} \circ \dots \circ \gc_{m, l_m}(\C{D}_m)$, at least $1/2$ of them lie in 
$E_{m_{i_2}}$ and hence, their further lifts lie in $B_{\gd'}(\tilde{\gD}(f_0)) \cap \C{D}_0$. 
From all the remaining lifts up to level $m_{i_2}$, at least half of the lifts 
$\gc_{m_{i_2}-1}, l_{i_2-1} \circ \dots \circ \gc_{m, l_m}(\C{D}_m)$ lie in $E_{m_{i_2}-1}$ and hence, their further lifts 
lie in $B_{\gd'}(\tilde{\gD}(f_0)) \cap \C{D}_0$, and so on. 
Thus, putting all this together, Proposition~\ref{P:lifts-vs-iterates} implies that for all $m\geq m_{i_2}+1$, we have 
\[\frac{|H(m+1, \gd)|}{q_{m+1}} \geq 1-\gep.\]
This finishes the proof of the proposition in this case.
\end{proof}

\begin{proof}[Proof of Theorem~\ref{T:small-cycle-property}]
For $m\geq 0$ let $\gs_m$ denote the non-zero fixed point of $f_m$ that lies on the boundary of $\C{P}_m$.
Recall that by Lemma~\ref{L:conjugacy}, $\gY_m: \C{P}_m \to \C{P}_0$ conjugates $f_m$ on $\C{P}_m' \subset \C{P}_m$
to $f_0\co{q_m}$ on $\gY_m(\C{P}'_m)$, and $\gy_m: \C{P}_m \to \C{P}_{m-1}$ conjugates $f_m$ on $\C{P}_m'$
to some iterate of $f_{m-1}$ on $\gy_m(\C{P}'_m)$.   
Since, $\C{P}_m$ is bounded by piecewise analytic curves, $\gy_m$ and $\gY_m$ extend to some continuous maps on 
the closure of $\C{P}_m$. 
In particular, $\gY_m$ maps $\gs_m$ to a periodic point of $f_0$ of period $q_m$. 
Similarly, $\gy_m$ maps $\gs_m$ to a periodic point of $f_{m-1}$ of period say $b_{m-1}$.
We want to show that every neighborhood of $\gD(f_0)$ contains the cycle of infinitely many periodic points of type 
$\gY_m(\gs_m)$, for some $m \in \D{N}$.

For $m \geq 1$, define 
\[\C{O}_{m-1}=\{\gF_{m-1}(\gy_m(\gs_m))+j \mid j\in \D{Z}, 0\leq j \leq b_{m-1}-1\}.\]
By definition of $\gy_m$ and $\ex$, the set $\C{O}_{m-1}$ projects under $\ex$ onto the non-zero fixed point of $\C{R}(f_{m-1})$, 
which is either $\gs_m$ or $s(\gs_m)$. 
Lemma~\ref{L:size-of-sigma} implies that there is a constant $C_1'>0$, independent of $m$, such that 
\begin{equation}\label{E:height-O}
\big|\Im \C{O}_{m-1} - \frac{1}{2\pi} \log \frac{1}{\ga_m}\big| \leq C'_1,
\end{equation}
and by Lemma~\ref{L:well-contains}, $\gd_0$ neighborhood of $\C{O}_{m-1}$ is contained in $\C{D}_{m-1}$.  

Fix an arbitrary $\gd>0$, and choose $\gd'>0$ such that for every $w\in \C{D}_0 \cap B_{\gd'} (\tilde{\gD}(f_0))$ 
we have $\gF_0^{-1}(w) \in B_\gd(\gD(f_0))$.
We break the rest of the argument into two cases. 

\medskip

{\em I)} Assume that statement $\mathscr{A}$ holds for $\ga$. 

\medskip

\noindent By Lemma~\ref{L:step3}, for every $\gt\in \C{C}_m(\frac{1}{2\ga_m}-2\B{i})$ we have 
\[ \big|\Im \gt_{m-1} - \frac{1}{2\pi} \log \frac{1}{\ga_m}\big| \leq C'.\]
Thus, by Lemma~\ref{L:well-contains}, for every element of $\C{O}_{m-1}$ there is an integer $l$ with 
$0 \leq l \leq a_{m-1}$ such that the hyperbolic distance between that element and 
$\gc_{m,l}(\frac{1}{2\ga_m}-2\B{i})$ is uniformly bounded from above by a constant say $H$, 
depending only on $C'$, $C'_1$, and $\gd_0$. 
By the uniform contraction in Lemma~\ref{L:contraction}, there is $m_0\geq 1$ such that if $m\geq m_0$ then
any two points at hyperbolic distance bounded by $H$ are mapped to two points at Euclidean distance bounded from above 
by $\gd'/2$. 

On the other hand, by Corollary~\ref{C:tips-near-Siegel-disk} for $\gd'/2$, there are infinitely many $m$ such that for all 
$\gt \in \C{C}_m(\frac{1}{2\ga_m}+2 \B{i})$ we have $\gt_0 \in B_{\gd'/2}(\tilde{\gD}(f_0))$. 
Combining this with the above paragraph, we conclude that for all such $m\geq m_0$ all lifts of elements of 
$\C{O}_{m-1}$ to the level $0$ are contained in $B_{\gd'}(\tilde{\gD}(f_0)$.
Therefore, by Proposition~\ref{P:lifts-vs-iterates}, the cycle of $\gY_m(\gs_m)$ is contained in $B_\gd(\gD(f_0))$.

\medskip

{\em II)} Assume that statement $\mathscr{B}$ holds for $\ga$.

\medskip

\noindent Let $T$, $C$, and $m_1< m_2 < m_3 < \dots$ be the quantities provided by property $\mathscr{B}$. 
By definition, for all $i \geq 1$, $\Im \tilde{\gD}(f_{m_i}) \subset [-2, T/\ga_{m_i}+C]$. 
In particular, there is $w_{m_i}  \in \partial \tilde{\gD}(f_{m_i})$ with $\Re w_{m_i}=1/(2\ga_{m_i})$ and 
$\Im w_{m_i} \in [-2, T/\ga_{m_i}+C]$. 
By Proposition~\ref{P:invariance-of-pc}, $\gc_{m_i,l}(w_{m_i}) \in \partial \tilde{\gD}(f_{m_i-1})$, 
for all $l$ with $0\leq l \leq a_{m_i-1}$.
Moreover, by Proposition~\ref{P:fine-estimate}, there is a constant $C_2>0$, independent of $i$ and $l$, such that 
\[|\Im \gc_{m_i, l}(\frac{1}{2\ga_{m_i}}+2\B{i}) - \Im \gc_{m_i,l}(w_{m_i})| \leq C_2.\] 
Thus, by Lemma~\ref{L:step3}, we have 
\[|\Im \gc_{m_i,l}(w_{m_i}) - \frac{1}{2\pi}\log \frac{1}{\ga_{m_i}} | \leq C_2+C'.\] 

Combining the above inequality with Equation~\eqref{E:height-O}, and using Lemma~\ref{L:well-contains}, 
we conclude that every element of $\C{O}_{m_i}$ lies within uniformly bounded (depending on $C'_1+C_2+C'$ and $\gd_0$) 
hyperbolic distance from $\partial \tilde{\gD}(f_{m_i-1})$. 
Then, by the contraction in Lemma~\ref{L:contraction}, for sufficiently large $i$, $\C{O}_{m_i}$
lifts into a set of points in $B_{\gd'}(\tilde{\gD}(f_0))$. 
Finally, Proposition~\ref{P:lifts-vs-iterates}, implies that the cycle of $\gY_{m_i}(\gs_{m_i})$ is contained in $B_\gd(\gD(f_0))$.
\end{proof}
\section{Unique ergodicity}\label{S:Unique-Ergodicity}
We work with the following equivalent definition of unique ergodicity, see \cite[Theorem 9.2]  {Man87}. 
A continuous map $f:X \to X$, where $X$ is a compact metric space, is uniquely ergodic if
for every continuous function $\gf :X \to \D{R}$ and every $x\in X$ the limit of the Birkohff averages 
\[\lim_{n\to\infty} \frac{1}{n}\sum_{j=0}^{n-1} \gf (f\co{j}(x))\]
exists and is independent of $x$.

Since we have a simple candidate for the unique invariant measure for the non-linearizable maps, that is the Dirac 
measure at $0$, the proof of Theorem~\ref{T:unique-ergodicity} becomes slightly simpler for non-linearizable maps. 
Also, it conveys the idea of proof for the linearizable ones where the invariant measure is more complicated. 
Hence, although both cases may be treated simultaneously, we present the proof for the non-linearizable maps first.   

%%%%%%%%%%%%%%%%%%%%%%%%%%%%%%%%%%%%%%%%%%%%%%
\subsection{Non-linearizable maps}
%%%%%%%%%%%%%%%%%%%%%%%%%%%%%%%%%%%%%%%%%%%%%%

\begin{proof}[Proof of  \refT{T:unique-ergodicity} when $\ga$ is non-Brjuno] 
We assume $N$ is the integer determined in Section~\ref{S:continued-fraction--renormalizations} and 
is subject to the inequality in Equation~\eqref{E:high-type-restriction}. 

Let $\gf:\pc(f_0)\to \D{R}$ be a continuous function with the supremum norm 
\[M=\max_{z\in \pc(f_0)} |\gf(z)|.\]
Let $\gep>0$ be an arbitrary constant. 
There is $\gd>0$ such that 
\[\forall w\in B_\gd(0), |\gf(w)-\gf(0)|<\gep.\] 
Recall the set $G(n,\gd)$ defined in Section~\ref{S:geometry-arithmetic}. 
By Proposition~\ref{P:cremer-sectors}, there is an integer $n>0$ such that 

\[\frac{|G(n,\gd)|}{q_n}\geq 1-\gep.\]

If $z=0$, then clearly the Birkhoff average of $\gf$ along the orbit of $0$ is the constant sequence with terms equal to $\gf(0)$.
We need to show that these averages along the orbit of every point in $\pc(f_0)$ is equal to $\gf(0)$. 

Recall that by proposition~\ref{P:pc-neighbor}, $\pc(f_0)$ is contained in $\gU^n$. 
Given a non-zero $\gz \in \pc(f_0)$, by Proposition~\ref{P:orbits-organized}, there is a one-to-one map 
$\gt: \{i\in \D{Z}\mid 0\leq i\leq q_n-1\} \to \{i\in \D{Z}\mid 0\leq i\leq q_n-1\}$, depending on $\gz$, 
such that $f_0\co{i}(\gz)\in f_0\co{\gt(i)}(I^n)$, for all $i$ with $0\leq i\leq q_n-1$.  
Then, 
\begin{equation}\label{E:average-one-turn}
\begin{aligned}
\bigg|\frac{1}{q_n}\sum_{k=0}^{q_n-1} & \gf(f_0\co{k}(\gz))  - \gf(0) \bigg | \\
&   \leq \frac{1}{q_n} \sum_{\substack{0\leq k\leq q_n-1\\ f_0\co{k}(\gz)\in G(n, \gd)}}  |\gf(f_0\co{k}(\gz))  - \gf(0) | 
 + \frac{1}{q_n} \sum_{\substack{0\leq k\leq q_n-1 \\ f_0\co{k}(\gz) \nin G(n, \gd)}} | \gf(f_0\co{k}(z)) - \gf(0)| \\
&  \leq \frac{1}{q_n} |G(n,\gd)| \cdot \gep + \frac{1}{q_n} (q_n- |G(n, \gd)|) \cdot 2 M \\
& \leq \gep + \gep 2 M = \gep (1+ 2M).
\end{aligned}
\end{equation}

Fix an arbitrary $z$ in $\pc(f_0)$. 
Let $N$ be a positive integer. 
Dividing $N-1$ by $q_n$ we obtain non-negative integers $m$ and $r$ such that $N-1= m q_n+r$ with $0\leq r \leq q_n-1$.
In particular, there is $N_0>0$ such that for all $N\geq N_0$ we have  
\[\frac{r}{N} 2 M \leq \gep.\] 
Applying Equation~\eqref{E:average-one-turn}, to the points $f_0\co{iq_n}(z)$, for $i\geq 0$, we conclude that for every $N\geq N_0$
we have 
\begin{align*}
\bigg|\frac{1}{N}\sum_{k=0}^{N-1}& \gf(f_0\co{k}(z))-\gf(0)\bigg| \\
& \leq \frac{1}{N} \sum_{i=0}^{m-1} \sum_{k=i q_n}^{(i+1)q_n-1} |\gf(f_0\co{k}(z))-\gf(0)| 
+  \frac{1}{N} \sum_{k=mq_n}^{N-1} |\gf(f_0\co{k}(z))-\gf(0)|\\
& = \frac{m q_n}{N} \frac{1}{m}\sum_{i=0}^{m-1}  \frac{1}{q_n} \sum_{k=i q_n}^{(i+1)q_n-1} |\gf(f_0\co{k}(z))-\gf(0)| 
+  \frac{1}{N} \sum_{k=mq_n}^{N-1} |\gf(f_0\co{k}(z))-\gf(0)|\\   
& \leq 1 \cdot \frac{1}{m} \cdot m \cdot \gep (1+ 2M)+ \frac{1}{N} \cdot r \cdot 2 M \\
& \leq \gep (1+ 2M)+ \gep = \gep (2+ 2M).
\end{align*}
As $\gep$ was chosen arbitrarily, we conclude that 
\[\lim_{N\to\infty}  \frac{1}{N}\sum_{k=0}^{N-1} \gf(f_0\co{k}(z))=\gf(0) \]
\end{proof}

%%%%%%%%%%%%%%%%%%%%%%%%%%%%%%%%%%%%%%%%%%%%%%%
\subsection{Linearizable maps}
%%%%%%%%%%%%%%%%%%%%%%%%%%%%%%%%%%%%%%%%%%%%%%%

\begin{proof}[Proof of  \refT{T:unique-ergodicity} when $\ga$ in Brjuno] 
The integer $N$ is determined in Section~\ref{S:continued-fraction--renormalizations} and is subject 
to Equation~\eqref{E:high-type-restriction}. 

Let $\gf:\pc(f_0)\to \D{R}$ be a continuous map with 
\[M=\max_{z\in \pc(f_0)} |\gf(z)|< +\infty.\]

Fix arbitrary $z\in \pc(f_0)$ and $\gep>0$. 
There is $\gd'>0$ such that for all $w$, $w'$ in $\pc(f_0)$ with $|w'-w|<\gd'$ we have $|\gf(w')-\gf(w)|<\gep$. 

Recall that by Proposition~\ref{P:pc-neighbor}, $\pc(f_0)$ is contained in every $\gU^n$, $n\geq 0$. 
Also, recall the definition of the sets $H(n, \gd)$ from Section~\ref{SS:nearby-orbits}. 
Using \refP{P:siegel-sectors}, there are integer $n_0$ and real $\gd_0>0$ such that 
\[\forall n\geq n_0, \forall \gd\leq \gd_0, \forall k \in H(n,\gd), \diam (f_0\co{k}(I^n)\setminus \gD(f_0))<\gd'.\]
By the same proposition, there is an integer $n \geq n_0$ such that 
\[\frac{|H(n, \gd)|}{q_n}\geq 1-\gep.\]
From Proposition~\ref{P:orbits-organized}, there is a permutation of the set $\{i\in \D{Z} \mid  0\leq i\leq q_n-1\}$ such that 
$f_0\co{i}(z)\in f_0\co{\gt(i)}(I^n)$, for all $i$ with $0\leq i\leq q_n-1$. 
Given $w \in \pc(f_0)$, let $\gr$ be a permutation of the set $\{i\in \D{Z} \mid  0\leq i\leq q_n-1\}$, obtained 
from Proposition~\ref{P:orbits-organized}, such that $f_0\co{i}(w) \in f_0\co{\gr(i)}(I^n)$. 
Note that for all $i$ with $0\leq i \leq q_n-1$, we have $f_0\co{\gr^{-1}(\gt(i))}(w) \in f_0\co{\gt(i)}(I^n)$. 
We have 

\begin{equation}\label{E:average-one-turn-Brjuno}
\begin{aligned}
\bigg|\frac{1}{q_n}\sum_{k=0}^{q_n-1} \gf(f_0\co{k}(z)) &- \frac{1}{q_n}\sum_{k=0}^{q_n-1} \gf(f_0\co{k}(w))\bigg | \\
& \leq \frac{1}{q_n} \sum_{k=0}^{q_n-1} | \gf(f_0\co{k}(z)) - \gf (f_0\co{\gr^{-1}(\gt(k))}(w)) | \\ 
&= \frac{1}{q_n} \sum_{\substack{0\leq k\leq q_n-1 \\ k \in H(n, \gd)}} | \gf(f_0\co{k}(z)) - \gf (f_0\co{\gr^{-1}(\gt(k))}(w)) |  \\
& \qquad   + \frac{1}{q_n} \sum_{\substack{0\leq k\leq q_n-1 \\ k \nin H(n, \gd)}} | \gf(f_0\co{k}(z)) - \gf (f_0\co{\gr^{-1}(\gt(k))}(w)) | \\ 
& \leq \frac{|H(n, \gd)|}{q_n} \cdot \gep + \frac{1- |H(n, \gd)|}{q_n} \cdot 2 M \leq \gep + \gep \cdot 2M.
\end{aligned}
\end{equation}

For every positive integer $N$ there are non-negative integers $m$ and $r$ with $N-1=m q_n+r$ and $0\leq r \leq q_n-1$. 
Let us assume that $N$ is large enough so that 
\[ \frac{r}{N} 2M \leq \gep.\]
Applying the estimate in Equation~\eqref{E:average-one-turn-Brjuno} to the points $w=f_0\co{i q_n}(z)$, $i\geq 0$, we get

\begin{align*}
\bigg|\frac{1}{N} &\sum_{k=0}^{N-1} \gf(f_0\co{k}(z))- \frac{1}{q_n}\sum_{k=0}^{q_n-1} \gf(f_0\co{k}(z)) \bigg| \\
& \leq \frac{1}{N} \bigg | \sum_{i=0}^{m-1} \sum_{k=i q_n}^{(i+1)q_n-1} \gf(f_0\co{k}(z)) 
+  \frac{1}{N} \sum_{k=mq_n}^{N-1} \gf(f_0\co{k}(z))- \frac{1}{q_n}\sum_{k=0}^{q_n-1} \gf(f_0\co{k}(z)) \bigg| \\
& = \frac{m q_n}{N} \frac{1}{m}\sum_{i=0}^{m-1} \big | \frac{1}{q_n} \sum_{k=i q_n}^{(i+1)q_n-1} \gf(f_0\co{k}(z))- 
\frac{1}{q_n}\sum_{k=0}^{q_n-1} \gf(f_0\co{k}(z)) \big|  
+  \frac{1}{N} \sum_{k=mq_n}^{N-1} |\gf(f_0\co{k}(z))-\gf(0)|\\   
& \leq 1 \cdot \frac{1}{m} \cdot m \cdot \gep (1+ 2M)+ \frac{1}{N} \cdot r \cdot 2 M \\
& \leq \gep (1+ 2M)+ \gep = \gep (2+ 2M).
\end{align*}

As $\gep$ was chosen arbitrarily, the above bound implies that the sequence of Birkhoff averages along the orbit of $z$ forms a Cauchy 
sequence. 
In particular, the sequence of averages along the orbit of every $z\in \pc(f_0)$ is convergent. 

On the other hand, by the abvoe argument, for every $z$ and $w$ in $\pc(f_0)$ and every $\gep>0$ there is $n\geq 0$ such that 
the inequality in Equation~\eqref{E:average-one-turn-Brjuno} holds. 
This implies that the limit of the Birkhoff averages along the orbits of $z$ and $w$ are equal. 
That is, the limit of the sequence is independent of $z$. 
\end{proof}

\begin{proof}[Proof of \refC{C:physical}]
As mentioned in the introduction, the limit set of Lebesgue almost every point in $J(f_0)$ is contained in $\pc(f_0)$. 
(Indeed, in \cite{Ch10-II} we show that for $Q_\ga$ with $\ga\in \irr$ the limit set of the orbit of almost every point in the Julia set 
is equal to $\pc(f_0)$.)
For any such point $z$, every convergent subsequence of the sequence of measures
\[\gm_n= \frac{1}{n}\sum_{k=0}^{n-1}\gd_{f_0\co{k}(z)}.\]
is an $f_0$-invariant probability supported on $\pc(f_0)$. 
However, by \refT{T:unique-ergodicity} there is only one invariant probability measure supported 
on $\pc(f_0)$.
Hence, the above sequence of measures is convergent and converges either to Dirac measure at $0$ 
or the harmonic measure on the boundary of the Siegel disk, depending on the type of $\ga$. 
\end{proof}

%%%%%%%%%%%%%%%%%%%%%%%%%%%%%%%%%%%%%%%%%%%%%%%%%%%%%%%%%%%%%%%%%

\subsection{Hedgehogs and the postcritical set}\label{SS:hedgehogs-and-PC}
Theorem~\ref{C:hedgehog-dynamics} follows from the following proposition and 
Theorem~\ref{T:unique-ergodicity}.

\begin{propo}\label{P:Hedgehogs-vs-postcritical}
Let $f \in \IS_\ga \cup\{Q_\ga\}$ with $\ga \in \irr$ and $K$ be a Siegel compacta of $f$. 
Then, either 
\begin{itemize}
\item[-] $\partial K$ is an invariant analytic curve in the Siegel disk of $f$, or
\item[-] $\partial K$ is contained in the postcritical set of $f$.
\end{itemize}
\end{propo}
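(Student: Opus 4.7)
The plan is to proceed by a dichotomy based on whether $K\ci\ol{\gD(f)}$, combining \refP{P:Siegel-disk-enclosed}, Perez-Marco's rotation-number theorem \cite{PM97} for $f|_{\partial K}$, and Ma\~n\'e's theorem \cite{Man87} on orbits in the Julia set.

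First I consider the case $K\ci\ol{\gD(f)}$, which forces $f$ to be linearizable. Letting $\phi\colon \gD(f)\to B(0,r)$ denote the linearizing conjugacy to rotation by $2\pi\ga$, the set $\phi(K\cap \gD(f))$ is a relatively closed, connected, rotation-invariant subset of $B(0,r)$ containing $0$. Because $\ga$ is irrational, every nontrivial rotation orbit is dense in a concentric circle, so any connected rotation-invariant set through $0$ must equal a disk $B(0,s)$ for some $0\leq s\leq r$; hence $K=\ol{\phi^{-1}(B(0,s))}$. If $s<r$, then $\partial K=\phi^{-1}(\{|w|=s\})$ is an invariant analytic Jordan curve in $\gD(f)$; if $s=r$, then $\partial K=\partial\gD(f)\ci\pc(f)$ by \refP{P:Siegel-disk-enclosed}.

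Next assume $K\not\ci\ol{\gD(f)}$; the goal is $\partial K\ci\pc(f)$. For $z\in \partial K$ three possibilities exhaust the options. If $z\in\partial\gD(f)$, then $z\in\pc(f)$ by \refP{P:Siegel-disk-enclosed}. If $z\in\cc\setminus\ol{\gD(f)}$ then $z\in J(f)$, and Ma\~n\'e's theorem \cite{Man87} leaves three alternatives: $z\in\pc(f)$, $z$ is pre-periodic repelling, or $\omega(z)$ is a hyperbolic invariant set disjoint from $\pc(f)$. By \cite{PM97}, $f|_{\partial K}$ is semi-conjugate to an irrational rotation of angle $2\pi\ga$, so $\partial K$ contains no periodic orbits and has zero topological entropy; a nontrivial hyperbolic invariant subset of $\partial K$ would carry positive entropy, so only the first alternative survives and $z\in\pc(f)$. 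Finally, if $z\in\partial K\cap\gD(f)$, the rotation-invariance argument of the first case applied to the component of $z$ in $\phi(K\cap\gD(f))$ shows that this component is a circle $\{|w|=t\}$; the corresponding Jordan curve in $\gD(f)$ bounds a domain $D$ whose closure must either lie in $K$ or be disjoint from $K$, and each possibility combined with $0\in K$ and $K\not\ci\ol{\gD(f)}$ contradicts the connectedness of $K$.

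The principal obstacle is the last sub-case: ruling out $\partial K\cap\gD(f)\neq\emptyset$ when $K$ extends past $\ol{\gD(f)}$. My plan is to use the foliation of $\gD(f)$ by $\phi$-level circles, express $K\cap\gD(f)$ as a union of closed subdisks and invariant circles in these coordinates, and then invoke Jordan-curve topology: a level circle contained in $\partial K$ separates $\gD(f)$ into two regions, and a careful enumeration of how the connected set $K$ can cross $\partial\gD(f)$ between them rules out coexistence of such a circle with points of $K$ outside $\ol{\gD(f)}$. In the non-linearizable case $\gD(f)=\emptyset$, so the first sub-case is vacuous and only the Ma\~n\'e--Perez-Marco alternative is needed, yielding $\partial K\ci\pc(f)$ directly.
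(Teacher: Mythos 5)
Your strategy is genuinely different from the paper's, and as written it has gaps that I don't think are repairable without essentially changing the approach. The paper never invokes Ma\~n\'e or Perez-Marco's rotation number here: it shows directly that $K$ is trapped in the nested domains $\gU^n$. The key dynamical input is that $f$ and $f^{-1}$ are univalent near $\overline{K}$, so $K$ avoids a $\gd$-ball around the critical point; Lemma~\ref{L:critical-puzzles-and-returns} then shows the critical sectors $f_0\co{l_n}(\gY_n(\C{C}_n^{-k_n}))$ shrink and nearly return to themselves, which forces $K$ to miss the closure of $\C{B}^n$ for large $n$ (Lemma~\ref{L:hedgehogs-postcritical}); since $\partial \gU^n \ci \overline{\C{B}^n}$ (Lemma~\ref{L:Upsilon-boundary}) and $K$ is connected with $0\in K$, one gets $K\ci \bigcap_n \gU^n = \gD(f)\cup \pc(f)$ by Proposition~\ref{P:Siegel-disk-enclosed}, and the dichotomy follows. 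This argument uses only the renormalization tower and therefore applies to every $f\in\IS_\ga\cup\{Q_\ga\}$.

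The concrete problems with your route are the following. First, Ma\~n\'e's theorem is a statement about rational maps; for $f\in\IS_\ga$, which is defined only on $U_f$ and need not extend to a rational map of $\rs$, there is no Julia set in the required sense and no version of that trichotomy to invoke, so your argument cannot cover the stated generality. Second, even for $Q_\ga$, the step ``a nontrivial hyperbolic invariant subset of $\partial K$ would carry positive entropy'' is false: expanding invariant sets can perfectly well have zero entropy (a repelling cycle is the simplest example, but there are infinite ones too). What one can say is that an infinite expanding invariant set is not equicontinuous, whereas a subsystem of a rigid rotation is isometric; but Perez-Marco's construction gives a circle homeomorphism on the prime-end compactification, not a conjugacy of $f|_{\partial K}$ onto a subset of a rotation, so the equicontinuity transfer is exactly the delicate point and is not supplied by a citation. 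Third, the sub-case $\partial K\cap \gD(f)\neq\emptyset$ with $K\nci\overline{\gD(f)}$, which you yourself flag as the principal obstacle, is only a plan (``a careful enumeration\dots''), and it is not routine: $K\cap\gD(f)$ may have infinitely many rotation-invariant components accumulating on $\partial\gD(f)$, and ruling out such configurations requires an actual argument. As it stands the proposal establishes the easy case $K\ci\overline{\gD(f)}$ correctly but does not prove the proposition.
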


The above proposition is proved for the quadratic maps $Q_\ga$ which are not linearizable at 
$0$ in \cite{Chi08}, where only the second possibility may arise.  
We break the proof of the proposition into several lemmas. 

Given $f\in \IS_\ga$ with $\ga\in \irr$, let $f_j$, for $j\geq 0$, denote the sequence of maps 
defined in Section~\ref{S:continued-fraction--renormalizations} with $f_j'(0)=e^{2\pi \B{i}\ga_j }$.
Recall the sets $\C{C}_n^{-k_n}$, $n \geq 0$, defined in Section~\ref{SS:change-coordinates}. 

In the next lemma $b_n=(k_n+a_n-\B{k}-1)q_n +q_{n-1}$.

\begin{lem}\label{L:critical-puzzles-and-returns}
For every $n\geq 0$ there is an integer $l_n$ with $0\leq l_n\leq k_nq_n+ q_{n-1}$ such that 
$f_0\co{l_n}(\gY_n(\C{C}_n^{-k_n}))$ contains the critical point of $f_0$.
Moreover,
\begin{gather*}
\lim_{n\to +\infty} \diam f_0\co{l_n}(\gY_n(\C{C}_n^{-k_n})) =0, \\
\lim_{n\to +\infty} \sup \, \big\{ |f_0\co{b_n}(z)-z| : 
z\in f_0\co{l_n}(\gY_n(\C{C}_n^{-k_n})) \cap \C{PC}(f_0)\big\}=0.
\end{gather*}
\end{lem}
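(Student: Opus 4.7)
The plan is to prove the three assertions in turn, using the univalence of $\gY_n$ on $\p_n$, the conjugacies of Lemma~\ref{L:conjugacy}, and the uniform hyperbolic contraction of the lifts $\gc_{i,0}$ from Lemma~\ref{L:contraction}. To establish existence of $l_n$, I will observe that $f_n\co{k_n}\colon\C{C}_n^{-k_n}\to\C{C}_n$ is a degree-two proper branched covering, since the maps $f_n\colon\C{C}_n^{-j}\to\C{C}_n^{-j+1}$ are univalent for $2\le j\le k_n$ while $f_n\colon\C{C}_n^{-1}\to\C{C}_n$ is degree two with branch point $\cp_n$; its unique critical point is therefore the $\tilde z\in\C{C}_n^{-k_n}$ with $f_n\co{k_n-1}(\tilde z)=\cp_n$. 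Lemma~\ref{L:conjugacy}(b) together with the univalence of $\gY_n|_{\p_n}$ will identify $f_0\co{k_nq_n+q_{n-1}}|_{\gY_n(\C{C}_n^{-k_n})}$ as another degree-two branched cover whose critical point is $\gY_n(\tilde z)$. Since $\cp_0$ is the only critical point of $f_0$, a point $w$ is critical for $f_0\co{m}$ iff $f_0\co{j}(w)=\cp_0$ for some $0\le j\le m-1$; applying this to $\gY_n(\tilde z)$ with $m=k_nq_n+q_{n-1}$ will produce an integer $0\le l_n\le k_nq_n+q_{n-1}-1$ with $f_0\co{l_n}(\gY_n(\tilde z))=\cp_0\in f_0\co{l_n}(\gY_n(\C{C}_n^{-k_n}))$.

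For the diameter, I will first use Proposition~\ref{P:petal-geometry} and the construction of $\C{C}_n^{-k_n}$ to see that $\gF_n(\C{C}_n^{-k_n})$ is a region of uniformly bounded Euclidean size whose hyperbolic diameter in $\C{D}_n$ is bounded independently of $n$. Iterating the contraction $\|\gc_{i,0}'\|\le\gr<1$ of Lemma~\ref{L:contraction} through the factorisation $\gY_n=\gF_0^{-1}\circ\gc_{1,0}\circ\dots\circ\gc_{n,0}\circ\gF_n$ will then give hyperbolic diameter at most $C\gr^n$ in $\C{D}_0$; because the image lies in $\gc_{1,0}(\C{D}_1)$, which by Lemma~\ref{L:well-contains} sits at distance $\ge\gd_0$ from $\partial\C{D}_0$, the two metrics are comparable there and $\gF_0^{-1}$ has bounded distortion, forcing $\diam\gY_n(\C{C}_n^{-k_n})\to 0$. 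By the first part, the iterates $f_0\co{j}$ for $0\le j\le l_n-1$ are univalent on $\gY_n(\C{C}_n^{-k_n})$; the $\gd_0$-thickening of Lemma~\ref{L:well-contains} will supply a hyperbolic neighbourhood on which the same univalent branch extends, so Koebe's distortion theorem will bound the distortion of $f_0\co{l_n-1}$ uniformly, and a final application of the polynomial-like $f_0$ enlarges diameters only by a bounded factor.

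For the near-periodicity claim, the chosen $b_n=(k_nq_n+q_{n-1})+(a_n-\B{k}-1)q_n$ records at level $0$ exactly one full revolution of $f_n$ around $\p_n$: first $k_n$ iterates through $S_n^0$ (Lemma~\ref{L:conjugacy}(b)), then $a_n-\B{k}-1$ iterates along $\p_n'$ (Lemma~\ref{L:conjugacy}(a)). For $z\in V_n:=f_0\co{l_n}(\gY_n(\C{C}_n^{-k_n}))\cap\pc(f_0)$, I will use Propositions~\ref{P:invariance-of-pc} and~\ref{P:lifts-vs-iterates} to produce level-$n$ representatives for $z$ and $f_0\co{b_n}(z)$ inside $\pc(f_n)$, and verify that their $\gF_n$-images sit in a uniformly bounded subset of $\C{D}_n$ (using Proposition~\ref{P:pc-neighbor} and the pre-compactness of $\IS$). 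The same contraction chain as in the previous part will then reduce their hyperbolic, hence Euclidean, separation at level~$0$ to $\le C'\gr^n$, giving uniform vanishing of $|f_0\co{b_n}(z)-z|$.

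The main obstacle throughout the last two parts will be this translation from hyperbolic to Euclidean estimates: one has to verify that all relevant images remain in a region of $\C{D}_0$ where the two metrics are uniformly comparable (secured by the thickening of Lemma~\ref{L:well-contains}), and, for the diameter statement, to extract a definite Koebe space for the potentially very long univalent iteration $f_0\co{l_n-1}$ by pushing that same thickening forward through the renormalisation.
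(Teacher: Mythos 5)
Your first step (the existence of $l_n$) is the paper's argument: $f_n\co{k_n}\colon\C{C}_n^{-k_n}\to\C{C}_n$ has a unique critical point, the conjugacy of Lemma~\ref{L:conjugacy} transports it to a critical point of $f_0\co{k_nq_n+q_{n-1}}$ in $\gY_n(\C{C}_n^{-k_n})$, and since $f_0$ has only one critical point some intermediate iterate must hit $\cp_0$. The genuine gap is in how you pass from $\diam \gY_n(\C{C}_n^{-k_n})\to 0$ to $\diam f_0\co{l_n}(\gY_n(\C{C}_n^{-k_n}))\to 0$. You propose to control $f_0\co{l_n-1}$ by Koebe distortion on a thickened neighbourhood and conclude that it ``enlarges diameters only by a bounded factor.'' Bounded distortion bounds the ratio of $|(f_0\co{l_n-1})'|$ over the set, not the expansion itself; here $l_n$ can be of order $q_{n+1}$ and the sectors $f_0\co{i}(I^n)$ have wildly different sizes, so $|(f_0\co{l_n})'|$ is genuinely unbounded along the sequence and this step fails as stated. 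A relative Koebe argument could in principle rescue it, but it would require an annulus of growing modulus around $\gY_n(\C{C}_n^{-k_n})$ on which the relevant branch of $f_0\co{l_n}$ is single-valued with uniformly bounded image, and Lemma~\ref{L:well-contains} as you invoke it does not supply that. The missing idea is Proposition~\ref{P:lifts-vs-iterates}: it writes the \emph{iterated} image directly as $f_0\co{l_n}(\gY_n(\C{C}_n^{-k_n}))=\gF_0^{-1}\circ\gc_{1,i_1}\circ\cdots\circ\gc_{n-1,i_{n-1}}\big(\gF_{n-1}(\gy_n(\C{C}_n^{-k_n}))\big)$ for suitable branches $i_j$, so the uniform contraction of Lemma~\ref{L:contraction} applies to the image itself and no forward push by $f_0\co{l_n}$ is ever performed. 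The same device is what makes the $f_0\co{b_n}$ statement work.

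A second, smaller gap: you assert that $\gF_n(\C{C}_n^{-k_n})$ has uniformly bounded hyperbolic diameter in $\C{D}_n$ ``by Proposition~\ref{P:petal-geometry} and the construction.'' This is not clear: that set sits near the right-hand edge $\Re w\approx a_n-\B{k}$ of $\gF_n(\C{P}_n)$, and nothing you cite guarantees a definite Euclidean distance to $\partial\C{D}_n$ there. The paper obtains the needed bound one level down: Proposition~\ref{P:well-contained-in-domain} provides a $\gd$-collar of $\C{C}_n^{-k_n}$ inside $\Dom f_n\setminus\{0\}$, which equals $\ex(\gF_{n-1}(S_{n-1}^0))$, hence an annulus of definite modulus whose lift under $\ex$ shows that $\gc_{n,0}\circ\gF_n(\C{C}_n^{-k_n})$ has uniformly bounded hyperbolic diameter in $\C{D}_{n-1}$; the contraction chain should start there. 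With these two repairs your outline coincides with the paper's proof.
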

 
\begin{proof}
For every $n\geq 0$, the map $f_n\co{k_n}: \C{C}_n^{-k_n} \to \C{C}_n$ has a unique critical point. 
By Lemma~\ref{L:conjugacy}, $\gY_n \circ f_n\co{k_n}= f_0\co{k_n q_n +q_{n-1}} \circ \gY_n$ 
on $\C{C}_n^{-k_n}$. 
This implies that the map $f_0\co{k_n q_n + q_{n-1}}$ has a critical point in $\gY_n(\C{C}_n^{-k_n})$. 
Since, $f_0$ has a unique critical point in its domain of definition, we conclude that there must be 
an integer $l_n$ with $0 \leq l_n \leq k_n q_n + q_{n-1}$ such that $f_0\co{l_n}(\gY_n(\C{C}_n^{-k_n}))$ 
contains the critical point of $f_0$. 

By Proposition~\ref{P:well-contained-in-domain} there is $\gd>0$ such that for all $n\geq 0$, 
$\gd$-neighborhood of $\C{C}_n^{-k_n}$ is contained in $\Dom f_n \setminus \{0\}$, and
$\diam \C{C}_n^{-k_n} \leq 2/\gd$. 
This implies that for every $n\geq 0$ there is a simply connected region $E_n \subset 
\Dom f_n\setminus \{0\}$ such that the conformal modulus of $(E_n \setminus \C{C}_n^{-k_n})$ is uniformly bounded from below independent of $n$. 
As $\gF_{n-1}(S_{n-1}^0)$ projects onto $\Dom f_n\setminus \{0\}$ under $\ex$, or $s \circ \ex$, 
we conclude that $\gc_{n,0} \circ \gF_n(\C{C}_n^{-k_n})$ has uniformly bounded hyperbolic diameter 
in $\C{D}_{n-1}$. 
On the other hand, the uniform bound in Proposition~\ref{P:turning} implies that 
$\inf \{\Re \C{C}_n^{-k_n}\} \geq a_n +k_n -\B{k}-2- \B{k}''$.
By a similar argument, it follows that for every $z'\in \C{C}_n^{-k_n}$ the hyperbolic distance between 
$\gc_{n,0} \circ \gF_n(z')$ and $\gc_{n,0} \circ \gF_n(f_n\co{a_n+k_n-\B{k}-1}(z'))$ in $\C{D}_{n-1}$ 
is uniformly bounded from above by a constant independent of $n$ and $z'\in \C{C}_n^{-k_n}$. 

Recall from Sections~\ref{SS:petals} and \ref{SS:lifts-vs-iterates} that 
$\gy_n(\C{C}_n^{-k_n}) \subset  J_{n-1}$ and $\gY_n(\C{C}_n^{-k_n}) \subset I_n$. 
It follows from Proposition~\ref{P:lifts-vs-iterates} that there are integers 
$i_j$ with $0 \leq i_j \leq a_{j-1}$, for $1 \leq j \leq n-1$, such that 
\[f_0\co{l_n}(\C{C}_n^{-k_n})=
\gF_0^{-1}\circ \gc_{1, i_1} \circ \gc_{2, i_2} \circ \gc_{n-1, i_{n-1}} (\gF_{n-1}(\gy_n(\C{C}_n^{-k_n}))).\]

Since the image of $\gc_{1, i_1}$ is well contained in $\C{D}_0$, see Lemma~\ref{L:well-contains}, 
the hyperbolic metric on $\C{D}_0$ and the Euclidean metric on $\C{D}_0$ are comparable on the 
set $\gc_{1, i_1}(D_{1})$.
Now, the uniform contraction of the changes of coordinates $\gc_{j, i_j}$ with respect to the 
hyperbolic metrics in Lemma~\ref{L:contraction} implies that the Euclidean diameters must shrink to 
zero. 
\end{proof}

The second limit in the above proposition is a special case of a more general statement.
It is proved in \cite{Ch10-I} that the sequence of maps $f_0\co{q_n}$ converge to the identity map 
on certain sets containing $\C{PC}(f_0)$ (and $f_0\co{l_n}(\gY_n(\C{C}_n^{-k_n}))$). 
But, we do not need this stronger statement here. 

Recall that $S_n^0= \C{C}_n^{-k_n}\cup (\Csh_n)^{-k_n}$.
We break each set $\gU^n$ into two sets as follows.  
Define the sets 
\[ A^n_a= \bigcup_{i=0}^{k_n+a_n-\B{k}-2} f_0\co{(i q_n)} (\gY_n((\Csh_n)^{-k_n})), 
\quad A^n_b= f_0\co{q_{n-1}}(A^n_a), 
\quad A^n= A^n_a \cup A^n_b\]
and
\[B^n_a= \bigcup_{i=0}^{k_n+a_n-\B{k}-2} f_0\co{(i q_n)} (\gY_n(\C{C}_n^{-k_n})), 
\quad B^n_b= f_0\co{q_{n-1}}(B^n_a), 
\quad B^n= B^n_a \cup B^n_b. 
\]
For $n\geq 1$, let 
\[\C{A}^n= \bigcup_{i=0}^{q_n-1}f_0\co{i}(A^n) \cup \{0\}, 
\quad \C{B}^n=\bigcup_{i=0}^{q_n-1}f_0\co{i}(B^n).\]
For every $n\geq 1$, we have $\gU^n= \C{A}^n \cup \C{B}^n$. 
The sets $\C{A}^n$, $\C{B}^n$, and $\gU^n$ are bounded by piecewise smooth curves 
(see Lemma~\ref{L:petal}).
The set $(\Csh_n)^{-k_n}$ is bounded by three (closed) smooth curves. 
Let us denote these curves by $\gga_n$, $\gn_n$, and $\gh_n$, such that   
\begin{gather*}
\gF_n(f_n\co{k_n}(\gga_n(t)))=1/2+(-2+t)\B{i}, \forall t\in [0, +\infty)\\
\gF_n(f_n\co{k_n}(\gn_n(t)))=3/2+(-2+t)\B{i}, \forall t\in [0, +\infty)  \\
\gF_n(f_n\co{k_n}(\gh_n(t)))=1/2+t-2\B{i}, \forall t\in [0,1]. 
\end{gather*}

\begin{lem}\label{L:boundary-curves}
For every $n\geq 0$, we have 
\begin{itemize}
\item[a)] $\gga_n$ is contained in the interior of $\cup_{m=k_n}^{k_n+a_n - \B{k}-2}(f_n\co{m}(S_n^0))$;
\item[b)] $f_n\co{(k_n+a_n - \B{k}-2)}(\gn_n)$ is contained in the interior of 
$\cup_{m=0}^{k_n-1}(f_n\co{m}(S_n^0))$.
\end{itemize}
\end{lem}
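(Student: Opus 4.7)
The plan is to work entirely in Fatou coordinates and reduce both inclusions to straightforward containments in the strip $\Phi_n(\C{P}_n)$. The key preliminary observation is that for every $j \in \{0,1,\dots,a_n-\B{k}-2\}$, the iterate $f_n\co{j}(\C{C}_n \cup \Csh_n)$ remains in $\C{P}_n$: since $\Phi_n(\C{C}_n \cup \Csh_n)$ has real part in $[1/2,3/2]$ and each application of $f_n$ shifts Fatou coordinate right by $1$ (by the Abel equation of Theorem~\ref{T:Ino-Shi1}(d)), the real part stays in $(0,a_n-\B{k})$ throughout, so the equation $\Phi_n(f_n\co{k_n+j}(S_n^0))=\Phi_n(\C{C}_n \cup \Csh_n)+j$ holds. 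Taking the union and invoking that the sets sit in $\C{P}_n$, one obtains
\[
\Phi_n\!\Big(\bigcup_{m=k_n}^{k_n+a_n-\B{k}-2}f_n\co{m}(S_n^0)\Big)
=\{w : 1/2\le \Re w\le a_n-\B{k}-1/2,\ \Im w>-2\}.
\]

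For part (a), I would note that $\gga_n\subset S_n^0\subset \C{P}_n$, and that by the very definition of $k_n$ (the smallest integer placing the preimages in $\{1/2<\Re\Phi_n<a_n-\B{k}-1/2\}$) the set $\Phi_n(S_n^0)$ is compactly contained in this open strip; thus $\Re\Phi_n(\gga_n(t))\in(1/2,a_n-\B{k}-1/2)$ for every $t\ge 0$. The imaginary-part bound $\Im\Phi_n(\gga_n(t))>-2$ is inherited from the corresponding bound for the bottom arc $\gh_n$: applying $E_{f_n}=\Phi_n\circ f_n\co{k_n}\circ\Phi_n^{-1}$, the horizontal segment $\Phi_n\circ f_n\co{k_n}(\gh_n)\subset\{\Im w=-2\}$ pulls back to a curve lying strictly above $\Im w=-2$ in the source $\Phi_n(S_n^0)$, and $\gga_n$ meets $\gh_n$ only at their common endpoint which is therefore itself in the interior of the union. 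By injectivity of $\Phi_n$ on $\C{P}_n$ and the fact that this interior lies inside the Fatou image computed above, the inclusion follows.

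For part (b), Abel's equation gives $\Phi_n(f_n\co{k_n+a_n-\B{k}-2}(\gn_n(t)))=(a_n-\B{k}-1/2)+(-2+t)\B{i}$, a vertical curve just to the right of $\Phi_n(S_n^0)$ inside $\Phi_n(\C{P}_n)$. On the dynamical side, the sets $f_n\co{m}(S_n^0)$ for $m=1,\dots,k_n-1$ are precisely the preimages $\C{C}_n^{-(k_n-m)}\cup(\Csh_n)^{-(k_n-m)}$, which by definition lie in $\Dom f_n\setminus\C{P}_n$ and together with $S_n^0=f_n\co{0}(S_n^0)$ form a connected chain returning from the right edge of $\Phi_n(\C{P}_n)$ into $\C{C}_n\cup\Csh_n$; in the dynamical plane their union therefore contains a full neighborhood of the right boundary $\gn_n$ of $S_n^0$, and hence a neighborhood of the target curve. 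The principal technical obstacle throughout is the bookkeeping of the near-parabolic wrap-around: one must pin down that $\Phi_n(S_n^0)$ sits at real part approximately $a_n-\B{k}-k_n+1$ by analyzing $E_{f_n}$ via its cylinder projection $\rr(f_n)\approx e^{-2\pi\B{i}/\ga_n}z$, and verify that the spiral of preimages $\C{C}_n^{-k}\cup(\Csh_n)^{-k}$ for $1\le k\le k_n-1$ fills the corresponding gap in the plane, both of which are routine but delicate applications of the Inou--Shishikura structure theorems.
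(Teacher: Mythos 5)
Your reduction to Fatou coordinates is the right setting, and your identification of $\bigcup_{m=k_n}^{k_n+a_n-\B{k}-2}f_n\co{m}(S_n^0)$ with $\gF_n^{-1}\big(\{1/2\le\Re w\le a_n-\B{k}-1/2,\ \Im w>-2\}\big)$ agrees with the paper. But both substantive claims are asserted rather than proved, and the missing ingredient is the same in each case: the quantitative mapping properties of the near-parabolic renormalization $\rr(f_n)$. In part a) the entire content of the lemma is the height bound $\Im\gF_n>-2$ along $\gga_n$; you derive it from the claim that the bottom arc of $\gF_n(f_n\co{k_n}(S_n^0))$ ``pulls back to a curve lying strictly above $\Im w=-2$,'' which is exactly the statement to be proven, and even granted at the endpoint it says nothing about the rest of the infinite curve $\gga_n$. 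The paper gets this bound by projecting with $\ex$: one first shows $\rr(f_n)$ is univalent on the definite disk $B(0,1/12)$ (using $B(0,8/9)\subset U$, injectivity of $P$ on $B(0,1/3)$, and the $1/4$-theorem applied to the map $\psi$ of Theorem~\ref{T:Ino-Shi2}), and then the Koebe distortion theorem yields $\rr(f_n)^{-1}\big(\overline{B(0,\tfrac{4}{27}e^{-4\pi})}\big)\subset B(0,\tfrac{4}{27}e^{4\pi})$, which pulled back through $\ex\circ\gF_n$ is precisely the required bound on $(\Csh_n)^{-k_n}\supset\gga_n$. Nothing of this kind appears in your proposal.

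Part b) contains, besides a gap, an outright error: you assert that $f_n\co{m}(S_n^0)=\C{C}_n^{-(k_n-m)}\cup(\Csh_n)^{-(k_n-m)}$ for $1\le m\le k_n-1$ ``by definition lie in $\Dom f_n\setminus\C{P}_n$.'' The definition only requires compact containment in $\Dom f_n$, and if these sets were disjoint from $\C{P}_n$ then part b) would be false, since the target curve $f_n\co{(k_n+a_n-\B{k}-2)}(\gn_n)$ lies in $\C{P}_n$. The actual mechanism is the wrap-around: the target curve sits on the line $\Re\gF_n=a_n-\B{k}-1/2$ and, because its heights are at least $2$, projects under $\ex\circ\gF_n$ into $\overline{B(0,\tfrac{4}{27}e^{-4\pi})}\setminus\{0\}$, which lies in $\Dom\rr(f_n)=\operatorname{int}\ex(\gF_n(S_n^0))$; combined with the position of $\gF_n(S_n^0)$ against the right edge of the strip at large heights (forced by the minimality in the definition of $k_n$), this places the curve in the interior of $\bigcup_{m=0}^{k_n-1}f_n\co{m}(S_n^0)$. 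Your closing sentence correctly names the needed fact---that the spiral of preimages fills the gap---but deferring it as routine is deferring essentially all of part b). A final caution: the displayed parametrizations of $\gga_n,\gn_n,\gh_n$ carry sign typos (the curves bound $(\Csh_n)^{-k_n}$, so the relevant heights are $\ge 2$, which is what the paper's proof of b) uses); read literally, as you do, the endpoint of $f_n\co{k_n}(\gga_n)$ lies on the circle $|w|=\tfrac{4}{27}e^{4\pi}$, which is outside the image of $\rr(f_n)$ and hence has no preimage, so the ``bottom arc'' argument cannot even get started.
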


\begin{proof}
Recall from Section~\ref{SS:IS-class} that the ellipse $E$ is contained in $B(0,2)$. 
By a simple calculation, $B(0, 8/9) \subset U$. 
On can verify that the polynomial $P$ is one-to-one on the ball $B(0,1/3)$.
By 1/4-theorem, $\gy (U)$ contains the ball $B(0,2/9)$ and $\gy(B(0, 1/3))$ contains $B(0, 1/12)$.  

By Theorem~\ref{T:Ino-Shi2} and the above paragraph, $\C{R}(f_n)$ is univalent on $B(0, 1/12)$. 
In particular, $B(0, \frac{4}{27} e^{-4\pi})$ is contained in $B(0,1/12)$, 
and by Koebe distortion theorem, 
\[\C{R}(f_n)^{-1}(B(0, \frac{4}{27}e^{-4\pi}) \subset B(0, \frac{4}{27}e^{+4\pi}).\] 
Therefore, by the definition of renormalization,
\[\ex(\gF_n(\Csh_n)^{-k_n}) \subset \ex(\gF_n(f_n\co{k_n}(S_n))).\]
The above equation implies the first part of the lemma. 

On the other hand, $\Im \gF_n(f_n\co{(k_n+a_n - \B{k}-2)}(\gn_n))\geq 2$ and 
$\C{R}(f_n)$ is defined on $B(0, \frac{4}{27} e^{-4\pi})$. 
Since $\Re \gF_n(f_n\co{(k_n+a_n - \B{k}-2)}(\gn_n)) = a_n-\B{k}_n-1/2$, the 
second part of the lemma follows.
\end{proof}

\begin{lem}\label{L:Upsilon-boundary}
For every $n\geq 1$, the closure of $\C{A}^n$ is contained in the interior of $\gU^n$. 
In particular, the boundary of $\gU^n$ is contained in the closure of $\C{B}^n$. 
\end{lem}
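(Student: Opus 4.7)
The plan is to analyze the boundary of $\C{A}^n$ first at the $n$-th renormalization level, where the combinatorial structure of the sectors is transparent via Fatou coordinates, and then transport the statement to level $0$ through the semi-conjugacy of \refL{L:conjugacy}. At level $n$, set
\[\tilde A_a^n=\bigcup_{i=0}^{k_n+a_n-\B{k}-2}f_n\co{i}((\Csh_n)^{-k_n}),\qquad \tilde B_a^n=\bigcup_{i=0}^{k_n+a_n-\B{k}-2}f_n\co{i}(\C{C}_n^{-k_n}),\]
so that $\tilde A_a^n\cup\tilde B_a^n=\bigcup_i f_n\co{i}(S_n^0)$. Since the critical point of $f_n$ lies on the $\C{C}_n^{-k_n}$ side, all iterates appearing in $A_a^n$ and $A_b^n$ are local homeomorphisms onto their images, so local boundary/interior relationships transport faithfully through $\gY_n$ and through the forward iterates of $f_0$ used to sweep out $\C{A}^n$ and $\C{B}^n$.

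The topological boundary of $\tilde A_a^n$ consists of three families: the ``horizontal'' arcs $f_n\co{i}(\gh_n)$ for $0\leq i\leq k_n+a_n-\B{k}-2$; the ``left'' vertical arc $\gga_n$ at $i=0$; and the ``right'' vertical arc $f_n\co{k_n+a_n-\B{k}-2}(\gn_n)$ at the last index. (The intermediate arcs $f_n\co{i}(\gn_n)$ for $0\leq i<k_n+a_n-\B{k}-2$ are shared between consecutive $f_n\co{i}((\Csh_n)^{-k_n})$ and $f_n\co{i+1}((\Csh_n)^{-k_n})$, since $f_n(\gga_n)=\gn_n$ realizes the common boundary between $(\Csh_n)^{-k_n}$ and $(\Csh_n)^{-(k_n-1)}$, so they lie in $\darun\tilde A_a^n$ and do not appear on the topological boundary.) Each horizontal arc $f_n\co{i}(\gh_n)$ is the shared boundary between $f_n\co{i}((\Csh_n)^{-k_n})\subset\tilde A_a^n$ and $f_n\co{i}(\C{C}_n^{-k_n})\subset\tilde B_a^n$, inherited from the common $\Im\gF_n=2$ edge of $\Csh_n$ and $\C{C}_n$, hence lies in $\darun(\tilde A_a^n\cup\tilde B_a^n)$. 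By \refL{L:boundary-curves}(a), $\gga_n$ lies in the interior of $\bigcup_{m=k_n}^{k_n+a_n-\B{k}-2}f_n\co{m}(S_n^0)\subset\tilde A_a^n\cup\tilde B_a^n$; and by \refL{L:boundary-curves}(b), $f_n\co{k_n+a_n-\B{k}-2}(\gn_n)$ lies in the interior of $\bigcup_{m=0}^{k_n-1}f_n\co{m}(S_n^0)\subset\tilde A_a^n\cup\tilde B_a^n$. Thus $\overline{\tilde A_a^n}\setminus\{0\}\subset\darun(\tilde A_a^n\cup\tilde B_a^n)$.

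Transporting this level-$n$ inclusion through $\gY_n$ and then through the $q_n$ iterates of $f_0$ that build $\C{A}^n$ from $A^n=A_a^n\cup A_b^n=A_a^n\cup f_0\co{q_{n-1}}(A_a^n)$ (the same argument applies verbatim to $A_b^n$), we conclude $\overline{\C{A}^n}\setminus\{0\}\subset\darun(\gU^n)$. Since $\gU^n$ contains a full neighborhood of $0$, as already observed after its definition, the point $0$ itself is also interior to $\gU^n$, which proves the first assertion. For the ``in particular'' statement, $\partial\gU^n\subset\partial\C{A}^n\cup\partial\C{B}^n$, and the inclusion $\partial\C{A}^n\subset\darun(\gU^n)$ forces $\partial\gU^n\subset\partial\C{B}^n\subset\overline{\C{B}^n}$.

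The main technical obstacle will be making the transport step rigorous: $\gY_n$ is defined only on $\p_n$, whereas the pre-images $(\Csh_n)^{-k}$ for $1\leq k\leq k_n$ lie outside $\p_n$. One must either extend $\gY_n$ via the functional equation $f_0\co{q_n}\circ\gY_n=\gY_n\circ f_n$ in the spirit of \refL{L:extending-inverse-F-coord}, or argue directly from $A_a^n=\bigcup_i f_0\co{iq_n}(\gY_n((\Csh_n)^{-k_n}))$ and verify inductively that the shared-boundary combinatorics at level $n$ survives each application of $f_0$, using that $\gY_n((\Csh_n)^{-k_n})$ avoids the critical point of $f_0$ throughout the relevant iterates.
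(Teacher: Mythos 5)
Your level-$n$ analysis is sound and matches the paper's use of \refL{L:boundary-curves}: the horizontal arcs $f_n\co{i}(\gh_n)$ are shared with the $\C{C}$-sectors, the intermediate vertical arcs are shared between consecutive $\Csh$-sectors, and the two extremal arcs $\gga_n$ and $f_n\co{(k_n+a_n-\B{k}-2)}(\gn_n)$ are absorbed by parts (a) and (b) of \refL{L:boundary-curves}. But the step you defer as ``the main technical obstacle'' is not a technicality --- it is where essentially all of the paper's proof lives, and it does not follow from the level-$n$ statement by a soft transport argument. The difficulty is not the one you identify (avoiding the critical point so that iterates are local homeomorphisms; in fact only the open mapping property of $f_0\co{j}$ is needed, and the paper uses exactly that). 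The real issue is that $\gU^n$ is a \emph{finite} union of sectors $f_0\co{l}(S_0^n)$ with $l$ ranging over the specific index set $\{iq_n+j\}\cup\{iq_n+q_{n-1}+j\}$, $0\leq i\leq k_n+a_n-\B{k}-2$, $0\leq j\leq q_n-1$. When you push a level-$n$ neighborhood of a boundary point forward to level $0$, the adjacent sectors it meets acquire exponents that must be checked to lie in this index set; at the two extremal arcs this fails naively. For instance, the right edge of the last sector of $I^n_a$, namely $f_0\co{(k_n+a_n-\B{k}-2)q_n}(\gY_n(\gn_n))$, is not the $\gY_n$-image of the arc controlled by \refL{L:boundary-curves}(b) (that arc corresponds, via \refL{L:conjugacy}(b), to the last sector of $I^n_b$, because passing through $S_n^0$ costs $k_nq_n+q_{n-1}$ rather than $k_nq_n$ iterates of $f_0$). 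The paper handles it by rewriting $f_0\co{iq_n}=f_0\co{(q_n-q_{n-1})}\circ f_0\co{q_{n-1}}\circ f_0\co{((i-1)q_n)}$ so as to land in a non-extremal sector of $I^n_b$, and then splitting on whether $j<q_{n-1}$ or $j\geq q_{n-1}$ to keep the final exponent below $q_n$. This trading between $I^n_a$ and $I^n_b$ (cases (b) and (c) of the paper's proof, and the analogous cases for $\gga_n$) has no counterpart at level $n$ and cannot be extracted from the inclusion $\overline{\tilde A_a^n}\setminus\{0\}\subset\darun(\tilde A_a^n\cup\tilde B_a^n)$ alone.

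Concretely: your sentence ``local boundary/interior relationships transport faithfully through $\gY_n$ and through the forward iterates of $f_0$ used to sweep out $\C{A}^n$ and $\C{B}^n$'' is the unproved claim. $\C{A}^n$ is not the image of $\tilde A_a^n$ under any single open map, and the union of level-$0$ sectors adjacent to a given boundary arc is governed by the return-time arithmetic ($q_n=a_{n-1}q_{n-1}+\gep q_{n-2}$, the offset $q_{n-1}$ between $I^n_a$ and $I^n_b$), not by the level-$n$ adjacency alone. To complete the argument you would need to carry out the index bookkeeping of the paper's case analysis; once you do, your proof becomes the paper's proof.
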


\begin{proof}
Fix an integer $n\geq 1$ and a point $z$ in the closure of $\C{A}^n$. 
The point $z=0$ belongs to the interior of $\gU^n$, so below we assume that $z$ is nonzero.

By the definition of the set $\C{A}^n$, there is an integer $l$ of the form $iq_n +j$ or $iq_n+q_{n-1}+j$, 
with $0 \leq i \leq k_n+a_n-\B{k}-2$ and $0 \leq j \leq q_n-1$, such that 
$z \in f_0\co{l}(\gY^n(\Csh_n)^{-k_n})$. 

If $z$ belongs to the interior of $f_0\co{l}(\gY_n(\Csh_n)^{-k_n})$ then it belongs to the interior of 
$\gU^n$ and we are done. 
If $z$ belongs to the curve $f_0\co{l}(\gY_n(\gh_n))$ minus its end points, then it belongs to the 
interior of the set $f_0\co{l}(S_0^n)$.
Hence, $z$ belongs to the interior of $\gU^n$. 
It remains to consider the situation where $z$ belongs to the boundary curves 
$f_0\co{l}(\gY_n(\gga_n \cup \gn_n))$.

First assume that $z\in f_0\co{l}(\gY_n(\gn_n))$, and choose $z'\in \gn_n$ with $z= f_0\co{l}(\gY_n(z'))$. 
We consider three cases.

\begin{itemize}
\item[(a)] $i< a_n+k_n-\B{k}-2$;
\item[(b)] $i= a_n+k_n-\B{k}-2$ and $l=i q_n+q_{n-1} + j$;
\item[(c)] $i= a_n+k_n-\B{k}-2$ and $l=i q_n + j$.
\end{itemize}
 
Assume that (a) holds. 
There is $w\in A^n_a \cup A^n_b$ such that $z=f_0\co{j}(w)$ and $w=f_0\co{(l-j)}(\gY_n(z'))$, where 
$l-j$ is either equal to $i q_n$ or $iq_n+q_{n-1}$. 
Note that $z'$ belongs to the interior of $S_n^0\cup f_n(S_n^0)$.  
Also, $\gY_n$ has a univalent extension onto $f_n(S_n)$ through the compositions of the lifts $\gc_{.,0}$. 
Then, $f_0\co{l}(\gY_n(f_n(S_n^0)))=f_0\co{l+q_n}(\gY_n(S_n^0)) \subset \gU^n$.
By the open mapping property of holomorphic and anti-holomorphic maps, this implies that 
$w$ must belong to the interior of 
$f_0\co{l-j} (\gY_n(S_n^0 \cup f_n(S_n^0))) \subset A^n_a\cup A^n_b$. 
Hence, $z=f_0\co{j}(w)$ must belong to the interior of $\gU^n$. 

Assume that (b) holds. 
There is $w\in A^n_b$ such that $z=f_0\co{j}(w)$ and $w=f_0\co{(l-j)}(\gY_n(z'))$.
By Lemma~\ref{L:boundary-curves}-b), $f_n\co{k_n+a_n-\B{k}-2}(\gn_n)$ is contained in the interior of 
$\cup_{m=0}^{k_n-1}(S_n^0)$. 
By Lemma~\ref{L:conjugacy}, this implies that $f_0\co{(l-j)}(\gY_n(\gn_n))$
is contained in the interior of $\cup_{m=0}^{k_n-1} f_0\co{(mq_n)}(\gY_n(S_n^0))\subset A^n_a$. 
By the open mapping property of $f_0\co{j}$, we conclude that $z$ lies in the interior of 
$\gU^n$.

Assume that (c) holds. 
Choose $w\in f_0\co{iq_n}(\gY_n(\gn_n))$ such that $z=f_0\co{j}(w)$. 
We have 
\[f_0\co{(iq_n)}(\gY_n(\gn_n))=
f_0 \co{(q_n-q_{n-1})} \circ f_0\co{q_{n-1}} \circ f_0\co{((i-1)q_n}(\gY_n(\gn_n)).\]
Let $w'\in f_0\co{q_{n-1}} \circ  f_0\co{((i-1)q_n}(\gY_n(\gn_n))$ be such that 
$f_0\co{(q_n-q_{n-1})} (w')=w$. 
By the argument in case a), $w'$ belongs to the interior of 
$A^n_a\cup A^n_b= A^n$. 
As $f_0\co{(q_n-q_{n-1})}$ maps open sets to open sets, $w$ belongs to the interior of 
$f_0\co{(q_n- q_{n-1})}(A^n)$. 
In particular, if $j\leq q_{n-1}-1$, we conclude that $z$ belongs to the interior of $\gU^n$. 

On the other hand, if $j\geq q_{n-1}$, by case b) above,  $f_0\co{q_{n-1}}(w)$ belongs to the interior 
of $A^n_b$. 
Therefore, for every $j$ with $q_{n-1} \leq j \leq q_n-1$, $z$ belongs to the interior of 
$f_0\co{(j-q_{n-1})}(A^n_b) \subset \gU^n$. 

Now assume $z\in f_0\co{l}(\gY_n(\gga_n))$. 
Choose $z'\in \gga_n$ with $z= f_0\co{l}(\gY_n(z'))$ and consider the following three cases.
\begin{itemize}
\item[(a)] $i\neq 0$;
\item[(b)] $i= 0$ and $l=i q_n+q_{n-1} + j$;
\item[(c)] $i= 0$ and $l=i q_n + j$.
\end{itemize}
The arguments in these cases are similar to the above ones, except that one uses part a) of Lemma~\ref{L:boundary-curves} instead of part b). 
We leave further details to the reader.
\end{proof}

Assume that $W\ni 0$ is a Jordan domain such that $f\in \IS_\ga$ and $f^{-1}$ are defined 
and univalent on a neighborhood of the closure of $W$. 
Let $K$ denote the invariant Siegel compacta of $f$ associated to the domain $W$. 
Recall that if $\gep_0=+1$ then $f_0=f$, and if $\gep_0=-1$ then $f_0=s\circ f \circ s$.
Define the domain $W'$ as $W$ if $\gep_0=+1$, and $W'=s(W)$ if $\gep_0=-1$. 
Then the Siegel compacta of $f_0$ in the closure of $W'$, denoted by $K'$, is equal to either $K$ or 
$s(K)$, depending on the sign of $\gep_0$. 

\begin{lem}\label{L:hedgehogs-postcritical}
Let $K'$ be a Siegel compacta of $f_0$. 
There is an integer $n_0\geq 0$ such that for all $n\geq n_0$, the set $K'$ does not intersect the 
closure of $\C{B}^n$.
\end{lem}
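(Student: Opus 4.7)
The plan is to derive a contradiction from the existence of $z\in K'\cap\overline{\C{B}^n}$ for arbitrarily large $n$, by combining the invariance of $K'$ with the critical-sector estimates of Lemma~\ref{L:critical-puzzles-and-returns}.

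Since $f$ and $f^{-1}$ are univalent on a neighborhood of $\overline{W}$, the critical point $cp$ of $f_0$ does not lie in $\overline{W'}$. I fix $\eta>0$ with $\overline{B(cp,\eta)}\cap\overline{W'}=\emptyset$, and Lemma~\ref{L:critical-puzzles-and-returns} produces $n_0$ such that for all $n\geq n_0$ the set $\Sigma_n:=f_0\co{l_n}(\gY_n(\C{C}_n^{-k_n}))$ has diameter less than $\eta$; since $cp\in\Sigma_n$, this yields $\Sigma_n\subset B(cp,\eta)$, disjoint from $\overline{W'}\supset K'$.

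Now assume $z\in K'\cap\overline{\C{B}^n}$ for some $n\geq n_0$. Unpacking the definition of $\C{B}^n$, I can write $z=f_0\co M(w)$ for some $w$ in the closure of $\gY_n(\C{C}_n^{-k_n})$ and some integer $M$ with $0\leq M\leq (k_n+a_n-\B{k}-1)q_n+q_{n-1}-1$. When $M\leq l_n$, the forward iterate $f_0\co{l_n-M}(z)=f_0\co{l_n}(w)$ belongs to $\overline{\Sigma_n}$; but by the forward invariance $f_0(K')=K'$ of the Siegel compacta, this point also lies in $K'\subset\overline{W'}$, contradicting $\overline{\Sigma_n}\cap\overline{W'}=\emptyset$.

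For the remaining case $M>l_n$, the forward-iteration argument fails because the orbit of $z$ does not revisit $\Sigma_n$, and I instead invoke the backward invariance of $K'$: since $f_0$ is univalent on a neighborhood of $\overline{W'}$ and $f_0(K')=K'$, the map $f_0\big|_{K'}\colon K'\to K'$ is a bijection, and the unique branch of $f_0^{-1}$ valued in $\overline{W'}$ defines a $K'$-backward orbit $z=z_0,z_{-1},z_{-2},\dots$ lying in $\overline{W'}$. Iterating this branch $M-l_n$ times, the target is $z_{-(M-l_n)}=f_0\co{l_n}(w)\in\overline{\Sigma_n}$, which would contradict $z_{-(M-l_n)}\in K'\subset\overline{W'}$. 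The main obstacle is that the backward $K'$-orbit may deviate from the natural orbit $f_0\co{M-k}(w)$ at some earlier step where an iterate leaves $\overline{W'}$, in which case the other $f_0$-preimage must be selected; but using the local branched-cover structure of $f_0$ at $cp$ together with the observation that $f_0(\Sigma_n)$ is a small neighborhood of the critical value $cv=f_0(cp)$ both of whose $f_0$-preimages lie inside $B(cp,\eta)$, the pinching around $cp$ forces every continuation of the backward orbit out of $\overline{W'}$, yielding the contradiction.
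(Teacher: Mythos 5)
Your argument for the range $M\leq l_n$ is exactly the paper's: forward invariance of $K'$ pushes a hypothetical intersection point into $\overline{f_0\co{l_n}(\gY_n(\C{C}_n^{-k_n}))}$, a tiny set around $\cp_0$ that $K'$ must avoid. The problem is the complementary range $M>l_n$, where your proof has a genuine gap. The backward $K'$-orbit $z_0,z_{-1},z_{-2},\dots$ is produced by the single inverse branch of $f_0$ defined on $W'$, and there is no reason it should coincide with the natural preimages $f_0\co{M-k}(w)$: the sets $f_0\co{j}(\gY_n(\C{C}_n^{-k_n}))$ are built from the renormalization tower and need not lie in $\overline{W'}$ at all, so the two backward orbits can separate at the first step and never meet again. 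Your proposed repair --- that the ``pinching'' near $\cp_0$ forces every continuation of the backward orbit out of $\overline{W'}$ --- does not establish anything: the $K'$-backward orbit stays in $\overline{W'}$ by construction, and to derive a contradiction from it you would need to show that \emph{all} preimages of some $z_{-k}$ leave $\overline{W'}$, which your local observation about preimages of points near $\cv_0$ only gives after you already know the backward orbit has entered $f_0(\Sigma_n)$ --- precisely what is in question.

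The paper closes the case $M>l_n$ without any backward iteration, using the second limit in Lemma~\ref{L:critical-puzzles-and-returns}, which your proposal never invokes. Every index occurring in $\C{B}^n$ is at most $b_n-1$, where $b_n=(k_n+a_n-\B{k}-1)q_n+q_{n-1}$ is an approximate return time: $|f_0\co{b_n}(\tilde z)-\tilde z|\leq \gd/3$ on the critical sector for $n$ large. So if $z'\in K'$ lies in $\overline{f_0\co{M}(\gY_n(\C{C}_n^{-k_n}))}$ with $l_n\leq M\leq b_n-1$, write $z'=f_0\co{M-l_n}(\tilde z)$ with $\tilde z$ in the closure of the small set containing $\cp_0$, and apply the \emph{forward} iterate $f_0\co{b_n-M+l_n}$ (a positive number of steps) to land at $f_0\co{b_n}(\tilde z)\in K'$, which is within $\gd/3+\gd/3$ of $\cp_0$ --- contradiction. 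You should replace your backward-orbit argument by this forward near-return argument; as written, the second half of your proof does not go through.
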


\begin{proof}
Let $\cp_0$ denote the critical point of $f_0$.
As $f_0$ is univalent on a neighborhood of $K'$, there is $\gd>0$ such that 
$B(\cp_0, \gd)\cap K'=\emptyset$. 
By Lemma~\ref{L:critical-puzzles-and-returns}, there is $n_0$ such that for all $n\geq n_0$, we have 
$\diam f_0\co{l_n}(\gY_n(\C{C}_n^{-k_n}))\leq \gd/3$ and for all $z\in \gY_n(\C{C}_n^{-k_n})$, 
$|f_0\co{b_n}(z)-z|\leq \gd/3$. 
In particular, for all $n\geq n_0$, $K'$ does not intersect the closure of 
$f_0\co{l_n}(\gY_n(\C{C}_n^{-k_n}))$.

As $f_0(K')=K'$, for all $n\geq n_0$ and all $i$ with $0 \leq i \leq l_n$, 
$K'$ may not intersect the closure of $f_0\co{i}(\gY_n(\C{C}_n^{-k_n}))$. 
However, we cannot immediately use the backward invariance of $K'$ to conclude the same statement 
for the other values of $i$. 
That is because, $K'$ is only fully invariant when $f_0$ is restricted to the domain $W'$, and 
there is no relation between $W'$ and $\gU^n$.

Assume that there is $n\geq n_0$ and an integer $i$ with 
$l_n \leq i \leq  b_n-1$ such that $K'$ intersects the closure of $f_0\co{i}(\gY_n(\C{C}_n^{-k_n}))$. 
Let $z'\in K'$ belong to the closure of $f_0\co{i}(\gY_n(\C{C}_n^{-k_n}))$, and choose $z$ in the 
closure of $\gY_n(\C{C}_n^{-k_n})$ such that $f_0\co{i}(z)=z'$.
Then, by the invariance of $K'$, $f_0\co{(b_n-i)}(z') \in K'$, and 
\begin{equation*}
d(\cp_0, f_0\co{(b_n-i)}(z')) 
\leq d(\cp_0, z)+ d(z, f_0\co{b_n}(z))
\leq \gd/3+ \gd/3.
\end{equation*}
That is, there is an element of $K'$ within $2\gd/3$-neighborhood of $\cp_0$,  
contradicting the choice of $\gd$. 
\end{proof}

\begin{proof}[Proof of Proposition~\ref{P:Hedgehogs-vs-postcritical}]
Since $f_0$ is conjugate to $f$, it is enough to prove the proposition for $f_0$ and $K'$.
As $K'$ is connected  and contains $0$, by the previous lemma, 
$K' \subseteq \cap_{n\geq n_0} \gU^n$. 
On the other hand, by Proposition~\ref{P:Siegel-disk-enclosed}, 
$\cap_{n\geq n_0} \gU^n$ is equal to $\gD(f_0) \cup \C{PC}(f_0)$ and 
$\partial \gD(f_0) \subseteq \C{PC}(f_0)$.
Thus, $K'$ is a connected invariant region in $\gD(f_0) \cup \C{PC}(f_0)$. 
This implies that either $K$ is equal to the region bounded by an analytic curve in $\gD(f_0)$, 
or it contains $\gD(f_0)$. 
\end{proof}

As a corollary of the above lemmas we conclude the following result.  

\begin{thm}\label{T:one-to-0ne}
For every $f \in \IS_\ga \cup \{Q_\ga\}$ with $\ga \in \irr$, $f : \C{PC}(f) \to \C{PC}(f)$ 
is one-to-one.
\end{thm}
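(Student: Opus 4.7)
The plan is to derive Theorem~\ref{T:one-to-0ne} by contradiction using Proposition~\ref{P:Hedgehogs-vs-postcritical} together with the standard injectivity of $f$ on Siegel compacta. Suppose $z_1 \neq z_2$ in $\pc(f)$ satisfy $f(z_1) = f(z_2) = w$. Passing to $f_0 = s^{\gep_0} \circ f \circ s^{\gep_0}$, since $\cv_0$ has unique preimage $\cp_0$ (the only critical point, a double root of $f_0 - \cv_0$), we must have $w \neq \cv_0$ and $z_1, z_2 \neq \cp_0$. The strategy is to produce a Jordan domain $U \ni 0$ with $f_0, f_0^{-1}$ univalent on a neighborhood of $\bar U$ (in particular $\cp_0 \notin \bar U$), such that the associated Siegel compactum $K(U)$ contains both $z_1$ and $z_2$; injectivity of $f_0$ on $K(U) \subset \bar U$ then forces $z_1 = z_2$, contradicting our assumption.

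To construct $U$, I exploit the sector decomposition $\gU^n = \C{A}^n \cup \C{B}^n$ introduced in Section~\ref{SS:hedgehogs-and-PC}. By Lemma~\ref{L:Upsilon-boundary}, $\bar{\C{A}^n}$ lies in $\text{int}(\gU^n)$, while by Lemma~\ref{L:critical-puzzles-and-returns} the critical sector $f_0^{\circ l_n}(\gY_n(\C{C}_n^{-k_n}))$ containing $\cp_0$ shrinks to $\{\cp_0\}$. Choosing $n$ large enough that $z_1, z_2$ lie outside this critical sector, I take $U_n$ to be a slight open thickening of $\gU^n$ minus a small closed disk around $\cp_0$ that fits inside the critical sector. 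By construction $\cp_0 \notin \bar{U_n}$, and $0, z_1, z_2 \in U_n$. Since $f_0$ has only one critical point, univalence of $f_0$ and $f_0^{-1}$ on a neighborhood of $\bar{U_n}$ is automatic once $\cp_0$ is kept away.

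Perez-Marco's theorem then yields a unique connected compact $f_0$-invariant $K(U_n)$ with $0 \in K(U_n) \subset \bar{U_n}$ and $K(U_n) \cap \partial U_n \neq \emptyset$. To verify $z_1, z_2 \in K(U_n)$, I would consider the connected component $C$ of $\pc(f_0) \cap \bar{U_n}$ containing $0$. By Proposition~\ref{P:Siegel-disk-enclosed}, $\cap_{n \geq n_0} \gU^n = \pc(f_0) \cup \bar{\gD(f_0)}$ is a nested intersection of connected sets and is itself connected; combined with the sector structure this implies that excising a small disk around $\cp_0$ (contained in the shrinking critical sector from Lemma~\ref{L:critical-puzzles-and-returns}) leaves $\pc(f_0) \cap \bar{U_n}$ with a component $C$ still containing $0, z_1, z_2$. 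This $C$ is a connected compact forward-invariant subset of $\bar{U_n}$ meeting $\partial U_n$, so by uniqueness in Perez-Marco's theorem $C \subseteq K(U_n)$, giving $z_1, z_2 \in K(U_n)$. Univalence of $f_0$ on a neighborhood of $\bar{U_n}$ then contradicts $f_0(z_1) = f_0(z_2)$.

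The main obstacle is the topological step: verifying that the connected component of $\pc(f_0) \cap \bar{U_n}$ containing $0$ still contains $z_1$ and $z_2$ after excising a neighborhood of $\cp_0$, and that this component is genuinely forward-invariant inside $\bar{U_n}$. This requires careful use of Lemma~\ref{L:Upsilon-boundary} (the bulk $\bar{\C{A}^n}$ stays in the interior of $\gU^n$) together with the shrinking of the critical sector from Lemma~\ref{L:critical-puzzles-and-returns}, ensuring that the only ``problem point'' of $\pc(f_0)$ relative to $U_n$ is a single vanishing sector around $\cp_0$, which does not separate the relevant part of $\pc(f_0)$ from $0$.
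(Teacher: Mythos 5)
There is a genuine gap, and it sits at the decisive step. You assert that ``univalence of $f_0$ and $f_0^{-1}$ on a neighborhood of $\bar{U_n}$ is automatic once $\cp_0$ is kept away.'' This is false: the absence of critical points gives only \emph{local} injectivity, never global injectivity. For the quadratic, $f_0(z)=f_0(z')$ with $z\neq z'$ exactly when $z$ and $z'$ are symmetric about $\cp_0$, and excising a small disk around $\cp_0$ does nothing to exclude such symmetric pairs from the remaining set. Ruling out a symmetric pair inside $\pc(f_0)$ is precisely the content of Theorem~\ref{T:one-to-0ne}, so your construction assumes at the hypothesis-checking stage (``$f_0$ is univalent near $\bar{U_n}$'') essentially what you set out to prove; the Perez-Marco machinery then adds nothing. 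There are secondary problems as well: removing a closed disk around an interior point of a thickened $\gU^n$ yields a non-simply-connected domain, so Perez-Marco's theorem (which requires a Jordan domain) does not apply as stated; and the inclusion $C\subseteq K(U_n)$ does not follow from the uniqueness clause of that theorem without verifying that $C$ is invariant in $\bar{U_n}$ and meets $\partial U_n$, neither of which you establish.

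The paper's proof avoids all of this and is much more direct. Because $f_0$ maps the sectors tiling $\gU^n$ to one another injectively except on the single critical sector, where it is a degree-two branched cover, any two points of $\gU^n$ with the same image must \emph{both} lie in $f_0\co{l_n}(\gY_n(\C{C}_n^{-k_n}))$. By Lemma~\ref{L:critical-puzzles-and-returns} the diameter of this critical sector tends to $0$, so two fixed distinct points of $\cap_{n}\gU^n\supseteq\pc(f_0)$ cannot both lie in it for all large $n$; hence $f_0$ is injective on $\pc(f_0)$. If you want to repair your argument, this sector analysis is exactly the input you would need to verify univalence of $f_0$ on your domain $U_n$ --- at which point the detour through Siegel compacta becomes unnecessary.
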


\begin{proof}
For every $n\geq 1$, if there are $z$ and $z'$ in $\gU^n$ with $f_0(z)=f_0(z')$ then 
$z$ and $z'$ must belong to $f_0\co{l_n}(\C{C}_n^{-k_n})$ where $l_n$ is the integer introduced in 
Lemma~\ref{L:critical-puzzles-and-returns}. 
Since the diameters of $f_0\co{l_n}(\C{C}_n^{-k_n})$ tend to $0$ as $n$ tends to $+\infty$, we conclude 
that $f_0$ is one-to-one on $\cap_{n\geq 1} \gU^n$. 
By Proposition~\ref{P:pc-neighbor}, the postcritical set of $f_0$ is contained in this intersection. 
\end{proof}
\bibliographystyle{amsalpha}
\bibliography{/Users/Davoud/Work/My-Work/Data}
\end{document}